\numberwithin{equation}{section}
\newtheorem{theorem}{Theorem}[section]
\newtheorem{proposition}[theorem]{Proposition}
\newtheorem{lemma}[theorem]{Lemma}
\newtheorem{corollary}{Corollary}[section]
\newtheorem{definition}[theorem]{Definition}
\newtheorem{remark}{Remark}[section]
\def\neweq#1{\begin{equation}\label{#1}}
\def\endeq{\end{equation}}
\newcommand{\R}{\mathbb{R}}
\newcommand{\N}{\mathbb{N}}
\newcommand{\eps}{\varepsilon}
\newcommand{\SA}{{\mathbb S}^{N-1}_\alpha}
\newcommand{\SAP}{{\mathbb S}^{N-1}_{\alpha,+}}
\newcommand{\Ha}{\mathcal H_\alpha^{N-1}}
\begin{document}

\title[  On solutions to a class of  degenerate equations with the Grushin operator]{  On solutions to a class of  degenerate equations with the Grushin operator}

\author{Laura Abatangelo, Alberto Ferrero, Paolo Luzzini}

\address{\hbox{\parbox{5.7in}{\medskip\noindent{Laura Abatangelo, \\
Politecnico di Milano, \\
        Dipartimento di Matematica, \\
        Piazza Leonardo da Vinci 32, 20133 Milano, Italy. \\
        {\em E-mail address: } {\tt laura.abatangelo@polimi.it}
        \\[5pt]
Alberto Ferrero, \\
Università del Piemonte Orientale, \\
        Dipartimento di Scienze e Innovazione Tecnologica, \\
        Viale Teresa Michel 11, 15121 Alessandria, Italy. \\
        {\em E-mail address: } {\tt alberto.ferrero@uniupo.it}  \\[5pt]
Paolo Luzzini, \\
Università del Piemonte Orientale, \\
        Dipartimento di Scienze e Innovazione Tecnologica, \\
        Viale Teresa Michel 11, 15121 Alessandria, Italy. \\
        {\em E-mail address: } {\tt paolo.luzzini@uniupo.it}
}}}}

\date{\today}

\begin{abstract}
The Grushin Laplacian $- \Delta_\alpha $ is a degenerate elliptic operator in   $\mathbb{R}^{h+k}$ that degenerates on
$\{0\} \times \mathbb{R}^k$. We consider weak solutions of $- \Delta_\alpha u= Vu$ in an open bounded connected domain $\Omega$ with $V \in W^{1,\sigma}(\Omega)$ and $\sigma > Q/2$, where $Q = h + (1+\alpha)k$ is the so-called homogeneous dimension of $\mathbb{R}^{h+k}$. By means of an Almgren-type monotonicity formula we identify the exact asymptotic blow-up profile of solutions on degenerate points of $\Omega$. As an application we derive strong unique continuation properties for solutions.
\end{abstract}

\maketitle

\noindent \textbf{Key words:} Grushin operator,  Almgren monotonicity formula,  unique continuation property.

\vspace{.2cm}
\noindent \textbf{AMS subject classifications:} 35H10, 35J70, 35B40, 35A16

\vspace{.2cm}

\section{Introduction}

For $N \in \mathbb{N}$, $N\ge 2$, $\alpha \in \N$, we  denote by $ \Delta_\alpha $ the operator defined by
\begin{equation*}
    \Delta_\alpha  := \Delta_x + |x|^{2\alpha} \Delta_y
\end{equation*}
where $x\in \R^h$, $y\in \R^k$, $h,k\ge 1$, $h+k=N$, and $\Delta_x$ and $\Delta_y$ are respectively the classical Laplace operators with respect to the $x$ and $y$ variables. Here and throughout the paper, $\N=\{0,1,2,3,\dots\}$ denotes the set of nonnegative integers.

Nowadays, $ \Delta_\alpha $  is  known as the Grushin (or Baouendi-Grushin) operator  and was introduced in a preliminary version by Baouendi \cite{Ba} and Grushin \cite{Gru70, Gru71}.
As one can immediately realize, this operator is not uniformly elliptic, due to the presence of the term $|x|^{2\alpha}$: there is a whole submanifold where the operator is degenerate, which is
\begin{equation}\label{eq:def-sigma}
\Sigma :=  \{(0,y)\in \R^h \times \R^k: y \in \R^k\} \subset \R^N.
\end{equation}

However, since $\alpha\in \N$, the Grushin operator can be written as a finite sum of squares of  smooth vector fields, i.e.  for some $n\ge 1$, $ \Delta_\alpha  =\sum_{i=1}^n Z_i^2$ (see Section \ref{s:functional-setting} for more details on this kind of representation), where $\{Z_i\}_{i=1,\ldots,n}$ satisfy the H\"ormander condition: if one considers the commutators $[Z_i,Z_j]=Z_i Z_j - Z_j Z_i $ for any $i\neq j$, then the set of all $Z_i$ and their iterated commutators up to a finite order generates $\R^N$ at any point of $\R^N$ (they generate a Lie algebra of maximum rank at any point).
 Under this condition, a famous result by Hormander~\cite{Hor1967} implies that the operator is {\em hypoelliptic}: for any open subset $\Omega \subset \R^N$, if $f\in C^{\infty}(\Omega)$ then the distributional solution $u$ of $ \Delta_\alpha  u=f$ is also $C^\infty(\Omega)$.

  In principle one can consider real $\alpha >0$, but in general the H\"ormander condition fails  because the generating vector fields are not smooth. This is the main reason why in several cases one considers the Grushin operator with $\alpha\in\N$. Incidentally, we mention that there is also a large literature investigating the case of non-smooth vector fields, where the H\"ormander condition does not apply. In this regard, so far from being exhaustive, we only mention the seminal work by Franchi and Lanconelli \cite{FrLa82}.


In the present paper we consider $\alpha\in\N$ and the equation
\begin{equation} \label{eq:u}
  - \Delta_\alpha  \, u =V(x,y) u \qquad \text{in } \Omega,
\end{equation}
where $\Omega$ is a connected bounded open set in $\R^N$ and $V$ is a potential. The term $Vu$ can be viewed as a zero-th order perturbation of the standard Grushin operator.  The potential  $V$ satisfies suitable regularity assumptions which will be specified in the next section. Here we just mention that in particular the eigenvalue equation $  - \Delta_\alpha  \, u =\lambda u$ will be included in our  analysis. Since $ \Delta_\alpha $ is uniformly elliptic far from the degenerate set, the most interesting case is $\Omega$ intersecting $\Sigma$. Moreover, $ \Delta_\alpha $ is  invariant with respect to translation in the variable $y \in \R^k$, so that we will always assume that $\Omega$ contains the origin.

The main objects of the present paper are asymptotic behaviors and unique continuation principles for weak solutions to \eqref{eq:u} (see \eqref{eq:var} for the definition of weak solution) around points of the degenerate set $\Omega \cap \Sigma$, which is supposed to be non empty, as already mentioned.

\subsection{Asymptotic behavior of solutions at the origin}

As it is well-known, under suitable assumptions on the potential, weak solutions to the equation for the classical Laplacian
\[
-\Delta u = V(x,y) u \quad \text{in }\Omega
\]
has the following asymptotic behavior at the origin: there exist $\gamma\in \N$ and a spherical harmonic $\Psi\in L^2(\mathbb S^{N-1})$ such that
\begin{align*}
&-\Delta_{\mathbb S^{N-1}} \Psi = \gamma(N-2+\gamma) \Psi,\\
  &  r^{-\gamma} u(r x) \to
    |x|^\gamma \Psi\left(\frac{x}{|x|}\right) \quad \text{as }r\to0 \text{ in $H^1(B_R)$ for any $R>0$.}
\end{align*}
For a detailed statement of this kind of result, see for instance \cite[Theorem 1.3]{FFT2} under pointwise assumptions on the potential and \cite[Theorem 1.1]{FFT1} under more general assumptions including integral conditions on the potential and its weak derivatives.
Besides its own interest, identifying the precise blow-up asymptotic profile of a solution at a point, namely the origin without losing generality, is the key step to determine the asymptotic behavior of eigenvalues when the domain is affected by special singular perturbations. Indeed, as several recent results establish, when a small set concentrating at a point is removed from the original domain, the eigenfunctions' vanishing order at that point plays a fundamental role to determine the rate of convergence of the related eigenbranches for the Laplace operator (see e.g. \cite{AbBoLeMu21, AbFeHiLe19, AbLeMu22, AbLeMu24}), as well as   for different operators (see  Felli and Romani \cite{FeRo23} and \cite{AbFeNo20}). The same can be said about moving mixed boundary conditions, as showed in  Felli, Noris and Ognibene \cite{FeNoOg22}, and \cite{AbFeLe20, AbOg24}.

This kind of problems is one of the main motivation which led us to the study of asymptotic behavior of solutions to linear problems for the Grushin operator. Indeed, motivated by the increasing interest in the spectral theory and geometry of degenerate elliptic operators and sublaplacians (see e.g. Chen, Chen and Li \cite{ChChLi20}, Frank and Helffer \cite{FrHe24}, and \cite{LaLuMu21, LuPrSt24}), in future works we plan to investigate the asymptotic behavior of Grushin eigenvalues under singular perturbation of the domain.

In  the Grushin case, away from the singular set, the operator is uniformly elliptic, so that solutions share essentially the same characteristics as solutions to equations involving the standard Laplacian. For this reason  we will focus on the asymptotic behavior of solutions at a point lying on  the degenerate set $\Sigma$, that we can suppose to be the origin. In particular, we want to investigate how the degenerate manifold plays a role.

In general, this kind of asymptotic results can be obtained by an established procedure relying on an Almgren-type monotonicity formula (see for instance \cite{FFFN, FF1, FFT1, FFT2, FFT3}). The so-called Almgren-type frequency function is the ratio between the quadratic form attached to the equation and the $L^2$-norm of the trace of solutions respectively taken over the ball of radius $r$ and its boundary. Both of them are normalized with certain powers of the radius $r$ depending on the dimension.
 In the setting of the Grushin operator, as already pointed out in  Garofalo \cite{Garofalo}, when balls are taken away from the singular set, the ellipticity of the operator prevails and the correct dimension is exactly $N$. On the contrary, at points on the degenerate manifold the so-called homogeneous dimension
 \[
 Q := h + (1+\alpha)k
 \]
 takes over and the balls need to be taken with respect to a suitable distance from the origin induced by $ \Delta_\alpha $,  which is usually called  gauge norm:
 \begin{equation} \label{eq:distance}
   d_\alpha(x,y):=\left(|x|^{2(\alpha+1)}+(\alpha+1)^2 |y|^2\right)^{\frac{1}{2(\alpha+1)}}
   \qquad \text{for any } (x,y)\in \R^h \times \R^k \, .
 \end{equation}
The  dimension $Q$ is then the homogeneous dimension of $\R^N$ with respect to the anisotropic dilation:
\begin{equation} \label{eq:delta-lambda}
  \delta_\lambda(x,y) := (\lambda x, \lambda^{\alpha+1} y) \qquad \text{for any } (x,y) \in \R^h \times \R^k ,  \lambda>0\, .
\end{equation}
The operator $ \Delta_\alpha $ possesses the following scaling property with respect to $\delta_\lambda$:
\begin{equation*} 
    \Delta_\alpha  (u \circ \delta_\lambda) = \lambda^2 ( \Delta_\alpha   u) \circ \delta_\lambda
\end{equation*}
while for the distance $d_\alpha$ we have
\begin{equation} \label{eq:d-delta}
  d_\alpha(\delta_\lambda(x,y))=\lambda d_\alpha(x,y) \qquad \text{for any } (x,y)\in \R^h \times \R^k \, .
\end{equation}
Once we defined the distance function $d_\alpha$, we can now introduce the following \textit{balls centered at the origin} of radius $r>0$ and the corresponding \textit{spheres} by
\begin{equation} \label{eq:B-r-alpha}
    B_r^\alpha:=\left\{(x,y)\in \R^h \times \R^k:d_\alpha(x,y)<r \right\}
    \, , \quad \partial B_r^\alpha:=\left\{(x,y)\in \R^h \times \R^k:d_\alpha(x,y)=r \right\} \, .
\end{equation}
This definition combined with \eqref{eq:d-delta} yields
\begin{equation} \label{eq:delta-B}
  \delta_\lambda (B_r^\alpha)=B_{\lambda r}^\alpha  \quad \text{and} \quad
  \delta_\lambda (\partial B_r^\alpha)=\partial B_{\lambda r}^\alpha\, .
\end{equation}
In the same way, through $d_\alpha$, we can define balls $B_r^\alpha(z_0)$ and spheres $\partial B_r^\alpha(z_0)$ centered at a generic point $z_0 \in \R^N$.

The role of the homogeneous dimension  is highlighted once more in the  first result of the paper, that  classifies the asymptotic behavior of any weak solution to \eqref{eq:u} near $0$:

\begin{theorem}\label{t:main}
 Let $\Omega\subset\R^{h+k}$ a bounded connected open set containing $0$. Let $\alpha \in \N$ and let $u \in   W^{1,2}_{\alpha,\, {\rm loc}}(\Omega)$ be a nontrivial weak solution to \eqref{eq:u} with $V \in W^{1,\sigma}_{{\rm loc}}(\Omega)$ and $\sigma > Q/2$. Then there exists $j\in\N$ such that
\begin{equation} \label{eq:as-for}
\rho^{-\ell} \, u(\delta_\rho(x,y)) \to (d_\alpha(x,y))^\ell \ \Psi\left(\frac{x}{d_\alpha(x,y)},\frac{y}{[d_\alpha(x,y)]^{\alpha+1}}\right) \quad \text{as } \rho \to  0^+
\end{equation}
in $ W^{1,2}_\alpha (B_r^\alpha)$ and uniformly in $B_r^\alpha$ for any $r>0$, where
    \begin{equation}\label{eq:car-ell}
      \ell=\dfrac{-(Q-2)+\sqrt{(Q-2)^2+4\mu_j(\alpha+1)^2}}{2},
    \end{equation}
$\mu_j$ is an eigenvalue of the polar operator $-\mathcal L_\Theta$ attached to the Grushin Laplacian (see definition in \eqref{GrPo2}) and $\Psi \not\equiv 0 $ is one of its relative eigenfunctions.
\end{theorem}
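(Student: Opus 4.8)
The proof follows the now-standard Almgren monotonicity scheme, adapted to the Grushin geometry (gauge norm $d_\alpha$, dilations $\delta_\lambda$, homogeneous dimension $Q$). I would proceed in four stages.

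\emph{Stage 1: The frequency function and its monotonicity.} First I would introduce, for a weak solution $u$ to \eqref{eq:u}, the height function
\[
H(r):=\frac{1}{r^{Q-1}}\int_{\partial B_r^\alpha}u^2\,|\nabla_\alpha d_\alpha|\,d\mathcal H^{N-1},
\]
(with $\nabla_\alpha=(\nabla_x,|x|^\alpha\nabla_y)$ the Grushin gradient), and the energy
\[
D(r):=\frac{1}{r^{Q-2}}\int_{B_r^\alpha}\bigl(|\nabla_\alpha u|^2-V u^2\bigr)\,dx\,dy,
\]
and then define the Almgren frequency $\mathcal N(r):=D(r)/H(r)$. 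The key computation is to differentiate $\log H$ and $\log D$ using the Grushin Rellich–Pohozaev identity associated with the dilation vector field generating $\delta_\lambda$ (namely $X=x\cdot\nabla_x+(\alpha+1)\,y\cdot\nabla_y$), which plays the role the radial field plays for the Laplacian. One obtains $H'(r)=\frac{2}{r}D(r)+(\text{error})$ and a representation of $D'(r)$ in terms of the boundary integral of $(Xu/|X|\cdot\text{something})^2$, so that $\mathcal N'(r)\ge -C(\text{potential terms})\,\mathcal N(r)-C$; hence $r\mapsto e^{Cr^\beta}(\mathcal N(r)+1)$ is nondecreasing for small $r$, for a suitable $\beta>0$ depending on $\sigma,Q$. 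The assumption $V\in W^{1,\sigma}_{\rm loc}$ with $\sigma>Q/2$ is exactly what is needed to bound the potential error terms, via the relevant Grushin Sobolev and Hardy inequalities (the weight $|x|^\alpha$ and the homogeneous dimension $Q$ enter here). This step requires a careful treatment of integrability near $\Sigma$, since $|\nabla_\alpha d_\alpha|$ and the coarea factor degenerate there.

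\emph{Stage 2: Limit of the frequency and the doubling estimate.} From monotonicity, $\gamma:=\lim_{r\to0^+}\mathcal N(r)$ exists and is finite. A unique continuation / nondegeneracy argument (if $H(r_0)=0$ for some $r_0$ then $u\equiv0$ on $B_{r_0}^\alpha$, contradicting nontriviality — here one uses that $\Omega$ is connected and that $u\not\equiv0$) shows $H(r)>0$ for small $r$. Integrating $\frac{d}{dr}\log H(r)=\frac{2}{r}\mathcal N(r)+O(r^{\beta-1})$ yields two-sided bounds $c\,r^{2\gamma+\epsilon}\le H(r)\le C\,r^{2\gamma-\epsilon}$ and, more precisely, that $r^{-2\gamma}H(r)$ has a positive finite limit; combined with $D\sim\gamma H$ this gives uniform control of $\|u\|_{W^{1,2}_\alpha(B_r^\alpha)}$ in terms of powers of $r$.

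\emph{Stage 3: Blow-up and identification of the limit profile.} Define the rescaled family $u_\rho(z):=\rho^{-\gamma}u(\delta_\rho z)\big/\sqrt{H(\rho)}$ (or without the normalization, $\rho^{-\ell}u(\delta_\rho z)$ — the two are reconciled at the end). Using the $W^{1,2}_\alpha$ bounds from Stage 2 and the scaling property $\Delta_\alpha(u\circ\delta_\lambda)=\lambda^2(\Delta_\alpha u)\circ\delta_\lambda$, the family $\{u_\rho\}$ is bounded in $W^{1,2}_\alpha(B_R^\alpha)$ for each $R$, hence converges weakly (along a subsequence) to some $U\not\equiv0$, which is $\delta_\lambda$-homogeneous of degree $\gamma$ and satisfies $-\Delta_\alpha U=0$ (the potential term scales away because $V(\delta_\rho z)\to0$ in the relevant norm). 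Separating variables in the gauge coordinates — writing points as $\delta_r(\theta)$ with $\theta$ on the Grushin sphere $\{d_\alpha=1\}$ and $r=d_\alpha$ — a homogeneous $\Delta_\alpha$-harmonic function of degree $\gamma$ must be of the form $r^\gamma\Psi(\theta)$ with $\Psi$ an eigenfunction of the polar operator $-\mathcal L_\Theta$ (defined in \eqref{GrPo2}), with eigenvalue $\mu_j$ tied to $\gamma$ exactly through the indicial equation $\gamma(Q-2+\gamma)\cdot(\text{factor})=\mu_j$; solving for $\gamma$ gives $\ell$ as in \eqref{eq:car-ell}. Thus $\gamma=\ell$ for some $j\in\N$, and the limit has the claimed form. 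Finally one upgrades weak to strong $W^{1,2}_\alpha$ convergence (using the equation to pass to the limit in the energies) and to uniform convergence on $B_r^\alpha$ (via subelliptic De Giorgi–Nash–Moser estimates for $\Delta_\alpha$, valid by hypoellipticity), and one removes the subsequence by showing the limit is unique — this is the standard argument that $r^{-\gamma}H(r)^{1/2}$ converges and that the "angular" coefficient of $u$ along each $\mu_j$-eigenspace stabilizes, forcing a single profile.

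\emph{Main obstacle.} The delicate point is Stage 1: making the Rellich–Pohozaev differentiation of $D(r)$ rigorous despite the degeneracy of $\Delta_\alpha$ on $\Sigma\cap B_r^\alpha$, and controlling the potential perturbation with only $V\in W^{1,\sigma}$, $\sigma>Q/2$. One must justify the boundary integrations by parts on $\partial B_r^\alpha$ (which meets $\Sigma$), handle the non-smoothness of $d_\alpha$ on $\Sigma$, and prove the Hardy-type inequality $\int_{B_r^\alpha}\frac{u^2}{d_\alpha^2}|\nabla_\alpha d_\alpha|^2\le C\int_{B_r^\alpha}|\nabla_\alpha u|^2$ with the correct constant, together with the trace/Sobolev estimates on Grushin spheres needed to absorb $\int V u^2$ and $\int (X\cdot\nabla V)u^2$ into $D(r)$ up to lower-order terms. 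Once these Grushin-adapted functional inequalities are in place, the remainder of the argument is a faithful transcription of the Laplacian case, with $N$ replaced by $Q$, the radial direction replaced by the dilation field $X$, and spherical harmonics replaced by eigenfunctions of $-\mathcal L_\Theta$.
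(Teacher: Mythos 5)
Your outline follows the same route as the paper: Almgren frequency built from $D$ and $H$, a Pohozaev identity for the dilation field $X_G=x\cdot\nabla_x+(\alpha+1)y\cdot\nabla_y$, boundedness and convergence of $\mathcal N$, blow-up along $\delta_\rho$, and identification of the limit as $\rho^\ell\Psi(\Theta)$ via separation of variables for $\mathcal L_\Theta$. Most of the stages are correct in substance.

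There is, however, one genuine gap, at the hinge of Stage 2/Stage 3. You assert that integrating $\frac{d}{dr}\log H(r)=\frac{2}{r}\mathcal N(r)+O(r^{\beta-1})$ gives not only the two-sided bounds $c\,r^{2\gamma+\epsilon}\le H(r)\le C\,r^{2\gamma-\epsilon}$ but also that $r^{-2\gamma}H(r)$ has a \emph{positive} finite limit. The monotonicity argument yields existence and finiteness of $\lim_{r\to0^+}r^{-2\gamma}H(r)$ (because $\mathcal N(r)-\gamma$ is bounded below by $-Cr^\delta$ and $\mathcal N'$ splits into a nonnegative part plus an integrable part), and it yields the lower bound $H(r)\ge K(\eta)r^{2\gamma+\eta}$ for every $\eta>0$; but neither of these implies the limit is nonzero (e.g.\ $H(r)\sim r^{2\gamma}|\log r|^{-1}$ is compatible with everything you derive). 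Strict positivity is precisely what lets you pass from convergence of the normalized blow-ups $u(\delta_\rho\cdot)/\sqrt{H(\rho)}$ to the statement of the theorem, which is normalized by $\rho^{-\ell}$; without it the two normalizations are not "reconciled at the end" and the asserted limit in \eqref{eq:as-for} could be zero or fail to exist. The paper closes this gap by expanding $u(\delta_\eps(\Theta))=\sum_i\varphi_i(\eps)\omega_i(\Theta)$ in eigenfunctions of $-\mathcal L_\Theta$, deriving the ODE $-\varphi_i''-\frac{Q-1}{\eps}\varphi_i'+\frac{(\alpha+1)^2\mu_i}{\eps^2}\varphi_i=\zeta_i$, obtaining an explicit integral representation $\varphi_i(\eps)=\eps^\ell(\text{const}_i+o(1))+O(\eps^{\ell+\delta})$, and showing by contradiction (against the nontriviality of the blow-up limit $\Psi$ on the $\mu_{j}$-eigenspace) that not all these constants vanish. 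The same Fourier--ODE machinery is what makes your final "the angular coefficient stabilizes" step rigorous and subsequence-independent; as written, your proposal invokes its conclusion twice without supplying the argument.
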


  For the definition of the weighted Sobolev space $ W^{1,2}_\alpha (\Omega)$ we refer to Section \ref{s:functional-setting}.
In the previous theorem, the polar operator $\mathcal{L}_\Theta$ acts on functions defined on the unit Grushin sphere
$ \SA = \{ (x,y)\in\R^{h+k}:  d_\alpha(x,y)=1\}$ and plays the same role as the Laplace-Beltrami operator on the unit sphere for the standard Laplacian. The explicit form of  $\mathcal{L}_\Theta$, together with its eigenvalues and eigenfunctions, has been computed by
Garofalo and Shen \cite{GaSh94} for the case $h \geq 2, k=1, \alpha=1$, and recently by De Bie and Lian \cite{DeLi24} for $h \geq 2, k\geq 1, \alpha \in \N$  (see also Liu \cite{Li18}). Up to the authors' knowledge, the explicit computation of the eigenvalues and eigenfunctions of  $\mathcal{L}_\Theta$ in the case $h=1$ has not been done yet.
Indeed, if $h=1$ it is not possible to find, for every choice of $m\in \N$, a Grushin harmonic polynomial in the variables $(x,y) \in \R^{1+k}$ that is $\delta_\lambda$-homogeneous of order $m$, so that one can not use all the theory of orthogonal polynomials to construct a basis of eigenfunctions for   $\mathcal{L}_\Theta$.

The proof of Theorem \ref{t:main} is based, as we mentioned, on an Almgren monotonicity approach. As tools, we need a Pohozaev-type integral identity together with a careful regularity analysis for weak solutions to \eqref{eq:u}.

\begin{remark}  As mentioned above, for any $h\ge 2$, $k\ge 1$ and $\alpha \in \N$ the spectrum of $-\mathcal L_\Theta$ was completely determined  (see Garofalo and Shen \cite{GaSh94} for the case $h \geq 2, k=1, \alpha=1$ and De Bie and Lian \cite{DeLi24} for the general case).

 In particular, in these cases,  all the eigenvalues of $-\mathcal L_\Theta$ are  of  the form $n (n + Q - 2)/(\alpha+1)^2$ where $n \in \N$. We observe that if we insert an eigenvalue $ \mu_j = n (n + Q - 2)/(\alpha+1)^2$ in \eqref{eq:car-ell}, we obtain $\ell = n$ thus showing that the exponent in the asymptotic formula \eqref{eq:as-for} is always an integer.

It remains open the question of determining the eigenvalues of $-\mathcal L_\Theta$ in the remaining cases $h=1$, $k\ge 1$ and $\alpha \in\N$.

\end{remark}

\subsection{Unique continuation principles}

 Relevant consequences of Theorem \ref{t:main} are unique continuation principles.  The first one shows that if a solution has infinite vanishing order at a point, then it must vanish everywhere. Namely we have the following.

\begin{corollary} \label{c:1}
Under the assumptions of Theorem \ref{t:main}, let $z_0\in\Omega$ and $u\in  W^{1,2}_{\alpha,\, {\rm loc}}(\Omega)$ be a weak solution to \eqref{eq:u} such that $u(z)=O(|z-z_0|^n)$ as $z\to z_0$ for any $n\in\N$. Then $u\equiv 0$ in $\Omega$.
\end{corollary}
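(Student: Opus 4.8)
The plan is to establish this strong unique continuation principle by distinguishing whether the point $z_0$ lies on the degenerate manifold $\Sigma$ or not. Assume first $z_0\in\Sigma$. Since $\Delta_\alpha$ commutes with translations in the $y$ variable, we may take $z_0=0$, so that the hypothesis reads $|u(z)|\le C_n|z|^n$ for $|z|$ small and every $n\in\N$. Arguing by contradiction, suppose $u\not\equiv0$; then Theorem \ref{t:main} provides $\ell$ as in \eqref{eq:car-ell} and $\Psi\not\equiv0$ for which \eqref{eq:as-for} holds. For $(x,y)\in B_1^\alpha$ and $\rho\in(0,1)$ one has
\[
|\delta_\rho(x,y)|^2=\rho^2|x|^2+\rho^{2(\alpha+1)}|y|^2\le\rho^2(|x|^2+|y|^2)\le C\rho^2,
\]
since $B_1^\alpha$ is bounded, whence $\rho^{-\ell}|u(\delta_\rho(x,y))|\le C_n C^{n/2}\rho^{\,n-\ell}\to 0$ uniformly on $B_1^\alpha$ as soon as $n>\ell$. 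Comparing this with \eqref{eq:as-for} forces $(d_\alpha(x,y))^\ell\Psi(\,\cdot\,)\equiv 0$ on $B_1^\alpha$, i.e.\ $\Psi\equiv0$ on $\SA$, a contradiction; hence $u\equiv0$ in $\Omega$.

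Now let $z_0\notin\Sigma$. Near $z_0$ the weight $|x|^{2\alpha}$ is smooth and bounded below by a positive constant, so there $\Delta_\alpha$ is uniformly elliptic with smooth principal part and $u$ solves a uniformly elliptic equation with potential $V\in W^{1,\sigma}$, $\sigma>Q/2\ge N/2$; by the classical strong unique continuation property for such operators (see e.g.\ \cite{FFT1,FFT2} and the references therein) the vanishing assumption gives $u\equiv0$ on a neighbourhood of $z_0$. To globalise, put $Z:=\{z\in\Omega:\ u\equiv0\text{ on a neighbourhood of }z\}$, which is open and nonempty, and on which $u\equiv0$; by continuity $u\equiv0$ on $\overline Z\cap\Omega$. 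Suppose $Z\ne\Omega$; by connectedness there is $z_*\in\Omega\cap\partial Z$. If $z_*\notin\Sigma$, take a Euclidean ball $B$ centred at $z_*$ with $\overline B\subset\Omega\setminus\Sigma$: then $u$ vanishes on a nonempty open subset of $B$, and weak unique continuation for the uniformly elliptic equation on the connected set $B$ yields $u\equiv0$ on $B$, i.e.\ $z_*\in Z$, which is absurd. Hence $\partial Z\cap\Omega\subseteq\Sigma$, and consequently $\Omega\setminus\Sigma$ is the disjoint union of the open sets $Z\setminus\Sigma$ (nonempty, as it contains $z_0$) and $\Omega\setminus(\overline Z\cup\Sigma)$. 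When $h\ge2$ the set $\Omega\setminus\Sigma$ is connected (one removes a subspace of codimension $\ge2$), so the second set is empty; since $\Sigma$ has empty interior this forces $\Omega\subseteq\overline Z$, i.e.\ $u\equiv0$ in $\Omega$.

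When $h=1$ the manifold $\Sigma=\{x=0\}$ is a hypersurface, the two open sets above may both be nonempty, and a little extra work is required. A standard connectedness argument produces $z_*\in\Sigma\cap\Omega$ such that, near $z_*$, $u$ vanishes identically on one of the two local sides of $\Sigma$ while $u\not\equiv0$ in $B_r^\alpha(z_*)$ for every $r>0$. Translating $z_*$ to the origin and invoking Theorem \ref{t:main}, we obtain a nontrivial $\delta_\rho$-homogeneous function $v=(d_\alpha)^\ell\Psi$ with $\Psi\not\equiv0$ solving $\Delta_\alpha v=0$ in $\R^{1+k}$; passing to the limit in the convergence \eqref{eq:as-for} along the $\delta_\rho$-invariant side on which $u$ vanishes gives $v\equiv0$ on $\{x>0\}$, hence on $\{x\ge0\}$ by continuity. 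Since the coefficient of $\partial_x^2$ in $\Delta_\alpha$ equals $1$, the hyperplane $\{x=0\}$ is non-characteristic for $\Delta_\alpha$, whose coefficients are polynomials and hence real-analytic; Holmgren's uniqueness theorem then forces $v\equiv0$ on a two-sided neighbourhood of $\{x=0\}$, and since $v$ is real-analytic on the connected set $\{x<0\}$ (where $\Delta_\alpha$ is uniformly elliptic) we conclude $v\equiv0$ identically, contradicting $\Psi\not\equiv0$. Thus this case cannot occur either, and $u\equiv0$ in $\Omega$.

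I expect the main obstacle to be precisely the propagation of the zero set across the degenerate manifold $\Sigma$: off $\Sigma$ the operator is uniformly elliptic and only classical unique continuation is needed, but on $\Sigma$ this is not available and one has to combine the scaling structure of $\Delta_\alpha$ — via the blow-up of Theorem \ref{t:main} — with the non-characteristic nature of $\Sigma$. The case $h=1$, in which $\Sigma$ disconnects $\R^N$ so that $\Omega\setminus\Sigma$ need not be connected, is where this difficulty is most visible; for $h\ge2$ the codimension of $\Sigma$ is already large enough that, once weak unique continuation has been used away from $\Sigma$, the conclusion follows by soft connectedness arguments.
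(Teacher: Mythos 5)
Your proposal is correct, and for the cases $z_0\in\Sigma$ and $z_0\notin\Sigma$ with $h\ge 2$ it follows essentially the paper's own argument (contradiction via the uniform convergence in \eqref{eq:as-for} for a degenerate point, classical strong and weak unique continuation plus connectedness of $\Omega\setminus\Sigma$ otherwise). The genuine divergence is in the case $h=1$, $z_0\notin\Sigma$, where both arguments reduce to showing that the blow-up limit $v=(d_\alpha)^\ell\Psi$ at a point of $\Sigma$, which vanishes on the half-space $\{x>0\}$, must vanish identically. You cross the degenerate hyperplane by observing that $\{x=0\}$ is non-characteristic for $\Delta_\alpha$ (the coefficient of $\partial_x^2$ is $1$) and that the coefficients are polynomial, so Holmgren's theorem kills $v$ in a two-sided neighbourhood of $\Sigma$, after which analytic hypoellipticity of the uniformly elliptic restriction to $\{x<0\}$ propagates the vanishing. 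The paper instead stays entirely within its own framework: it picks $\bar z\in\Sigma\setminus\{0\}$, notes that all derivatives of the smooth homogeneous solution $W$ vanish at $\bar z$ because $W\equiv 0$ on $\{x>0\}$, so $W=O(|z-\bar z|^n)$ for every $n$ by Taylor's formula, and then applies the already-established $\Sigma$-case of the corollary to $W$ (which solves \eqref{eq:W-equation} with a potential that is smooth away from the origin). Your route imports two classical external tools (Holmgren and analyticity of solutions of elliptic equations with analytic coefficients) but is direct; the paper's route is self-referential and needs only the $\Sigma$-case of the corollary plus the smoothness of $W$, which is why it records Proposition \ref{p:reg-eig}. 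Both arguments are sound; in your write-up you should make explicit that $v$ solves $\Delta_\alpha v=0$ in a full neighbourhood of every point of $\{x=0\}$ and is smooth there (by hypoellipticity, as in Proposition \ref{p:reg}), since Holmgren requires the equation and the one-sided vanishing on an actual neighbourhood of the non-characteristic surface.
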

  In a similar way we can also recover the following stronger result.
\begin{corollary} \label{c:2}
Under the assumptions of Theorem \ref{t:main}, let $z_0\in\Omega$ and $u\in  W^{1,2}_{\alpha,\, {\rm loc}}(\Omega)$ be a weak solution to \eqref{eq:u} such that
\[
\int_{ B_r^\alpha(z_0)} |x|^{2\alpha} \, u^2 \, dxdy = O(r^n)\quad \text{as }r\to  0^+
\]
for any $n\in\N$. Then $u\equiv 0$ in $\Omega$.
\end{corollary}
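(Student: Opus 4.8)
The plan is to reduce Corollary \ref{c:2} to the conclusion of Theorem \ref{t:main}, exactly as one reduces the classical strong unique continuation statement to the Almgren blow-up result. The key observation is that the integral condition on $\int_{B_r^\alpha(z_0)} |x|^{2\alpha} u^2\,dxdy$ is precisely the statement that the function $u$ has infinite vanishing order at $z_0$ with respect to the weighted $L^2$-mass that appears in the denominator of the Almgren frequency function. First I would translate the problem to a neighborhood of the origin: if $z_0\in\Omega\cap\Sigma$ we may assume $z_0=0$ after a translation in the $y$-variable (which, as remarked in the introduction, leaves $\Delta_\alpha$ and hence the class of weak solutions invariant), while if $z_0\notin\Sigma$ the operator is uniformly elliptic near $z_0$ and the classical strong unique continuation principle applies directly. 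So assume $z_0=0$.

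Next I would invoke Theorem \ref{t:main} in contrapositive form: if $u\not\equiv 0$ in $\Omega$, then (by unique continuation in the connected open set $\Omega$, or simply because $u$ is a nontrivial solution) the asymptotic expansion \eqref{eq:as-for} holds with some $j\in\N$, some $\ell$ given by \eqref{eq:car-ell}, and some eigenfunction $\Psi\not\equiv 0$ of $-\mathcal L_\Theta$. The convergence in \eqref{eq:as-for} takes place in $W^{1,2}_\alpha(B_r^\alpha)$, in particular in the weighted $L^2$ sense. I would then compute the weighted $L^2$-mass of the blow-up limit: changing variables via the anisotropic dilation $\delta_\rho$ and using that the Jacobian of $\delta_\rho$ equals $\rho^Q$ together with $d_\alpha(\delta_\rho(x,y))=\rho\,d_\alpha(x,y)$ and the homogeneity $|\rho x|^{2\alpha}=\rho^{2\alpha}|x|^{2\alpha}$, one obtains
\[
\int_{B_r^\alpha} |x|^{2\alpha}\, u^2\,dxdy = \rho^{2\ell+2\alpha+Q}\int_{B_{r/\rho}^\alpha}|x|^{2\alpha}\,\bigl(\rho^{-\ell}u(\delta_\rho(x,y))\bigr)^2\,dxdy,
\]
and letting $\rho\to0^+$ with $r=\rho$ (or arguing directly from \eqref{eq:as-for}) shows that
\[
\lim_{\rho\to0^+}\rho^{-(2\ell+2\alpha+Q)}\int_{B_\rho^\alpha}|x|^{2\alpha}\,u^2\,dxdy = \int_{B_1^\alpha}|x|^{2\alpha}\,(d_\alpha(x,y))^{2\ell}\,\Bigl|\Psi\Bigl(\tfrac{x}{d_\alpha},\tfrac{y}{d_\alpha^{\alpha+1}}\Bigr)\Bigr|^2\,dxdy,
\]
and this constant is strictly positive because $\Psi\not\equiv0$ (the weight $|x|^{2\alpha}$ vanishes only on the lower-dimensional set $\Sigma$, so it cannot kill a nontrivial eigenfunction). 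Hence $\int_{B_\rho^\alpha}|x|^{2\alpha}u^2\,dxdy$ behaves exactly like $c\,\rho^{2\ell+2\alpha+Q}$ as $\rho\to0^+$ with $c>0$, which is incompatible with the hypothesis that this quantity is $O(\rho^n)$ for every $n\in\N$. This contradiction forces $u\equiv0$, and then $u\equiv0$ in all of $\Omega$ by the connectedness argument already used for Corollary \ref{c:1}.

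The only genuinely delicate point is the strict positivity of the limiting weighted $L^2$-mass of the profile, i.e. the fact that $\int_{B_1^\alpha}|x|^{2\alpha}(d_\alpha)^{2\ell}|\Psi(\cdot)|^2\,dxdy>0$; this requires knowing that the weight $|x|^{2\alpha}$ is the natural one in the $\delta_\lambda$-homogeneous integration by polar coordinates adapted to $d_\alpha$, so that the weighted surface measure on $\SA$ against which $\Psi$ is normalized is mutually absolutely continuous with Lebesgue-type measure away from the negligible set $\{x=0\}$. Given the functional setting of Section \ref{s:functional-setting} this is routine, but it is the step that must be handled with care; everything else is a change of variables plus the already-established Theorem \ref{t:main}.
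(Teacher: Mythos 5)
Your argument for the degenerate case $z_0\in\Sigma$ is correct and essentially the paper's: the paper also derives a polynomial lower bound for the weighted mass from the Almgren asymptotics, only it does so by integrating the one-sided estimate $H(r)\ge K(\eta)r^{2\ell+\eta}$ of Lemma \ref{l:H-2ell} over spheres (using $\psi_\alpha=|x|^{2\alpha}d_\alpha^{-2\alpha}$), whereas you pass to the limit in Theorem \ref{t:main} after the change of variables $\delta_\rho$; both yield $\int_{B_\rho^\alpha}|x|^{2\alpha}u^2\gtrsim \rho^{Q+2\alpha+2\ell+o(1)}$, and your positivity check for $\int_{B_1^\alpha}|x|^{2\alpha}d_\alpha^{2\ell}\Psi^2$ is sound since in polar coordinates this reduces to a positive constant times $\int_{\SA}\psi_\alpha\Psi^2\,d\Ha$.

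The one point where your write-up is too quick is the case $z_0\notin\Sigma$ with $h=1$: there the classical strong unique continuation principle does \emph{not} ``apply directly,'' because $\Omega\setminus\Sigma$ has two connected components and Sogge's theorem only forces $u\equiv0$ on the component containing $z_0$ (this is exactly the issue raised in Remark \ref{r:cor}). One must then run the additional argument from the proof of Corollary \ref{c:1}: apply Theorem \ref{t:main} at a point of $\Sigma$, deduce that the limiting eigenfunction $\Psi$ vanishes on a half-sphere, pass to the $\delta_\lambda$-homogeneous extension $W$, and use Taylor expansion at a point of $\Sigma\setminus\{0\}$ together with the already-proved degenerate case. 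You allude to ``the connectedness argument already used for Corollary \ref{c:1}'' only at the end and in the wrong case, so this step should be stated explicitly where it is needed.
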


We note also that the  degenerate set of the Grushin operator has zero measure in $\R^{N}$.  In the case $\Omega \setminus \Sigma$ is connected, this implies the validity of a ``weak'' unique continuation principle  by classical unique continuation for elliptic operators: if a weak solution to \eqref{eq:u} $u$ is identically zero on an open subset of $\Omega$ then $u\equiv 0$ on the whole $\Omega$.

We can strengthen it in Corollary \ref{c:positive-measure}, which is not a consequence of the unique continuation properties stated in Corollary \ref{c:1} and Corollary \ref{c:2} nor on other results contained in previous works like  Garofalo  \cite{Garofalo},  Garofalo and Shen \cite{GaSh94}, Banerjee, Garofalo and Manna \cite{BaGaMa20}, and  De Bie and Lian \cite{DeLi24}. It is a direct consequence of the asymptotic analysis of Theorem \ref{t:main}, as one can see in the proof of the corollary. See also Remark \ref{r:cor} for more details on the novelty of the corollary.

\begin{corollary} \label{c:positive-measure}
Under the assumptions of Theorem \ref{t:main}, let $u\in  W^{1,2}_{\alpha,\, {\rm loc}}(\Omega)$ be a weak solution to \eqref{eq:u} such that $u\equiv 0$ on a subset $\omega\subseteq\Omega$ with nonzero measure in $\R^{h+k}$. Then $u\equiv 0$ in $\Omega$.
\end{corollary}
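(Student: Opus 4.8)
The plan is to produce a single point of $\Omega$ at which $u$ vanishes to infinite order and then to quote Corollary \ref{c:1}; the role of the asymptotic analysis is to upgrade the measure-theoretic vanishing hypothesis to a pointwise one. First I would note that, since $|\omega|>0$ while $\Sigma$ is Lebesgue-null in $\R^{h+k}$, the set $\omega\setminus\Sigma$ still has positive measure, so the Lebesgue density theorem furnishes a point $z_0\in(\omega\setminus\Sigma)\cap\Omega$ at which $\omega$, hence the zero set $\{u=0\}$, has density one. I would then fix a Euclidean ball $B$ centred at $z_0$ with $\overline B\subset\Omega\setminus\Sigma$: on $B$ the weight $|x|^{2\alpha}$ is pinched between two positive constants, so there $-\Delta_\alpha u=Vu$ is a uniformly elliptic equation with smooth leading coefficients, and since $\sigma>Q/2\geq N/2$ the Sobolev embedding gives $V\in L^{N/2}(B)$.

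Next I would show that $u$ vanishes to infinite order at $z_0$, i.e.\ $u(z)=O(|z-z_0|^n)$ as $z\to z_0$ for every $n\in\N$. If $u\not\equiv0$ near $z_0$, the classical Almgren monotonicity and blow-up analysis for uniformly elliptic equations of this kind -- the non-degenerate counterpart of the argument behind Theorem \ref{t:main}, with the homogeneous dimension $Q$ replaced by $N$ and $d_\alpha$ by the Euclidean norm, see \cite{FFT1,FFT2} -- produces an integer $\gamma\geq0$ and a nontrivial spherical harmonic $Y$ of degree $\gamma$ with
\[
r^{-\gamma}\,u(z_0+r\,\cdot)\ \longrightarrow\ |z|^\gamma\,Y\bigl(z/|z|\bigr)\qquad\text{in }H^1(B_1)\ \text{and uniformly on }\overline{B_1}\ \text{as }r\to0^+.
\]
Since the rescaled functions vanish on $E_r:=r^{-1}\bigl(\{u=0\}-z_0\bigr)\cap B_1$ and $|E_r|\to|B_1|$ by the density-one property, the limit forces the nonzero homogeneous harmonic polynomial $|z|^\gamma Y(z/|z|)$ to vanish almost everywhere on $B_1$, which is impossible since its zero set is Lebesgue-null. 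Hence $u\equiv0$ in a neighbourhood of $z_0$, and in particular it vanishes there to infinite order. (Alternatively one may skip the blow-up and invoke the classical strong unique continuation principle for uniformly elliptic operators with potential in $L^{N/2}_{\mathrm{loc}}$, which already gives $u\equiv0$ on $B$.)

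Finally, Corollary \ref{c:1} applied at $z_0$ yields $u\equiv0$ in $\Omega$, as claimed. I expect the delicate point to be exactly the one just used: ``vanishing on a set of positive measure'' does not by itself meet the pointwise hypotheses of Corollary \ref{c:1} or \ref{c:2}, so one must first localise away from $\Sigma$ -- where the equation is genuinely elliptic and the quantitative (uniform) blow-up of the classical theory is available -- to manufacture a point of infinite vanishing order, and only afterwards use the Grushin unique continuation of Corollary \ref{c:1}, which rests on the asymptotic expansion of Theorem \ref{t:main}, to carry the vanishing across the degenerate manifold $\Sigma$. This last passage is the genuinely new ingredient of the statement and is needed, for instance, when $h=1$, where $\Omega\setminus\Sigma$ can be disconnected.
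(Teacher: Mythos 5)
Your proposal is correct and follows essentially the same route as the paper: away from $\Sigma$ the positive-measure vanishing is disposed of by classical elliptic theory (the paper quotes the de Figueiredo--Gossez/Sogge unique continuation from sets of positive measure directly, which is exactly the alternative you mention to the density-point blow-up), and the passage across $\Sigma$ when $h=1$ is precisely the mechanism already built into Corollary \ref{c:1}, which you invoke and which the paper re-runs. The only cosmetic difference is that you first manufacture a point of infinite vanishing order off $\Sigma$ and feed it to Corollary \ref{c:1}, whereas the paper applies the measure version of the classical principle on a connected component of $\Omega\setminus\Sigma$ and then repeats the Corollary \ref{c:1} argument to cross the degenerate set.
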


\begin{remark} \label{r:cor}  We observe that the proofs of Corollaries \ref{c:1} and \ref{c:2} when $z_0 \notin  \Sigma$ and the proof of Corollary \ref{c:positive-measure} can be obtained from the classical unique continuation principle for uniformly elliptic operators (see de Figueiredo \cite{deFiGo} and Sogge \cite{So}) only when $\Omega \setminus \Sigma$ is connected. Indeed, one has to apply the classical unique continuation principle outside an arbitrarily small neighborhood of the singular set $\Sigma$. But when $h=1$, the set $\Sigma$ is a hyperplane and $\Omega \setminus \Sigma$ is an open set made of two connected components. In the case of Corollaries \ref{c:1} and \ref{c:2}, the classical unique continuation principle for uniformly elliptic operators only implies vanishing of the solution $u$ on the connected component containing $z_0$. 

Concerning Corollary \ref{c:positive-measure}, if  for example the set $\omega$, defined in its statement, is entirely contained in one of the two connected components of $\Omega \setminus \Sigma$ or the intersection with the other connected component has zero measure, then the classical unique continuation principle only implies that the solution $u$ of \eqref{eq:u} vanishes on one of the two connected components but not necessarily on the other one. 

We can conclude that Corollaries \ref{c:1}, \ref{c:2} and \ref{c:positive-measure} are new results and, as the reader can check, their proofs cannot do without the asymptotic analysis contained in Theorem \ref{t:main}.
\end{remark}


All the aforementioned   corollaries complement the earlier unique continuation results on the Grushin operator in the literature. One over all is Garofalo \cite{Garofalo}, where a strong unique continuation principle is proved under the assumption that the potential $V$ satisfies suitable pointwise growth conditions.
In a subsequent work by Garofalo and Shen \cite{GaSh94},  the authors were able to remove the aforementioned bound on $V$ replacing it with an integrability condition on the potential $V$,
but only in the case  $\alpha=1$, $h\geq2$ and $k=1$. Indeed, by means of Carleman estimates they establish a strong unique continuation property under certain $L^p$ condition on $V$ if the solution vanishes of infinite order at a point of the singular set in the $L^2$ mean. This result has been extended and improved very recently by De Bie and Lian \cite{DeLi24} taking into account  the case  $\alpha \in\N$, $h\geq2$ and $k\geq 1$. Note that our results are not implied by those in
De Bie and Lian \cite{DeLi24}.

For example, looking at Table 1 in De Bie and Lian \cite{DeLi24}, we see that in the case $h \ge 4$ and $k\ge 2$, they prove the validity of the unique continuation principle for $V \in L^r_{{\rm loc}}(\Omega)$ with $r>(\alpha+1)(h+k-2)+1$.
In order to construct an example of a potential $V$ satisfying our assumption on $V$ contained in Theorem \ref{t:main} but not their one, we have to look for $h,k,\alpha$ and $\sigma > \frac Q2$,  $\sigma<N$ such that $\frac{NQ}{2N-Q} < \sigma^* = \frac{N\sigma}{N-\sigma} < (\alpha+1)(h+k-2) + 1$. The left term in these inequalities is strictly less than the right one if and only if
\begin{equation} \label{eq:ineq-h}
   \alpha h^2 - (\alpha^2 k+2\alpha+1) h - \alpha(\alpha+1) k^2 + (2\alpha^2-\alpha-1) k > 0 \, ,
\end{equation}
which is verified for $h$ large enough once we have fixed $\alpha$ and $k$. If we choose for simplicity the smallest values of $\alpha$ and $k$ admissible in this case, i.e. $\alpha=1$ and $k=2$, the inequality becomes $h^2-5h-8>0$ whose least positive integer solution is $h=7$. With this choice of $h,k,\alpha$ we choose $\sigma = 5.6 > 5.5 = \frac{Q}{2}$ and the following potential $V(x,y)=(|x|^2 + |y|^2)^{-0.3}$ that is singular at the origin and radial with respect to the Euclidean distance. Denoting by $B_1$ the unit ball with respect to the Euclidean distante and choosing it as a domain for \eqref{eq:u}, by direct computation, one can verify that $V \in W^{1,\sigma}(B_1)$ but $V \not \in L^r_{{\rm loc}}(B_1)$ when $r>(\alpha+1)(h+k-2)+1=15$. Similar examples can be found for any $\alpha\ge 1$ and $k\ge 2$ provided that $h$ satisfies \eqref{eq:ineq-h}.

Finally, we observe that our unique continuation principles are always new when $h=1$ since this case was never considered in \cite{DeLi24,GaSh94}.

The paper is organized as follows. Section \ref{s:functional-setting} is devoted to establish the correct functional setting and  some notation. Section \ref{s:spherical} describes the Grushin operator in spherical coordinates. Section \ref{s:monotonicity} deals with the monotonicity argument that is the fundamental tool for proving the main results of the paper. The same section contains the proofs of Theorem \ref{t:main}, and Corollaries \ref{c:1}-\ref{c:positive-measure}. Section \ref{s:regularity} is devoted to some fundamental properties of weighted Sobolev spaces and to the regularity of solutions of equations with the operator $ \Delta_\alpha $.  It is worth noting that the results from Section  \ref{s:regularity}, although used in the previous section, are presented later for the sake of clarity.     Finally, the paper is equipped with a detailed appendix containing some basic scaling properties and a rigorous proof of a Pohozaev type identity.

\bigskip

\section{ Functional setting} \label{s:functional-setting}

We recall some basic facts on the Grushin Laplacian, see for example Garofalo \cite{Garofalo}.

Let us define the variable coefficients matrix
\begin{equation*}
   A_\alpha(x,y)=
   \left(
   \begin{tabular}{c|c}
     $I_{\R^h}$ & $0$  \\
     \hline
      $0$       &  $|x|^{2\alpha} I_{\R^k}$
   \end{tabular}
   \right)
\end{equation*}
and the following \textit{Grushin gradient} $\nabla_\alpha$ defined by $\nabla_\alpha v=(\nabla_x v, |x|^\alpha \nabla_y v)$ where $\nabla_x$ and the $\nabla_y$ denotes the gradient vectors with respect to $x$ and $y$ respectively.

Letting $v$ be a sufficiently smooth function defined on a domain $\Omega$ of $\R^N$ with sufficiently smooth boundary, the definitions of $A_\alpha$ and $\nabla_\alpha$ formally yield
$$
    \Delta_\alpha  v={\rm div}(A_\alpha \nabla v)
$$
so that by integration by parts we have
\begin{align} \label{eq:int-parts}
  & \int_\Omega (- \Delta_\alpha  v) v \, dxdy=\int_\Omega -{\rm div}(A_\alpha \nabla v) v \, dxdy=\int_\Omega A_\alpha \nabla v \cdot \nabla v \, dxdy-\int_{\partial \Omega} v (A_\alpha \nabla v)\cdot \nu d\mathcal H^{N-1} \\[7pt]
 \notag  & \qquad =\int_\Omega  |\nabla_\alpha v|^2 \, dxdy-\int_{\partial \Omega} v (A_\alpha \nabla v)\cdot \nu \, d\mathcal H^{N-1} \, ,
\end{align}
where $\mathcal H^{N-1}$ denotes the $(N-1)$-dimensional Hausdorff measure in $\R^N$ and $\nu$ the outer unit normal vector to $\partial \Omega$. Identity \eqref{eq:int-parts} suggests that a suitable class of weighted Sobolev spaces should be the natural setting for defining the notion of weak solution for an equation with the Grushin operator. To define this kind of class of functional spaces, we first need to introduce the following notations. If $J=(h_1,\dots,j_h)\in \N^h$ and $x=(x_1,\dots,x_h)\in \R^h$, we define
\begin{equation*}
   x^J:=x_1^{j_1}\cdot \dots \cdot x_h^{j_h} \, , \qquad |J|:=j_1+\dots +j_h
\end{equation*}
and for any $\alpha\in \N$ and $J=(j_1,\dots,j_h)\in \N$ with $|J|=\alpha$, we define the following extension of the usual binomial coefficient
\begin{equation*}
    \binom{\alpha}{J}:=\frac{\alpha!}{(j_1)! \cdot \hphantom{} \dots \hphantom{} \cdot (j_h)!} \, .
\end{equation*}
With this notation one can easily verify that
\begin{equation} \label{eq:binom}
   |x|^{2\alpha} \, \Delta_y =\sum_{\ell=1}^{k} \quad  \sum_{J\in \N^h , \ |J|=\alpha}   \binom{\alpha}{J} (x^J)^2  \, \frac{\partial^2}{\partial y_\ell^2} \, .
\end{equation}
Looking at \eqref{eq:binom}, it seems natural to define the following vector fields
\begin{equation} \label{eq:vector-fields}
  X_j:=\frac{\partial}{\partial x_j} , \quad j\in \{1,\dots,h\} \, ,
      \qquad \qquad Y_{J,\ell}:=\sqrt{\binom{\alpha}{J}} \, x^J \frac{\partial}{\partial y_\ell} , \quad
      J\in \N^h , \, |J|=\alpha, \, \ell\in \{1,\dots,k\} \, .
\end{equation}
Indeed, in this way we may write
\begin{equation} \label{eq:hormander}
   \Delta_\alpha =\sum_{j=1}^{h} X_j^2 + \sum_{\ell=1}^{k} \quad  \sum_{J\in \N^h , \ |J|=\alpha} Y_{J,\ell}^2 \, .
\end{equation}
We are ready to introduce the following class of weighted Sobolev spaces denoted by $ W^{1,p}_\alpha (\Omega)$ which are defined by
\begin{align*} 
   W^{1,p}_\alpha (\Omega):=& \Big\{v\in L^p(\Omega): X_j v\in L^p(\Omega) \quad \forall \, j\in \{1,\dots,h\},  \\
  & \notag \qquad Y_{J,\ell} \, v \in L^p(\Omega) \quad \forall \, J \in \N^h, \, |J|=\alpha, \, \forall  \, \ell \in \{1,\dots,k\} \Big\} \, .
\end{align*}
In particular we have that $|\nabla_\alpha v|\in L^p(\Omega)$ if $v\in  W^{1,p}_\alpha (\Omega)$.

We clarify that $X_j v$ and $Y_{J,\ell} \, v$ have to be interpreted in the weak sense. In particular we say that $w=Y_{J,\ell} \,  v$ in the weak sense if $w\in L^1_{{\rm loc}}(\Omega)$ and
\begin{equation} \label{eq:weak-Yj}
     \int_\Omega w \varphi \, dxdy=-\int_\Omega v Y_{J,\ell} \, \varphi \, dxdy \qquad \text{for any } \varphi\in C^\infty_c(\Omega) \, .
\end{equation}
The definition of $X_j v$ in the weak sense is standard being it simply a partial derivative.

The space $ W^{1,p}_\alpha (\Omega)$ endowed with the norm
\begin{equation*}
    \|u\|_{ W^{1,p}_\alpha (\Omega)}:=\left(\int_\Omega |\nabla_\alpha u|^p dxdy+\int_\Omega |u|^p dxdy\right)^{\frac 1p}
\end{equation*}
is a Banach space. In particular, when $p=2$ we have that the space $ W^{1,2}_\alpha (\Omega)$ endowed with the scalar product
\begin{equation*}
   (u,v)_{ W^{1,2}_\alpha (\Omega)}:=\int_\Omega \nabla_\alpha u \cdot \nabla_\alpha v \, dxdy +\int_\Omega uv \, dxdy
   \qquad \text{for any } u,v\in  W^{1,2}_\alpha (\Omega)
\end{equation*}
is a Hilbert space.

We denote by $ W^{1,p}_{\alpha,0} (\Omega)$ the subspace of $ W^{1,p}_\alpha (\Omega)$ defined as the closure in $ W^{1,p}_\alpha (\Omega)$ of $C^\infty_c(\Omega)$ where $C^\infty_c(\Omega)$ denotes the space of $C^\infty$-functions with compact support in $\Omega$.
We also denote by  $ W^{1,p}_{\alpha,c} (\Omega)$ the subspace of $ W^{1,p}_\alpha (\Omega)$-functions with compact support in $\Omega$.

Since we will also need to apply regularity results for weak solutions of equations with the Grushin operator, we also introduce the following class of second order weighted Sobolev spaces $ W^{2,p}_\alpha (\Omega)$ defined by
\begin{align*} 
   W^{2,p}_\alpha (\Omega):=& \Big\{v\in L^p(\Omega): X_j v, \ Y_{J,\ell}\, v, \  X_{j_1} X_{j_2} v,  \
  Y_{J_1,\ell_1} Y_{J_2,\ell_2} \, v , \ X_j Y_{J,\ell} \, v , \ Y_{J,\ell} \, X_j v\in L^p(\Omega) \\
   & \quad \forall \, j, j_1, j_2 \in \{1,\dots,h\},
   \notag \quad \forall \, J, J_1, J_2 \in \N^h, \,  |J|=|J_1|=|J_2|=\alpha, \, \forall  \, \ell, \ell_1, \ell_2 \in \{1,\dots,k\} \Big\} \, .
\end{align*}
The space $ W^{2,p}_\alpha (\Omega)$ endowed with the norm
\begin{align*}
    \|u\|_{ W^{2,p}_\alpha (\Omega)}:=& \left\{\int_\Omega |u|^p \, dxdy + \int_\Omega |\nabla_\alpha u|^p \, dxdy+\sum_{j_1, j_2=1}^{h}
       \int_\Omega |X_{j_1} X_{j_2} u|^p dxdy  \right. \\
     &
     +\sum_{\ell_1, \ell_2=1}^{k} \quad  \sum_{J_1, J_2 \in \N^h , \ |J_1|=|J_2|=\alpha} \int_\Omega
      |Y_{J_1,\ell_1} Y_{J_2,\ell_2} \, u|^p  dxdy
    \\
     &  \left. +\sum_{j=1}^{h}\sum_{\ell=1}^{k} \quad  \sum_{J\in \N^h , \ |J|=\alpha} \int_\Omega
      \Big[|X_j Y_{J,\ell} \, u|^p +|Y_{J,\ell} \, X_j u|^p \Big]dxdy \right\}^{\frac 1p} \quad
      \text{for any } u \in  W^{2,p}_\alpha (\Omega)
\end{align*}
is a Banach space. Similarly, we can define the spaces of higher order $ W^{m,p}_\alpha (\Omega)$ for any integer $m\ge 1$.

The basic properties of $ W^{1,2}_\alpha (\Omega)$ and $ W^{1,2}_{\alpha,0} (\Omega)$ are collected in Section \ref{s:regularity}. For other properties of the Grushin Laplacian $ \Delta_\alpha $ and for a complete collection of the tools employed in the proofs of the main results see again Section \ref{s:regularity} and the Appendix.

We now proceed by introducing the main assumptions and by providing the rigorous notion of weak solution for equation \eqref{eq:u}. Let us  recall that the homogeneous dimension is $Q=h+(\alpha+1)k$. For more details on the role of the number $Q$ see the Appendix.
Let $\Omega$ be a domain in $\R^N$ and let us assume that
\begin{equation} \label{eq:V}
    V \in W^{1,\sigma}_{{\rm loc}}(\Omega) \, ,   \qquad    \sigma>\frac{Q}{2} \, .
\end{equation}
We say that $u\in  W^{1,2}_{\alpha, \rm loc} (\Omega)$ is a weak solution of \eqref{eq:u} if
\begin{equation}\label{eq:var}
  \int_\Omega \nabla_\alpha u \cdot \nabla_\alpha v \, dxdy=\int_\Omega V(x,y) uv \, dxdy
  \qquad \text{for any } v \in  W^{1,2}_{\alpha,c} (\Omega) \, .
\end{equation}
We conclude this section with a list of notations which will be used throughout the paper. We denote by  $X_G$ the vector field $X_G(x,y):=(x,(\alpha+1)y)$ defined for any $(x,y)\in \R^N=\R^h\times \R^k$.
This vector field will be also identified with the corresponding differential operator defined by
\begin{equation} \label{eq:def-XG}
   X_G:= \sum_{i=1}^{h} x_i \frac{\partial}{\partial x_i}+(\alpha+1) \sum_{j=1}^{k} y_j \frac{\partial}{\partial y_j} \, .
\end{equation}
This means that if $v=v(x,y)$ is a function then $X_G \, v:=X_G \cdot \nabla v$.
The differential operator $X_G$ leaves unchanged the distance function:
\begin{equation*}
   X_G d_\alpha = d_\alpha \qquad \text{in } \R^N \setminus \{0\} \, .
\end{equation*}
We introduce the following $(N-1)$-dimensional measure denoted by $\mathcal H_\alpha^{N-1}$ and defined by
\begin{equation*}
  \mathcal H_\alpha^{N-1}(M):=\int_M \frac{1}{|\nabla d_\alpha|} \, d\mathcal H^{N-1}
\end{equation*}
for any measurable set $M\subseteq \R^N$ with finite $(N-1)$-dimensional Hausdorff measure.

  Finally,  according to Garofalo \cite{Garofalo}, we also introduce the  angular  function
\begin{equation} \label{eq:psi-alpha}
    \psi_\alpha(x,y):=\frac{|x|^{2\alpha}}{(d_\alpha(x,y))^{2\alpha}} \qquad \text{for any } (x,y)\in \R^N \setminus \{0\} \, .
\end{equation}

\section{The Grushin operator in spherical coordinates} \label{s:spherical}

 In this section we introduce suitable spherical coordinates adapted to the Grushin structure and  a decomposition of $ \Delta_\alpha $ in such a system. Note that these spherical coordinates have been already introduced in full generality in D'Ambrosio and Lucente \cite{DaLu03} (see also Garofalo and Shen \cite{GaSh94} and De Bie and Lian \cite{DeLi24}). Then we study the spectral structure of the spherical operator.

With the only purpose of using a unified notation, we only treat  in detail the cases $h,k\ge 2$ and we only give a brief explanation when at least one between $h$ and $k$ equals $1$.

\medskip

{\bf The case $h,k \ge 2$.} Denoting by $r = |x|$, $t = |y|$ the radial variables in $\mathbb{R}^h$ and $\mathbb{R}^k$ and by $\theta \in \mathbb{R}^{h-1}, \eta\in \mathbb{R}^{k-1}$ the corresponding angular variables, and writing the operators $\Delta_x$ and $\Delta_y$  in spherical coordinates in  $\mathbb{R}^h$ and $\mathbb{R}^k$ respectively,
 given a sufficiently smooth function $u$  we obtain
 \begin{equation}\label{GP}
     \Delta_\alpha  u =  \left(\frac{\partial^2 u}{\partial r^2} +\frac{h-1}{r} \, \frac{\partial u}{\partial r} + \frac{1}{r^2} \, \Delta_\theta u\right)
+r^{2\alpha}\left(  \frac{\partial^2 u}{\partial t^2} +\frac{k-1}{t} \, \frac{\partial u}{\partial t} + \frac{1}{t^2} \, \Delta_\eta u\right)
 \end{equation}
where $\Delta_\theta$ and $\Delta_\eta$ denote the Laplace-Beltrami operators on the $(h-1)$-dimensional and $(k-1)$-dimensional spheres.

Next, we  plan to use as a global radial variable the natural norm associated with the Grushin operator, that is
 $ \rho = d_\alpha(x,y)$ where $d_\alpha$ was defined in \eqref{eq:distance}.
 To do that, the right change of variables is:
 \begin{equation}\label{chvar}
 (r,t) = \Phi(\rho,\varphi) := \left(\rho (\sin \varphi)^\frac{1}{\alpha+1}, \frac{\rho^{\alpha+1}}{\alpha+1}\cos \varphi\right)
 \end{equation}
 with $\rho \in (0,+\infty)$, $\varphi \in \left(0,\frac \pi 2\right)$. The Jacobian matrix of the above transformation is
 \[
 J_\Phi(\rho, \varphi) = \begin{pmatrix} \frac{\partial r }{\partial \rho} & \frac{\partial r }{\partial \varphi}\\
 \frac{\partial t }{\partial \rho} & \frac{\partial t }{\partial \varphi}
  \end{pmatrix} =
 \begin{pmatrix}(\sin \varphi)^\frac{1}{\alpha+1} & \frac{\rho}{\alpha+1}(\sin \varphi)^{-\frac{\alpha}{\alpha+1}}\cos \varphi\\
\rho^{\alpha}\cos \varphi& -\frac{\rho^{\alpha+1}}{\alpha+1}\sin \varphi
 \end{pmatrix}
 \]
 and its determinant is:
 \begin{align} \label{eq:det-rho-phi}
 \det J_\Phi(\rho,\varphi) &=  -\frac{\rho^{\alpha+1}}{\alpha+1} \, (\sin \varphi)^{-\frac{\alpha}{\alpha+1}} \, .
 \end{align}
Clearly
\begin{align*}
J_{\Phi^{-1}}(r,t) = \frac{1}{\det  J_\Phi(\rho,\varphi) }\begin{pmatrix}
\frac{\partial t}{\partial \varphi} & -\frac{\partial r}{\partial \varphi} \\
-\frac{\partial t}{\partial \rho} & \frac{\partial r}{\partial \rho}
\end{pmatrix} = \begin{pmatrix} \left(\sin \varphi\right)^\frac{2\alpha+1}{\alpha+1}&\rho^{-\alpha}\cos \varphi \\
\frac{\alpha+1}{\rho}\, (\sin \varphi)^{\frac{\alpha}{\alpha+1}}\cos \varphi& -\frac{\alpha+1}{\rho^{\alpha+1}}\sin \varphi
 \end{pmatrix} \, ,
\end{align*}
 so that
 \begin{align} \label{eq:p-r-t}
 \frac{\partial u }{\partial r} = \frac{\partial u }{\partial \rho} \left(\sin \varphi\right)^\frac{2\alpha+1}{\alpha+1} + \frac{\partial u }{\partial \varphi }\frac{\alpha+1}{\rho}(\sin \varphi)^{\frac{\alpha}{\alpha+1}}\cos \varphi \, ,
 \qquad
  \frac{\partial u }{\partial t } = \frac{\partial u }{\partial \rho} \, \rho^{-\alpha}\cos \varphi-\frac{\partial u }{\partial \varphi }\frac{\alpha+1}{\rho^{\alpha+1}}\sin \varphi .
 \end{align}
 A possible way to proceed, with the purpose of writing the Grushin operator  in polar coordinates, is to use \eqref{GP} and to compute all the second order derivatives of $u$ with respect to the variables $r$ and $t$ and express them in terms of second order derivatives in the variables $\rho$ and $\varphi$, as we already did for first order derivatives.
 Here, we proceed differently with the purpose of simplifying computations and  recovering a proper variational formulation for the spherical operator $\mathcal L_\Theta$ appearing in the representation \eqref{GrPo} below.

 Given two functions $u$ and $v$ sufficiently smooth, thanks to \eqref{eq:p-r-t} we may write in polar coordinates the following product:
 \begin{align} \label{eq:gradu-gradv}
    & \nabla_\alpha u \cdot \nabla_\alpha v=\frac{\partial u}{\partial r} \frac{\partial v}{\partial r}+\frac{1}{r^2}
    \, \nabla_\theta u \cdot \nabla_\theta v+r^{2\alpha} \frac{\partial u}{\partial t} \frac{\partial v}{\partial t}+\frac{r^{2\alpha}}{t^2}
    \, \nabla_\eta u \cdot \nabla_\eta v \\[10pt]
 \notag   & \qquad = \psi_\alpha  \frac{\partial u}{\partial \rho} \frac{\partial v}{\partial \rho}
  +\frac{(\alpha+1)^2}{\rho^2}\, \psi_\alpha \left[\frac{\partial u}{\partial \varphi} \frac{\partial v}{\partial \varphi}
   +\frac{1}{(\alpha+1)^2 (\sin \varphi)^2} \, \nabla_\theta u \cdot \nabla_\theta v+ \frac{1}{(\cos \varphi)^2} \, \nabla_\eta u \cdot \nabla_\eta v  \right] \, .
 \end{align}

 Then, assuming that $u$ is a sufficiently smooth function defined in a neighborhood of the origin $B_R^\alpha$ and $v \in C^\infty_c(B_R^\alpha)$, recalling that $\SA$ denotes the unit Grushin sphere $\partial B_1^\alpha$, we may write
 \begin{align*}
    & \int_{B_R^\alpha} (- \Delta_\alpha  u) v \, dxdy = \int_{B_R^\alpha} \nabla_\alpha u \cdot \nabla_\alpha v \, dxdy
       = \int_{\SA} \left( \int_0^R \psi_\alpha  \frac{\partial u}{\partial \rho} \frac{\partial v}{\partial \rho} \,
          \rho^{Q-1} \, d\rho \right) d\Ha  \\[10pt]
    & \quad + \int_{0}^{R} \frac{(\alpha+1)^2}{\rho^2} \, \rho^{Q-1} \left\{ \int_{\left(0,\frac \pi 2\right) \times \mathbb{S}^{h-1}\times \mathbb{S}^{k-1}}   \psi_\alpha \left[\frac{\partial u}{\partial \varphi} \frac{\partial v}{\partial \varphi}
   +\frac{1}{(\alpha+1)^2 (\sin \varphi)^2} \, \nabla_\theta u \cdot \nabla_\theta v   \right.  \right. \\[10pt]
  & \quad  \left. \left. + \frac{1}{(\cos \varphi)^2} \, \nabla_\eta u \cdot \nabla_\eta v  \right] \frac{1}{(\alpha+1)^k} \, (\sin \varphi)^{\frac{h-1-\alpha}{\alpha+1}} (\cos \varphi)^{k-1}
     \, d\varphi \, d\mathcal H^{h-1}(\theta) \, d\mathcal H^{k-1}(\eta)  \right\} \, d\rho
 \end{align*}
 where we used the fact that
\begin{equation} \label{eq:volume-integral}
   dxdy=\frac{\rho^{Q-1}}{(\alpha+1)^k} \, (\sin \varphi)^{\frac{h-1-\alpha}{\alpha+1}} (\cos \varphi)^{k-1}
    d\rho \, d\varphi \, d\mathcal H^{h-1}(\theta) \, d\mathcal H^{k-1}(\eta)=\rho^{Q-1}  d\rho \, d\Ha
\end{equation}
and where \eqref{eq:volume-integral} follows from \eqref{chvar} and \eqref{eq:det-rho-phi} .

Integrating by parts we obtain
\begin{align*}
    & \int_{B_R^\alpha} (- \Delta_\alpha  u) v \, dxdy = \int_{B_R^\alpha} - \psi_\alpha\left(\frac{\partial^2 u}{\partial \rho^2}+\frac{Q-1}{\rho}\frac{\partial u }{\partial \rho}+\frac{(\alpha+1)^2}{\rho^2}\mathcal{L}_\Theta u \right)
    v \, dxdy
\end{align*}
so that we may conclude that
\begin{equation}\label{GrPo}
 \Delta_\alpha  = \psi_\alpha\left(\frac{\partial^2}{\partial \rho^2}+\frac{Q-1}{\rho}\frac{\partial  }{\partial \rho}+\frac{(\alpha+1)^2}{\rho^2}\mathcal{L}_\Theta \right) \, ,
\end{equation}
where $\mathcal{L}_\Theta$ is the surface operator on the Grushin unit sphere $\SA$ defined by
\begin{equation}\label{GrPo2}
 \mathcal{L}_\Theta := \frac{\partial^2 }{\partial\varphi^2}+ \left(\frac{h-1+\alpha}{\alpha+1}\frac{\cos\varphi}{\sin\varphi}  -(k-1)\frac{\sin\varphi}{\cos\varphi}\right)\frac{\partial  }{\partial \varphi}+\frac{1}{((\alpha+1)\sin\varphi)^2}\Delta_\theta+
 \frac{1}{(\cos\varphi)^2}\Delta_\eta \, .
\end{equation}
Here $\Theta$ denotes an arbitrary element of $\SA$ whose angular coordinates are given by $\theta_1,\dots, \theta_{h-1}$, $\eta_1,\dots, \eta_{k-1},\varphi$.

Note that  Garofalo and Shen in \cite[p.135]{GaSh94} and De Bie and Lian \cite{DeLi24} already derived formula \eqref{GrPo}  respectively in the case $k=1, \alpha=1$ and in the general  case $k,\alpha \in \mathbb{N}\setminus\{0\}$.


From the above arguments we also deduce that if $u,v \in C^\infty(\overline{B_R^\alpha})$ with $v$ compactly supported in $B_R^\alpha$, then
\begin{align} \label{eq:f-b-sferica}
   & -\int_{\SA} \psi_\alpha(\Theta) \mathcal L_\Theta u(\rho,\Theta) v(\rho,\Theta) \, d\Ha \\[7pt]
 \notag  & =\int_{\SA} \psi_\alpha(\Theta) \left[\frac{\partial u}{\partial \varphi}
   \frac{\partial v}{\partial \varphi}+\frac{1}{(\alpha+1)^2 (\sin \varphi)^2} \, \nabla_\theta u \cdot \nabla_\theta v+\frac{1}{(\cos \varphi)^2} \, \nabla_\eta u \cdot \nabla_\eta v\right] d\Ha =: b_\alpha(u,v)
\end{align}
for any $\rho \in (0,R)$, which gives the variational formulation for the operator $\mathcal L_\Theta$. Such a variational formulation now needs a suitable functional space on which we can define the bilinear form appearing in the second line of \eqref{eq:f-b-sferica}.

Following the scheme usually used for defining classical Sobolev spaces on the boundary of a domain, see for example Lions and Magenes \cite[Chapter 1, Section 7.3]{LionsMagenes}, we can define the weighted Sobolev space $ W^{1,2}_\alpha (\SA)$. We explain in more details how this procedure may be implemented.

We can construct a partition of unity  $\phi_1,\dots,\phi_n, \phi_{n+1},\dots,\phi_{n+m}$ associated with an open covering $O_1, \dots , O_n, O_{n+1}, \dots, O_{n+m}$ of $\SA$  in $\mathbb{R}^N$ such that
$$
    D:= \SA \cap \left(\Sigma \cup \left\{(x,y) \in \R^{h+k}: 0<\varphi< \frac \pi 4 \right\}\right)
        \subseteq \bigcup_{j=1}^n O_j
$$
and such that $\sum_{j=1}^{n}  \phi_j$ vanishes outside $\SA \setminus D$.

Recalling Lions and Magenes \cite{LionsMagenes}, for any $j \in \{1,\dots , n+m\}$, we now define a family of sufficiently smooth diffeomorphisms $\Phi_j : O_j \to W_j \times (-1,1)$ where $W_j$ is an open set in $\R^{N-1}$, such that
$$
    \Phi_j(B_1^\alpha \cap O_j) = W_j \times (-1,0) \, , \quad
    \Phi_j(\SA \cap O_j) = W_j \times \{0\} \, , \quad
    \Phi_j(O_j \setminus \overline{B_1^\alpha}) = W_j \times (0,1) \, .
$$

In the case $j \in \{1,\dots,n\}$, we have to specialize the choice of the maps $\Phi_j$ being the maps involving the singular set $\SA \cap \Sigma$.

We describe in details one of those maps $\Phi_j$, $j \in \{1,\dots,n\}$, corresponding to case $O_j \cap \SA \subset \SAP$ where $\SAP$ is the subset of $\SA$ of points satisfying $y_k>0$. Proceeding similarly by replacing $y_k$ with the other variables $y_1,\dots,y_{k-1}$ we can cover the entire set $D$.

Let $\eta_{k-1}\in \left(0,\frac \pi 2\right)$ if $k\ge 3$, $\eta_{k-1} \in \left(-\frac \pi 2 , \frac \pi 2\right)$ if $k=2$, the polar coordinate such that if $(x,y)\in \SAP$ and $y'=(y_1,\dots,y_{k-1})$ then $y_k=|y| \cos(\eta_{k-1})$ and $|y'|=|y| \, \sin (\eta_{k-1})$  if $k\ge 3$, and $y_2 = |y| \cos(\eta_1)$ and $y_1 = |y| \sin(\eta_1)$ if $k=2$.

Let us denote by $P_k : \R^{N} \to \R^{N-1}$ the projection $(x,y',y_k) \mapsto (x,y')$ and let $\widetilde D := P_k(D) \cap \widetilde B_1^\alpha$ where $\widetilde B_1^\alpha$ denotes the Grushin unit ball in $\R^{N-1}=\R^h \times \R^{k-1}$.

We can choose $W_j := \widetilde D$ and define over $W_j \times (-1,1)$ the map
\begin{equation} \label{eq:Psi-j}
    \Psi_j(x,y',y_k) := \left(x,y', y_k  + \Gamma(x,y') \right)
    := \left(x,y', y_k + \sqrt{\tfrac{1-|x|^{2(\alpha+1)}-(\alpha+1)^2 \, |y'|^2}{(\alpha+1)^2}} \right)
\end{equation}
and put
\begin{equation} \label{eq:Oj}
O_j := \Psi_j (W_j \times (-1,1)) \quad \text{and} \quad
\Phi_j =\Psi_j^{-1} : O_j \to W_j \times (-1,1) \, .
\end{equation}


Let $w$ be a function defined over $\SA$. We will say that $w \in  W^{1,2}_\alpha (\SA)$ if
the function
$$
  ( \phi_j \, w)(\Psi_j(x,y',0)) \in  W^{1,2}_\alpha (\widetilde D)  \qquad \text{for any $j \in \{1,\dots,n+m\}$} \, .
$$

\bigskip

{\bf The case $h \ge 2$ and $k=1$.} Identity \eqref{GP} is replaced by
\begin{equation*} 
     \Delta_\alpha  u =  \left(\frac{\partial^2 u}{\partial r^2} +\frac{h-1}{r} \, \frac{\partial u}{\partial r} + \frac{1}{r^2} \, \Delta_\theta u\right)
+r^{2\alpha} \, \frac{\partial^2 u}{\partial y^2}
 \end{equation*}
 and \eqref{chvar} remains the same with the only difference that this time $\varphi \in (0,\pi)$ and $t = y$.

 Proceeding as in the case $k\ge 2$ we see that \eqref{GrPo} still holds true with $\mathcal L_{\Theta}$ defined by

 \begin{equation*} 
 \mathcal{L}_\Theta := \frac{\partial^2 }{\partial\varphi^2}
 +\frac{h-1+\alpha}{\alpha+1}\frac{\cos\varphi}{\sin\varphi} \frac{\partial  }{\partial \varphi}+\frac{1}{((\alpha+1)\sin\varphi)^2}\Delta_\theta \, .
\end{equation*}

The construction of $O_j$, $\Psi_j$ and $\Phi_j$ may be done as in the case $k\ge 2$: here $P:\R^N \to \R^{N-1}$ is the projection $(x,y) \mapsto x$, $\widetilde D:=P(D) \cap \widetilde B_1$ where $\widetilde B_1$ is the Euclidean unit ball in $\R^{N-1}$. Then again $W_j := \widetilde D$ and
$$
   \Psi_j(x,y) := \left(x, y + \sqrt{\tfrac{1-|x|^{2(\alpha+1)}}{(\alpha+1)^2}}\right) \, .
$$
The rest, included the definition of $ W^{1,2}_\alpha (\SA)$, is the same as for $k\ge 2$.  Namely, we will say that $w \in  W^{1,2}_\alpha (\SA)$ if
the function
$$
  ( \phi_j \, w)(\Psi_j(x,0)) \in  W^{1,2}(\widetilde D)  \qquad \text{for any $j \in \{1,\dots,n+m\}$} \, .
$$

\bigskip

{\bf The case $h = 1$ and $k\ge 2$.} Identity \eqref{GP} is replaced by
\begin{equation*} 
     \Delta_\alpha  u =  \frac{\partial^2 u}{\partial x^2}
+r^{2\alpha}\left(  \frac{\partial^2 u}{\partial t^2} +\frac{k-1}{t} \, \frac{\partial u}{\partial t} + \frac{1}{t^2} \, \Delta_\eta u\right).
 \end{equation*}
 and \eqref{chvar} is replaced by
 \begin{equation}\label{chvar-2}
 (x,t) = \Phi(\rho,\varphi) := \left(\rho (\sin \varphi) |\sin \varphi|^{-\frac{\alpha}{\alpha+1}}, \frac{\rho^{\alpha+1}}{\alpha+1}\cos \varphi\right)
 \end{equation}
 with $\rho \in (0,+\infty)$, $\varphi \in \left(-\frac \pi 2,\frac \pi 2\right)$.

 With this choice of $\rho$ and $\varphi$, we have a new version of \eqref{eq:p-r-t}:
 \begin{align} \label{eq:p-r-t-bis}
  & \frac{\partial u }{\partial x} = \frac{\partial u }{\partial \rho} \left|\sin \varphi\right|^\frac{\alpha}{\alpha+1} \sin \varphi + \frac{\partial u }{\partial \varphi }\frac{\alpha+1}{\rho}|\sin \varphi|^{\frac{\alpha}{\alpha+1}}\cos \varphi \, ,
 \\[10pt]
 \notag &
  \frac{\partial u }{\partial t } = \frac{\partial u }{\partial \rho} \, \rho^{-\alpha}\cos \varphi-\frac{\partial u }{\partial \varphi }\frac{\alpha+1}{\rho^{\alpha+1}}\sin \varphi .
 \end{align}
so that the identity corresponding to \eqref{eq:gradu-gradv} becomes
 \begin{align} \label{eq:gradu-gradv-bis}
    & \nabla_\alpha u \cdot \nabla_\alpha v=\frac{\partial u}{\partial x} \frac{\partial v}{\partial x}+|x|^{2\alpha} \frac{\partial u}{\partial t} \frac{\partial v}{\partial t}+\frac{|x|^{2\alpha}}{t^2}
    \, \nabla_\eta u \cdot \nabla_\eta v \\[10pt]
 \notag   & \qquad = \psi_\alpha  \frac{\partial u}{\partial \rho} \frac{\partial v}{\partial \rho}
  +\frac{(\alpha+1)^2}{\rho^2}\, \psi_\alpha \left[\frac{\partial u}{\partial \varphi} \frac{\partial v}{\partial \varphi}
   + \frac{1}{(\cos \varphi)^2} \, \nabla_\eta u \cdot \nabla_\eta v  \right] \, .
 \end{align}
 Then by \eqref{chvar-2}, \eqref{eq:p-r-t-bis} and \eqref{eq:gradu-gradv-bis}, we deduce that \eqref{GrPo} still  holds true with
 \begin{equation*} 
 \mathcal{L}_\Theta := \frac{\partial^2 }{\partial\varphi^2}
 +\left(\frac{\alpha}{\alpha+1}\frac{\cos\varphi}{\sin\varphi} -(k-1) \frac{\sin \varphi}{\cos \varphi}\right)\frac{\partial  }{\partial \varphi}+\frac{1}{(\cos\varphi)^2} \, \Delta_\eta \, .
\end{equation*}
The construction of $O_j$, $\Psi_j$ and $\Phi_j$ may be done as in the case $h \ge 2$ with $k\ge 2$ or $k=1$: here $P_k:\R^N \to \R^{N-1}$ is the projection $(x,y',y_k) \mapsto (x,y')$, $D$ is defined similarly to the previous cases with the only difference that condition $0<\varphi<\frac \pi 4$ is replaced here by $-\frac \pi 4<\varphi< \frac \pi 4$, $\widetilde D:=P(D) \cap \widetilde B_1$ where $\widetilde B_1$ is the Grushin unit ball in  $\R^{h} \times \R^{k-1}$. Then again $W_j := \widetilde D$ and
$\Psi_j$ is defined as in \eqref{eq:Psi-j}.
The rest, included the definition of $ W^{1,2}_\alpha (\SA)$, is the same as for $h,k\ge 2$.

\bigskip

{\bf The case $h = 1$ and $k = 1$.} In this case, identity \eqref{GP} becomes
\begin{equation*} 
    \Delta_\alpha  u = \frac{\partial^2 u}{\partial x^2} + |x|^{2\alpha} \, \frac{\partial^2 u}{\partial y^2} \, .
\end{equation*}
We define $\rho$ and $\varphi$ in such a way that
\begin{equation}\label{chvar-4}
   (x,y) = \Phi(\rho,\varphi):= \left(\rho (\sin \varphi) |\sin \varphi|^{-\frac{\alpha}{\alpha+1}}, \frac{\rho^{\alpha+1}}{\alpha+1}\cos \varphi\right)
\end{equation}
with $\rho \in (0,+\infty)$ and $\varphi \in (-\pi,\pi)$.

  With this choice of $\rho$ and $\varphi$, we have also this time the counterpart of \eqref{eq:p-r-t}:
 \begin{align} \label{eq:p-r-t-ter}
  & \frac{\partial u }{\partial x} = \frac{\partial u }{\partial \rho} \left|\sin \varphi\right|^\frac{\alpha}{\alpha+1} \sin \varphi + \frac{\partial u }{\partial \varphi }\frac{\alpha+1}{\rho}|\sin \varphi|^{\frac{\alpha}{\alpha+1}}\cos \varphi \, ,
 \\[10pt]
 \notag &
  \frac{\partial u }{\partial y } = \frac{\partial u }{\partial \rho} \, \rho^{-\alpha}\cos \varphi-\frac{\partial u }{\partial \varphi }\frac{\alpha+1}{\rho^{\alpha+1}}\sin \varphi .
 \end{align}
 Now the identity corresponding to \eqref{eq:gradu-gradv} becomes
 \begin{align} \label{eq:gradu-gradv-ter}
    & \nabla_\alpha u \cdot \nabla_\alpha v=\frac{\partial u}{\partial x} \frac{\partial v}{\partial x}+|x|^{2\alpha} \frac{\partial u}{\partial y} \frac{\partial v}{\partial y} = \psi_\alpha  \frac{\partial u}{\partial \rho} \frac{\partial v}{\partial \rho}
  +\frac{(\alpha+1)^2}{\rho^2}\, \psi_\alpha \frac{\partial u}{\partial \varphi} \frac{\partial v}{\partial \varphi}
 \, .
 \end{align}
 Then by \eqref{chvar-4}, \eqref{eq:p-r-t-ter}, \eqref{eq:gradu-gradv-ter}, we deduce that \eqref{GrPo} still holds true with
 \begin{equation*} 
    \mathcal L_\Theta :=  \frac{\partial^2}{\partial \varphi^2} +\frac{\alpha}{\alpha+1} \frac{\cos \varphi}{\sin \varphi} \, \frac{\partial}{\partial \varphi} \, .
 \end{equation*}
The construction of $O_j$, $\Psi_j$ and $\Phi_j$ may be done as in previous cases: here $P:\R^N \to \R^{N-1}$ is the projection $(x,y) \mapsto x$, $D$ is characterized again by condition $-\frac \pi 4 < \varphi < \frac \pi 4$, $\widetilde D:=P(D) \cap (-1,1)$. Then again $W_j := \widetilde D$ and
$\Psi_j$ is defined as in the case $h\ge 2$ and $k=1$.
The rest, included the definition of $ W^{1,2}_\alpha (\SA)$, is the same as for $h,k\ge 2$.
 Namely, we will say that $w \in  W^{1,2}_\alpha (\SA)$ if
the function
$$
  ( \phi_j \, w)(\Psi_j(x,0)) \in  W^{1,2}(\widetilde D)  \qquad \text{for any $j \in \{1,\dots,n+m\}$} \, .
$$

\medskip

 We are now ready to prove that the weighted Sobolev spaces on $\SA$ we have introduced are Hilbert spaces.
\begin{proposition} \label{p:equivalence} Let $h,k\ge 1$ and let $ W^{1,2}_\alpha (\SA)$ be the weighted Sobolev space defined on the Grushin sphere. Then $ W^{1,2}_\alpha (\SA)$ is a Hilbert space with the scalar product
\begin{equation} \label{eq:norm-b-alpha}
   (w_1,w_2)_{ W^{1,2}_\alpha (\SA)} = b_\alpha(w_1,w_2)+\int_{\SA}  \, w_1 w_2 \, d\Ha
   \, , \qquad \text{for any } w_1,w_2 \in  W^{1,2}_\alpha (\SA)
\end{equation}
where $b_\alpha(\cdot,\cdot)$ is the bilinear form defined by the second line in \eqref{eq:f-b-sferica}.
\end{proposition}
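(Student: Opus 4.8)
The plan is to reduce the statement to the completeness of the model (weighted) Sobolev spaces on the charts, which are Hilbert spaces by the discussion in Section~\ref{s:functional-setting}, transported to $\SA$ through the partition of unity $\{\phi_j\}$ and the local parametrizations $\Psi_j(\cdot,0)$. Concretely, I would introduce the auxiliary norm
\[
\|w\|_*^2 := \sum_{j} \big\| (\phi_j w)\circ\Psi_j(\cdot,0)\big\|_{ W^{1,2}_\alpha (\widetilde D)}^2
\]
(with $ W^{1,2}_\alpha (\widetilde D)$ replaced by the unweighted $W^{1,2}(\widetilde D)$ on the charts whose closures are disjoint from $\Sigma$), which by construction realizes $ W^{1,2}_\alpha (\SA)$ as a subspace of a finite product of Hilbert spaces. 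The proof then splits into two independent parts: (i) the form in \eqref{eq:norm-b-alpha} is a scalar product and the norm it induces is equivalent to $\|\cdot\|_*$; (ii) $ W^{1,2}_\alpha (\SA)$ is complete for $\|\cdot\|_*$. Granting both, $ W^{1,2}_\alpha (\SA)$ is complete for the norm of \eqref{eq:norm-b-alpha}, which is symmetric, bilinear and positive definite, and the assertion follows.

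For part (i), bilinearity and symmetry of \eqref{eq:norm-b-alpha} are immediate from the expression of $b_\alpha$ in the second line of \eqref{eq:f-b-sferica}; the form is nonnegative, and $(w,w)_{ W^{1,2}_\alpha (\SA)}=0$ forces $\int_{\SA} w^2\,d\Ha=0$, hence $w=0$. The heart of this part is the equivalence of norms, which I would prove chart by chart. The decisive observation is that, since $d_\alpha\equiv 1$ on $\SA$, the weight $\psi_\alpha$ pulls back under $\Psi_j(\cdot,0)$ to exactly $|x|^{2\alpha}$ — the very weight in the definition of $ W^{1,2}_\alpha (\widetilde D)$ — and that in these coordinates one has $\sin\varphi=|x|^{\alpha+1}$. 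Expressing the tangential derivatives $\partial_\varphi$, $\nabla_\theta$, $\nabla_\eta$ in terms of the derivatives in the chart variables $(x,y')$, the coefficients $\psi_\alpha$, $\psi_\alpha/\sin^2\varphi$, $\psi_\alpha/\cos^2\varphi$ of $b_\alpha$ combine into an integrand two-sidedly comparable to $|\nabla_x f|^2+|x|^{2\alpha}|\nabla_{y'}f|^2$, the Jacobian of the parametrization being smooth and bounded away from $0$ and $\infty$ on the compact support of $\phi_j$. On the charts whose supports avoid $\Sigma$ the quantities $\psi_\alpha$ and $\sin\varphi,\cos\varphi$ are themselves bounded above and below, so $b_\alpha$ restricted there is comparable to a classical Dirichlet form and the same conclusion holds with the unweighted $W^{1,2}$. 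Summing over $j$, using $w=\sum_j\phi_j w$ on $\SA$ together with the triangle inequality for the seminorm $b_\alpha(\cdot,\cdot)^{1/2}$ and the continuity of multiplication by $\phi_j$, gives the equivalence of $\|\cdot\|_{ W^{1,2}_\alpha (\SA)}$ and $\|\cdot\|_*$; in particular the form \eqref{eq:norm-b-alpha} is finite on $ W^{1,2}_\alpha (\SA)$.

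For part (ii), let $(w_p)$ be Cauchy for $\|\cdot\|_*$ and set $g_j^{(p)}:=(\phi_j w_p)\circ\Psi_j(\cdot,0)$. Then $(g_j^{(p)})_p$ is Cauchy in $ W^{1,2}_\alpha (\widetilde D)$, hence converges there to some $g_j$; since $g_j^{(p)}=(\phi_j\circ\Psi_j)\cdot(w_p\circ\Psi_j)$ with $\phi_j\circ\Psi_j$ a fixed smooth compactly supported function, $g_j$ has compact support inside $W_j$, so $g_j\circ\Phi_j$ extends by zero to a function on $\SA$. I would then put $w:=\sum_j g_j\circ\Phi_j$ and check that $(\phi_i w)\circ\Psi_i(\cdot,0)=g_i$ for every $i$: writing $T_{ji}:=\Phi_j\circ\Psi_i(\cdot,0)$ for the transition maps and using $\sum_j\phi_j w_p=w_p$ one has $g_i^{(p)}=\sum_j (\phi_i\circ\Psi_i(\cdot,0))\,(g_j^{(p)}\circ T_{ji})$, and since the $T_{ji}$ are smooth diffeomorphisms between the relevant open subsets of $\widetilde D$ that preserve the $|x|^{2\alpha}$-weight near $\Sigma$ (and are uniformly bi-Lipschitz elsewhere on the sets at hand), composition with them is a bounded operation on the model spaces; letting $p\to\infty$ gives $g_i=\sum_j (\phi_i\circ\Psi_i(\cdot,0))\,(g_j\circ T_{ji})=(\phi_i w)\circ\Psi_i(\cdot,0)$. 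Hence $w\in  W^{1,2}_\alpha (\SA)$ and $w_p\to w$ for $\|\cdot\|_*$, proving completeness.

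I expect the main obstacle to be the two-sided norm comparison of part (i) near the degenerate set: one has to verify carefully that the simultaneously degenerate and singular coefficients $\psi_\alpha$, $\psi_\alpha/\sin^2\varphi$, $\psi_\alpha/\cos^2\varphi$ of $b_\alpha$, once the tangential frame of $\SA$ is written in the graph coordinates $(x,y')$ of $\Psi_j$, reconstruct exactly the full list of weighted first derivatives $\nabla_x f$, $|x|^\alpha\nabla_{y'}f$, with no direction left uncontrolled and no spurious degeneration or blow-up — this is precisely where the identities $\sin\varphi=|x|^{\alpha+1}$, $\psi_\alpha=|x|^{2\alpha}$ and the explicit form of the defining function $\Gamma$ in \eqref{eq:Psi-j} come into play. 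A secondary, milder, point is the boundedness of composition with the transition maps on the weighted spaces, needed in part (ii); the remaining bookkeeping with the partition of unity is routine.
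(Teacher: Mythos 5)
Your proposal follows essentially the same route as the paper: localize via the partition of unity and the parametrizations $\Psi_j$, and establish a two-sided, chart-by-chart comparison between the norm induced by \eqref{eq:norm-b-alpha} and the model norms of $W^{1,2}_\alpha(\widetilde D)$, using precisely the identities $\psi_\alpha=|x|^{2\alpha}$ and $\sin\varphi=|x|^{\alpha+1}$ on $\SA$ that underlie the paper's estimates \eqref{eq:www}--\eqref{eq:equivalence} and the measure comparison \eqref{eq:measures}. The only difference is that you make explicit the completeness of $W^{1,2}_\alpha(\SA)$ for the chart-based norm $\|\cdot\|_*$ (gluing the chart limits through the transition maps), a step the paper leaves implicit once the norm equivalence is in place; this is a harmless and in fact welcome addition.
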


\begin{proof} We only treat in detail the proof in the case $h,k\ge 2$. By definition of $ W^{1,2}_\alpha (\SA)$, we can localize the problem to a single relatively open set $\SA \cap O_j$
with $j \in \{1,\dots,n\}$ being the sets that cover the  degenerate set $\SA \cap \Sigma$.

In order to simplify notations, we still denote by $w$ the function $ \phi_j w$ so that we may assume in the rest of the proof that $w$ is compactly supported in $\SA \cap O_j$. Again, with the only purpose of simplifying notations, we identify $w$ with the function $(x,y') \mapsto w(\Psi_j(x,y',0))$ for $(x,y') \in \widetilde D$.

We have to show the equivalence between the  $ W^{1,2}_\alpha (\widetilde D)$-norm and the $ W^{1,2}_\alpha (\SA)$-norm defined by \eqref{eq:norm-b-alpha}. This can be done by expressing the $ W^{1,2}_\alpha (\widetilde D)$-norm of $w$ in terms of the polar coordinates $\theta_1,\dots,\theta_{h-1},\eta_1,\dots,\eta_{k-1},\varphi$ for $\SAP$.

Let us put $\nabla_\alpha w=(\nabla_x w,|x|^\alpha \nabla_{y'} w)$. Now, putting $r=|x|$ and, $t'=|y'|$ if $k\ge 3$ and $t' = y'$ if $k=2$, we may write
\begin{align} \label{eq:grad-split-1}
   & |\nabla_\alpha w|^2 =\left(\frac{\partial w}{\partial r}\right)^2 + \frac{1}{r^2} \, |\nabla_\theta w|^2
    +r^{2\alpha} \left(\frac{\partial w}{\partial t'}\right)^2 + \frac{r^{2\alpha}}{t'^2} \, |\nabla_{\eta'} w|^2
    \qquad \text{if $k \ge 3$} ,
\end{align}
where $\nabla_{\eta'}$ denotes the gradient on $\mathbb S^{k-2}$ and
\begin{align} \label{eq:grad-split-2}
   & |\nabla_\alpha w|^2 =\left(\frac{\partial w}{\partial r}\right)^2 + \frac{1}{r^2} \, |\nabla_\theta w|^2
    +r^{2\alpha} \left(\frac{\partial w}{\partial t'}\right)^2
    \qquad \text{when $k = 2$} \, .
\end{align}
Proceeding as for the computation of \eqref{eq:p-r-t} and observing that $t'=\frac{1}{\alpha+1} \, \cos \varphi \, \sin(\eta_{k-1})$ and $r=(\sin \varphi)^{\frac{1}{\alpha +1}}$, we may write
\begin{align*}
   & \frac{\partial w}{\partial r}=\frac{(\alpha+1) (\sin \varphi)^{\frac{\alpha}{\alpha +1}}}{\cos \varphi} \,
   \frac{\partial w}{\partial \varphi} + \frac{(\alpha+1)(\sin \varphi)^{\frac{2\alpha+1}{\alpha+1}}
   \sin(\eta_{k-1})}{(\cos \varphi)^2 \, \cos(\eta_{k-1})} \, \frac{\partial w}{\partial \eta_{k-1}}
   \,  ,   \\[10pt]
   & \frac{\partial w}{\partial t'}= \frac{\alpha+1}{\cos \varphi \, \cos(\eta_{k-1})} \,  \frac{\partial w}{\partial \eta_{k-1}}
   \, ,
\end{align*}
so that
\begin{equation} \label{eq:EST-1}
  \left(\frac{\partial w}{\partial r}\right)^2 +r^{2\alpha} \left(\frac{\partial w}{\partial t'}\right)^2
  = \frac{(\alpha+1)^2 (\sin \varphi)^{\frac{2\alpha}{\alpha +1}}}{(\cos \varphi)^2}
  \left[
  \left( \frac{\partial w}{\partial \varphi} + b \, \frac{\partial w}{\partial \eta_{k-1}}\right)^2
  + c^2 \left( \frac{\partial w}{\partial \eta_{k-1}} \right)^2  \right]
\end{equation}
where we put $b=\frac{\sin \varphi \, \sin(\eta_{k-1})}{\cos \varphi \cos(\eta_{k-1})}$ and $c=\frac{1}{\cos(\eta_{k-1})}$.

It is readily seen that for $0<\varphi<\frac \pi 4$, $b^2 < c^2$ and the matrix
$$
   \begin{pmatrix}
     1-\frac 14 & \ \ b \\
     b          & \ \ \left(1-\frac 14\right) c^2 + b^2
   \end{pmatrix}
$$
is positive definite so that
\begin{equation} \label{eq:EST-2}
   \frac 14 \left[ \left(\frac{\partial w}{\partial \varphi}\right)^2 + c^2 \left( \frac{\partial w}{\partial \eta_{k-1}} \right)^2 \right] \le \left( \frac{\partial w}{\partial \varphi} + b \, \frac{\partial w}{\partial \eta_{k-1}}\right)^2
  + c^2 \left( \frac{\partial w}{\partial \eta_{k-1}} \right)^2
  \le  3 \left[ \left(\frac{\partial w}{\partial \varphi}\right)^2 + c^2 \left( \frac{\partial w}{\partial \eta_{k-1}} \right)^2 \right] \, .
\end{equation}
Combining \eqref{eq:EST-1} and \eqref{eq:EST-2} we obtain
\begin{align} \label{eq:www}
  & \frac{(\alpha+1)^2}{4(\cos \varphi)^2} \, \psi_\alpha \left[ \left(\frac{\partial w}{\partial \varphi}\right)^2 +
    \frac{1}{(\cos(\eta_{k-1}))^2} \, \left( \frac{\partial w}{\partial \eta_{k-1}} \right)^2  \right]  \le    \left(\frac{\partial w}{\partial r}\right)^2 +r^{2\alpha} \left(\frac{\partial w}{\partial t'}\right)^2 \\[10pt]
\notag  & \qquad \le \frac{3(\alpha+1)^2}{(\cos \varphi)^2} \, \psi_\alpha  \left[ \left(\frac{\partial w}{\partial \varphi}\right)^2 + \frac{1}{(\cos(\eta_{k-1}))^2} \, \left( \frac{\partial w}{\partial \eta_{k-1}} \right)^2 \right] \, .
\end{align}
Recalling the definition of $O_j$ in \eqref{eq:Oj}, the fact that the functions of the partition of unity $ \phi_j$ are compactly supported in $O_j$ and that $w$ in \eqref{eq:www} actually stands for $ \phi_j\, w$, we deduce that there exist
two positive constants $C_1$ and $C_2$ independent of $w$ such that
\begin{align} \label{eq:www-bis}
  & C_1 \, \psi_\alpha \left[ \left(\frac{\partial w}{\partial \varphi}\right)^2 +
    \left( \frac{\partial w}{\partial \eta_{k-1}} \right)^2  \right]  \le    \left(\frac{\partial w}{\partial r}\right)^2 +r^{2\alpha} \left(\frac{\partial w}{\partial t'}\right)^2 \le C_2 \, \psi_\alpha  \left[ \left(\frac{\partial w}{\partial \varphi}\right)^2 +  \left( \frac{\partial w}{\partial \eta_{k-1}} \right)^2 \right] \, .
\end{align}
Now, observing that $\frac{1}{r^2} |\nabla_\theta w|^2 = \psi_\alpha \frac{1}{(\sin \varphi)^2} \, |\nabla_\theta w|^2$ and
$|\nabla_\eta w|^2 = \left( \frac{\partial w}{\partial \eta_{k-1}} \right)^2 + \frac{1}{(\sin(\eta_{k-1})^2} \, |\nabla_{\eta'} w|^2$, by \eqref{eq:grad-split-1}, \eqref{eq:grad-split-2} and \eqref{eq:www-bis} we finally obtain, for suitable positive constants $C_1,C_2$, not necessarily the same of \eqref{eq:www-bis},
\begin{align} \label{eq:equivalence}
 & C_1 \psi_\alpha \left[ \left(\frac{\partial w}{\partial \varphi}\right)^2 + \frac{1}{(\alpha+1)^2 (\sin \varphi)^2} \, |\nabla_\theta w|^2 + \frac{1}{(\cos \varphi)^2} \, |\nabla_\eta w|^2 \right]  \le    |\nabla_\alpha w|^2 \\[10pt]
\notag & \qquad \le C_2 \psi_\alpha \left[ \left(\frac{\partial w}{\partial \varphi}\right)^2 + \frac{1}{(\alpha+1)^2 (\sin \varphi)^2} \, |\nabla_\theta w|^2 + \frac{1}{(\cos \varphi)^2} \, |\nabla_\eta w|^2 \right] \, .
\end{align}
Recalling again that the support of $ \phi_j w$ is contained in $O_j$ we have that
\begin{equation} \label{eq:measures}
     D_1 \, dx dy' \, \le d\Ha = \sqrt{1+|\nabla \Gamma(x,y')|^2}\, dxdy' \le  D_2 \, dxdy'
\end{equation}
over $\SA \cap O_j$, for some positive constants $D_1$ and $D_2$ independent of $w$. We recall that $\Gamma$ is the function introduced in \eqref{eq:Psi-j}.

Integrating \eqref{eq:equivalence} and taking into account \eqref{eq:measures}, we immediately see that the norm induced by \eqref{eq:norm-b-alpha} is equivalent to the norm coming from $ W^{1,2}_\alpha (\widetilde D)$.

The same kind of approach may be implemented in all the other cases $h\ge 2$ and $k=1$, $h=1$ and $k\ge 2$, $h=k=1$.
\end{proof}


\begin{corollary} \label{c:equivalence} Let $h,k \ge 1$. Then the space $ W^{1,2}_\alpha (\SA)$ endowed with norm induced by \eqref{eq:norm-b-alpha} is compactly embedded $L^2(\SA)$.
  \end{corollary}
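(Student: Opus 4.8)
The plan is to exploit the local definition of $W^{1,2}_\alpha(\SA)$ given in Section \ref{s:spherical}: a partition-of-unity reduction turns the statement into finitely many compact embeddings on bounded domains of $\R^{N-1}$, of classical Rellich--Kondrachov type for the charts lying away from $\Sigma$ and of ``Grushin--Rellich'' type for the charts meeting $\Sigma$.

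First I would fix an arbitrary bounded sequence $(w_i)_{i\in\N}$ in $W^{1,2}_\alpha(\SA)$; the embedding into $L^2(\SA)$ is continuous since $\|w\|_{L^2(\SA)}\le\|w\|_{W^{1,2}_\alpha(\SA)}$ by \eqref{eq:norm-b-alpha}, so only compactness is at stake. By the very definition of the space, for each chart index $j$ the flattened and localized functions $(\phi_j w_i)\circ\Psi_j(\,\cdot\,,0)$ form a bounded sequence in a weighted (or, for the charts avoiding $\Sigma$, ordinary) Sobolev space over the bounded model domain $\widetilde D$, and these functions are compactly supported there because each $\phi_j$ is compactly supported in $O_j$. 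For a chart avoiding $\Sigma$, the classical Rellich--Kondrachov theorem --- applied after extension by zero to $\R^{N-1}$ --- produces an $L^2$-convergent subsequence. For a chart meeting $\Sigma$, the proof of Proposition \ref{p:equivalence} shows that the $W^{1,2}_\alpha(\SA)$-norm localized to $O_j$ is equivalent to the $W^{1,2}_\alpha(\widetilde D)$-norm, so the localized functions sit with uniformly bounded norm in $W^{1,2}_{\alpha,0}(\widetilde D)$, and the compact embedding $W^{1,2}_{\alpha,0}(\widetilde D)\hookrightarrow L^2(\widetilde D)$ on the bounded set $\widetilde D$ yields an $L^2$-convergent subsequence; since by \eqref{eq:measures} the measure $d\Ha$ is comparable to Lebesgue measure on $\SA\cap O_j$, this is the same as convergence in $L^2(\SA\cap O_j)$. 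A diagonal extraction over the finitely many indices $j$, combined with the identity $w_i=\sum_j \phi_j w_i$, then yields a subsequence of $(w_i)$ converging in $L^2(\SA)$, which is the claim.

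The heart of the matter --- and the only place where the degeneracy of $\Delta_\alpha$ genuinely enters --- is the compact embedding $W^{1,2}_{\alpha,0}(\widetilde D)\hookrightarrow L^2(\widetilde D)$ on a bounded domain of $\R^{N-1}$. I expect to deduce it from the subelliptic estimate attached to the H\"ormander system $\{X_j,Y_{J,\ell}\}$ of \eqref{eq:hormander}: compactly supported functions enjoy a gain of fractional smoothness, $\|v\|_{H^\varepsilon(\R^{N-1})}\le C\|v\|_{W^{1,2}_\alpha(\widetilde D)}$ for some $\varepsilon=\varepsilon(h,k,\alpha)>0$, whence the standard compactness of $H^\varepsilon_0(\widetilde D)\hookrightarrow L^2(\widetilde D)$ for bounded $\widetilde D$ finishes the argument; alternatively one can run the usual metric-measure proof using the doubling property of Lebesgue measure with respect to Grushin balls together with the associated Poincar\'e inequality. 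This compactness is recorded, in the precise form needed here, among the properties of weighted Sobolev spaces in Section \ref{s:regularity}. The remaining steps --- the partition-of-unity bookkeeping, the appeal to Proposition \ref{p:equivalence} and \eqref{eq:measures}, and the diagonal patching of subsequences --- are routine.
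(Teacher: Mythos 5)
Your proposal is correct and follows essentially the same route as the paper's proof: a partition-of-unity localization to the charts $O_j$, identification of the localized traces with a bounded sequence in $W^{1,2}_\alpha(\widetilde D)$ for a bounded $\widetilde D\subset\R^{N-1}$ (via the norm equivalence of Proposition \ref{p:equivalence} and \eqref{eq:measures}), the compact embedding into $L^2(\widetilde D)$ supplied by Proposition \ref{p:embedding}, and a diagonal extraction to patch the subsequences. The only cosmetic difference is that you sketch an independent subelliptic derivation of the local compact embedding, whereas the paper simply invokes Proposition \ref{p:embedding}; since you also defer to that result, the arguments coincide in substance.
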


\begin{proof} Using a partition of unity subject to an open cover $\{O_j\}$ of $\SA$, as in the proof of Proposition \ref{p:equivalence}, we can identify locally a bounded sequence in   $ W^{1,2}_\alpha (\SA)$ with a bounded sequence in $ W^{1,2}_\alpha (\widetilde D)$ for some bounded domain $\widetilde D \subset \R^{N-1}$. Now, we know that $ W^{1,2}_\alpha (\widetilde D)$ is compactly embedded into $L^2(\widetilde D)$, see Proposition \ref{p:embedding} in Section \ref{s:regularity}, so that our sequence admits a subsequence converging in $L^2(\widetilde D)$. Coming back to $\SA$, we recover the strong convergence of this subsequence in $L^2(\SA \cap O_j)$ for any $j$. This proves strong convergence in $L^2(\SA)$.
\end{proof}

\bigskip

Thanks to Proposition \ref{p:equivalence} we can now give a rigorous variational formulation for the eigenvalue problem associated with the operator $-\mathcal L_\Theta$: we say that $\mu$ is an eigenvalue of $-\mathcal L_\Theta$ corresponding to some eigenfunction $\Psi \in  W^{1,2}_\alpha (\SA)\setminus \{0\}$ if
\begin{equation} \label{eq:b-alpha-eig}
   b_\alpha (\Psi,w)=\mu \int_{\SA} \psi_\alpha \Psi \, w \, d\Ha \qquad
   \text{for any } w\in  W^{1,2}_\alpha (\SA) \, .
\end{equation}
By Corollary \ref{c:equivalence} and standard results for linear self-adjoint compact operators, we deduce that the eigenvalue problem \eqref{eq:b-alpha-eig} admits a sequence of nonnegative eigenvalues diverging to $+\infty$ with finite multiplicity. Moreover, it can be proved that $ W^{1,2}_\alpha (\SA)$ can be endowed with an equivalent scalar product defined by
$$
     b_\alpha(w_1,w_2) + \int_{\SA} \psi_\alpha \, w_1 w_2 \, d\Ha
$$
(see the argument used in the proof of Proposition \ref{p:Poincare}) and, with respect to this scalar product, $ W^{1,2}_\alpha (\SA)$ admits a Hilbertian basis of eigenfunctions.

Throughout the paper we use the following notations for the eigenvalues and the eigenfunctions of $-\mathcal L_\Theta$
\begin{equation} \label{eq:eig-L-Theta}
   0 = \mu_0 < \mu_1 \le \mu_2 \le \dots \le \mu_n \le \dots \, ,
\end{equation}
where each eigenvalue appears in the sequence as many times as its multiplicity, and
\begin{equation}\label{eq:eigf-L-Theta}
   \omega_0 \, , \omega_1 \, \dots \, , \omega_j \, , \dots
\end{equation}
a corresponding sequence of orthonormal eigenfunctions with respect to the scalar product of $L^2_\alpha(\SA)$ given by $(w_1,w_2) \mapsto  \int_{\SA} \psi_\alpha \, w_1 w_2 \, d\Ha$.

\section{The monotonicity formula} \label{s:monotonicity}

 In the present section we present the Almgren-type monotonicity argument and we prove the main results of the paper, namely Theorem \ref{t:main} and Corollaries \ref{c:1}, \ref{c:2}, and \ref{c:positive-measure}.

Let $\Omega \subseteq \R^{h+k}$ be a bounded connected domain that, without losing generality, contains the origin. Let $u$ be a weak solution of \eqref{eq:u} in sense of the variational formulation \eqref{eq:var}. Let us define the functions
\begin{align} \label{eq:D}
   & D(r) := r^{2-Q} \int_{B_r^\alpha} \left(|\nabla_\alpha u|^2-V(x,y)u^2\right) dxdy \, , \\[7pt]
  \label{eq:H} & H(r) :=r^{1-Q} \int_{\partial B_r^\alpha} \psi_\alpha \, u^2 \, d\Ha \, ,
\end{align}
for any $r\in (0,R)$ where $R>0$ is the maximal value for which $B_R^\alpha\subseteq \Omega$ and where $\psi_\alpha$ is the function defined in \eqref{eq:psi-alpha}.  The functions $D$ and $H$ are respectively called the energy and height functions.

Both functions $D$ and $H$ are well defined if we assume that $V$ satisfies \eqref{eq:V}.  Indeed, for $D$ one can check by the Sobolev embedding in Proposition \ref{p:embedding} and by  \eqref{eq:V} that $Vu^2\in  L^1_{\rm loc}(\Omega)$.  Instead, in the definition of $H$, we only need $u\in  W^{1,2}_\alpha (B_r^\alpha)$ since the trace of a function  $u\in   W^{1,2}_{\alpha,\, {\rm loc}}(\Omega)$, which in \eqref{eq:H} is still denoted by $u$, belongs to the weighted space $L^2_\alpha(\partial B_r^\alpha)$ as one can see from Proposition \ref{p:trace-alpha}.  Here  $L^2_\alpha(\partial B_r^\alpha)$ is the space of  square integrable function on $\partial B_r^\alpha$ with respect to the measure $|x|^{2\alpha}d\mathcal{H}^{N-1}$   (see Subsection \ref{s:properties-Sobolev} for the general definition of the space $L^p_\alpha(\partial \Omega)$).

We start by computing the derivative of $H$.

\begin{lemma} \label{l:H'} Let $u \in   W^{1,2}_{\alpha,\, {\rm loc}}(\Omega)$ be a weak solution of \eqref{eq:u} with $V$ satisfying \eqref{eq:V} and let $H$ be the corresponding function defined in \eqref{eq:H}. Then $H\in W^{1,1}_{{\rm loc}}(0,R)$  and
 for a.e. $r \in (0,R)$:
\begin{equation} \label{eq:H'}
    H'(r)=2r^{1-Q} \int_{\partial B_r^\alpha} \frac{\psi_\alpha}{d_\alpha} \, u X_G u \, d\Ha \, .
\end{equation}

\end{lemma}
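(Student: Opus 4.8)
The plan is to compute $H'(r)$ by the standard change of variables $\delta_r$ that maps $B_1^\alpha$ onto $B_r^\alpha$ and hence $\partial B_1^\alpha$ onto $\partial B_r^\alpha$, turning the surface integral over the moving sphere $\partial B_r^\alpha$ into an integral over the fixed sphere $\SA$. Concretely, using the scaling relations \eqref{eq:delta-B} and the homogeneity of $d_\alpha$ and $\psi_\alpha$ under $\delta_r$ (note $\psi_\alpha$ is $\delta_r$-homogeneous of degree $0$), together with the identity \eqref{eq:volume-integral} relating the volume element to $\rho^{Q-1}\,d\rho\,d\Ha$, one sees that
\[
H(r)=r^{1-Q}\int_{\partial B_r^\alpha}\psi_\alpha\,u^2\,d\Ha=\int_{\SA}\psi_\alpha(\Theta)\,\big(u(\delta_r(\Theta))\big)^2\,d\Ha(\Theta),
\]
so the $r$-dependence has been entirely transferred onto the integrand $u\circ\delta_r$. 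Differentiating under the integral sign then produces $H'(r)=2\int_{\SA}\psi_\alpha(\Theta)\,u(\delta_r(\Theta))\,\frac{d}{dr}\big[u(\delta_r(\Theta))\big]\,d\Ha(\Theta)$, and the chain rule gives $\frac{d}{dr}u(\delta_r(\Theta))=\frac{1}{r}(X_G u)(\delta_r(\Theta))$ since $X_G$ is exactly the infinitesimal generator of the dilations $\delta_r$ (this is the content of the relation $X_G d_\alpha=d_\alpha$, reflecting that $X_G=\rho\,\partial_\rho$ in spherical coordinates). Changing variables back to $\partial B_r^\alpha$ yields \eqref{eq:H'}.

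The first step I would carry out carefully is the rigorous justification of this change of variables at the level of the weighted Sobolev trace: Proposition \ref{p:trace-alpha} guarantees that for each fixed $r\in(0,R)$ the trace of $u$ on $\partial B_r^\alpha$ lies in $L^2_\alpha(\partial B_r^\alpha)$, so $H(r)$ is well defined; one then needs that the map $r\mapsto u\circ\delta_r$ is, as a map into $W^{1,2}_\alpha(\SA)$ or at least into $L^2_\alpha(\SA)$ together with its $X_G$-derivative, absolutely continuous on compact subintervals of $(0,R)$. This is where the regularity theory for weak solutions enters: since $u$ solves \eqref{eq:u} with $V\in W^{1,\sigma}_{\rm loc}$, $\sigma>Q/2$, $u$ enjoys enough interior regularity (the results of Section \ref{s:regularity}, in particular that $u\in W^{2,2}_{\alpha,\rm loc}$ away from $\Sigma$ and suitable weighted estimates near $\Sigma$) that $X_G u$ is locally in $W^{1,2}_\alpha$, hence has well-defined traces on the spheres $\partial B_r^\alpha$ depending measurably on $r$, with $\int_K\int_{\partial B_r^\alpha}|X_G u|^2/d_\alpha\,\psi_\alpha\,d\Ha\,dr<\infty$ for $K\Subset(0,R)$. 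From this, the Fubini/coarea structure of \eqref{eq:volume-integral} yields $H\in W^{1,1}_{\rm loc}(0,R)$ and the formula for $H'$ a.e.

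The main obstacle is precisely this differentiation-under-the-integral-sign step in the weighted setting: one must avoid treating $\partial_r$ of the moving-domain integral naively, because $\partial B_r^\alpha$ passes through the degenerate set $\Sigma$ where $\psi_\alpha$ vanishes and the vector field $\nabla d_\alpha$ degenerates, so the classical Reynolds/Hadamard formula for differentiating surface integrals over a family of hypersurfaces is not directly available. The cleanest route, which I expect the authors to follow, is to establish the identity first for smooth $u$ (where \eqref{eq:int-parts}, \eqref{eq:gradu-gradv} and the spherical decomposition \eqref{GrPo} make everything explicit and one can integrate by parts on $\SA$ freely), and then pass to general weak solutions by a density/approximation argument using the regularity of $u$ and the equivalence of norms in Proposition \ref{p:equivalence}. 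The only genuinely delicate point in that approximation is controlling the traces near $\Sigma$ uniformly in the approximating sequence, for which the weighted Sobolev trace estimates of Section \ref{s:regularity} (Proposition \ref{p:trace-alpha}) are exactly what is needed; once those are in hand, the computation of $H'$ reduces to bookkeeping with the scaling relations \eqref{eq:d-delta}, \eqref{eq:delta-B} and \eqref{eq:volume-integral}.
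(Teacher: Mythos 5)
Your formula and the underlying change of variables are exactly those of the paper: Lemma \ref{l:rescale-partial} rewrites the right-hand side of \eqref{eq:H'} as an integral over the fixed sphere $\partial B_1^\alpha$ of $|x|^{2\alpha}\,2u(rx,r^{\alpha+1}y)\,\nabla u(rx,r^{\alpha+1}y)\cdot(x,(\alpha+1)r^{\alpha}y)$, and the chain-rule identity $\tfrac{d}{dr}\,u(\delta_r(x,y))=\tfrac1r X_G u(\delta_r(x,y))$ (cf.\ \eqref{eq:vect-field}) is precisely what produces $X_G u$. Where you diverge is in the justification. You propose to differentiate under the integral sign, first for smooth $u$ and then by a density argument, and you flag as the main obstacles the failure of the Reynolds/Hadamard formula across $\Sigma$ and the need for uniform trace control near $\Sigma$ along the approximating sequence. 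The paper sidesteps all of this: it notes that $\psi_\alpha\, u\, X_G u\in L^1_{{\rm loc}}(\Omega\setminus\{0\})$, so the right-hand side of \eqref{eq:H'} is defined for a.e.\ $r$, then integrates that expression over an arbitrary interval $(r_1,r_2)$ and applies Fubini--Tonelli together with the one-dimensional fundamental theorem of calculus for $r\mapsto\bigl(u(rx,r^{\alpha+1}y)\bigr)^2$ along a.e.\ ray; the result is exactly $H(r_2)-H(r_1)$, which simultaneously yields $H\in W^{1,1}_{{\rm loc}}(0,R)$ and \eqref{eq:H'} a.e. This route requires neither the equation nor any of the regularity theory of Section \ref{s:regularity} (the lemma holds for every $u\in W^{1,2}_{\alpha,\,{\rm loc}}(\Omega)$, solution or not, and in particular does not need $X_G u\in W^{1,2}_{\alpha,{\rm loc}}$ or $u\in W^{2,2}_{\alpha,{\rm loc}}$), and the delicate approximation near $\Sigma$ you anticipate dissolves once one integrates in $r$: the coarea structure converts the $r$-integrated surface quantities into ordinary volume integrals. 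Your approach could be completed, but it is heavier than necessary and imports hypotheses the statement does not use.
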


\begin{proof} Since $u \in   W^{1,2}_{\alpha,\, {\rm loc}}(\Omega)$ then we readily see that the function $\psi_\alpha u X_G u \in L^1_{{\rm loc} }(\Omega\setminus \{0\})$ and hence the function $r \mapsto \int_{\partial B_r^\alpha} \frac{\psi_\alpha}{d_\alpha} \, u X_G u  \, d\Ha$ is defined a.e. in $(0,R)$. Then for a.e. $r\in (0,R)$, by Lemma \ref{l:rescale-partial}, we may write
\begin{align} \label{eq:ae-der}
   & 2r^{1-Q} \int_{\partial B_r^\alpha} \frac{\psi_\alpha}{d_\alpha} \, u X_G u \, d\Ha
     = \int_{\partial B_1^\alpha} |x|^{2\alpha} \, 2 u(rx,r^{\alpha+1}y) \nabla u(rx,r^{\alpha+1}y) \cdot (x,(\alpha+1)r^\alpha y) \, d\Ha \, .
\end{align}
Then we integrate \eqref{eq:ae-der} over a generic interval $(r_1,r_2)$ with $r_2>r_1>0$ to obtain after Fubini-Tonelli Theorem
\begin{align*} 
   & \int_{r_1}^{r_2} \left(2r^{1-Q} \int_{\partial B_r^\alpha} \frac{\psi_\alpha}{d_\alpha} \, u X_G u \, d\Ha\right) d\Ha
   =\int_{\partial B_1^\alpha} |x|^{2\alpha}
     \left(\int_{r_1}^{r_2} \frac{\partial}{\partial r}\left(u(rx,r^{\alpha+1}y)\right)^2 \right) d\Ha  \\[15pt]
   &   = \int_{\partial B_1^\alpha} |x|^{2\alpha}
     \left[\left(u(r_2x,r_2^{\alpha+1}y)\right)^2-\left(u(r_1x,r_1^{\alpha+1}y)\right)^2  \right] d\Ha  \\[15pt]
   & = r_2^{1-Q} \int_{\partial B_{r_2}^\alpha} r_2^{-2\alpha}|x|^{2\alpha} u^2 \, d\Ha
      -r_1^{1-Q} \int_{\partial B_{r_1}^\alpha} r_1^{-2\alpha}|x|^{2\alpha} u^2 \, d\Ha
      =H(r_2)-H(r_1) \, .
\end{align*}
This readily shows that $H\in W^{1,1}_{{\rm loc}}(0,R)$ and moreover its derivative is given a.e. by \eqref{eq:H'}.
\end{proof}

We also need to represent the derivative of the function $D$. In order to perform the computation of $D'$ we first need  the following Pohozaev-type identity.  For the clarity of exposition we postpone the proof to the Appendix.

\begin{proposition} \label{p:Pohozaev} (Pohozaev-type identity) Let $u\in   W^{1,2}_{\alpha,\, {\rm loc}}(\Omega)$ be a weak solution of \eqref{eq:u} with $V$ satisfying \eqref{eq:V}. Then $u\in  W^{2,2}_{\alpha,{\rm loc}} (\Omega)$, $\nabla u\in L^2_{{\rm loc}}(\Omega;\R^N)$ and the trace on $\partial B_r^\alpha$ of first order derivatives of $u$ belong to $L^2(\partial B_r^\alpha)$ for any $r>0$ such that $\overline{B_r^\alpha}\subset \Omega$. Moreover for a.e. $r\in (0,R)$ we have
\begin{align*}
   & -\frac{Q-2}{2} \int_{B_r^\alpha} |\nabla_\alpha u|^2 \, dxdy+\frac{r}{2} \int_{\partial B_r^\alpha} |\nabla_\alpha u|^2 \, d\Ha - \frac{1}{r} \int_{\partial B_r^\alpha} \psi_\alpha (X_G u)^2 \, d\Ha \\[10pt]
   & \quad = \frac{r}2 \int_{\partial B_r^\alpha} Vu^2 \, d\Ha  -\frac{Q}{2} \int_{B_r^\alpha} Vu^2 \, dxdy
   -\frac 12 \int_{B_r^\alpha} (\nabla V\cdot X_G) u^2 \, dxdy \, .
\end{align*}
\end{proposition}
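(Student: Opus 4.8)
The plan is to establish the Pohozaev-type identity by testing the equation \eqref{eq:u} with the function $X_G u = X_G \cdot \nabla u$, which is the natural generating vector field for the anisotropic dilations $\delta_\lambda$. First I would justify the regularity claims: by the regularity theory for the Grushin operator developed in Section \ref{s:regularity} (applied to $-\Delta_\alpha u = Vu$ with $V \in W^{1,\sigma}_{\rm loc}$, $\sigma > Q/2$), one obtains $u \in W^{2,2}_{\alpha,{\rm loc}}(\Omega)$, and since the Grushin second-order operator controls the full Euclidean Hessian away from $\Sigma$ (and the weight $|x|^{2\alpha}$ is locally bounded), one gets $\nabla u \in L^2_{\rm loc}(\Omega;\R^N)$; the trace statements on $\partial B_r^\alpha$ then follow from the trace results (Proposition \ref{p:trace-alpha} and its analogues) for a.e. $r$ such that $\overline{B_r^\alpha} \subset \Omega$. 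These regularity facts are what make all subsequent integrations by parts legitimate.

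Next I would carry out the core computation on $B_r^\alpha$. Using $X_G u$ as a test function (which lies in $W^{1,2}_{\alpha,c}$ after localization, by the regularity just established), the weak formulation gives $\int_{B_r^\alpha} \nabla_\alpha u \cdot \nabla_\alpha (X_G u)\,dxdy = \int_{B_r^\alpha} V u\, X_G u\, dxdy$, plus boundary terms on $\partial B_r^\alpha$ that I must keep track of carefully since I am not integrating over all of $\Omega$. The key algebraic identity is the commutator relation between $X_G$ and $\nabla_\alpha$: one checks that $\nabla_\alpha(X_G u) = X_G(\nabla_\alpha u) + \nabla_\alpha u$ componentwise (this reflects the degree-one homogeneity of $\nabla_\alpha$ under $\delta_\lambda$, using $X_G d_\alpha = d_\alpha$ and the structure $\nabla_\alpha = (\nabla_x, |x|^\alpha \nabla_y)$). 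Hence $\nabla_\alpha u \cdot \nabla_\alpha(X_G u) = \tfrac12 X_G(|\nabla_\alpha u|^2) + |\nabla_\alpha u|^2$. Integrating $\tfrac12 X_G(|\nabla_\alpha u|^2)$ over $B_r^\alpha$ and using the divergence theorem together with $\operatorname{div} X_G = h + (\alpha+1)k = Q$ and the fact that $X_G$ is tangent to the dilation field (so its normal component on $\partial B_r^\alpha$ produces the factor involving $d_\alpha = r$ and $\psi_\alpha$, via the scaling lemmas like Lemma \ref{l:rescale-partial}), one obtains the terms $-\tfrac{Q-2}{2}\int_{B_r^\alpha}|\nabla_\alpha u|^2 + \tfrac r2\int_{\partial B_r^\alpha}|\nabla_\alpha u|^2\, d\Ha$. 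Similarly, on the right-hand side, writing $Vu\,X_G u = \tfrac12 V X_G(u^2)$ and integrating by parts moves the derivative onto $V u^2$, producing $-\tfrac Q2 \int_{B_r^\alpha} Vu^2 - \tfrac12\int_{B_r^\alpha}(\nabla V \cdot X_G)u^2 + \tfrac r2\int_{\partial B_r^\alpha} Vu^2\, d\Ha$. Finally, the boundary term coming from the left-hand side integration by parts of $\nabla_\alpha u \cdot \nabla_\alpha(X_G u)$ contributes $-\tfrac1r\int_{\partial B_r^\alpha}\psi_\alpha (X_G u)^2\, d\Ha$, once the conormal derivative $(A_\alpha \nabla u)\cdot \nu$ is rewritten in terms of $X_G u$ and $\psi_\alpha$ using the polar decomposition \eqref{GrPo}--\eqref{eq:gradu-gradv} and the identity $|\nabla d_\alpha|^{-1}$ relating $d\mathcal H^{N-1}$ and $d\Ha$; this matches the expression for $H'$ in Lemma \ref{l:H'}.

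The main obstacle I anticipate is the rigorous justification of the integrations by parts near the degenerate set $\Sigma$ and near the origin: $X_G u$ is not obviously an admissible test function because of the possible blow-up of $\nabla u$ on $\Sigma$ and because $\psi_\alpha$ and the weights are only locally bounded (not Lipschitz) across $\Sigma$. The standard remedy is to work first with a cutoff that vanishes in a neighborhood of $\{x=0\}$ and near the origin, perform all computations there (where everything is smooth by hypoellipticity), and then pass to the limit as the cutoff parameter goes to zero, using the $W^{2,2}_\alpha$ and $L^2$-Hessian bounds to show the error terms vanish — in particular controlling $\int_{\{|x|<\eps\}\cap B_r^\alpha}$-type remainders by the absolute continuity of the Lebesgue integral. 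For this reason, as the statement indicates, I would relegate the full technical limiting argument to the Appendix, presenting here only the formal computation on smooth functions together with the precise statement of which approximation and which regularity bounds make it rigorous. A secondary delicate point is that the identity is claimed only for a.e. $r$: this is because the boundary traces on $\partial B_r^\alpha$ and the coarea-type slicing (e.g. via Lemma \ref{l:rescale-partial} and \eqref{eq:volume-integral}) are well defined only for a.e. radius, so one proves an integrated version on intervals $(r_1,r_2)$ first and then differentiates, exactly as in the proof of Lemma \ref{l:H'}.
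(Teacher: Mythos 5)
Your proposal is essentially correct and follows the classical Rellich--Pohozaev strategy (test with the multiplier $X_G u$, use the commutator identity $\nabla_\alpha(X_G u)=X_G(\nabla_\alpha u)+\nabla_\alpha u$ and $\operatorname{div}X_G=Q$), and your formal computation does reproduce the stated identity. The paper's proof in the Appendix packages the same multiplier differently: it works with the rescaled family $u_r=u\circ\delta_r$ on the \emph{fixed} unit ball $B_1^\alpha$ and differentiates in the scaling parameter, using $\tfrac{\partial u_r}{\partial r}=\tfrac1r X_G u_r$ (formula \eqref{eq:vect-field}), so that the factor $Q$ comes from the Jacobian $r^Q$ of Lemma \ref{l:change-B} and the boundary terms from the coarea formula of Lemma \ref{l:der-B}; the pointwise identity $\nabla_\alpha u_r\cdot\nabla_\alpha(\partial_r u_r)=\tfrac12\partial_r(|\nabla_\alpha u_r|^2)$ replaces your commutator computation. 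The rigorous justification is also different: rather than cutting off near $\Sigma$ and near the origin and passing to the limit in the cutoff parameter, the paper first establishes $u\in W^{2,2}_{\alpha,{\rm loc}}\cap W^{1+\frac{2}{\alpha+1},\sigma}_{{\rm loc}}$ with $\nabla u\in L^2_{{\rm loc}}$ (Proposition \ref{p:reg-2}), approximates $u$ by functions $u^n\in C^\infty(\overline{B_r^\alpha})$ converging in $W^{2,2}_\alpha(B_r^\alpha)$ and $W^{1,2}(B_r^\alpha)$ (Proposition \ref{p:density}), performs the integration by parts of Proposition \ref{p:int-parts} on each $u^n$, and passes to the limit. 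Both routes are legitimate; the paper's avoids having to control remainder integrals concentrated on $\{|x|<\eps\}$.

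One step of your argument is genuinely flawed as written: the claim that $\nabla u\in L^2_{{\rm loc}}(\Omega;\R^N)$ follows because ``the Grushin operator controls the full Euclidean Hessian away from $\Sigma$ and the weight $|x|^{2\alpha}$ is locally bounded.'' Membership in $W^{2,2}_\alpha$ only gives $|x|^\alpha\nabla_y u\in L^2$, and boundedness of the weight does not let you remove it; the whole difficulty is the integrability of the unweighted $\nabla_y u$ \emph{across} $\Sigma$. The paper obtains this from the fractional regularity $u\in W^{1+\frac{2}{\alpha+1},\sigma}_{{\rm loc}}(\Omega)$ (Proposition \ref{p:reg-2}, built on Proposition \ref{p:reg}(ii) and the Brezis--Mironescu product lemma), followed by fractional Sobolev embedding and trace theorems, which also yield the $L^2(\partial B_r^\alpha)$ trace statement for the first-order derivatives for \emph{every} admissible $r$, not just almost every $r$. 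Your deferral to ``the regularity theory of Section \ref{s:regularity}'' points to the right place, but the heuristic you give for this step would not survive scrutiny and should be replaced by the argument of Proposition \ref{p:reg-2}.
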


\begin{remark} We observe that the regularity result stated in Proposition \ref{p:Pohozaev} combined with the integration by parts formula stated in Proposition \ref{p:int-parts} in the Appendix and Lemma \ref{l:H'}, yields
\begin{equation} \label{eq:D-H'}
   D(r)=\frac{r}{2} \, H'(r) \, .
\end{equation}
The same regularity result stated in Proposition \ref{p:Pohozaev} shows that the integral in the right hand side of \eqref{eq:H'}   can be also interpreted in the sense of traces.
\end{remark}

\begin{lemma} Let $u \in   W^{1,2}_{\alpha,\, {\rm loc}}(\Omega)$ be a weak solution of \eqref{eq:u} with $V$ satisfying \eqref{eq:V} and let $D$ be the corresponding function defined in \eqref{eq:D}. Then $D \in W^{1,1}_{{\rm loc}}(0,R)$ and for a.e. $r\in (0,R)$ we have that
\begin{align} \label{eq:D'}
   & D'(r)=-r^{1-Q} \int_{B_r^\alpha} \left(2V+\nabla V\cdot X_G\right) u^2 \, dxdy+2r^{-Q}
   \int_{\partial B_r^\alpha} \psi_\alpha (X_G u)^2 \, d\Ha \, .
\end{align}

\end{lemma}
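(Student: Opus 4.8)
The plan is to write $D$ as a product, differentiate it, and then eliminate the interior Dirichlet integral by inserting the Pohozaev-type identity of Proposition~\ref{p:Pohozaev}. First I would set $D(r)=r^{2-Q}E(r)$ with $E(r):=\int_{B_r^\alpha}\big(|\nabla_\alpha u|^2-Vu^2\big)\,dxdy$. Since $u\in W^{1,2}_{\alpha,\,{\rm loc}}(\Omega)$ the term $|\nabla_\alpha u|^2$ is in $L^1_{{\rm loc}}$, and by the Sobolev embedding of Proposition~\ref{p:embedding} together with \eqref{eq:V} one has $Vu^2\in L^1_{{\rm loc}}(\Omega)$, so the integrand of $E$ lies in $L^1_{{\rm loc}}(\Omega)$. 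Using the polar decomposition \eqref{eq:volume-integral} (equivalently, the coarea formula for the gauge $d_\alpha$ and the rescaling of Lemma~\ref{l:rescale-partial}) I would write $E(r)=\int_0^r\big(\int_{\partial B_s^\alpha}(|\nabla_\alpha u|^2-Vu^2)\,d\Ha\big)\,ds$, which shows that $E$ is locally absolutely continuous on $(0,R)$, hence $E\in W^{1,1}_{{\rm loc}}(0,R)$ with $E'(r)=\int_{\partial B_r^\alpha}(|\nabla_\alpha u|^2-Vu^2)\,d\Ha$ for a.e.\ $r$; since $r\mapsto r^{2-Q}$ is smooth on $(0,R)$, it follows that $D\in W^{1,1}_{{\rm loc}}(0,R)$ and the Leibniz rule holds a.e.

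Differentiating, for a.e.\ $r\in(0,R)$ one obtains
\begin{align*}
  D'(r)&=r^{1-Q}\Big[(2-Q)\!\int_{B_r^\alpha}\!|\nabla_\alpha u|^2\,dxdy+r\!\int_{\partial B_r^\alpha}\!|\nabla_\alpha u|^2\,d\Ha\Big]\\
  &\qquad-r^{1-Q}\Big[(2-Q)\!\int_{B_r^\alpha}\!Vu^2\,dxdy+r\!\int_{\partial B_r^\alpha}\!Vu^2\,d\Ha\Big],
\end{align*}
the boundary integrands being interpreted in the trace sense (legitimate because $u\in W^{2,2}_{\alpha,{\rm loc}}(\Omega)$ and the traces of $\nabla_\alpha u$ on $\partial B_r^\alpha$ are square integrable for a.e.\ $r$ by Proposition~\ref{p:Pohozaev}). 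Multiplying the identity of Proposition~\ref{p:Pohozaev} by $2$ and solving for the first bracket yields, for a.e.\ $r$,
\begin{align*}
  &(2-Q)\!\int_{B_r^\alpha}\!|\nabla_\alpha u|^2\,dxdy+r\!\int_{\partial B_r^\alpha}\!|\nabla_\alpha u|^2\,d\Ha\\
  &\qquad=\frac{2}{r}\!\int_{\partial B_r^\alpha}\!\psi_\alpha(X_G u)^2\,d\Ha+r\!\int_{\partial B_r^\alpha}\!Vu^2\,d\Ha-Q\!\int_{B_r^\alpha}\!Vu^2\,dxdy-\!\int_{B_r^\alpha}\!(\nabla V\!\cdot\! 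X_G)u^2\,dxdy.
\end{align*}
Substituting this into the expression for $D'(r)$, the two occurrences of $r\int_{\partial B_r^\alpha}Vu^2\,d\Ha$ cancel; the interior potential terms combine via $-Q-(2-Q)=-2$ into $-r^{1-Q}\int_{B_r^\alpha}(2V+\nabla V\cdot X_G)u^2\,dxdy$, and the surviving boundary term is $2r^{-Q}\int_{\partial B_r^\alpha}\psi_\alpha(X_G u)^2\,d\Ha$; this is precisely \eqref{eq:D'}.

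I expect the delicate point to be the first step: the gauge $d_\alpha$ is not Lipschitz at the origin and $|\nabla d_\alpha|$ degenerates along $\Sigma$, so passing from $E(r)$ to a surface integral over $\partial B_r^\alpha$, and making sense of $\int_{\partial B_r^\alpha}|\nabla_\alpha u|^2\,d\Ha$ for a.e.\ $r$, should be carried out through the explicit polar coordinates \eqref{eq:volume-integral} and the scaling identities of the Appendix (with the weight $\psi_\alpha$ compensating the degeneracy), together with the trace regularity furnished by Proposition~\ref{p:Pohozaev}. Once these facts are in place, everything else is a routine algebraic rearrangement, entirely parallel to the computation of $H'$ in Lemma~\ref{l:H'}.
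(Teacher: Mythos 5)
Your proposal is correct and follows essentially the same route as the paper: differentiate $D$ via the coarea/rescaling identity (Lemma \ref{l:der-B}) to get $D'(r)=(2-Q)r^{1-Q}\int_{B_r^\alpha}(|\nabla_\alpha u|^2-Vu^2)\,dxdy+r^{2-Q}\int_{\partial B_r^\alpha}(|\nabla_\alpha u|^2-Vu^2)\,d\Ha$, then substitute the Pohozaev identity of Proposition \ref{p:Pohozaev} multiplied by $2$; the algebra and cancellations you carry out match the paper's exactly.
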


\begin{proof} By Proposition \ref{p:Pohozaev} we easily deduce that   for a.e. $r\in (0,R)$:
  \begin{align} \label{eq:D'-1}
     & r \int_{\partial B_r^\alpha} \left(|\nabla_\alpha u|^2-Vu^2\right) d\Ha=(Q-2) \int_{B_r^\alpha} |\nabla_\alpha u|^2 \, dxdy-Q \int_{B_r^\alpha} Vu^2 \, dxdy \\[7pt]
     \notag & \qquad -\int_{B_r^\alpha} (\nabla V\cdot X_G) u^2 \, dxdy+\frac 2r \int_{\partial B_r^\alpha} \psi_\alpha (X_G u)^2 \, d\Ha \, .
  \end{align}
  On the other hand, for a.e. $r\in (0,R)$ we have that
  \begin{align} \label{eq:D'-2}
     & D'(r)=(2-Q) r^{1-Q} \int_{B_r^\alpha} \left(|\nabla_\alpha u|^2 -Vu^2\right)dxdy
     +r^{2-Q} \int_{\partial B_r^\alpha} \left(|\nabla_\alpha u|^2 -Vu^2\right)d\Ha
  \end{align}
  thanks to Lemma \ref{l:der-B} in the Appendix.

  Combining \eqref{eq:D'-1} and \eqref{eq:D'-2} we arrive to the conclusion of the proof.
\end{proof}


 We will also need the following Poincar\'e-type inequality in $B_r^\alpha$.

\begin{proposition} \label{p:Poincare}  Let $r>0$. There exists a constant $C(h,k,\alpha)$ depending only on $h$, $k$ and $\alpha$ such that
  \begin{equation}\label{eq:Poincare}
     \frac{1}{r^2} \int_{B_r^\alpha} v^2 \, dxdy\le C(h,k,\alpha) \left(\int_{B_r^\alpha} |\nabla_\alpha v|^2 \, dxdy
     +\frac{1}{r}\int_{\partial B_r^\alpha} \psi_\alpha v^2 \, d\Ha \right)
\end{equation}
for any $v\in  W^{1,2}_\alpha (B_r^\alpha)$.
\end{proposition}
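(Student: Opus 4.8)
The plan is to reduce the weighted Poincar\'e inequality on the Grushin ball $B_r^\alpha$ to the unit ball $B_1^\alpha$ by the anisotropic scaling $\delta_r$, and then to prove the inequality on $B_1^\alpha$ by a compactness-contradiction argument, exactly as one does for the classical trace-Poincar\'e inequality on the Euclidean unit ball. First I would perform the change of variables $(x,y) = \delta_r(\xi,\zeta) = (r\xi, r^{\alpha+1}\zeta)$: under this map $B_r^\alpha$ corresponds to $B_1^\alpha$ by \eqref{eq:delta-B}, $dxdy = r^Q\,d\xi d\zeta$, the Grushin gradient rescales so that $\int_{B_r^\alpha}|\nabla_\alpha v|^2\,dxdy = r^{Q-2}\int_{B_1^\alpha}|\nabla_\alpha w|^2\,d\xi d\zeta$ where $w(\xi,\zeta):=v(\delta_r(\xi,\zeta))$, and the scaling lemmas for the boundary integrals (of the type of Lemma \ref{l:rescale-partial} already used for $H$) give $r^{1-Q}\int_{\partial B_r^\alpha}\psi_\alpha v^2\,d\Ha = \int_{\partial B_1^\alpha}\psi_\alpha w^2\,d\Ha$ and $r^{-2-Q}\int_{B_r^\alpha}v^2\,dxdy = r^{-2}\cdot r^{-Q}\int_{B_r^\alpha}v^2 = \int_{B_1^\alpha}w^2$. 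Thus \eqref{eq:Poincare} for general $r$ is equivalent to the case $r=1$, and the constant $C(h,k,\alpha)$ is genuinely scale-invariant.

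Next I would prove the $r=1$ inequality
\[
\int_{B_1^\alpha} w^2\,dxdy \le C \left(\int_{B_1^\alpha}|\nabla_\alpha w|^2\,dxdy + \int_{\partial B_1^\alpha}\psi_\alpha w^2\,d\Ha\right)
\qquad \text{for all } w\in W^{1,2}_\alpha(B_1^\alpha)
\]
by contradiction. Suppose it fails; then there is a sequence $w_n\in W^{1,2}_\alpha(B_1^\alpha)$ with $\int_{B_1^\alpha}w_n^2 = 1$ but $\int_{B_1^\alpha}|\nabla_\alpha w_n|^2 + \int_{\partial B_1^\alpha}\psi_\alpha w_n^2\,d\Ha \to 0$. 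In particular $\{w_n\}$ is bounded in $W^{1,2}_\alpha(B_1^\alpha)$, so by the compact embedding $W^{1,2}_\alpha(B_1^\alpha)\hookrightarrow\hookrightarrow L^2(B_1^\alpha)$ (Proposition \ref{p:embedding}, invoked in Corollary \ref{c:equivalence}) a subsequence converges strongly in $L^2(B_1^\alpha)$ and weakly in $W^{1,2}_\alpha(B_1^\alpha)$ to some $w$. Strong $L^2$ convergence forces $\int_{B_1^\alpha}w^2 = 1$, while weak lower semicontinuity of $v\mapsto\int|\nabla_\alpha v|^2$ gives $\int_{B_1^\alpha}|\nabla_\alpha w|^2 = 0$; since $B_1^\alpha$ is connected, $w$ is a nonzero constant $c\ne 0$. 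Finally, the boundary term: by continuity of the trace operator $W^{1,2}_\alpha(B_1^\alpha)\to L^2_\alpha(\partial B_1^\alpha)$ (Proposition \ref{p:trace-alpha}) the traces converge, at least weakly, in $L^2_\alpha(\partial B_1^\alpha)$, so $\int_{\partial B_1^\alpha}\psi_\alpha w^2\,d\Ha \le \liminf \int_{\partial B_1^\alpha}\psi_\alpha w_n^2\,d\Ha = 0$; but $w\equiv c$ gives $\int_{\partial B_1^\alpha}\psi_\alpha w^2\,d\Ha = c^2\int_{\partial B_1^\alpha}\psi_\alpha\,d\Ha > 0$ since $\psi_\alpha>0$ a.e.\ on $\partial B_1^\alpha$ and this integral equals the surface measure $|\partial B_1^\alpha|$ up to constants. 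This contradiction proves the claim.

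The main obstacle — and the reason this needs a little care rather than being a one-line citation — is that the two auxiliary ingredients (the compact embedding into $L^2(B_1^\alpha)$ and the continuity of the weighted trace onto $\partial B_1^\alpha$) must be established on the Grushin ball, whose boundary $\partial B_1^\alpha$ touches the degenerate set $\Sigma$; these are precisely the content of Proposition \ref{p:embedding} and Proposition \ref{p:trace-alpha} deferred to Section \ref{s:regularity}. Once those are in hand, one must also be slightly careful about the mode of convergence of traces: strong $L^2_\alpha(\partial B_1^\alpha)$ convergence of $w_n$ does not follow from boundedness in $W^{1,2}_\alpha$ without a compact trace embedding, so it is cleaner to use only weak convergence of traces together with weak lower semicontinuity of $w\mapsto\int_{\partial B_1^\alpha}\psi_\alpha w^2\,d\Ha$ (a nonnegative weighted quadratic functional, hence weakly l.s.c.\ on $L^2_\alpha(\partial B_1^\alpha)$), which is all that is needed to reach the contradiction $0 \ge c^2|\partial B_1^\alpha|_{\psi_\alpha} > 0$. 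Everything else — the scaling identities and the connectedness argument identifying the weak limit as a constant — is routine given the material already developed in Sections \ref{s:functional-setting}–\ref{s:monotonicity} and the Appendix.
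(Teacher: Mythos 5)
Your proof is correct and follows essentially the same route as the paper: reduce to $r=1$ by the anisotropic scaling $\delta_r$ (using the $\delta_r$-invariance of $\psi_\alpha$ and the scaling lemmas of the Appendix), then argue by compactness–contradiction using the compact embedding $W^{1,2}_\alpha(B_1^\alpha)\hookrightarrow\hookrightarrow L^2(B_1^\alpha)$ and the continuity of the trace into $L^2_\alpha(\partial B_1^\alpha)$. The only differences are cosmetic (you normalize $\int_{B_1^\alpha}w_n^2=1$ and contradict $c^2\int_{\partial B_1^\alpha}\psi_\alpha\,d\Ha>0$ directly, whereas the paper normalizes the full $W^{1,2}_\alpha$-norm and shows the limiting constant must vanish on the boundary), so nothing further is needed.
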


\begin{proof} For simplicity let start by proving \eqref{eq:Poincare} for $r=1$. It is enough to prove that the original norm of $ W^{1,2}_\alpha (B_1^\alpha)$ is equivalent to the norm defined by
$$
    \|v\|_\alpha := \left(\int_{B_1^\alpha} |\nabla_\alpha v|^2 \, dxdy
     + \int_{\partial B_1^\alpha} \psi_\alpha v^2 \, d\Ha \right)^{\frac 12} \qquad \text{for any } v \in  W^{1,2}_\alpha (B_1^\alpha)
$$
which means that there exist two positive constants $C_1,C_2$ such that
$$
   \|v\|_\alpha \le C_1  \|v\|_{ W^{1,2}_\alpha (B_1^\alpha)} \, , \qquad
    \|v\|_{ W^{1,2}_\alpha (B_1^\alpha)} \le C_2 \|v\|_\alpha  \qquad \text{for any } v \in  W^{1,2}_\alpha (B_1^\alpha)  \, .
$$
The first estimate is a consequence of the continuity of the trace map, see Proposition \ref{p:trace-alpha} in Section \ref{s:regularity}.
For proving the second estimate we can proceed by contradiction and find a sequence $\{v_n\} \subset  W^{1,2}_\alpha (B_1^\alpha)$
with $\|v_n\|_{ W^{1,2}_\alpha (B_1^\alpha)}=1$ such that $\|v_n\|_\alpha \to 0$ as $n \to +\infty$. Up to subsequences, we can assume that there exists $v \in   W^{1,2}_\alpha (B_1^\alpha)$ such that $v_n \rightharpoonup v$ weakly in $ W^{1,2}_\alpha (B_1^\alpha)$ and by the compact embedding $ W^{1,2}_\alpha (B_1^\alpha) \subset L^2(B_1^\alpha)$, see Proposition \ref{p:embedding} in Section \ref{s:regularity}, $v_n \to v$ strongly in $L^2(B_1^\alpha)$. Moreover, by weak convergence we have that $\int_{B_1^\alpha} \nabla_\alpha v_n \cdot \nabla_\alpha v \, dxdy \to \int_{B_1^\alpha} |\nabla_\alpha v|^2 \, dxdy$. On the other hand we also have
\begin{equation*}
   \left|\int_{B_1^\alpha} \! \nabla_\alpha v_n \!\cdot\! \nabla_\alpha v \, dxdy \right| \leq
      \left(\int_{B_1^\alpha} |\nabla_\alpha v_n|^2 \, dxdy\right)^{\frac 12}
        \left(\int_{B_1^\alpha} |\nabla_\alpha v|^2 \, dxdy\right)^{\frac 12} \! \le  \|v_n\|_\alpha \left(\int_{B_1^\alpha} |\nabla_\alpha v|^2 \, dxdy\right)^{\frac 12}
\end{equation*}
which converges to zero thus proving that $\int_{B_1^\alpha} |\nabla_\alpha v|^2 \, dxdy=0$ so that $v$ is constant in $B_1^\alpha$. On the other hand, the continuity of the trace map also implies that $v_n \rightharpoonup v$ weakly in $L^2_\alpha(\partial B_1^\alpha)$ but $v_n \to 0$ strongly in $L^2_\alpha(\partial B_1^\alpha)$ since $\|v_n\|_{L^2_\alpha(\partial B_1^\alpha)} \le \|v_n\|_\alpha \to 0$. This implies that the constant function $v$ vanishes on $\partial B_1^\alpha$ which means that actually $v=0$ in $B_1^\alpha$.

In particular $v_n \to 0$ in $L^2(B_1^\alpha)$ and recalling that  $\int_{B_1^\alpha} |\nabla_\alpha v_n|^2 dxdy \le \|v_n\|_\alpha^2 \to 0$, this implies $v_n \to 0$ strongly in $ W^{1,2}_\alpha (B_1^\alpha)$, contradicting $\|v_n\|_{ W^{1,2}_\alpha (B_1^\alpha)}=1$.
The equivalence of the two norms readily implies the validity of \eqref{eq:Poincare} in the case $r=1$.

For proving \eqref{eq:Poincare} for any arbitrary radius $r>0$, letting $v\in  W^{1,2}_\alpha (B_r^\alpha)$, we consider the rescaled function $v_r(x,y)=v(rx,r^{\alpha+1}y)$ defined in $B_1^\alpha$ and we apply to it \eqref{eq:Poincare} with $r=1$.
  Then applying Lemma \ref{l:change-B} and Lemma \ref{l:rescale-partial} we arrive to the conclusion in the general case.
\end{proof}

As a consequence of Proposition \ref{p:Poincare} one can also prove the following

\begin{corollary} \label{c:Poincare}  Let $r>0$. Let $1\le q<\infty$ be such that $q\le \frac{2Q}{Q-2}$ when $Q>2$. Then
there exists a constant $C(h,k,\alpha,q)$ depending only on $h$, $k$, $\alpha$ and $q$ such that
  \begin{equation}\label{eq:Poincare-bis}
  \left( \int_{B_r^\alpha} |v|^q \, dxdy\right)^{\frac 2q} \le C(h,k,\alpha,q) \, r^{\frac{2-q}{q}\, Q+2}
    \left(\int_{B_r^\alpha} |\nabla_\alpha v|^2 \, dxdy
     +\frac{1}{r}\int_{\partial B_r^\alpha} \psi_\alpha v^2 \, d\Ha \right)
\end{equation}
for any $v\in  W^{1,2}_\alpha (B_r^\alpha)$.
\end{corollary}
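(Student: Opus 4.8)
The plan is to deduce Corollary \ref{c:Poincare} from Proposition \ref{p:Poincare} by combining the $L^2$-Poincaré-type inequality with a Grushin–Sobolev embedding, and then to obtain the general radius $r$ by the usual anisotropic rescaling. I would first treat the case $r=1$. Let $v \in W^{1,2}_\alpha(B_1^\alpha)$. The right-hand side of \eqref{eq:Poincare} with $r=1$ controls $\|v\|_{W^{1,2}_\alpha(B_1^\alpha)}^2$ up to a constant: indeed, by Proposition \ref{p:Poincare}, $\int_{B_1^\alpha} v^2\,dxdy \le C(h,k,\alpha)\,(\int_{B_1^\alpha}|\nabla_\alpha v|^2\,dxdy + \int_{\partial B_1^\alpha}\psi_\alpha v^2\,d\Ha)$, and adding $\int_{B_1^\alpha}|\nabla_\alpha v|^2\,dxdy$ to both sides gives $\|v\|_{W^{1,2}_\alpha(B_1^\alpha)}^2 \le C'(h,k,\alpha)\,(\int_{B_1^\alpha}|\nabla_\alpha v|^2\,dxdy + \int_{\partial B_1^\alpha}\psi_\alpha v^2\,d\Ha)$. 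Then I would invoke the Grushin–Sobolev embedding (this is Proposition \ref{p:embedding}, and for the full subcritical range the embedding $W^{1,2}_\alpha(B_1^\alpha) \hookrightarrow L^q(B_1^\alpha)$ for $1 \le q \le \frac{2Q}{Q-2}$ when $Q>2$, and for all $q<\infty$ when $Q\le 2$), to get $(\int_{B_1^\alpha}|v|^q\,dxdy)^{2/q} \le C(h,k,\alpha,q)\,\|v\|_{W^{1,2}_\alpha(B_1^\alpha)}^2$. Chaining the two bounds yields \eqref{eq:Poincare-bis} with $r=1$, since the exponent $\frac{2-q}{q}Q+2$ equals $2$ when… wait, more precisely at $r=1$ the power of $r$ is irrelevant.

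For a general radius $r>0$, I would rescale. Given $v \in W^{1,2}_\alpha(B_r^\alpha)$, set $v_r(x,y) := v(\delta_r(x,y)) = v(rx, r^{\alpha+1}y)$, which by \eqref{eq:delta-B} is defined on $B_1^\alpha$ and lies in $W^{1,2}_\alpha(B_1^\alpha)$. Applying the case $r=1$ to $v_r$ and then tracking the Jacobian factors under $\delta_r$ gives the claim: by Lemma \ref{l:change-B} the volume element transforms so that $\int_{B_1^\alpha}|v_r|^q\,dxdy = r^{-Q}\int_{B_r^\alpha}|v|^q\,dxdy$ and $\int_{B_1^\alpha}|\nabla_\alpha v_r|^2\,dxdy = r^{2-Q}\int_{B_r^\alpha}|\nabla_\alpha v|^2\,dxdy$, while by Lemma \ref{l:rescale-partial} the boundary term satisfies $\int_{\partial B_1^\alpha}\psi_\alpha v_r^2\,d\Ha = r^{-Q}\int_{\partial B_r^\alpha}\psi_\alpha v^2\,d\Ha$ (recall $X_G d_\alpha = d_\alpha$ and the scaling \eqref{eq:d-delta}, so $\psi_\alpha$ is $\delta_r$-invariant). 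Substituting these into $(\int_{B_1^\alpha}|v_r|^q)^{2/q} \le C\,(\int_{B_1^\alpha}|\nabla_\alpha v_r|^2 + \int_{\partial B_1^\alpha}\psi_\alpha v_r^2)$ and solving for the $B_r^\alpha$ integrals produces exactly the power $r^{\frac{2-q}{q}Q+2}$ in \eqref{eq:Poincare-bis}; one checks $-\tfrac{2}{q}Q + \min(2-Q,-Q+1)$… the bookkeeping is routine once the three scaling identities are in hand, and consistency is easily verified by setting $q=2$, where the exponent becomes $2$ as in \eqref{eq:Poincare}.

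The only real point requiring care is the scaling exponent bookkeeping, i.e. checking that the powers of $r$ coming from the volume element ($r^{Q}$), from $|\nabla_\alpha v|^2$ under $\delta_r$ (an extra $r^2$ relative to the function), and from the surface measure $d\Ha$ together with $\psi_\alpha$ combine to give precisely $r^{\frac{2-q}{q}Q+2}$; this is a direct computation using \eqref{eq:delta-B}, \eqref{eq:d-delta}, Lemma \ref{l:change-B} and Lemma \ref{l:rescale-partial}. A secondary, more conceptual input is the validity of the Grushin–Sobolev embedding with critical exponent $\frac{2Q}{Q-2}$ on the bounded domain $B_1^\alpha$; this is standard in the Grushin setting (it follows from the Sobolev inequality associated to the sum-of-squares structure \eqref{eq:hormander}) and is the embedding referenced in Proposition \ref{p:embedding}. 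With these in place, the proof is a short assembly. I would write it as: prove the $r=1$ case via Proposition \ref{p:Poincare} plus Proposition \ref{p:embedding}; then conclude by the rescaling $v \mapsto v_r$ and Lemmas \ref{l:change-B} and \ref{l:rescale-partial}, exactly as in the last paragraph of the proof of Proposition \ref{p:Poincare}.
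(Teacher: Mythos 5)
Your proposal is correct and follows essentially the same route as the paper: Proposition \ref{p:embedding} on $B_1^\alpha$ combined with the Poincar\'e inequality \eqref{eq:Poincare} and the rescaling identities of Lemmas \ref{l:change-B} and \ref{l:rescale-partial} (the paper merely rescales the Sobolev embedding first and then invokes \eqref{eq:Poincare} at radius $r$, rather than assembling everything at $r=1$). One small slip in your bookkeeping: by Lemma \ref{l:rescale-partial} and the $\delta_r$-invariance of $\psi_\alpha$ the boundary term scales as $\int_{\partial B_1^\alpha}\psi_\alpha v_r^2\,d\Ha = r^{1-Q}\int_{\partial B_r^\alpha}\psi_\alpha v^2\,d\Ha$, not $r^{-Q}$; with this correction the exponent $r^{\frac{2-q}{q}Q+2}$ comes out exactly as claimed.
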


\begin{proof} By Proposition \ref{p:embedding} applied to the case $\Omega=B_1^\alpha$ combined with a rescaling argument yields
\begin{equation} \label{eq:Poincare-ter}
  \left( \int_{B_r^\alpha} |v|^q \, dxdy\right)^{\frac 2q} \le C_1(h,k,\alpha,q) \, r^{\frac{2-q}{q}\, Q+2}
    \left(\int_{B_r^\alpha} |\nabla_\alpha v|^2 \, dxdy
     +\frac{1}{r^2}\int_{B_r^\alpha} v^2 \, dxdy \right) \, .
\end{equation}
Estimate \eqref{eq:Poincare-bis} then follows combining \eqref{eq:Poincare-ter} with \eqref{eq:Poincare}.
\end{proof}

\begin{lemma} \label{l:H>0} Let $u \in    W^{1,2}_{\alpha,\, {\rm loc}} (\Omega)$ be a nontrivial weak solution of \eqref{eq:u} with $V$ satisfying \eqref{eq:V} and let $H$ be the corresponding function defined in \eqref{eq:H}. Then there exist $\bar r\in (0,R)$ such that $H(r)>0$ for any $r\in (0,\bar r)$.
\end{lemma}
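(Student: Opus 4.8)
The plan is to argue by contradiction. If the conclusion fails, then, since $H\in W^{1,1}_{\rm loc}(0,R)$ is continuous and nonnegative, for every $\bar r\in(0,R)$ there is some $r_0\in(0,\bar r)$ with $H(r_0)=0$. Fix once and for all $R_0\in(0,R)$, so that $\overline{B^\alpha_{R_0}}\subset\Omega$ and $\|V\|_{L^\sigma(B^\alpha_{R_0})}<\infty$. I will choose $\bar r\in(0,R_0)$ small, depending only on $h,k,\alpha,\sigma$ and $\|V\|_{L^\sigma(B^\alpha_{R_0})}$, and show that such an $r_0$ forces $u\equiv 0$ in $B^\alpha_{r_0}$, and then $u\equiv 0$ in $\Omega$, contradicting the nontriviality of $u$.

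Since $\psi_\alpha>0$ a.e., the equality $H(r_0)=0$ says exactly that the trace of $u$ on $\partial B^\alpha_{r_0}$ vanishes; by the trace characterization of $ W^{1,2}_{\alpha,0}$ recalled in Section \ref{s:regularity}, this gives $u\in  W^{1,2}_{\alpha,0}(B^\alpha_{r_0})$, so that the function $v$ equal to $u$ in $B^\alpha_{r_0}$ and to $0$ elsewhere belongs to $ W^{1,2}_{\alpha,c}(\Omega)$ and is an admissible test function in \eqref{eq:var}, yielding
\[
\int_{B^\alpha_{r_0}}|\nabla_\alpha u|^2\,dxdy=\int_{B^\alpha_{r_0}}V\,u^2\,dxdy .
\]
To control the right-hand side I would apply H\"older's inequality with exponents $\sigma$ and $\sigma/(\sigma-1)$ and then Corollary \ref{c:Poincare} with $q=\tfrac{2\sigma}{\sigma-1}$. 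The hypothesis $\sigma>Q/2$ enters twice: it guarantees $q\le\tfrac{2Q}{Q-2}$ (so Corollary \ref{c:Poincare} applies, also in its trivial form when $Q\le 2$), and it makes the exponent $\tfrac{2-q}{q}\,Q+2=2-\tfrac{Q}{\sigma}$ strictly positive. Since moreover the boundary term in \eqref{eq:Poincare-bis} equals $r_0^{\,Q-2}H(r_0)=0$, one obtains
\[
\int_{B^\alpha_{r_0}}|\nabla_\alpha u|^2\,dxdy\ \le\ C\,\|V\|_{L^\sigma(B^\alpha_{R_0})}\,r_0^{\,2-Q/\sigma}\int_{B^\alpha_{r_0}}|\nabla_\alpha u|^2\,dxdy .
\]
Choosing $\bar r$ so that $C\,\|V\|_{L^\sigma(B^\alpha_{R_0})}\,\bar r^{\,2-Q/\sigma}<1$ forces $\int_{B^\alpha_{r_0}}|\nabla_\alpha u|^2=0$; then Proposition \ref{p:Poincare}, again using $H(r_0)=0$, gives $\int_{B^\alpha_{r_0}}u^2=0$, i.e. $u\equiv 0$ in $B^\alpha_{r_0}$.

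It then remains to propagate the vanishing to all of $\Omega$. On $\Omega\setminus\Sigma$ the operator $ \Delta_\alpha $ is uniformly elliptic on compact subsets, and there $u\in  W^{1,2}_{\alpha,\, {\rm loc}}(\Omega)$ is a weak $H^1_{\rm loc}$ solution of the uniformly elliptic equation $-{\rm div}(A_\alpha\nabla u)=Vu$, with $V\in L^\sigma_{\rm loc}$ and $\sigma>Q/2\ge N/2$; since $u$ vanishes on the nonempty open set $B^\alpha_{r_0}\setminus\Sigma$, the classical unique continuation principle for uniformly elliptic operators (de Figueiredo \cite{deFiGo}, Sogge \cite{So}) gives $u\equiv 0$ on every connected component of $\Omega\setminus\Sigma$ met by $B^\alpha_{r_0}$. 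If $\Omega\setminus\Sigma$ is connected, in particular when $h\ge 2$, this already yields $u\equiv 0$ a.e. in $\Omega$ since $|\Sigma|=0$. In the remaining case $h=1$, one combines this with the fact that $B^\alpha_{r_0}$ is a full neighbourhood of a relatively open portion of the hyperplane $\Sigma$ on which both $u$ and $\partial_x u$ vanish (note $\partial_x u\in W^{1,2}_{\alpha,\rm loc}$ by the regularity $u\in W^{2,2}_{\alpha,{\rm loc}}(\Omega)$ of Proposition \ref{p:Pohozaev}), so the vanishing crosses $\Sigma$ and reaches the remaining components; in all cases $u\equiv 0$ in $\Omega$, the desired contradiction.

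The computations in the second paragraph are routine bookkeeping, the only substantive input being the choice of $q$ so that the power of $r_0$ is positive, which is precisely what the assumption $\sigma>Q/2$ provides. The genuinely delicate point is the last paragraph: passing from ``$u\equiv 0$ in a Grushin ball around the origin'' to ``$u\equiv 0$ in $\Omega$'' when $\Omega\setminus\Sigma$ is disconnected (which occurs only for $h=1$), where one needs a unique continuation statement that crosses the degenerate hyperplane $\Sigma$ rather than merely the interior elliptic one.
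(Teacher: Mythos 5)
Your overall strategy coincides with the paper's: assume $H(r_0)=0$ for some small $r_0$, deduce $\int_{B_{r_0}^\alpha}\bigl(|\nabla_\alpha u|^2-Vu^2\bigr)\,dxdy=0$, absorb the potential term via H\"older and Corollary \ref{c:Poincare} with $q=\tfrac{2\sigma}{\sigma-1}$ (your exponent bookkeeping, $2-Q/\sigma>0$, is exactly the paper's $r^{\frac{2\sigma-Q}{\sigma}}$), conclude $u\equiv 0$ in $B_{r_0}^\alpha$, and finish with classical unique continuation off $\Sigma$. Two points deserve comment. First, your route to the identity $\int_{B_{r_0}^\alpha}|\nabla_\alpha u|^2=\int_{B_{r_0}^\alpha}Vu^2$ invokes the implication ``zero trace on $\partial B_{r_0}^\alpha$ $\Rightarrow$ $u\in  W^{1,2}_{\alpha,0} (B_{r_0}^\alpha)$'', so that the zero extension is admissible in \eqref{eq:var}. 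That characterization is standard for classical Sobolev spaces on Lipschitz domains but is \emph{not} established anywhere in Section \ref{s:regularity} for the weighted spaces $ W^{1,2}_\alpha $, so as written this step rests on an unproved lemma. The paper sidesteps it: from \eqref{eq:H'} the vanishing of the trace gives $H'(r_0)=0$, and then \eqref{eq:D-H'} yields $D(r_0)=\tfrac{r_0}{2}H'(r_0)=0$, which is the same identity obtained from already-proved ingredients. You should either adopt that route or supply a proof of the trace characterization.

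Second, your closing paragraph misplaces the difficulty in the case $h=1$. Since $B_{r_0}^\alpha$ is centered at the origin, which lies on $\Sigma$, the set $B_{r_0}^\alpha\setminus\Sigma$ contains points with $x>0$ and with $x<0$, i.e.\ it meets \emph{both} connected components of $\Omega\setminus\Sigma$. Hence your own intermediate conclusion (``$u\equiv0$ on every component met by $B_{r_0}^\alpha$'') already gives $u\equiv0$ a.e.\ in $\Omega$, with no need for the extra argument about Cauchy data $(u,\partial_x u)$ vanishing on $\Sigma$ and the vanishing ``crossing'' the hyperplane --- an argument which, as stated, appeals to no theorem proved in the paper. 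The connectivity obstruction you describe is genuine for Corollaries \ref{c:1}--\ref{c:positive-measure} (see Remark \ref{r:cor}), where the vanishing set may sit entirely on one side of $\Sigma$, but it simply does not arise here.
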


\begin{proof} By \eqref{eq:V}, H\"older inequality, Corollary \ref{c:Poincare}, we deduce that  for all $r \in (0,R):$
\begin{align} \label{eq:est-r0}
    \int_{B_r^\alpha} \left[|\nabla_\alpha u|^2-Vu^2\right] dxdy & \ge \left[1-\|V\|_{L^\sigma(B_r^\alpha)} \,
   C\left(h,k,\alpha,\tfrac{2\sigma}{\sigma-1}\right) r^{\frac{2\sigma-Q}{\sigma}} \right]\int_{B_r^\alpha} |\nabla_\alpha u|^2\,  dxdy  \\[7pt]
\notag   & \qquad -\frac 1r \, \|V\|_{L^\sigma(B_r^\alpha)} \,
   C\left(h,k,\alpha,\tfrac{2\sigma}{\sigma-1}\right) r^{\frac{2\sigma-Q}{\sigma}} \int_{\partial B_r^\alpha} \psi_\alpha  u^2 \, d\Ha \, .
\end{align}
Suppose by contradiction that for any $\bar r\in (0,R)$ the exists $r_0 \in (0,\bar r)$ such that $H(r_0)=0$.
Since by \eqref{eq:V} we know that $\sigma>\frac{Q}{2}$, choosing $\bar r$ small enough such that
\begin{equation} \label{eq:bar-r}
   1-\|V\|_{L^\sigma(B_{\bar r}^\alpha)} \,
   C\left(h,k,\alpha,\tfrac{2\sigma}{\sigma-1}\right) \bar r^{\, \frac{2\sigma-Q}{\sigma}}>0 \, ,
\end{equation}
applying \eqref{eq:est-r0} with $r=r_0<\bar r$, observing that by \eqref{eq:H'} we have $H'(r_0)=0$ being the trace of $u$ zero on $\partial B_{r_0}$, by \eqref{eq:D-H'} and \eqref{eq:D} we obtain
\begin{align*}
   & 0=\int_{B_{r_0}^\alpha} \left[|\nabla_\alpha u|^2-Vu^2\right] dxdy
   \ge \left[1-\|V\|_{L^\sigma(B_{r_0}^\alpha)} \,
   C\left(h,k,\alpha,\tfrac{2\sigma}{\sigma-1}\right) r_0^{\frac{2\sigma-Q}{\sigma}} \right]\int_{B_{r_0}^\alpha} |\nabla_\alpha u|^2\,  dxdy \, ,
\end{align*}
which implies that $u$ is constant in $B_{r_0}^\alpha$ and since its trace on $\partial B_{r_0}^\alpha$ is zero then $u\equiv 0$ in $B_{r_0}^\alpha$. Applying a classical unique continuation principle for elliptic operators, see Sogge \cite{So}, we deduce that $u\equiv 0$ on any open set whose closure is contained in $\Omega \setminus \Sigma$. This proves that $u=0$ a.e. in $\Omega$, a contradiction.
\end{proof}

Thanks to Lemma \ref{l:H>0} we may define the Almgren-type function corresponding to nontrivial solutions of equation \eqref{eq:V}:
\begin{equation}\label{eq:N(r)}
    \mathcal N(r) := \frac{D(r)}{H(r)}  \qquad \text{for any } r\in (0,\bar r) \, .
\end{equation}
 We now compute the derivative of $\mathcal{N}$ and prove some estimates.

\begin{lemma} \label{l:N'} Let $u \in   W^{1,2}_{\alpha,\, {\rm loc}}(\Omega)$ be a nontrivial weak solution of \eqref{eq:u} with $V$ satisfying \eqref{eq:V}, let $\mathcal N$ be defined by \eqref{eq:N(r)} with $\bar r$ as in \eqref{eq:bar-r}. Then for a.e. $r\in (0,\bar r)$ we have
\begin{align} \label{eq:N'}
   \mathcal N'(r)&=2\frac{r^{1-2Q}}{(H(r))^2} \, \left\{ \int_{\partial B_r^\alpha} \psi_\alpha \left({X_G u}\right)^2 d\Ha
  \cdot \int_{\partial B_r^\alpha} \psi_\alpha  u^2 \, d\Ha-\left( \int_{\partial B_r^\alpha} \psi_\alpha u {X_G u} \, d\Ha \right)^2 \right\} \\[10pt]
   \notag  & \qquad -\frac{r^{1-Q}}{H(r)}  \int_{B_r^\alpha} (2V + \nabla V\cdot X_G ) u^2 \, dxdy \, .
\end{align}
Moreover, we also have
\begin{equation} \label{eq:N>-1}
  \mathcal N(r)>-1 \qquad \text{for any } r\in (0,\bar r)
\end{equation}
and
\begin{equation}  \label{eq:N'ge}
  \mathcal N'(r)\ge -K(h,k,\alpha,\sigma,\bar r,V) r^{-1+\frac{2\sigma-Q}{\sigma}} \, [\mathcal N(r)+1] \qquad  \text{for a.e. } \, r \in (0,\bar r) 
\end{equation}
for some positive constant $K(h,k,\alpha,\sigma,\bar r,V)$ depending only on $h$, $k$, $\alpha$, $\sigma$, $\bar r$ and $V$.
\end{lemma}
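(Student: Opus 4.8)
The plan is to differentiate the quotient $\mathcal N=D/H$ directly. This is legitimate on $(0,\bar r)$ because $D,H\in W^{1,1}_{{\rm loc}}(0,R)$ and, by Lemma~\ref{l:H>0}, $H$ is (absolutely) continuous and strictly positive there, hence bounded away from $0$ on compact subintervals; thus $\mathcal N\in W^{1,1}_{{\rm loc}}(0,\bar r)$ and for a.e.\ $r$ one has $\mathcal N'=(D'H-DH')/H^2$. Into this I would substitute the three ingredients already available: the expression \eqref{eq:H'} for $H'$ (with $d_\alpha\equiv r$ on $\partial B_r^\alpha$, so $H'(r)=2r^{-Q}\int_{\partial B_r^\alpha}\psi_\alpha\,uX_Gu\,d\Ha$), the identity $D=\tfrac r2H'$ from \eqref{eq:D-H'}, and the formula \eqref{eq:D'} for $D'$. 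Then $DH'=\tfrac r2(H')^2$ produces the squared boundary term $2r^{1-2Q}\big(\int_{\partial B_r^\alpha}\psi_\alpha uX_Gu\,d\Ha\big)^2$, while $D'H$ produces both $2r^{1-2Q}\int_{\partial B_r^\alpha}\psi_\alpha(X_Gu)^2\,d\Ha\,\int_{\partial B_r^\alpha}\psi_\alpha u^2\,d\Ha$ and the potential term; after rewriting $\int_{\partial B_r^\alpha}\psi_\alpha u^2\,d\Ha=r^{Q-1}H(r)$ to simplify the latter, \eqref{eq:N'} follows by pure bookkeeping.

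For the bound $\mathcal N(r)>-1$ I would in fact prove the sharper inequality $\mathcal N(r)+1\ge\gamma$, where $\gamma:=1-\|V\|_{L^\sigma(B_{\bar r}^\alpha)}C(h,k,\alpha,\tfrac{2\sigma}{\sigma-1})\,\bar r^{\,(2\sigma-Q)/\sigma}>0$ is positive by the choice \eqref{eq:bar-r} of $\bar r$. Writing $\mathcal N(r)+1=(D(r)+H(r))/H(r)$ and $D(r)+H(r)=r^{1-Q}\big[r\int_{B_r^\alpha}(|\nabla_\alpha u|^2-Vu^2)\,dxdy+\int_{\partial B_r^\alpha}\psi_\alpha u^2\,d\Ha\big]$, I would apply \eqref{eq:est-r0} and use that for $r<\bar r$ one has $1-\|V\|_{L^\sigma(B_r^\alpha)}C\,r^{(2\sigma-Q)/\sigma}\ge\gamma$ (monotonicity of $\|V\|_{L^\sigma(B_r^\alpha)}$ in $r$ and of $r\mapsto r^{(2\sigma-Q)/\sigma}$, the latter exponent being positive since $\sigma>Q/2$). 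This gives $r\int_{B_r^\alpha}(|\nabla_\alpha u|^2-Vu^2)+\int_{\partial B_r^\alpha}\psi_\alpha u^2\ge\gamma\big(r\int_{B_r^\alpha}|\nabla_\alpha u|^2+\int_{\partial B_r^\alpha}\psi_\alpha u^2\big)\ge\gamma\int_{\partial B_r^\alpha}\psi_\alpha u^2$, i.e.\ $D(r)+H(r)\ge\gamma H(r)$, which yields \eqref{eq:N>-1} and, as by-products, the estimates $\int_{B_r^\alpha}|\nabla_\alpha u|^2\,dxdy\le\gamma^{-1}r^{Q-2}(D(r)+H(r))$ and $H(r)\le\gamma^{-1}(D(r)+H(r))$ needed below.

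For the differential inequality \eqref{eq:N'ge} I would start from \eqref{eq:N'} and discard the first (nonnegative) term via the Cauchy--Schwarz inequality in $L^2(\partial B_r^\alpha,\psi_\alpha\,d\Ha)$ applied to $X_Gu$ and $u$ (whose traces lie in that space by Proposition~\ref{p:Pohozaev}), so that $\mathcal N'(r)\ge-\frac{r^{1-Q}}{H(r)}\int_{B_r^\alpha}|2V+\nabla V\cdot X_G|\,u^2\,dxdy$. On $B_{\bar r}^\alpha$ one has $|X_G(x,y)|\le|x|+(\alpha+1)|y|\le d_\alpha(x,y)+d_\alpha(x,y)^{\alpha+1}\le\bar r+\bar r^{\alpha+1}$ by \eqref{eq:distance}, hence $\|2V+\nabla V\cdot X_G\|_{L^\sigma(B_r^\alpha)}\le c(\alpha,\bar r)\|V\|_{W^{1,\sigma}(B_{\bar r}^\alpha)}$. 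Applying H\"older with exponents $\sigma$ and $\frac{\sigma}{\sigma-1}$ and then Corollary~\ref{c:Poincare} with $q=\frac{2\sigma}{\sigma-1}$ (admissible since $\sigma>Q/2$ forces $q\le\frac{2Q}{Q-2}$ when $Q>2$, any finite $q$ being allowed when $Q=2$; the exponent there equals $\frac{2-q}{q}Q+2=\frac{2\sigma-Q}{\sigma}$) gives $\int_{B_r^\alpha}|2V+\nabla V\cdot X_G|u^2\le C_*\,r^{(2\sigma-Q)/\sigma}\big(\int_{B_r^\alpha}|\nabla_\alpha u|^2+\frac1r\int_{\partial B_r^\alpha}\psi_\alpha u^2\big)$ with $C_*=C_*(h,k,\alpha,\sigma,\bar r,V)$. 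Feeding in the two estimates from the previous paragraph together with $\frac1r\int_{\partial B_r^\alpha}\psi_\alpha u^2=r^{Q-2}H(r)\le\gamma^{-1}r^{Q-2}(D(r)+H(r))$, the parenthesis is bounded by $\frac2\gamma r^{Q-2}(D(r)+H(r))$; since $r^{1-Q}\cdot r^{Q-2}=r^{-1}$ and $(D(r)+H(r))/H(r)=\mathcal N(r)+1$, this is exactly \eqref{eq:N'ge} with $K=2C_*/\gamma$.

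The one genuinely delicate point is the appearance of the factor $\mathcal N(r)+1$ (rather than a bare constant) on the right of \eqref{eq:N'ge}: the crude estimate only gives $\mathcal N'(r)\ge -C\,r^{-1+(2\sigma-Q)/\sigma}\big(\gamma^{-1}(\mathcal N(r)+1)+1\big)$, and the spurious additive constant cannot be absorbed into $\mathcal N(r)+1$ unless one knows $\mathcal N(r)+1$ stays bounded away from $0$. This is precisely why I would prove the quantitative lower bound $\mathcal N(r)+1\ge\gamma$ in place of the weaker $\mathcal N(r)>-1$; granting that, everything else is routine manipulation of the already-established identities of Lemmas~\ref{l:H'} and the lemma preceding Proposition~\ref{p:Poincare}, the Pohozaev identity, and the Poincar\'e-type inequalities of Proposition~\ref{p:Poincare} and Corollary~\ref{c:Poincare}.
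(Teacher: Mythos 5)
Your proposal is correct and follows essentially the same route as the paper: \eqref{eq:N'} is obtained from the quotient rule together with \eqref{eq:H'}, \eqref{eq:D-H'} and \eqref{eq:D'}; the lower bound \eqref{eq:N>-1} comes from \eqref{eq:est-r0} and the choice \eqref{eq:bar-r} (your quantitative form $\mathcal N(r)+1\ge\gamma$ is exactly the content of the paper's inequality $D(r)+H(r)\ge\gamma H(r)$ in \eqref{eq:D+H}); and \eqref{eq:N'ge} follows by discarding the nonnegative Cauchy--Schwarz term and controlling the potential term via H\"older and Corollary \ref{c:Poincare}, absorbing both contributions into $D(r)+H(r)$. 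The "delicate point" you flag is handled in the paper in precisely the way you propose, so no gap remains.
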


\begin{proof} The proof of \eqref{eq:N'} follows by \eqref{eq:H'}, \eqref{eq:D-H'}, \eqref{eq:D'} and the definition of $\mathcal N$.

Let us proceed with the proof of \eqref{eq:N>-1}. Let $\bar r$ be as in \eqref{eq:bar-r}. Then by \eqref{eq:est-r0} and \eqref{eq:bar-r} we have that
\begin{align} \label{eq:D+H}
    D(r)+H(r) & \ge r^{2-Q} \left[ \int_{B_r^\alpha} \left(|\nabla_\alpha u|^2-Vu^2\right)dxdy \right.  \\[7pt]
    \notag  & \qquad \left. +\frac 1r \, \|V\|_{L^\sigma(B_r^\alpha)} \,
   C\left(h,k,\alpha,\tfrac{2\sigma}{\sigma-1}\right) r^{\frac{2\sigma-Q}{\sigma}} \int_{\partial B_r^\alpha} \psi_\alpha u^2 \, d\Ha \right]  \\[7pt]
  \notag & \ge r^{2-Q} \left[1-\|V\|_{L^\sigma(B_r^\alpha)} \,
   C\left(h,k,\alpha,\tfrac{2\sigma}{\sigma-1}\right) r^{\frac{2\sigma-Q}{\sigma}} \right]\int_{B_r^\alpha} |\nabla_\alpha u|^2\,  dxdy   > 0
\end{align}
for any $r \in (0,\bar r)$. Dividing the last inequality by $H(r)$, \eqref{eq:N>-1} readily follows.

Let us proceed with the proof of \eqref{eq:N'ge}. By \eqref{eq:V}, H\"older inequality, Corollary \ref{c:Poincare} and \eqref{eq:D+H}, we infer that there exists a constant $C(h,k,\alpha,\sigma,\bar r)$ depending only on $h$, $k$, $\alpha$, $\sigma$ and $\bar r$ such that
\begin{align} \label{eq:pr-est}
   & \left|\int_{B_r^\alpha} (2V + \nabla V\cdot X_G ) u^2 \, dxdy \right| \le
   C\left(h,k,\alpha,\sigma,\bar r\right) r^{\frac{2\sigma-Q}{\sigma}} \, \|V\|_{W^{1,\sigma}(B_r^\alpha)}
   \, r^{Q-2} \, [D(r)+H(r)]
\end{align}
for any $r\in (0,\bar r)$. Combining \eqref{eq:pr-est} with \eqref{eq:N'} and using the H\"older inequality, we obtain
\begin{align} \label{eq:N'-basso}
   & \mathcal N'(r)\ge -\frac{r^{1-Q}}{H(r)}  \int_{B_r^\alpha} (2V + \nabla V\cdot X_G ) u^2 \, dxdy
   \ge -K(h,k,\alpha,\sigma,\bar r,V) r^{-1+\frac{2\sigma-Q}{\sigma}}[\mathcal N(r)+1]
\end{align}
for any $r\in (0,\bar r)$ where we put $K(h,k,\alpha,\sigma,\bar r,V)=C\left(h,k,\alpha,\sigma,\bar r\right) \, \|V\|_{W^{1,\sigma}(B_{\bar r}^\alpha)}$. This completes the proof of \eqref{eq:N'ge}.
\end{proof}

\bigskip

\begin{lemma} \label{l:bound} Suppose that all the assumptions of Lemma \ref{l:N'} hold true. Then $\mathcal N$ is bounded as $r\to 0^+$.
\end{lemma}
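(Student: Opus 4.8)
The plan is to establish the boundedness of $\mathcal N$ as $r \to 0^+$ by integrating the differential inequality \eqref{eq:N'ge}. The key observation is that the right-hand side of \eqref{eq:N'ge} contains the factor $r^{-1+\frac{2\sigma-Q}{\sigma}}$, and since $\sigma > Q/2$ by \eqref{eq:V}, the exponent satisfies $-1 + \frac{2\sigma-Q}{\sigma} = \frac{\sigma - Q}{\sigma} > -1$, so the function $r \mapsto r^{-1+\frac{2\sigma-Q}{\sigma}}$ is integrable near $0$. This is precisely the structural feature that makes the argument work.

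First I would rewrite \eqref{eq:N'ge} in the form
\[
  \frac{d}{dr}\log\bigl(\mathcal N(r)+1\bigr) = \frac{\mathcal N'(r)}{\mathcal N(r)+1} \ge -K(h,k,\alpha,\sigma,\bar r,V)\, r^{-1+\frac{2\sigma-Q}{\sigma}}
  \qquad \text{for a.e. } r \in (0,\bar r),
\]
which is legitimate because $\mathcal N(r)+1 > 0$ on $(0,\bar r)$ by \eqref{eq:N>-1}, and because $\mathcal N$ is absolutely continuous on compact subintervals of $(0,\bar r)$ (being a quotient of $W^{1,1}_{\rm loc}$ functions with $H$ bounded away from zero locally). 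Integrating this inequality over an interval $[r, r_1] \subset (0,\bar r)$ gives
\[
  \log\bigl(\mathcal N(r_1)+1\bigr) - \log\bigl(\mathcal N(r)+1\bigr) \ge -K \int_r^{r_1} s^{-1+\frac{2\sigma-Q}{\sigma}}\, ds \ge -K \int_0^{\bar r} s^{-1+\frac{2\sigma-Q}{\sigma}}\, ds =: -\widetilde K,
\]
where $\widetilde K < \infty$ precisely because $-1 + \frac{2\sigma-Q}{\sigma} > -1$. Rearranging yields $\mathcal N(r)+1 \le e^{\widetilde K}\bigl(\mathcal N(r_1)+1\bigr)$ for all $r \in (0,r_1)$, so $\mathcal N$ is bounded above near $0$. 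The lower bound is immediate from \eqref{eq:N>-1}, which already gives $\mathcal N(r) > -1$. Hence $\mathcal N$ is bounded on $(0,\bar r)$, and in particular as $r \to 0^+$.

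I do not expect a serious obstacle here: the only delicate points are justifying that $\mathcal N$ (equivalently $\log(\mathcal N + 1)$) is absolutely continuous so that the fundamental theorem of calculus applies to its a.e. derivative, and confirming the sign of the exponent $\frac{\sigma-Q}{\sigma}$. Both follow directly from the regularity of $D$ and $H$ established in the preceding lemmas (each in $W^{1,1}_{\rm loc}(0,R)$), from Lemma \ref{l:H>0} which keeps $H$ positive near $0$, and from the hypothesis $\sigma > Q/2$ in \eqref{eq:V}. The closest thing to a subtlety is making sure $H$ does not vanish on the interval of integration, but this is exactly what Lemma \ref{l:H>0} guarantees on $(0,\bar r)$.
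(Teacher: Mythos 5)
Your proof is correct and follows essentially the same route as the paper: integrating the logarithmic form of \eqref{eq:N'ge} over $(r,\bar r)$, using that the exponent $-1+\frac{2\sigma-Q}{\sigma}>-1$ makes the right-hand side integrable near $0$, and invoking \eqref{eq:N>-1} for the lower bound. No gaps.
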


\begin{proof} Let us consider \eqref{eq:N'ge} and for simplicity denote by $K$ the constant $K(h,k,\alpha,\sigma,\bar r,V)$ introduced in the statement of Lemma \ref{l:N'} and let us put $\delta=\frac{2\sigma-Q}{\sigma}>0$. Integrating \eqref{eq:N'ge} in $(r,\bar r)$ we obtain
\begin{equation*}
  \log(\mathcal N(\bar r)+1)-\log(\mathcal N(r)+1)\ge \tfrac{K}{\delta} \left(r^\delta-\bar r^{\, \delta}\right) \qquad \text{for any } r\in (0,\bar r) \, .
\end{equation*}
This implies
\begin{equation*}
   \mathcal N(r)+1\le [\mathcal N(\bar r)+1] e^{\tfrac{K}{\delta} \left(\bar r^{\, \delta}-r^\delta\right)} \le [\mathcal N(\bar r)+1] e^{\tfrac{K\bar r^{\, \delta}}{\delta}} \qquad \text{for any } r\in (0,\bar r) \, .
\end{equation*}
This proves that $\mathcal N$ is bounded from above in a right neighborhood of $0$.
The fact that $\mathcal N$ is also bounded from below follows by \eqref{eq:N>-1}.
\end{proof}

\begin{lemma} \label{l:def-ell} Suppose that all the assumption of Lemma \ref{l:N'} hold true. Then $\mathcal N$ admits a finite limit as $r\to 0^+$ which we denote by $\ell$.
\end{lemma}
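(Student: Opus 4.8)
The plan is to show that the limit of $\mathcal{N}(r)$ as $r\to 0^+$ exists and is finite by combining the a.e.\ lower bound on $\mathcal{N}'$ from Lemma \ref{l:N'} with the uniform boundedness of $\mathcal{N}$ from Lemma \ref{l:bound}. The key observation is that \eqref{eq:N'ge} together with the boundedness of $\mathcal{N}$ shows that $\mathcal{N}'(r) \ge -g(r)$ for a.e.\ $r\in(0,\bar r)$, where $g(r) := K(h,k,\alpha,\sigma,\bar r,V)\,(\sup_{(0,\bar r)}[\mathcal{N}+1])\,r^{-1+\delta}$ and $\delta = \frac{2\sigma-Q}{\sigma}>0$ by \eqref{eq:V}. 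Since $\delta>0$, the function $g$ is integrable near $0$: $\int_0^{\bar r} g(r)\,dr < \infty$.

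The main step is then to introduce the auxiliary function $G(r) := \mathcal{N}(r) + \int_0^r g(s)\,ds$, which is well defined on $(0,\bar r)$ because the integral converges. From $\mathcal{N}' \ge -g$ a.e.\ we get $G'(r) = \mathcal{N}'(r) + g(r) \ge 0$ for a.e.\ $r$; since $\mathcal{N} \in W^{1,1}_{\rm loc}(0,\bar r)$ (being a ratio of the $W^{1,1}_{\rm loc}$ functions $D$ and $H$, with $H>0$ on $(0,\bar r)$) and $r\mapsto \int_0^r g$ is absolutely continuous, $G$ is locally absolutely continuous and hence nondecreasing on $(0,\bar r)$. A monotone function admits a limit as $r\to 0^+$, possibly $-\infty$ a priori; but $\mathcal{N}$ is bounded by Lemma \ref{l:bound} and $\int_0^r g \to 0$ as $r\to 0^+$, so $G$ is bounded near $0$, and therefore $\lim_{r\to 0^+} G(r)$ exists and is finite. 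Consequently
\[
\ell := \lim_{r\to 0^+}\mathcal{N}(r) = \lim_{r\to 0^+} G(r) - \lim_{r\to 0^+}\int_0^r g(s)\,ds = \lim_{r\to 0^+} G(r)
\]
exists and is finite. I would also record, for later use in the blow-up analysis, that $\ell \ge -1$ by passing to the limit in \eqref{eq:N>-1}, and in fact $\ell \ge 0$ once one knows $D(r)\ge 0$ for small $r$ from \eqref{eq:est-r0} and \eqref{eq:bar-r}, though strictly speaking only finiteness is asserted in the statement.

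I do not expect a serious obstacle here: the argument is the standard ``monotonicity up to an integrable perturbation'' trick, and all the analytic inputs (the differential inequality \eqref{eq:N'ge}, the two-sided bound on $\mathcal{N}$, and the $W^{1,1}_{\rm loc}$ regularity of the ingredients) are already established in the preceding lemmas. The only mild care needed is to check that $g$ is genuinely integrable at $0$, which is exactly where the hypothesis $\sigma > Q/2$ enters through $\delta = \frac{2\sigma-Q}{\sigma}>0$, and to note that $H(r)>0$ on $(0,\bar r)$ (Lemma \ref{l:H>0}) so that $\mathcal{N}$ is well defined and inherits the regularity of $D$ and $H$.
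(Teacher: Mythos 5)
Your proof is correct and follows essentially the same route as the paper: the paper decomposes $\mathcal N'$ (via \eqref{eq:N'} and \eqref{eq:N'-basso}) into a nonnegative part plus an $L^1(0,\bar r)$ part and concludes that $\int_r^{\bar r}\mathcal N'$ converges, which is the same ``monotone up to an integrable perturbation'' argument you implement by adding $\int_0^r g$ to $\mathcal N$. The only cosmetic difference is that you use only the one-sided bound \eqref{eq:N'ge} together with Lemma \ref{l:bound}, while the paper reads off the nonnegative part explicitly from the Cauchy--Schwarz term in \eqref{eq:N'}; both are equally valid and rely on the same inputs.
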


\begin{proof} By Lemma \ref{l:bound} we know that $\mathcal N$ is bounded and by \eqref{eq:N'} and \eqref{eq:N'-basso}, we infer that $\mathcal N'$ is the sum of a nonnegative function $f(r)$  (the first line in the right hand side of  \eqref{eq:N'}) and of a function $g(r)$  (the second line in the right hand side of  \eqref{eq:N'}) which is integrable in $(0,\bar r)$.

Hence, the integral functions $\int_r^{\bar r} f(t)\, dt$ and $\int_r^{\bar r} g(t)\, dt$ both admit a limit as $r\to 0^+$. Therefore, $\int_r^{\bar r} \mathcal N'(t) \, dt$ also admits a limit as $r\to 0^+$. This shows that $\mathcal N$ admits a limit as $r \to 0^+$. This limit is necessarily finite thanks to Lemma \ref{l:bound}.
\end{proof}

Proceeding as in the following papers \cite{FFFN,FF1,FFT1,FFT2,FFT3} with suitable adaptation to  the  present problem one can prove the following.

\begin{lemma} \label{l:H-2ell}
 Suppose that all the  assumptions of Lemma \ref{l:N'} hold true and let $\ell$ be as in Lemma \ref{l:def-ell}. Then the following conclusions hold true:
 \begin{equation} \label{eq:exist-lim-H}
    \lim_{r \to 0^+} r^{-2\ell} H(r)
 \end{equation}
 exists and it is finite and moreover for any $\eta>0$ there exist $K(\eta)>0$ and $r_\eta>0$, both depending on $\eta$, such that
 \begin{equation} \label{eq:est-H-ge}
      H(r) \ge K(\eta) \, r^{2\ell +\eta} \qquad \text{for any } r \in (0,r_\eta) \, .
 \end{equation}
\end{lemma}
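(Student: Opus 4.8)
The plan is to turn everything into a first–order differential (in)equality for $H$ via the identity $D(r)=\tfrac r2 H'(r)$ established in \eqref{eq:D-H'}, which together with $\mathcal N=D/H$ (see \eqref{eq:N(r)}) gives $\mathcal N(r)=\tfrac{rH'(r)}{2H(r)}$. Since $H\in W^{1,1}_{{\rm loc}}(0,R)$ (Lemma \ref{l:H'}) and $H>0$ on $(0,\bar r)$ by Lemma \ref{l:H>0}, the function $\log H$ — and hence $\log\!\big(H(r)/r^{2\ell}\big)$ — lies in $W^{1,1}_{{\rm loc}}(0,\bar r)$, with
\[
\frac{d}{dr}\log H(r)=\frac{2\mathcal N(r)}{r},\qquad
\frac{d}{dr}\log\!\Big(\frac{H(r)}{r^{2\ell}}\Big)=\frac{2\big(\mathcal N(r)-\ell\big)}{r}
\]
for a.e. $r\in(0,\bar r)$. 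Both conclusions of the lemma will follow from a suitable control of $\mathcal N(r)-\ell$ near $0$.

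For the \emph{lower} bound I would use the quantitative estimate \eqref{eq:N'ge}. Setting $\delta:=\frac{2\sigma-Q}{\sigma}>0$ and multiplying \eqref{eq:N'ge} by the integrating factor $e^{\frac K\delta r^\delta}$, one checks that $r\mapsto e^{\frac K\delta r^\delta}\big(\mathcal N(r)+1\big)$ has a.e.\ nonnegative derivative, hence is non-decreasing on $(0,\bar r)$; being bounded by Lemma \ref{l:bound}, it converges as $r\to0^+$, and by Lemma \ref{l:def-ell} its limit is $\ell+1$. Monotonicity then forces $e^{\frac K\delta r^\delta}\big(\mathcal N(r)+1\big)\ge \ell+1$ for every $r\in(0,\bar r)$, so that, recalling $\ell\ge -1$ from \eqref{eq:N>-1} and using $e^{-x}-1\ge -x$,
\[
\mathcal N(r)-\ell\ \ge\ (\ell+1)\big(e^{-\frac K\delta r^\delta}-1\big)\ \ge\ -C_1\,r^\delta ,\qquad C_1:=(\ell+1)\tfrac K\delta\ge 0 .
\]
Plugging this into the logarithmic derivative above shows that $r\mapsto \log\!\big(H(r)/r^{2\ell}\big)+\tfrac{2C_1}{\delta}r^\delta$ is non-decreasing on $(0,\bar r)$; it is bounded above by its value at $\bar r$, hence admits a limit in $[-\infty,+\infty)$ as $r\to0^+$, and since $r^\delta\to0$ the same is true of $\log\!\big(H(r)/r^{2\ell}\big)$. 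Therefore $\lim_{r\to0^+}r^{-2\ell}H(r)$ exists in $[0,+\infty)$, i.e.\ \eqref{eq:exist-lim-H} holds with a finite limit.

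For \eqref{eq:est-H-ge} I would only use the \emph{upper} control coming from $\mathcal N(r)\to\ell$ (Lemma \ref{l:def-ell}): given $\eta>0$ choose $r_\eta\in(0,\bar r)$ with $\mathcal N(r)-\ell<\eta/2$ for all $r\in(0,r_\eta)$ (shrinking $r_\eta$ if necessary so that $H(r_\eta)>0$, by Lemma \ref{l:H>0}). Then $\frac{d}{dr}\log\!\big(H(r)/r^{2\ell}\big)\le \eta/r$ a.e.\ on $(0,r_\eta)$, and integrating on $(r,r_\eta)$ gives
\[
\log\frac{H(r_\eta)}{r_\eta^{2\ell}}-\log\frac{H(r)}{r^{2\ell}}\ \le\ \eta\log\frac{r_\eta}{r},
\]
which rearranges to $H(r)\ge K(\eta)\,r^{2\ell+\eta}$ with $K(\eta):=H(r_\eta)\,r_\eta^{-2\ell-\eta}>0$, as claimed.

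The argument is essentially routine once the monotonicity machinery of the previous lemmas is available; the one step that genuinely uses the hypotheses is the rate $\mathcal N(r)-\ell\ge -C_1 r^\delta$, for which the strictly positive exponent $\delta=\frac{2\sigma-Q}{\sigma}$ — i.e.\ the assumption $\sigma>Q/2$ — is essential, since without an explicit decay rate of $\mathcal N(r)-\ell$ the function $(\mathcal N(r)-\ell)/r$ need not be integrable near $0$ and the limit in \eqref{eq:exist-lim-H} could fail to exist.
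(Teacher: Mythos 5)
Your proof is correct and follows essentially the same route as the paper: the key step in both is the one-sided estimate $\mathcal N(r)-\ell\ge -C_1 r^{\delta}$ with $\delta=\frac{2\sigma-Q}{\sigma}$ (the paper gets it by integrating $\mathcal N'$ using \eqref{eq:N'-basso} and the boundedness of $\mathcal N$, you get the same bound via an integrating factor), after which the paper simply delegates the logarithmic-derivative argument for $\log\big(H(r)/r^{2\ell}\big)$ to \cite[Lemma 5.6]{FFT2}, which is exactly the argument you write out in full.
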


\begin{proof} We first observe that by the proof of Lemma \ref{l:def-ell}, we have that $\mathcal N' \in L^1(0,\bar r)$ and
by \eqref{eq:N'-basso} we have that
\begin{equation} \label{eq:est-present-article}
   \mathcal N(r)-\ell=\int_0^r \mathcal N'(t) \, dt\ge -C_1 \, r^\delta \qquad \text{for any } r\in (0,\bar r)
\end{equation}
with $\delta=\frac{2\sigma-Q}{\sigma}$ accordingly with the notation of Lemma \ref{l:bound}.

Once we have \eqref{eq:est-present-article}, one can follow exactly the same argument used in the proof of \cite[Lemma 5.6]{FFT2} to complete the proof of \eqref{eq:exist-lim-H} and \eqref{eq:est-H-ge}.
\end{proof}

We now perform a blow-up argument  of a  nontrivial solutions $u$ of equation \eqref{eq:V} by defining the following family of rescaled functions:
\begin{equation} \label{eq:u-eps}
u_\eps (x,y) = \dfrac{u(\eps x,\eps^{\alpha+1}y)}{\sqrt{H(\eps)}},  \qquad  \eps >0\, .
\end{equation}
By \eqref{eq:psi-alpha} and Lemma \ref{l:rescale-partial} we readily see that
\begin{align} \label{eq:normalization}
    \int_{\partial B_1^\alpha} \psi_\alpha u_\eps^2 \, d\Ha=1  \, .
\end{align}

 Before proving the following blow-up result, we recall that by $(\rho,\Theta)$ we denote the radial and spherical  coordinates introduced in Section \ref{s:spherical}.
\begin{lemma} \label{l:blow-up} Suppose that all the assumptions of Lemma \ref{l:N'} hold true and let $\ell$ be limit defined in Lemma \ref{l:def-ell}. Then there exists $j\in\N$ such that
    \begin{equation} \label{eq:def-ell}
    \ell=\dfrac{-(Q-2)+\sqrt{(Q-2)^2+4\mu_j(\alpha+1)^2}}{2}
    \end{equation}
    where $\mu_j$ is an eigenvalue of the polar operator $-\mathcal L_\Theta$ with $\mathcal L_\Theta$ defined by \eqref{GrPo2}.

    Moreover, for any sequence $\eps_n \downarrow 0$ there  exist a subsequence $\eps_{n_j}$ and an eigenfunction $\Psi$ of the operator $-\mathcal L_\Theta$ associated  with $\mu_j$ such that $\int_{\partial B_1^\alpha} \psi_\alpha \, \Psi^2 \, d\Ha=1$ and  as $j \to +\infty$
    \begin{equation} \label{eq:u-c}
    u_{\eps_{n_j}}(\rho,\Theta) \to \rho^{\ell} \, \Psi(\Theta) \qquad \rho>0, \, \Theta \in \partial B_1^\alpha \, ,
    \end{equation}
    weakly in $ W^{1,2}_\alpha (B_1^\alpha)$, strongly in $ W^{1,2}_\alpha (B_r^\alpha)$ and uniformly in $B_r^\alpha$ for any $r\in(0,1)$.
\end{lemma}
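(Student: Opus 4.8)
The plan is to run the classical Almgren-type blow-up scheme of \cite{FFFN,FF1,FFT1,FFT2,FFT3}, adapted to the Grushin dilations $\delta_\lambda$, starting from the boundedness of the frequency $\mathcal N$ (Lemma \ref{l:bound}) and its limit $\ell$ (Lemma \ref{l:def-ell}). First I would show that $\{u_\eps\}$ is bounded in $ W^{1,2}_\alpha (B_1^\alpha)$ as $\eps\to0^+$: using the scaling of $\Delta_\alpha$ and $d_\alpha$ and the Jacobian $\eps^Q$ of $\delta_\eps$, and setting $V_\eps(x,y):=\eps^2V(\delta_\eps(x,y))$, one finds
\[
\int_{B_1^\alpha}\big(|\nabla_\alpha u_\eps|^2-V_\eps u_\eps^2\big)\,dxdy=\mathcal N(\eps),\qquad \|V_\eps\|_{L^\sigma(B_1^\alpha)}=\eps^{2-Q/\sigma}\,\|V\|_{L^\sigma(B_\eps^\alpha)}\longrightarrow 0,
\]
since $\sigma>Q/2$; hence, by H\"older's inequality and the Sobolev embedding of Corollary \ref{c:Poincare}, the potential term is absorbed for $\eps$ small, bounding $\int_{B_1^\alpha}|\nabla_\alpha u_\eps|^2$, and the Poincar\'e inequality of Proposition \ref{p:Poincare} together with \eqref{eq:normalization} bounds $\|u_\eps\|_{L^2(B_1^\alpha)}$. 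By reflexivity, the compact embedding of Proposition \ref{p:embedding} and compactness of the trace on $\partial B_1^\alpha$ (Proposition \ref{p:trace-alpha}), along any sequence $\eps_n\downarrow0$ there are a subsequence (not relabelled) and $U\in W^{1,2}_\alpha (B_1^\alpha)$ with $u_{\eps_n}\rightharpoonup U$ in $ W^{1,2}_\alpha (B_1^\alpha)$, $u_{\eps_n}\to U$ in $L^2(B_1^\alpha)$ and in $L^2_\alpha(\partial B_1^\alpha)$; thus $\int_{\partial B_1^\alpha}\psi_\alpha U^2\,d\Ha=1$ and $U\not\equiv0$. Passing to the limit in \eqref{eq:var} for $u_{\eps_n}$ and using $\|V_{\eps_n}\|_{L^\sigma}\to0$, $U$ is a weak solution of $-\Delta_\alpha U=0$ in $B_1^\alpha$.

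The core of the proof is to identify the Almgren frequency of $U$ with $\ell$. I would first upgrade the convergence: testing the equations for $u_{\eps_n}$ and for $U$ against $(u_{\eps_n}-U)\chi$, with $\chi\in C^\infty_c(B_1^\alpha)$ a cutoff equal to $1$ on $B_r^\alpha$, and using $u_{\eps_n}\to U$ in $L^2$ and $\|V_{\eps_n}\|_{L^\sigma}\to0$, one obtains $\int_{B_1^\alpha}|\nabla_\alpha(u_{\eps_n}-U)|^2\chi\,dxdy\to0$, hence $u_{\eps_n}\to U$ strongly in $ W^{1,2}_\alpha (B_r^\alpha)$ and, by continuity of the trace, in $L^2_\alpha(\partial B_r^\alpha)$ for a.e.\ $r\in(0,1)$. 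Writing $H_\eps(r),\widetilde D_\eps(r)$ for the height and energy of $u_\eps$ on $B_r^\alpha$, the scaling relations give $H_\eps(r)=H(\eps r)/H(\eps)$ and $\widetilde D_\eps(r)=D(\eps r)/H(\eps)$, so $\mathcal N(\eps r)=\widetilde D_\eps(r)/H_\eps(r)$. For each fixed $r$ the left-hand side tends to $\ell$ along $\eps_n\to0$ by Lemma \ref{l:def-ell}, while by the strong convergence (the rescaled potential terms vanishing) $\widetilde D_{\eps_n}(r)\to D_U(r)$ and $H_{\eps_n}(r)\to H_U(r)$, the energy and height of $U$ with zero potential; since $U$ is harmonic, nontrivial and enjoys unique continuation (as in Lemma \ref{l:H>0}), $H_U(r)>0$ for all $r\in(0,1]$, whence $\mathcal N_U(r):=D_U(r)/H_U(r)\equiv\ell$ on $(0,1)$. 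Applying the computation of Lemma \ref{l:N'} with $V\equiv0$, the identity $\mathcal N_U'\equiv0$ forces equality in the Cauchy--Schwarz inequality on every sphere $\partial B_r^\alpha$, which together with $\mathcal N_U\equiv\ell$ yields $X_G U=\ell\,U$; hence $U$ is $\delta_\lambda$-homogeneous of degree $\ell$, i.e.\ $U(\rho,\Theta)=\rho^\ell\,\Psi(\Theta)$ with $\Psi:=U|_{\partial B_1^\alpha}$ and $\int_{\partial B_1^\alpha}\psi_\alpha\Psi^2\,d\Ha=1$.

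It then remains to insert $U=\rho^\ell\Psi$ into $-\Delta_\alpha U=0$: by the polar decomposition \eqref{GrPo} this reduces to $\big(\ell(\ell-1)+(Q-1)\ell\big)\Psi+(\alpha+1)^2\mathcal L_\Theta\Psi=0$, i.e.\ $-\mathcal L_\Theta\Psi=\mu_j\Psi$ with $\mu_j=\ell(\ell+Q-2)/(\alpha+1)^2$. Read in the weak form \eqref{eq:b-alpha-eig} through the variational formula \eqref{eq:f-b-sferica}, this shows $\mu_j$ is an eigenvalue of $-\mathcal L_\Theta$ and $\Psi\not\equiv0$ a corresponding eigenfunction in $ W^{1,2}_\alpha (\SA)$. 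Since $\ell\ge0$ (because $\mathcal N(r)\ge -C\,r^{(2\sigma-Q)/\sigma}\to0$ as $r\to0^+$, by \eqref{eq:est-r0}), solving $\ell^2+(Q-2)\ell-\mu_j(\alpha+1)^2=0$ gives exactly \eqref{eq:def-ell}.

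Strong convergence in $ W^{1,2}_\alpha (B_r^\alpha)$ has already been established; the uniform convergence in $B_r^\alpha$ follows from the regularity estimates of Section \ref{s:regularity} applied to the rescaled solutions $u_{\eps_n}$ — which are bounded uniformly in $n$, as $\|V_{\eps_n}\|_{W^{1,\sigma}}$ is controlled — combined with Ascoli--Arzel\`a, the limit being forced to equal $\rho^\ell\Psi$. The main obstacle I expect is precisely the frequency-identification step: securing the strong $ W^{1,2}_\alpha $-convergence on interior balls and the strong trace convergence on the spheres $\partial B_r^\alpha$ for a.e.\ $r$, while keeping uniform control of the rescaled potentials $V_{\eps_n}$, so as to pass legitimately to the limit in $\mathcal N(\eps_n r)=\widetilde D_{\eps_n}(r)/H_{\eps_n}(r)$; extracting homogeneity from $\mathcal N_U'\equiv0$ (the equality case of Cauchy--Schwarz in the weighted space $L^2_\alpha(\partial B_r^\alpha)$) also requires care.
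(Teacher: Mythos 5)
Your proposal is correct and follows essentially the same blow-up strategy as the paper: normalization and compactness of $u_\eps$, passage to a Grushin-harmonic limit whose Almgren quotient is constantly equal to $\ell$, equality in the Cauchy--Schwarz inequality on the spheres to obtain $\delta_\lambda$-homogeneity, and the polar decomposition \eqref{GrPo} to identify $\Psi$ as an eigenfunction of $-\mathcal L_\Theta$ with $\mu_j=\ell(\ell+Q-2)/(\alpha+1)^2$. The only local differences --- you obtain the strong interior $ W^{1,2}_\alpha $-convergence by an energy/cutoff argument instead of the paper's $ W^{2,2}_\alpha $ estimates combined with the compact embedding, and you select the root in \eqref{eq:def-ell} via $\ell\ge 0$ rather than by discarding the singular Euler solution $\rho^{\beta_1}w(\Theta)$ through its failure to belong to $ W^{1,2}_\alpha (B_1^\alpha)$ --- are both valid.
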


\begin{proof} Throughout the proof of this lemma we denote by $C$ any positive constant independent of $\varepsilon$ which is not necessarily always the same in each formula of this proof.

We first prove that $\{u_\eps\}_{\varepsilon \in (0,\bar r)}$ is bounded in $ W^{1,2}_\alpha (B_1^\alpha)$. By \eqref{eq:u-eps}, \eqref{eq:Poincare} and \eqref{eq:D+H} we obtain
\begin{equation} \label{eq:bound-W12}
    \|u_\varepsilon\|_{ W^{1,2}_\alpha (B_1^\alpha)}^2 \le C[\mathcal N(\varepsilon)+1]
\end{equation}
for any $\varepsilon \in (0,\bar r)$ with $\bar r$ as in \eqref{eq:N(r)}. Boundedness in $ W^{1,2}_\alpha (B_1^\alpha)$ then follows from Lemma \ref{l:bound}.

Therefore, for any sequence $\eps_n$ there exists a subsequence $\eps_{n_j}$ such that
    \begin{equation} \label{eq:conv-Boundary}
         u_{\eps_{n_j}} \rightharpoonup \bar u \quad \text{in } W^{1,2}_\alpha (B_1^\alpha).
    \end{equation}
  Moreover, we claim that
  \begin{equation} \label{eq:conv-Boundary2}
    u_{\eps_{n_j}} \to \bar u \neq 0 \quad \text{in }L^2_\alpha(\partial B_1^\alpha) \, .
  \end{equation}
For simplicity, in order to prove \eqref{eq:conv-Boundary2},  we only wrote $u_{\eps_{n_j}}$
and $\bar u$ on $\partial B_1^\alpha$ to actually denote their traces ${\rm Tr}_\alpha \, u_{\eps_{n_j}}$ and ${\rm Tr}_\alpha \, \bar u$. First of all we observe that $u_{\eps_{n_j}} \to \bar u$ a.e. with respect to the measure $\mathcal H^{N-1}$. Indeed, denoting by $\Sigma_\eta=\{(x,y):|x|<\eta\}$, $\eta>0$, a neighborhood of the set $\Sigma$, we have that
$\{u_\varepsilon\}_{\eps \in (0,\bar r)}$ is bounded in the classical Sobolev space (without weight)
$W^{1,2}(B_1^\alpha \setminus \overline \Sigma_\eta)$.
 Hence by compactness of the classical trace map from $W^{1,2}(B_1^\alpha \setminus \overline \Sigma_\eta)$ to $L^2(\partial B_1^\alpha \setminus \overline \Sigma_\eta)$, along a suitable subsequence of $\{u_{\eps_{n_j}}\}$, the a.e. convergence in $\partial B_1^\alpha \setminus \overline \Sigma_\eta$ holds true. Being the pointwise limit always the same, i.e. the function $\bar u$ found in \eqref{eq:conv-Boundary}, by using a Cantor diagonal argument we may find a suitable subsequence, that for simplicity we still denote by $\{u_{\eps_{n_j}}\}$, such that $u_{\eps_{n_j}} \to \bar u$ a.e. in $\partial B_1^\alpha \setminus \Sigma$ and hence a.e. in $\partial B_1^\alpha$ being $\mathcal H^{N-1}(\partial B_1^\alpha \cap \Sigma)=0$.
Now, by Corollary \ref{c:trace} we have that $\{u_\varepsilon\}_{\eps \in (0,\bar r)}$ is bounded in $L^q_\alpha(\partial B_1^\alpha)$ for some $q>2$ and this, together with the a.e. convergence and the Vitali convergence theorem, yields
the strong convergence in $L^2_\alpha(\partial B_1^\alpha)$ contained in \eqref{eq:conv-Boundary2} along a suitable subsequence.

In particular, by \eqref{eq:normalization} we infer that $\int_{\partial B_1^\alpha} \psi_\alpha \, \bar u^2 \, d\Ha=1$ and hence $\bar u\neq 0$.

Next, putting $V_\eps(x,y)=V(\eps x, \eps^{\alpha+1 }y)$, we observe that
\begin{equation} \label{eq:u-eps-equation}
   - \Delta_\alpha  u_\varepsilon =\varepsilon^2 \, V_\varepsilon \, u_\varepsilon  \qquad \text{in } B_1^\alpha
\end{equation}
for any $\varepsilon \in (0,\bar r)$. By Lemma \ref{l:change-B} and direct computation we see that
\begin{equation} \label{eq:conv-V}
    \|\varepsilon^2 \, V_\varepsilon\|_{W^{1,\sigma}(B_1^\alpha)}\le \varepsilon^{\frac{2\sigma-Q}{\sigma}} \,
     \|V\|_{W^{1,\sigma}(\Omega)}
\end{equation}
for any $\varepsilon \in (0,\min\{1,\bar r\})$. Up to shrink $\bar r$, it is not restrictive to assume in the rest of the proof that $\bar r<1$.

Hence, recalling that $\sigma>Q/2$ thanks to \eqref{eq:V}, by \eqref{eq:bound-W12} and Proposition \ref{p:reg-2} (i), we deduce that $\{u_\varepsilon\}_{\varepsilon \in (0,\bar r)}$ is bounded in $ W^{2,2}_\alpha (B_r^\alpha)$ for any $r\in (0,1)$.

Applying the compact embedding result of Proposition \ref{p:embedding} (i) we deduce that
\begin{equation} \label{eq:strong-conv}
    u_{\varepsilon_{n_j}} \to \bar u \qquad \text{in }  W^{1,2}_\alpha (B_r^\alpha)
\end{equation}
for any $r\in (0,1)$.

Moreover, we can pass to the limit in the weak formulation of \eqref{eq:u-eps-equation} over $B_1^\alpha$ (see \eqref{eq:var} for more details) and by H\"older inequality, \eqref{eq:conv-V} and \eqref{eq:conv-Boundary},  we obtain
    \begin{equation}\label{eq:baru}
    - \Delta_\alpha  \, \bar u =0 \quad \text{in } B_1^\alpha
    \end{equation}
    in a weak sense. Moreover, by Proposition \ref{p:reg} we also have that $\bar u \in C^\infty(B_1^\alpha)$.

    Furthermore, by \eqref{eq:N(r)} and   Lemmas \ref{l:change-B}, \ref{l:rescale-partial} we have
    \begin{align}\label{eq:Ninepsr}
     \mathcal N (\eps r) &= \dfrac{(\eps r)^{2-Q} \int_{B_{\eps r}^\alpha} (|\nabla_\alpha u|^2 - V u^2)\,dxdy}{(\eps r)^{1-Q} \int_{\partial B_{\eps r}^\alpha} \psi_\alpha \, u^2\,d\Ha } \\[7pt]
    \notag & = \dfrac{ r^{2-Q}  \int_{B_{ r}^\alpha} |\nabla_\alpha u_\eps|^2 \,dxdy}{r^{1-Q} \int_{\partial B_{ r}^\alpha} \psi_\alpha \, u_\eps^2\,d\Ha } -
   \frac{(\varepsilon r)^{2-Q}  \int_{B_{\varepsilon r}^\alpha}  V u^2 \, dxdy}{(\varepsilon r)^{1-Q}
    \int_{\partial B_{\varepsilon r}^\alpha} \psi_\alpha \, u^2\,d\Ha} \, .
    \end{align}
    About the second term in the latter equation, by \eqref{eq:prel-est}, \eqref{eq:V} we obtain
    \begin{align} \label{eq:residual-term}
    & \left| \frac{(\varepsilon r)^{2-Q}  \int_{B_{\varepsilon r}^\alpha}  V u^2 \, dxdy}{(\varepsilon r)^{1-Q}
    \int_{\partial B_{\varepsilon r}^\alpha} \psi_\alpha \, u^2\,d\Ha}\right|
     \le \frac{\frac 12 C(h,k,\alpha,\sigma,\bar r) (\varepsilon r)^{\frac{2\sigma-Q}{\sigma}} \|V\|_{W^{1,\sigma}(\Omega)} [ D(\varepsilon r)+H(\varepsilon r)]}{H_{u}(\varepsilon r) }\\[7pt]
    \notag & \qquad \le 2^{-1} C(h,k,\alpha,\sigma,\bar r) (\varepsilon r)^{\frac{2\sigma-Q}{\sigma}} \|V\|_{W^{1,\sigma}(\Omega)} \,[ \mathcal N(\eps r) + 1]= o(1) \quad \text{as }\eps \to 0
    \end{align}
    uniformly with respect to $r$ in a neighborhood of $0$. From \eqref{eq:Ninepsr}, \eqref{eq:residual-term}, \eqref{eq:conv-Boundary},   \eqref{eq:conv-Boundary2} and \eqref{eq:strong-conv}, we deduce
    \begin{equation}\label{eq:Nbaru}
            \ell=\lim_{j \to +\infty}  \mathcal N(\eps_{n_j} r) = \lim_{j \to +\infty} \dfrac{ r^{2-Q}  \int_{B_{ r}^\alpha} |\nabla_\alpha u_{\eps_{n_j}}|^2 \,dxdy}{r^{1-Q} \int_{\partial B_{ r}^\alpha} \psi_\alpha \, u_{\eps_{n_j}}^2\,d\Ha }  \, ,
    \end{equation}

    \begin{equation} \label{eq:Nbaru-2}
        \lim_{j \to +\infty}  r^{2-Q}  \int_{B_{ r}^\alpha} |\nabla_\alpha u_{\eps_{n_j}}|^2 \,dxdy
        =r^{2-Q}  \int_{B_{ r}^\alpha} |\nabla_\alpha \bar u|^2 \,dxdy=:D_{\bar u}(r)
    \end{equation}
      and
    \begin{equation} \label{eq:Nbaru-3}
         \lim_{j \to +\infty} r^{1-Q} \int_{\partial B_{ r}^\alpha} \psi_\alpha \, u_{\eps_{n_j}}^2\,d\Ha=
         r^{1-Q} \int_{\partial B_{ r}^\alpha} \psi_\alpha \, \bar u^2\,d\Ha=:H_{\bar u}(r)
    \end{equation}
    for any $r\in(0,1)$. We observe that $H_{\bar u}(r)>0$ for any $r\in (0,1)$. Indeed if there exists $r_0 \in (0,1)$
    such that $H_{\bar u}(r_0)=0$ then combining \eqref{eq:Nbaru}-\eqref{eq:Nbaru-3} we see that also $D_{\bar u}(r_0)=0$ since otherwise the limit in the right hand side of \eqref{eq:Nbaru} cannot be finite. This implies $\bar u\equiv 0$ in $B_{r_0}^\alpha$ and applying the classical unique continuation property for elliptic operator in $B_1^\alpha \setminus \Sigma$ we infer that $\bar u\equiv 0$ in $B_1^\alpha$, a contradiction.

    Now we may define the Almgren-type quotient corresponding to $\bar u$ by
    \[
    \mathcal N_{\bar u} (\rho) : = \frac{D_{\bar u}(\rho)}{H_{\bar u}(\rho)}=\dfrac{ \rho^{2-Q}  \int_{B_{ \rho}^\alpha} |\nabla_\alpha \bar u|^2 \,dxdy}{\rho^{1-Q} \int_{\partial B_{ \rho}^\alpha} \psi_\alpha \, \bar u^2\,d\Ha }, \quad  \rho \in (0,1).
    \]

    Combining \eqref{eq:Nbaru} with \eqref{eq:Nbaru-2}-\eqref{eq:Nbaru-3}, we infer
    \begin{equation*}
       \ell = \lim_{j \to +\infty}  \mathcal N(\eps_{n_j} \rho) = \mathcal N_{\bar u} (\rho) \qquad
        \text{for any } \rho \in (0,1) \, ,
    \end{equation*}
    which shows that $\mathcal N_{\bar u}$ is a constant function.

    On the other hand, following the same outline as in Lemma \ref{l:N'}, it results
    \begin{equation}\label{eq:N'baru}
        \mathcal N_{\bar u}'(\rho)
    = \frac{2 \rho^{1-2Q}}{(H_{\bar u}(\rho))^2} \left\{ \int_{\partial B_\rho^\alpha} \psi_\alpha
     \left({X_G \bar u}\right)^2 d\Ha
  \cdot \int_{\partial B_\rho^\alpha} \psi_\alpha  \bar u^2 \, d\Ha-\left( \int_{\partial B_\rho^\alpha} \psi_\alpha \bar u {X_G \bar u} \, d\Ha \right)^2 \right\}
    \end{equation}
    for any $\rho\in(0,1)$. Being $\mathcal N_{\bar u}$ constant from \eqref{eq:N'baru} we deduce that we have an equality in a H\"older inequality which implies that $X_G \bar u$ and $\bar u$ are parallel vectors as elements of the Hilbert space $L^2_\alpha(\partial B_\rho^\alpha)$ for any $\rho\in(0,1)$.  Hence there exists a function $h(\rho)$ such that
    \begin{equation}\label{eq:parallelvectors}
        X_G \bar u(\rho,\Theta)=h(\rho)\bar u(\rho,\Theta) \quad \text{for any } \rho \in(0,1), \ \Theta \in \partial B_1^\alpha.
    \end{equation}
    We observe that multiplying both sides of \eqref{eq:parallelvectors} by $\psi_\alpha \bar u$ and integrating over $\partial B_\rho^\alpha$ with respect to the measure $\mathcal H_\alpha^{N-1}$, by \eqref{eq:H'} we deduce that $h(\rho)=\rho H_{\bar u}'(\rho)/(2H_{\bar u}(\rho))$ which shows that $h \in L^1_{\rm loc}(0,1)$.

    By \eqref{eq:vect-field} in the Appendix, we have that $\frac{\partial \bar u}{\partial \rho}= \frac{1}{\rho} \, X_G \bar u$. Integrating \eqref{eq:parallelvectors} we obtain
    \[
    \bar u(\rho,\Theta) = \bar u\left(\tfrac 12,\Theta\right) e^{\int_{1/2}^\rho \frac{h(r)}{r} \,dr}
     \quad \text{ for any $\rho\in (0,1)$ and $\Theta \in \partial B_1^\alpha$} \, .
    \]

    Let us put $w(\Theta)=\bar u\left(\frac 12,\Theta\right)$ and $z(\rho):=e^{\int_{1/2}^\rho \frac{h(r)}{r} \,dr}$.
    Using the polar expression of the Grushin operator \eqref{GrPo} and recalling that $\bar u$ is a solution to \eqref{eq:baru} we have
    \[  \psi_\alpha\left(\frac{\partial^2 z }{\partial \rho^2} \, w(\Theta)+ \frac{Q-1}{\rho}\frac{\partial z }{\partial \rho} \, w(\Theta)+\frac{(\alpha+1)^2}{\rho^2}z(\rho)\mathcal{L}_\Theta w(\Theta) \right)=0 \, .
    \]
    Maintaining $\rho$ fixed and varying $\Theta \in \partial B_1^\alpha$, we see that the function $w=w(\Theta)$ is an eigenfunction of $-\mathcal L_\Theta$ and letting $\mu$ the corresponding eigenvalue, we deduce that
    \begin{equation*} 
              -\mathcal L_\Theta w= \mu w \quad \text{in $\partial B_1^\alpha$} \qquad \text{and} \qquad \frac{\partial^2 z }{\partial \rho^2}+\frac{Q-1}{\rho}\frac{\partial z }{\partial \rho} - \frac{\mu(\alpha+1)^2}{\rho^2} z=0
               \quad \text{in $(0,1)$} \, .
    \end{equation*}
     The linear equation for $z=z(\rho)$ has solutions of the form $z(\rho)=c_1 \rho^{\beta_1} + c_2\rho^{\beta_2}$ where
     $$
       \beta_1 = -\frac12(Q-2)-\frac12\sqrt{(Q-2)^2+4\mu(\alpha+1)^2}
       \quad \text{and} \quad
       \beta_2 = -\frac12(Q-2)+\frac12\sqrt{(Q-2)^2+4\mu(\alpha+1)^2} \, .
     $$
     We show that $\rho^{\beta_1}w(\Theta) \not\in  W^{1,2}_\alpha (B_1^\alpha)$. Indeed, by \eqref{eq:gradu-gradv} in the case $h,k \ge 2$ and the corresponding identities when at least one between $h$ or $k$ equals $1$, we have that
     $$
        |\nabla_\alpha (\rho^{\beta_1} w(\Theta))|^2=\rho^{2\beta_1-2} \, W(\Theta)
     $$
     for some function $W$ depending only on the angular part $\Theta \in \partial B_1^\alpha$. Looking at the spherical representation of the volume element $dxdy$ in \eqref{eq:volume-integral}, we conclude that $|\nabla_\alpha (\rho^{\beta_1} w(\Theta))|^2 \in L^1(B_1^\alpha)$ if and only if $\int_0^1 \rho^{2\beta_1+Q-3} \, d\rho<\infty$ and this is impossible as one can see substituting the expression of $\beta_1$.

     Therefore the expression of $\bar u$ si given by $\bar u(\rho,\Theta)= c_2 \, \rho^{\beta_2} w(\Theta)$.
The value of $c_2$ is given by $\left(\int_{\partial B_1^\alpha} \psi_\alpha \, \bar u^2 \, d\Ha\right)^{-\frac 12}$ so that we can now put $\Psi(\Theta)=c_2 \, w(\Theta)$.

     Putting the expression $\bar u(\rho,\Theta)=\rho^{\beta_2} \Psi(\Theta)$ in $D_{\bar u}$ and $H_{\bar u}$, by
     \eqref{eq:gradu-gradv} and corresponding identities when at least one between $h$ and $k$ equals $1$ and \eqref{eq:b-alpha-eig}, we obtain
     \begin{align*}
     &    D_{\bar u}(\rho) =\rho^{2-Q} \beta_2^2 \int_{0}^{\rho} r^{2\beta_2+Q-3}
        \left(\int_{\partial B_1^\alpha} \psi_\alpha \, \Psi^2 \, d\Ha \right)\, dr \\[7pt]
        & +\rho^{2-Q} \int_0^\rho (\alpha+1)^2\,
        r^{2\beta_2 +Q-3} \left( \int_{\partial B_1^\alpha} b_\alpha(\Psi,\Psi) \, d\Ha \right) \, dr =\frac{\beta_2^2+\mu(\alpha+1)^2}{2\beta_2+Q-2}  \, \rho^{2\beta_2}=\beta_2 \, \rho^{2\beta_2} \, ,  \\[7pt]
     &   H_{\bar u}(\rho) = \rho^{1-Q} \rho^{2\beta_2 +Q-1} \int_{\partial B_1^\alpha} \psi_\alpha \, \Psi^2 \, d\Ha
        =\rho^{2\beta_2} \, ,
     \end{align*}
      where we recall that $ b_\alpha(\cdot,\cdot)$ is defined in \eqref{eq:f-b-sferica}.
    Therefore, a straightforward computation yields $\mathcal N_{\bar u}\equiv \beta_2$ and hence $\beta_2=\ell$.

    It remains to prove the uniform convergence in \eqref{eq:u-c}. It follows from the fact that $\|u_\varepsilon\|_{ W^{1,2}_\alpha (B_1^\alpha)} = O(1)$ as $\varepsilon \to 0^+$ combined with Proposition \ref{p:L-infty} in Section \ref{s:regularity},  Battaglia and Bonfiglioli \cite[Theorem 4.1]{BatBon} and the Ascoli-Arzelà Theorem.
    This completes the proof of the lemma.
\end{proof}

A crucial part for concluding the proof of Theorem \ref{t:main} is to show that the limit in \eqref{eq:exist-lim-H} is strictly positive.
Recalling the notations introduced for \eqref{eq:eig-L-Theta} and \eqref{eq:eigf-L-Theta}, we may expand in Fourier series the function $u$:   $ u(\delta_\eps (\Theta)) = \sum_{i=0}^{+\infty} \varphi_i(\eps) \omega_i(\Theta)$, $\Theta \in \SA$,
with $\delta_\eps$ as in \eqref{eq:delta-lambda}, where $\varphi_i$ is given by
\begin{equation} \label{eq:phi-i}
   \varphi_i(\eps) := \int_{\SA}  \psi_\alpha(\Theta)  u(\delta_\eps(\Theta)) \omega_i(\Theta) \, d\Ha \, .
\end{equation}
We also define
\begin{equation*} 
   \Upsilon_i(\eps) := \int_{B_\eps^\alpha} V(x,y) u(x,y) \omega_i \left(\frac{x}{d_\alpha(x,y)},\frac{y}{[d_\alpha(x,y)]^{\alpha+1}}\right) \, dxdy
\end{equation*}
and
\begin{equation}  \label{eq:zeta-i}
    \zeta_i(\eps) := \eps^{1-Q} \, \Upsilon_i'(\eps) = \int_{\SA} V(\delta_\eps(\Theta)) u(\delta_\eps(\Theta)) \omega_i(\Theta) \, d\Ha
\end{equation}
thanks to Lemma \ref{l:der-B} and Lemma \ref{l:rescale-partial} in the Appendix.
By \eqref{GrPo}, \eqref{eq:phi-i}, \eqref{eq:zeta-i}, the fact that $u$ solves \eqref{eq:u}, the symmetry of the bilinear form $b_\alpha(\cdot,\cdot)$ defined in \eqref{eq:f-b-sferica} and the fact that $\omega_i$ is an eigenfunction of $\mu_i$, we obtain the following equation for $\varphi_i$
\begin{equation} \label{eq:equation-phi-i}
     -\varphi_i''(\eps)-\frac{Q-1}{\eps} \, \varphi_i'(\eps) + \frac{(\alpha+1)^2 \mu_i}{\eps^2} \, \varphi_i(\eps) = \zeta_i(\eps) \quad \text{in the sense of distributions in $(0,\bar r)$}\, .
\end{equation}

\begin{lemma} \label{l:est-phi-i} Suppose that all the assumptions of Lemma \ref{l:N'} hold true and let $\ell$ be the limit defined in Lemma \ref{l:def-ell}. Let $m\ge 1$ be the multiplicity of the eigenvalue $\mu_{j_0} = \mu_{j_0+1} = \dots = \mu_{j_0+m-1}$ of $-\mathcal L_{\Theta}$ found in Lemma \ref{l:blow-up} and satisfying identity \eqref{eq:def-ell}. Then for any $R \in (0,\bar r)$, with $\bar r$ as in Lemma \ref{l:H>0}, and $i \in \{j_0,\dots,j_0+m-1\}$, we have
\begin{equation}\label{eq:est-phi-i}
  \varphi_i(\eps) =\eps^\ell \left( R^{-\ell} \varphi_i(R) \! + \!\frac{2-Q-\ell}{2-Q-2\ell} \, \int_{\eps}^{R} s^{-Q+1-\ell} \Upsilon_i(s) \, ds \! - \! \frac{\ell R^{-Q+2-2\ell}}{2-Q-2\ell} \, \int_0^R s^{\ell-1} \Upsilon_i(s) \, ds \right) \! + \! O(\eps^{\ell + \delta})
\end{equation}
as $\eps \to 0^+$, with $\delta=\frac{2\sigma-Q}{\sigma}$ as in the proof of Lemma \ref{l:bound}.
\end{lemma}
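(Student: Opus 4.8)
The idea is to regard \eqref{eq:equation-phi-i} as an inhomogeneous linear second order ODE for $\varphi_i$ and to integrate it explicitly, the two free constants being pinned down by a priori growth information inherited from the monotonicity analysis.

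The first ingredient is a pair of a priori estimates. Since $\mathcal N(r)=D(r)/H(r)$ and $D(r)=\tfrac r2 H'(r)$ by \eqref{eq:D-H'}, one has $H'(r)/H(r)=2\mathcal N(r)/r$; integrating on $(\eps,R)$ and using the lower bound $\mathcal N(r)-\ell\ge-Cr^{\delta}$ already obtained in \eqref{eq:est-present-article}, we get $H(\eps)\le C\,\eps^{2\ell}$ for $\eps$ in a right neighbourhood of $0$, whence, by the Cauchy--Schwarz inequality in $L^2_\alpha(\SA)$ and the normalization of $\omega_i$,
\[
|\varphi_i(\eps)|\le\sqrt{H(\eps)}=O(\eps^{\ell})\qquad\text{as }\eps\to0^+ .
\]
For the source term we estimate $\Upsilon_i$ by H\"older's inequality with exponents $\sigma$ and $\sigma'=\sigma/(\sigma-1)$, then bound $\|u\|_{L^{2\sigma'}(B_\eps^\alpha)}$ via the weighted Sobolev--Poincar\'e inequality of Corollary \ref{c:Poincare} (applicable since $2\sigma'<2Q/(Q-2)$ by \eqref{eq:V}), and $\|\omega_i(\,\cdot/d_\alpha,\,\cdot/d_\alpha^{\alpha+1})\|_{L^{2\sigma'}(B_\eps^\alpha)}$ by the $\delta_\lambda$-homogeneity of this function together with the integrability of the spherical eigenfunctions (Corollary \ref{c:trace} and the interior regularity for $-\mathcal L_\Theta\omega_i=\mu_i\omega_i$). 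Recalling $D(\eps)+H(\eps)=(\mathcal N(\eps)+1)H(\eps)=O(\eps^{2\ell})$, this yields $|\Upsilon_i(\eps)|\le C\,\eps^{\ell+Q-2+\delta}$; in particular the integral $\int_0^R s^{\ell-1}\Upsilon_i(s)\,ds$ converges absolutely.

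Next I would integrate \eqref{eq:equation-phi-i}. Since $\mu_i=\mu_{j_0}$ realizes \eqref{eq:def-ell}, the homogeneous equation has the two independent solutions $\eps^{\beta_1}$ and $\eps^{\beta_2}=\eps^{\ell}$, with $\beta_1=2-Q-\ell$, $\beta_1+\ell=2-Q$, Wronskian $\gamma\,\eps^{1-Q}$ and $\gamma:=\ell-\beta_1=2\ell+Q-2=\sqrt{(Q-2)^2+4\mu_i(\alpha+1)^2}$. Variation of parameters, using $\zeta_i(s)=s^{1-Q}\Upsilon_i'(s)$ and choosing the integration endpoints $0$ and $R$ respectively (the first being legitimate by the bound on $\Upsilon_i$), gives
\[
\varphi_i(\eps)=c_1\eps^{\beta_1}+c_2\eps^{\ell}+\frac{\eps^{\beta_1}}{\gamma}\int_0^\eps s^{\ell}\Upsilon_i'(s)\,ds-\frac{\eps^{\ell}}{\gamma}\int_R^\eps s^{\beta_1}\Upsilon_i'(s)\,ds .
\]
Integrating by parts in both integrals, the two boundary contributions equal to $\tfrac1\gamma\eps^{\beta_1+\ell}\Upsilon_i(\eps)$ cancel, and one is left with a term $c_1\eps^{\beta_1}$, a term $\tilde c_2\,\eps^{\ell}$ (with $\tilde c_2=c_2+\gamma^{-1}R^{\beta_1}\Upsilon_i(R)$), the term $\tfrac{\beta_1}{\gamma}\eps^{\ell}\int_R^\eps s^{\beta_1-1}\Upsilon_i(s)\,ds$ (which is $O(\eps^{\ell})$), and the term $-\tfrac{\ell}{\gamma}\eps^{\beta_1}\int_0^\eps s^{\ell-1}\Upsilon_i(s)\,ds$, which is $O(\eps^{\ell+\delta})$ thanks to $|\Upsilon_i(s)|\le Cs^{\ell+Q-2+\delta}$. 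Since $\varphi_i(\eps)=O(\eps^{\ell})$ and $\beta_1<\ell$ (as $\gamma>0$), the coefficient of $\eps^{\beta_1}$ must vanish; evaluating the remaining identity at $\eps=R$, where the $\int_R^\eps$ integral is zero, fixes the coefficient of $\eps^{\ell}$ to be $R^{-\ell}\varphi_i(R)-\tfrac{\ell}{\gamma}R^{\beta_1-\ell}\int_0^R s^{\ell-1}\Upsilon_i(s)\,ds$. Substituting back and rewriting $\beta_1$ and $\gamma$ through $2-Q-\ell$ and $2-Q-2\ell$ reproduces exactly \eqref{eq:est-phi-i}, with remainder $O(\eps^{\ell+\delta})$.

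The step I expect to demand the most care is the a priori decay $|\Upsilon_i(\eps)|=O(\eps^{\ell+Q-2+\delta})$: obtaining the exact exponent requires careful bookkeeping of the $\eps$-scaling in Corollary \ref{c:Poincare}, of the identity $D+H=(\mathcal N+1)H$, and of an $L^p(\SA)$ bound with $p>2$ for the eigenfunctions $\omega_i$. A routine point, dealt with as in \cite{FFFN,FF1,FFT1,FFT2,FFT3}, is justifying the variation-of-parameters representation from the merely distributional form of \eqref{eq:equation-phi-i}: this is legitimate once one observes that $\Upsilon_i\in W^{1,1}_{\rm loc}(0,\bar r)$, so that the equation may be integrated against the fundamental system.
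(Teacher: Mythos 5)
Your proposal is correct and follows essentially the same route as the paper: derive the decay $\Upsilon_i(\eps)=O(\eps^{Q-2+\ell+\delta})$ from H\"older's inequality, the scaling of Corollary \ref{c:Poincare} and $H(\eps)=O(\eps^{2\ell})$, integrate the Euler-type equation \eqref{eq:equation-phi-i} explicitly, and discard the singular branch $\eps^{2-Q-\ell}$ before evaluating at $\eps=R$. The only (harmless) deviation is that you eliminate the coefficient of $\eps^{\beta_1}$ via the pointwise bound $|\varphi_i(\eps)|\le\sqrt{H(\eps)}=O(\eps^{\ell})$, whereas the paper argues by contradiction with the integrability of $\eps\mapsto\eps^{Q-1}|\varphi_i(\eps)|^{q}$ coming from the Sobolev embedding; both are valid.
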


\begin{proof} The proof of this lemma is based on the same argument used in the proof of \cite[Lemma 6.8]{FFT3} and we refer to \cite{FFT3} for the details. By direct computation one can check that \eqref{eq:equation-phi-i} may rewritten in the form
\begin{equation*}
  -\left[ \eps^{Q-1+2\ell} \left( \eps^{-\ell} \varphi_i(\eps) \right)' \right]' = \eps^{Q-1+\ell} \, \zeta_i(\eps) \, .
\end{equation*}
Proceeding as in the proof of \cite[Lemma 6.8]{FFT3} we obtain, for a suitable constant $c_i$,
\begin{align} \label{eq:v-phi-1}
    \varphi_i(\eps) & = \eps^\ell \left[ R^{-\ell} \varphi_i(R) + \frac{2-Q-\ell}{2-Q-2\ell} \int_{\eps}^{R}
     s^{-Q+1-\ell} \, \Upsilon_i(s) \, ds + \frac{\ell c_i R^{-Q+2-2\ell}}{2-Q-2\ell} \right]  \\[10pt]
 \notag  & \qquad  +\frac{\ell \eps^{-Q+2-\ell}}{Q-2+2\ell} \left(c_i + \int_{\eps}^{R} s^{\ell-1} \Upsilon_i(s) \, ds \right) \, .
\end{align}
By Lemma \ref{l:change-B}, \eqref{eq:conv-V}, Proposition \ref{p:embedding} we have that
\begin{align*}
   & \frac{\eps^2}{\sqrt{H(\eps)}} \, |\Upsilon_i(\eps)| \le C(\Omega,\alpha,h,k,\sigma)  \|V\|_{L^\sigma(\Omega)} \, \|\omega_i\|_{L^{\frac{2\sigma}{\sigma-1}}(\SA)} \eps^{Q + \frac{2\sigma-Q}{\sigma}} \, \|u_\eps\|_{ W^{1,2}_\alpha (B_1^\alpha)}
\end{align*}
for some constant $C(\Omega,\alpha,h,k,\sigma) $ depending only on $\Omega,\alpha,h,k,\sigma$. Then, by \eqref{eq:bound-W12}, Lemma \ref{l:bound}, \eqref{eq:exist-lim-H}, we deduce that $\Upsilon_i(\eps) = O(\eps^{Q-2+\ell+\delta})$ as $\eps \to 0^+$ and in particular $s^{-Q+1-\ell} \Upsilon_i(s) \in L^1(0,R)$.

This implies that the first line in \eqref{eq:v-phi-1} is a $O(\eps^\ell)   \subseteq o(\eps^{-Q+2-\ell})$ as $\eps \to 0^+$.

From the estimate $\Upsilon_i(\eps) = O(\eps^{Q-2+\ell+\delta})$ we also have $s^{\ell -1 } \Upsilon_i(s) \in L^1(0,R)$ so that if by contradiction we have $\ell\left(c_i + \int_{0}^{R} s^{\ell-1} \Upsilon_i(s) \, ds \right)\neq 0$ then   $\varphi_i(\eps) \asymp \eps^{-Q+2-\ell}$ as $\eps \to 0^+$ contradicting the fact that $\int_0^R \eps^{Q-1} |\varphi_i(\eps)|^q d\eps<+\infty$ for any $q \ge 1$ if $Q=2$ and $1\le q \le \frac{2Q}{Q-2}$ if $Q>2$, as a consequence of Sobolev embedding (see Proposition \ref{p:embedding}) and \eqref{eq:phi-i}.

Now if $\ell = 0$, the conclusion of the lemma follows immediately, otherwise from what we have obtained above we infer $c_i =- \int_{0}^{R} s^{\ell-1} \Upsilon_i(s) \, ds$ so that \eqref{eq:v-phi-1} becomes
\begin{align*} 
    \varphi_i(\eps) & = \eps^\ell \left[ R^{-\ell} \varphi_i(R) + \frac{2-Q-\ell}{2-Q-2\ell} \int_{\eps}^{R}
     s^{-Q+1-\ell} \, \Upsilon_i(s) \, ds - \frac{\ell R^{-Q+2-2\ell}}{2-Q-2\ell} \int_0^R s^{\ell-1} \Upsilon_i(s) \, ds \right]  \\[10pt]
 \notag  & \qquad  -\frac{\ell \eps^{-Q+2-\ell}}{Q-2+2\ell}  \int_{0}^{\eps} s^{\ell-1} \Upsilon_i(s) \, ds \, .
\end{align*}
The last term in the above identity satisfies $O(\eps^{\ell+\delta})$ as a consequence of the estimate $\Upsilon_i(s) = O(s^{Q-2+\ell+\delta})$. This completes the proof of the lemma also in the case $\ell \neq 0$.
\end{proof}

 Now we are ready to prove that the limit in  \eqref{eq:exist-lim-H} is strictly positive.

\begin{lemma} \label{l:lim-H>0}
    Suppose that all the assumptions of Lemma \ref{l:N'} hold true and let $\ell$ be the limit defined in Lemma \ref{l:def-ell}. Then the limit \eqref{eq:exist-lim-H} is strictly positive.
\end{lemma}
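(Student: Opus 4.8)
The plan is to argue by contradiction. Since Lemma \ref{l:H-2ell} already guarantees that $L:=\lim_{r\to 0^+} r^{-2\ell}H(r)$ exists, is finite, and is manifestly nonnegative, it suffices to rule out $L=0$. The idea is that $L=0$ would force the ``resonant'' Fourier components of $u$ (those along the eigenspace attached to $\ell$) to decay strictly faster than $r^\ell$, hence would make $H(r)$ decay faster than $r^{2\ell+\delta}$ along a suitable sequence, contradicting the non-degeneracy lower bound \eqref{eq:est-H-ge}; here $\delta=\tfrac{2\sigma-Q}{\sigma}$ as in Lemma \ref{l:bound}.

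First I would record a Parseval identity for the expansion $u(\delta_\eps(\Theta))=\sum_{i\ge 0}\varphi_i(\eps)\,\omega_i(\Theta)$. Rescaling $\partial B_\eps^\alpha$ onto $\SA$ through $\delta_\eps$, using that $\psi_\alpha$ is $\delta_\eps$-invariant together with Lemma \ref{l:rescale-partial}, one gets $H(\eps)=\int_{\SA}\psi_\alpha\,(u\circ\delta_\eps)^2\,d\Ha=\sum_{i\ge 0}\varphi_i(\eps)^2$, so in particular $\varphi_i(\eps)^2\le H(\eps)$ for every $i$ and every $\eps\in(0,\bar r)$. Next I would extract the exact leading order of the resonant coefficients: with $\mu_{j_0}=\dots=\mu_{j_0+m-1}$ the eigenvalue produced by Lemma \ref{l:blow-up} and $m$ its multiplicity (as in Lemma \ref{l:est-phi-i}), for each $i\in\{j_0,\dots,j_0+m-1\}$ the estimate \eqref{eq:est-phi-i}, together with the bound $\Upsilon_i(s)=O(s^{Q-2+\ell+\delta})$ established in the proof of Lemma \ref{l:est-phi-i} (which makes all the integrals in \eqref{eq:est-phi-i} absolutely convergent at $0$ and yields $\int_0^\eps s^{-Q+1-\ell}\Upsilon_i(s)\,ds=O(\eps^\delta)$), shows that $\eps^{-\ell}\varphi_i(\eps)$ has a finite limit $\gamma_i$ as $\eps\to 0^+$, and more precisely $\varphi_i(\eps)=\gamma_i\,\eps^\ell+O(\eps^{\ell+\delta})$.

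Assume now $L=0$. Then $\varphi_i(\eps)^2/\eps^{2\ell}\le H(\eps)/\eps^{2\ell}\to 0$, hence $\gamma_i=0$ for every resonant $i$, so that $\sum_{i=j_0}^{j_0+m-1}\varphi_i(\eps)^2=O(\eps^{2\ell+2\delta})$ as $\eps\to 0^+$. On the other hand, by Lemma \ref{l:blow-up} (and the $L^2_\alpha(\partial B_1^\alpha)$ convergence of traces established in its proof) there is a sequence $\eps_n\downarrow 0$ along which the traces of $u_{\eps_n}=(u\circ\delta_{\eps_n})/\sqrt{H(\eps_n)}$ converge in $L^2_\alpha(\partial B_1^\alpha)$ to an eigenfunction $\Psi$ of $-\mathcal L_\Theta$ relative to $\mu_{j_0}$ with $\int_{\SA}\psi_\alpha\Psi^2\,d\Ha=1$. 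Since $\Psi$ belongs to the $m$-dimensional eigenspace spanned by $\omega_{j_0},\dots,\omega_{j_0+m-1}$, continuity of the Fourier coefficients (together with $\|u_{\eps_n}\|_{L^2_\alpha(\partial B_1^\alpha)}^2=1$ by \eqref{eq:normalization}) gives $\sum_{i=j_0}^{j_0+m-1}\varphi_i(\eps_n)^2/H(\eps_n)\to\sum_{i=j_0}^{j_0+m-1}\big(\int_{\SA}\psi_\alpha\Psi\,\omega_i\,d\Ha\big)^2=1$. Hence $H(\eps_n)\le 2\sum_{i=j_0}^{j_0+m-1}\varphi_i(\eps_n)^2\le C\,\eps_n^{2\ell+2\delta}$ for $n$ large.

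Finally I would invoke the lower bound: applying \eqref{eq:est-H-ge} with $\eta=\delta$ gives $H(\eps_n)\ge K(\delta)\,\eps_n^{2\ell+\delta}$ for $n$ large, whence $K(\delta)\,\eps_n^{2\ell+\delta}\le C\,\eps_n^{2\ell+2\delta}$, i.e.\ $K(\delta)\le C\,\eps_n^\delta\to 0$, which is absurd since $K(\delta)>0$. Therefore $L>0$. The delicate point, and the main obstacle, is to reconcile two pieces of information of different nature: the \emph{full} asymptotics $\varphi_i(\eps)=\gamma_i\eps^\ell+O(\eps^{\ell+\delta})$ coming from the ODE analysis behind Lemma \ref{l:est-phi-i}, which under $L=0$ forces $\gamma_i=0$, with the merely \emph{subsequential} blow-up information of Lemma \ref{l:blow-up}, which nonetheless says that the resonant part still carries essentially all of $H(\eps_n)$; it is precisely this conflict that produces the faster-than-$r^{2\ell}$ decay clashing with the non-degeneracy estimate \eqref{eq:est-H-ge}.
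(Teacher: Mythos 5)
Your proof is correct and follows essentially the same route as the paper: Parseval plus Lemma \ref{l:est-phi-i} (with the bound $\Upsilon_i(s)=O(s^{Q-2+\ell+\delta})$) to force $\varphi_i(\eps)=O(\eps^{\ell+\delta})$ on the resonant modes under the assumption $L=0$, then the blow-up Lemma \ref{l:blow-up} and the lower bound \eqref{eq:est-H-ge} with $\eta=\delta$ to reach a contradiction. The only (harmless) difference is in the final step: the paper shows the normalized coefficients $\varphi_i(\eps)/\sqrt{H(\eps)}$ tend to $0$ and hence that $\Psi$ would be orthogonal to its own eigenspace, whereas you sum the squares of the resonant coefficients to get $H(\eps_n)\le C\eps_n^{2\ell+2\delta}$ and contradict \eqref{eq:est-H-ge} directly.
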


\begin{proof} Suppose by contradiction that the limit in \eqref{eq:exist-lim-H} equals $0$ so that by the Parseval identity we also have that $\lim_{\eps \to 0^+} \eps^{-\ell} \, \varphi_i(\eps)=0$ for any $i \in \{j_0,\dots,j_0+m-1\}$, with $j_0$ and $m$ as in Lemma \ref{l:est-phi-i}. Therefore, by Lemma \ref{l:est-phi-i}, letting $\eps \to 0^+$, we obtain
\begin{equation*}
   R^{-\ell} \varphi_i(R) \! + \!\frac{2-Q-\ell}{2-Q-2\ell} \, \int_{0}^{R} s^{-Q+1-\ell} \, \Upsilon_i(s) \, ds \! - \! \frac{\ell R^{-Q+2-2\ell}}{2-Q-2\ell} \, \int_0^R s^{\ell-1} \, \Upsilon_i(s) \, ds = 0
\end{equation*}
 which  replaced in the asymptotic estimate \eqref{eq:est-phi-i} for  $\varphi_i$ implies
\begin{equation} \label{eq:est-phi-3}
    \varphi_i(\eps) = - \frac{2-Q-\ell}{2-Q-2\ell} \, \eps^\ell \int_0^\eps s^{-Q+1-\ell} \, \Upsilon_i(s) \, ds  + O(\eps^{\ell + \delta}) \, .
\end{equation}
In the proof of Lemma \ref{l:est-phi-i} we have shown that $\Upsilon_i(s) = O(s^{Q-2+\ell+\delta})$ so that by \eqref{eq:est-phi-3} we obtain $\varphi_i(\eps) = O(\eps^{\ell + \delta})$ as $ \eps \to 0^+$.

Now, by \eqref{eq:est-H-ge} with $\eta=\delta$, we have that $\sqrt{H(\eps)} \ge \sqrt{K(\delta)} \, \eps^{\ell +\frac \delta 2}$. Recalling the definition of $u_\eps$ in \eqref{eq:u-eps}, for any $i \in \{j_0,\dots,j_0+m-1\}$, we then have
\begin{align} \label{eq:eigenspace-1}
   &  \left|\int_{\SA} \psi_\alpha \, u_\eps \, \omega_i \, d\Ha \right|= \frac{|\varphi_i(\eps)|}{\sqrt{H(\eps)}}
   \le \frac{O(\eps^{\ell +\delta})}{\sqrt{K(\delta)} \, \eps^{\ell + \frac \delta 2}} = O(\eps^{\frac \delta 2}) = o(1)
   \qquad \text{as} \ \eps \to 0^+ \, .
\end{align}
On the other hand, by \eqref{eq:u-c} and \eqref{eq:conv-Boundary}, for any $i \in \{j_0,\dots,j_0+m-1\}$
\begin{equation} \label{eq:eigenspace-2}
   \int_{\SA} \psi_\alpha \, u_\eps \, \omega_i \, d\Ha \to \int_{\SA} \psi_\alpha \, \Psi \, \omega_i \, d\Ha
\end{equation}
where $\Psi$ belongs to the eigenspace generated by $\omega_{j_0},\dots,\omega_{j_0+m-1}$.
Combining \eqref{eq:eigenspace-1} and \eqref{eq:eigenspace-2} we infer $\int_{\SA} \psi_\alpha \, \Psi \, \omega_i \, d\Ha = 0$ for any $i \in \{j_0,\dots,j_0+m-1\}$ in contradiction with the fact that $\Psi \not\equiv 0$ belongs to the eigenspace generated by $\omega_{j_0},\dots,\omega_{j_0+m-1}$.
\end{proof}

\subsection{End of the proof of Theorem \ref{t:main}} Using the notations of Lemma \ref{l:est-phi-i} for $\mu_{j_0},\dots, \mu_{j_0+m-1}$ and $\omega_{j_0},\dots,\omega_{j_0+m-1}$, letting $\eps_n \downarrow 0$, by Lemma \ref{l:blow-up} and Lemma \ref{l:lim-H>0}, along a subsequence $\eps_{n_j}$ we have
\begin{align*}
   & \eps_{n_j}^{-\ell} \, u(\delta_{\eps_{n_j}}(x,y)) \rightharpoonup 
      (d_\alpha(x,y))^\ell \ \sum_{i=j_0}^{j_0+m-1} \beta_i  \, \omega_i\left(\frac{x}{d_\alpha(x,y)},\frac{y}{[d_\alpha(x,y)]^{\alpha+1}}\right)
\end{align*}
weakly in $ W^{1,2}_\alpha (B_1^\alpha)$ and strongly in $L^2_\alpha(\SA)$, where $\sum_{i=j_0}^{j_0+m-1} \beta_i \, \omega_i = \Psi$ with $\Psi$ as in Lemma \ref{l:blow-up}. To complete the proof of the theorem it remains to prove that the function $\Psi$ and hence the coefficients $\beta_{j_0},\dots,\beta_{j_0+m-1}$  do not depend on the sequence $\eps_n$, thus showing that the convergence above actually holds as $\eps \to 0^+$.

By \eqref{eq:phi-i} and the orthonormality of $\omega_{j_0},\dots,\omega_{j_0+m-1}$, for any $i \in \{j_0,\dots,j_0+m-1\}$, we then have
\begin{equation*}
  \eps_{n_j}^{-\ell} \, \varphi_i(\eps_{n_j}) \to \beta_i  \qquad \text{as } j \to +\infty \, .
\end{equation*}
On the other hand, by Lemma \ref{l:est-phi-i} we have that
\begin{equation*}
   \eps^{-\ell} \varphi_i(\eps) \to R^{-\ell} \varphi_i(R) \! + \!\frac{2-Q-\ell}{2-Q-2\ell} \, \int_{0}^{R} s^{-Q+1-\ell} \Upsilon_i(s) \, ds \! - \! \frac{\ell R^{-Q+2-2\ell}}{2-Q-2\ell} \, \int_0^R s^{\ell-1} \Upsilon_i(s) \, ds \, ,
\end{equation*}
thus proving that $\beta_i$ does not depend on the sequence $\eps_n$ and
\begin{equation*}
    \beta_i = R^{-\ell} \varphi_i(R) \! + \!\frac{2-Q-\ell}{2-Q-2\ell} \, \int_{0}^{R} s^{-Q+1-\ell} \Upsilon_i(s) \, ds \! - \! \frac{\ell R^{-Q+2-2\ell}}{2-Q-2\ell} \, \int_0^R s^{\ell-1} \Upsilon_i(s) \, ds \, .
\end{equation*}
To complete the proof of Theorem \ref{t:main} we need to prove that  the  strong convergence in $ W^{1,2}_\alpha (B_r^\alpha)$ and the uniform convergence in $B_r^\alpha$ contained in \eqref{eq:as-for} holds for any $r>0$ and not only for $r \in (0,1)$ as already shown in Lemma \ref{l:blow-up}. Letting $r>0$ and $(X,Y) \in B_r^\alpha$, one can define $(x,y) = \delta_{1/(2r)}(X,Y) \in B_{1/2}^\alpha$ and observe that
\begin{align*}
  & \rho^{-\ell} u(\delta_\rho(X,Y)) = (2 r)^\ell \ (2 r\rho)^{-\ell} u(\delta_{2 r\rho}(x,y)) \to
  (2 r)^{\ell} (d_\alpha(x,y))^{-\ell} \, \Psi \left(\frac{x}{d_\alpha(x,y)}, \frac{y}{[d_\alpha(x,y)]^{\alpha+1}}\right) \\[7pt]
  & \qquad =  (d_\alpha(X,Y))^{-\ell} \, \Psi \left(\frac{X}{d_\alpha(X,Y)}, \frac{Y}{[d_\alpha(X,Y)]^{\alpha+1}}\right) \, \qquad \text{as }  \rho \to 0^+ ,
\end{align*}
   that completes the proof of Theorem \ref{t:main}.

\subsection{Proof of Corollary \ref{c:1}} We divide the proof in three cases.

\textit{\underline{The case $z_0 \in \Sigma$.}} Letting $z_0=(0,y_0)$, up to the translation $(x,y) \mapsto (x,y-y_0)$ we may assume that $z_0$ is the origin of $\R^N$. Suppose by contradiction that $u\not\equiv 0$ in $\Omega$ and apply Theorem \ref{t:main} to get
\begin{equation} \label{eq:app-t}
   \rho^{-\ell} \, u(\delta_\rho(\Theta)) \to \Psi(\Theta) \qquad \text{as } \rho \to 0^+
\end{equation}
uniformly in $\SA$ for some eigenfunction $\Psi \not\equiv 0$ of $-\mathcal L_\Theta$. Taking $n > \ell$, with $n$ as in the statement of the corollary, we have that $|u(\delta_\rho(\Theta))| \le C \rho^n$ for any $\rho>0$ sufficiently small. This, combined with \eqref{eq:app-t} implies $\Psi \equiv 0$ in $\SA$, a contradiction.

\textit{\underline{The case $z_0 \not\in \Sigma$, $h\ge 2$.}} The assumption $h \ge 2$ implies that $\Omega \setminus \Sigma$ is connected and the proof follows from the classical unique continuation principle for uniformly elliptic operators (see Sogge \cite{So}) applied outside an arbitrarily small neighborhood of the singular set $\Sigma$, as already observed in Remark \ref{r:cor}.

\textit{\underline{The case $z_0 \not\in \Sigma$, $h=1$.}} In this case, if $0 \in \Omega$, the set $\Omega \setminus \Sigma$ is not connected. We denote by $H^\pm := \{(x,y) \in \R^{h+k}: x\gtrless 0\}$ and by
$\Omega^\pm := \Omega \cap H^\pm$ the two connected components of $\Omega \setminus \Sigma$ .
We may suppose that $z_0 \in \Omega^+$, being the other case completely equivalent. The classical unique continuation principle for uniformly elliptic operators implies that $u \equiv 0$ in $\Omega^+$. Suppose by contradiction that $u\not \equiv 0$ in $\Omega$. Then we may apply Theorem \ref{t:main} around $0$ and conclude that the eigenfunction $\Psi \not \equiv 0$ vanishes in $\SA \cap H^+$. Then we may define the function $W \in C^\infty(\R^N \setminus \{0\})$ as in the proof of Proposition \ref{p:reg-eig}, solution of the equation \eqref{eq:W-equation} with a potential in $C^\infty(\R^N \setminus \{0\})$. Choose a point $\bar z = (0,\bar y) \in \Sigma \setminus \{0\}$ so that all derivatives of any order of $W$ vanish at $\bar z$, being $W \equiv 0$ in $H^+$. By Taylor formula with Peano remainder term we deduce that $W(z) = O(|z-\bar z|^n)$ for any $n\in \N$. Applying to $W$ the result of Corollary \ref{c:1} itself, already proved in the case $z_0 = \bar z \in \Sigma$, we may conclude that $W \equiv 0$ in $\R^N \setminus \{0\}$ and in particular $\Psi \equiv 0$ in $\SA$, thus reaching a contradiction.

\subsection{Proof of Corollary \ref{c:2}} For the same reason already explained in the proof of Corollary \ref{c:1}, we have to distinguish three cases.

\textit{\underline{The case $z_0 \in \Sigma$.}} Up to translation we may assume that $z_0=0$. Suppose by contradiction that $u\not\equiv 0$ in $\Omega$.  By  estimate \eqref{eq:est-H-ge} of Lemma \ref{l:H-2ell}
 for any $\eta>0$ there exist $K(\eta)>0$ and $r_\eta>0$ such that, after integrating  the boundary integral:
\begin{equation*}
    \int_{B_r^\alpha} \psi_\alpha u^2 \, dxdy \ge \frac{K(\eta)}{2\ell + \eta +Q} \, r^{2\ell + \eta +Q} \quad  \mbox{ for any } r< r_\eta,
\end{equation*}
thus contradicting the assumption of the corollary one we choose $n>2\ell + \eta +Q$.

\textit{\underline{The case $z_0 \not\in \Sigma$.}} We can treat both the cases $h \ge 2$ and $h=1$ exactly as in the proof of Corollary \ref{c:1} by applying the unique continuation principle with the integral assumption which is precisely the type of condition assumed in Sogge \cite[Theorem 2.1]{So}, that is implied by our one.

\subsection{Proof of Corollary \ref{c:positive-measure}}
When $h \ge 2$ we know that $\Omega \setminus \Sigma$ is connected so that, as already observed in Remark \ref{r:cor} the conclusion follows from the unique continuation principle for uniformly elliptic operators in the version with the solution $u$ vanishing on a set of positive measure, see de Figueiredo and Gossez \cite{deFiGo} and Sogge \cite{So}. When $h=1$, we already observed in Remark \ref{r:cor} that in general we can only conclude that $u$ vanishes on at least one connected component of $\Omega \setminus \Sigma$. But in this case we can apply the argument already used in the proof of Corollary \ref{c:1} and complete the proof also in this situation.

\section{Regularity results and properties of weighted Sobolev spaces} \label{s:regularity}

This section collects some fundamental properties of the weighted Sobolev spaces $ W^{1,p}_\alpha $ and some local regularity results in  $W^{k,p}_\alpha$ and in fractional Sobolev spaces for solutions of equations with the Grushin operator $ \Delta_\alpha $.  We note that parts of the results contained in this section, or their variants in more general settings, can be found in the literature  (see e.g.  Rothschild and  Stein \cite{RoSt},  Garofalo and Nhieu \cite{GaNh98}, Danielli, Garofalo and Nhieu \cite{DaGaNh98},  Monti and Morbidelli \cite{MoMo02}, and Kogoj and Lanconelli \cite{KoLa12}). Nevertheless, to the best of our knowledge, no existing references fully address the exact assumptions required here. For these reasons we decided to provide an exposition including detailed proofs, which we believe  can be of interest for people working with the Grushin operator.

\subsection{Properties of the Sobolev spaces $ W^{1,p}_\alpha (\Omega)$ } \label{s:properties-Sobolev}

In this section we will state and prove some basic results of weighted Sobolev spaces $ W^{1,p}_\alpha (\Omega)$ concerning extension theorems, density, trace operators and embeddings.
In all these statements we assume the validity of the following condition on the domain $\Omega$:

\begin{definition} \label{d:condition-D} Let $h,k,\alpha \in \N$, $h,k,\alpha \ge 1$ and let $N=h+k$. We say that a domain $\Omega\subset \R^N$ satisfies condition $(D)$ if it is a domain with Lipschitz boundary and moreover $\partial \Omega\cap \Sigma$, with $\Sigma$ as in \eqref{eq:def-sigma},
may be covered by a finite family $\{V_j\}_{j=1}^s$ of cuboids (i.e. rotations of rectangle parallelepipeds in $\R^N$) such that the following assumptions hold:
\begin{itemize}
  \item[$(i)$] there exists a corresponding family of rotations $\{r_j\}_{j=1}^s$ in $\R^N=\R^h \times \R^k$, $r_j=r_j(x,y)$ independent of $x\in \R^h$ (and hence depending only on $y\in \R^k$), such that for any $j\in \{1,\dots,s\}$
      $$
         r_j(V_j)=\{(x,y)\in \R^h \times \R^k:\alpha_{\iota j}<x_\iota<\beta_{\iota j}, \iota=1,\dots,h, \
         a_{ij}<y_i<b_{ij}, i=1,\dots,k  \};
      $$

  \item[$(ii)$] for any $j\in \{1,\dots,s\}$
  $$
     r_j(\Omega \cap V_j)=\{(x,y)\in \R^h\times \R^k: (x,y') \in W_j, \ g_j(x,y')<y_k<b_{kj}\}
  $$
  where we wrote $y'=(y_1,\dots,y_{k-1})$ and
  $$
    W_j:=\{(x,y')\in \R^h\times \R^{k-1}:\alpha_{\iota j}<x_\iota<\beta_{\iota j}, \iota=1,\dots,h, \
         a_{ij}<y_i<b_{ij}, i=1,\dots,k-1   \};
  $$

  \item[$(iii)$] for any $j\in \{1,\dots,s\}$ we have that $g_j\in C^{0,1}(\overline{W_j})$ and moreover
  $$
      a_{kj}<g_j(x,y')<b_{kj} \qquad \text{for any } (x,y')\in W_j \, ,
  $$
  and there exists a constant $C_j$ such that
  $$
     |\nabla_x g_j(x,y')| \le C_j |x|^{\alpha} \qquad \text{for any } (x,y')\in W_j \, .
  $$
\end{itemize}

\end{definition}

\begin{remark} We point out that conditions (i)-(iii) in Definition \ref{d:condition-D} not necessarily involve the whole boundary of $\Omega$ but only the part of it intersecting the singular set $\Sigma$. In other words the family of cuboids $\{V_j\}_{j=1}^s$ does not in general cover the whole set $\partial \Omega$ but only $\partial\Omega \cap \Sigma$.
However, for the remaining part of $\partial \Omega$ one can find in Definition \ref{d:condition-D} a Lipschitzianity condition.
\end{remark}

This definition excludes for example domains $\Omega$ for which relatively open sets in $\Sigma$ are entirely contained in $\partial \Omega$. In other words, the set $\Sigma$ intersects transversally $\partial\Omega$. Moreover, the second condition in (iii) states that partial derivatives with respect to the variables $x_i$ of the functions $g_j$ have to vanish at least like $|x|^\alpha$ or faster than it when $(x,y')$ is close to the set $W_j \cap \Sigma$.  Note that the same condition for domains in $\R^2$ has been already introduced by Monti and Morbidelli \cite[p. 763]{MoMo02}.

Definition \ref{d:condition-D} seems to be quite restrictive but for our purposes this is not a great problem since the next results about the properties of the weighted Sobolev spaces  $ W_\alpha^{1,p} (\Omega)$ will be essentially applied to the case $\Omega=B_r^\alpha$ with $B_r^\alpha$ as in \eqref{eq:B-r-alpha}. One can verify that for this kind of balls $B_r^\alpha$, condition $(D)$ is actually satisfied.

\bigskip

We start with the following extension result:

\begin{proposition} \label{p:extension}
  Let $\Omega\subset \R^N$ be a bounded domain satisfying condition $(D)$ introduced in Definition \ref{d:condition-D}.
  Then there exists a linear continuous operator $\mathcal E: W^{1,p}_\alpha (\Omega)\to  W^{1,p}_\alpha (\R^N)$ such that $(\mathcal E v)_{|\Omega}=v$ for any $v\in  W^{1,p}_\alpha (\Omega)$.
\end{proposition}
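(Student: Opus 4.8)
The plan is to construct the extension operator by the classical localization-plus-reflection scheme, adapted to the Grushin geometry where the weight $|x|^{2\alpha}$ degenerates on $\Sigma$. First I would take a finite open cover of $\overline\Omega$ subordinate to the structure in Definition \ref{d:condition-D}: the cuboids $\{V_j\}_{j=1}^s$ covering $\partial\Omega\cap\Sigma$, plus finitely many sets $\{V_j\}_{j>s}$ covering the rest of $\partial\Omega$ (where $\Omega$ is merely a Lipschitz domain away from $\Sigma$), plus one interior set $V_0\Subset\Omega$. Choose a smooth partition of unity $\{\chi_j\}$ subordinate to this cover. For the interior piece $\chi_0 v$ no extension is needed (extend by zero). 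For the pieces $\chi_j v$ with $j>s$, we are away from $\Sigma$, so $|x|^{2\alpha}$ is bounded above and below by positive constants on $V_j$; hence $W^{1,p}_\alpha(\Omega\cap V_j)$ is equivalent to the ordinary $W^{1,p}(\Omega\cap V_j)$, and the classical Lipschitz-domain extension theorem (Stein-type, via the extension operator for Lipschitz graphs) applies verbatim. The entire difficulty is concentrated in the cuboids $V_j$, $j\le s$, meeting $\Sigma$.

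For such a $V_j$, after applying the rotation $r_j$ (which, being independent of $x$, commutes with the $X$-derivatives and only mixes the $y$-variables, so preserves the weighted structure), we are reduced to the model situation of a subgraph $\{y_k>g_j(x,y')\}$ with $g_j\in C^{0,1}$ satisfying $|\nabla_x g_j|\le C_j|x|^\alpha$. I would flatten the boundary by the change of variables $(x,y',y_k)\mapsto(x,y',y_k-g_j(x,y'))$; the crucial point is that this map has bounded Lipschitz inverse \emph{and} transforms $\nabla_\alpha$ into an equivalent Grushin-type gradient, because the off-diagonal terms produced by the chain rule involve $\partial_{x_\iota}g_j\cdot\partial_{y_k}$, and $|\partial_{x_\iota}g_j|\le C_j|x|^\alpha$ is exactly the size that lets such a term be absorbed (it contributes $|x|^\alpha\partial_{y_k}v$, which is controlled by $|\nabla_\alpha v|$). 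This is precisely why condition $(iii)$ is imposed, and it is the step I expect to be the main obstacle — one must check carefully that the push-forward of the weighted norm is equivalent to the flat weighted norm on a half-space-like domain $\{y_k>0\}\times(\text{cuboid in }(x,y'))$. Once the boundary is flat, the extension is the even reflection $\widetilde v(x,y',y_k):=v(x,y',-y_k)$ for $y_k<0$; since the weight $|x|^{2\alpha}$ and the vector fields $X_\iota=\partial_{x_\iota}$, $Y_{J,\ell}=\sqrt{\binom{\alpha}{J}}x^J\partial_{y_\ell}$ involve no $y_k$-derivative except through $\partial_{y_k}$, even reflection keeps $v\in W^{1,p}$ across $\{y_k=0\}$ in the weighted sense (the tangential derivatives reflect evenly, $\partial_{y_k}$ reflects oddly, and no distributional mass appears on $\{y_k=0\}$ because the trace matches), with norm bounded by $2^{1/p}$ times the original.

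Having produced, for each $j\le s$, a bounded extension $\mathcal E_j:W^{1,p}_\alpha(\Omega\cap V_j)\to W^{1,p}_\alpha(V_j')$ for a slightly larger cuboid $V_j'$, I would recombine: set $\mathcal E v:=\chi_0 v+\sum_{j\ge1}\mathcal E_j(\chi_j v)$, where each $\mathcal E_j(\chi_j v)$ is understood as extended by zero outside $V_j'$ (legitimate since $\chi_j v$ is compactly supported in $V_j$, so $\mathcal E_j(\chi_j v)$ vanishes near $\partial V_j'$; for the reflection this uses that even reflection of a compactly supported function is compactly supported). Linearity is clear, and $(\mathcal E v)|_\Omega=\sum_j\chi_j v=v$. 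Continuity follows from the product rule estimate $\|\chi_j v\|_{W^{1,p}_\alpha}\le C\|v\|_{W^{1,p}_\alpha(\Omega\cap V_j)}$ (the factor $C$ depending on $\|\chi_j\|_{C^1}$, using that $|\nabla_\alpha(\chi_j v)|\le|\chi_j||\nabla_\alpha v|+|v||\nabla_\alpha\chi_j|$ and $\chi_j$ is smooth hence $\nabla_\alpha\chi_j$ bounded) combined with the boundedness of each $\mathcal E_j$ and the finiteness of the cover. I would remark that for the balls $B_r^\alpha$ — the only domains to which this is actually applied — condition $(D)$ holds with an explicit description of the defining functions $g_j$ and their $x$-gradient bound, so Proposition \ref{p:extension} applies to $\Omega=B_r^\alpha$; by the scaling $\delta_\lambda$ the constant can moreover be tracked in $r$ if needed.
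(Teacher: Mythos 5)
Your proposal is correct and follows essentially the same route as the paper's proof: partition of unity, the flattening map $(x,y',y_k)\mapsto(x,y',y_k-g_j(x,y'))$, the observation that the chain-rule term $\partial_{y_k}v\cdot\nabla_x g_j$ is absorbed into $|\nabla_\alpha v|$ precisely because of the bound $|\nabla_x g_j|\le C_j|x|^\alpha$ from condition $(D)(iii)$ (this is the paper's estimate \eqref{eq:prel-est}), followed by even reflection across $\{y_k=0\}$ and pull-back. The only cosmetic difference is that the paper cites Brezis's lemma for the symmetric reflection step rather than arguing the trace-matching directly.
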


\begin{proof} The proof follows exactly the same argument used for classical Sobolev spaces based on partition of unity, local deformation of the domain and symmetric extension. Let us emphasize the main differences with the classical case.
Up to the rotation $r_j$ introduced in Definition \ref{d:condition-D}, we may assume that
\begin{equation*}
  \Omega \cap V_j=\{(x,y)\in \R^h\times \R^k: (x,y') \in W_j, \ g_j(x,y')<y_k<b_{kj}\}
\end{equation*}
and if $v\in  W^{1,p}_\alpha (\Omega)$, up to consider a partition of unity, we may assume that
\begin{equation*}
   {\rm supp}(v) \subset \{(x,y)\in \R^h\times \R^k: (x,y') \in W_j, \ g_j(x,y')\le y_k<b_{kj}\}
\end{equation*}
so that by implementing a trivial extension of $v$ we may put $v(x,y',y_k)=0$ for any $(x,y')\in W_j$ and $y_k\ge b_{kj}$.

Then we define the deformation
\begin{align*}
   & \Phi_j:W_j\times \R   \to W_j\times \R   \\
   & \Phi_j(x,y',y_k)=(x,y',y_k-g_j(x,y')) \qquad \text{for any } (x,y',y_k)\in W_j \times \R
\end{align*}
and the function  $\mathbf{v}:W_j\times (0,+\infty)\to \R$ given by $\mathbf{v}(x,y',y_k):=v(\Phi_j^{-1}(x,y',y_k))$ for any
$(x,y',y_k) \in W_j \times (0,+\infty)$.

By a direct computation based on Definition \ref{d:condition-D} (iii), we infer
\begin{align}  \label{eq:prel-est}
   & |\nabla_\alpha \mathbf{v}(x,y',y_k)|^2 \le 2|\nabla_x v(\Phi_j^{-1}(x,y',y_k))|^2 +
      2 \left|\frac{\partial v}{\partial y_k}(\Phi_j^{-1}(x,y',y_k))\right|^2 |\nabla_x g_j(x,y')|^2 \\[5pt]
 \notag      & \quad + 2|x|^{2\alpha}|\nabla_{y'} v(\Phi_j^{-1}(x,y',y_k))|^2 +2|x|^{2\alpha}
       \left|\frac{\partial v}{\partial y_k}(\Phi_j^{-1}(x,y',y_k))\right|^2 |\nabla_{y'} g_j(x,y')|^2 \\[5pt]
  \notag      & \quad +|x|^{2\alpha}
       \left|\frac{\partial v}{\partial y_k}(\Phi_j^{-1}(x,y',y_k))\right|^2  \\[5pt]
   \notag    & \le (2C_j^2+2 \sup |\nabla g_j|^2+3) \, |\nabla_\alpha v(\Phi_j^{-1}(x,y',y_k))|^2
   \quad \text{for a.e. $(x,y',y_k) \in W_j \times (0,+\infty)$. }
\end{align}

By \eqref{eq:prel-est} and a change of variables we obtain
\begin{align*}
   & \int_{W_j \times (0,+\infty)}  |\nabla_\alpha  \mathbf{v}|^p dxdy'dy_k \le
   (2C_j^2+2 \sup |\nabla g_j|^2+3)^{\frac p2} \int_{\Omega \cap V_j} |\nabla_\alpha v|^p dxdy \, .
\end{align*}
 With a similar but easier procedure one can show that $\|\mathbf{v}\|_{L^p(W_j \times (0,+\infty))}=\|v\|_{L^p(\Omega \cap V_j)}$
 so that $\mathbf{v}\in  W^{1,p}_\alpha (W_j \times (0,+\infty))$ and
 \begin{equation} \label{eq:V1}
     \|\mathbf{v}\|_{ W^{1,p}_\alpha (W_j \times (0,+\infty))}  \le \left[1+(2C_j^2+2 \sup |\nabla g_j|^2+3)^{\frac p2}\right]^{\frac 1p}
     \, \|v\|_{ W^{1,p}_\alpha (\Omega\cap V_j)} \, .
 \end{equation}

The next step is to extend the function $\mathbf{v}$ by a symmetric reflection with respect to the hyperplane $y_k=0$. This extension gives rise to a function $\widetilde {\mathbf{v}}:W_j\times \R\to \R$ such that $\widetilde {\mathbf{v}}\in  W^{1,p}_\alpha (W_j \times \R)$ and
\begin{equation*} 
     \|\widetilde { \mathbf{v}}\|_{  W^{1,p}_\alpha (W_j \times \R)}=2^{\frac 1p}  \, \| \mathbf{v}\|_{  W^{1,p}_\alpha (W_j \times (0,+\infty))} \, .
\end{equation*}
For more details about the extension by symmetric reflection and the proof of the fact that $X_i \widetilde { \mathbf{v}}$ and $Y_{J,\ell}\widetilde { \mathbf{v}}$ exist in a weak sense and are $L^p$-functions, see Brezis \cite[Lemma 9.2]{Brezis}.

The final step is to define the function $\widetilde v:W_j \times \R \to \R$ by putting $\widetilde v(x,y',y_k):=\widetilde { \mathbf{v}}(\Phi(x,y',y_k))$ for any $(x,y',y_k)\in W_j \times \R$. With the same procedure used in \eqref{eq:prel-est} one can verify that $\widetilde v\in  W^{1,p}_\alpha (W_j \times \R)$ and
\begin{equation} \label{eq:V3}
  \|\widetilde v\|_{ W^{1,p}_\alpha (W_j \times \R)} \le \left[1+(2C_j^2+2 \sup |\nabla g_j|^2+3)^{\frac p2}\right]^{\frac 1p}  \,
  \|\widetilde { \mathbf{v}}\|_{ W^{1,p}_\alpha (W_j \times \R)} \, .
\end{equation}
Since $\widetilde v$ is compactly supported in $W_j\times \R$, it can be trivially extended to the whole $\R^N$. Moreover, combining \eqref{eq:V1}-\eqref{eq:V3} we can write
\begin{equation*}
   \|\widetilde v\|_{ W^{1,p}_\alpha (\R^N)}\le 2^{\frac 1p} \left[1+(2C_j^2+2 \sup |\nabla g_j|^2+3)^{\frac p2}\right]^{\frac 2p}
    \|v\|_{ W^{1,p}_\alpha (\Omega)} \, .
\end{equation*}
This completes the proof of the proposition.
\end{proof}

\begin{remark} \label{r:compact-support} We observe that under the assumptions of Proposition \ref{p:extension}, the extension operator can be constructed in such a way that every function $\mathcal E v$, $v \in  W^{1,p}_\alpha (\Omega)$, is compactly supported in $\R^N$ and its support is contained in a ball $B_R(0)$ with a radius $R$ independent of $v \in  W^{1,p}_\alpha (\Omega)$. Indeed, suppose that is not the case; then it is sufficient to choose a cutoff function $\zeta \in C^\infty_c(\R^N)$ with $\zeta\equiv 1$ in $\Omega$ and multiply every extended function by $\zeta$. In this way, we may construct a new extension operator which now satisfies the desired property.
\end{remark}

The next purpose is to show that it is possible to define a trace operator from the space $ W^{1,p}_\alpha (\Omega)$ to the weighted $L^p_\alpha(\partial\Omega)$ of functions defined on $\partial\Omega$ which belongs to the $L^p$-space with respect to the boundary measure $|x|^{p\alpha} d\mathcal H^{N-1}$.

A possible strategy is to define the trace on smooth functions and to extend it by density and continuity to the whole space $ W^{1,p}_\alpha (\Omega)$. In order to prove density of $C^\infty(\overline \Omega)$ in $ W^{1,p}_\alpha (\Omega)$, we assume again condition $(D)$ introduced in Definition \ref{d:condition-D}.  Since the proof follows the classical argument by Friedrichs in \cite{Friedrichs}, we therefore omit it.

\begin{proposition} \label{p:density}
Let $\Omega\subset \R^N$ be a bounded domain satisfying condition $(D)$ introduced in Definition \ref{d:condition-D}
Then $C^\infty(\overline \Omega)$ is dense in $ W^{1,p}_\alpha (\Omega)$.
\end{proposition}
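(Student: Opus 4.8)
The plan is to reduce the statement to $\R^N$ by means of the extension operator and then to mollify, which is the classical Friedrichs scheme \cite{Friedrichs}; I assume throughout that $1\le p<\infty$ (for $p=\infty$ the statement fails, and the paper only uses $p=2$). First I would take $v\in W^{1,p}_\alpha(\Omega)$ and set $\widetilde v:=\mathcal E v\in W^{1,p}_\alpha(\R^N)$, where $\mathcal E$ is the extension operator of Proposition~\ref{p:extension}; by Remark~\ref{r:compact-support} we may and do assume ${\rm supp}\,\widetilde v\subset B_R(0)$ with $R$ independent of $v$. Fixing a standard family of mollifiers $(\varrho_\eps)_{\eps>0}\subset C^\infty_c(\R^N)$, $\varrho_\eps\ge0$, $\int_{\R^N}\varrho_\eps\,dz=1$, ${\rm supp}\,\varrho_\eps\subset B_\eps(0)$, I set $v_\eps:=\varrho_\eps*\widetilde v$, which is smooth and compactly supported in $\R^N$. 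Since the restriction map $W^{1,p}_\alpha(\R^N)\to W^{1,p}_\alpha(\Omega)$ is continuous and $v_\eps|_{\overline\Omega}\in C^\infty(\overline\Omega)$, it suffices to show that $v_\eps\to\widetilde v$ in $W^{1,p}_\alpha(\R^N)$ as $\eps\to0^+$.

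The routine parts are the $L^p$--convergence $v_\eps\to\widetilde v$, which is classical, and the convergence of the derivatives along the constant--coefficient fields $X_j=\partial/\partial x_j$: mollification commutes with constant--coefficient differentiation, so $X_j v_\eps=\varrho_\eps*(X_j\widetilde v)$, and as $X_j\widetilde v\in L^p(\R^N)$ we get $X_j v_\eps\to X_j\widetilde v$ in $L^p(\R^N)$. The only genuinely non--routine point, and the one I expect to be the main obstacle, is the convergence $Y_{J,\ell}v_\eps\to Y_{J,\ell}\widetilde v$ in $L^p(\R^N)$, because mollification does not commute with $Y_{J,\ell}=\sqrt{\binom{\alpha}{J}}\,x^J\,\partial/\partial y_\ell$ on account of the variable factor $x^J$.

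To handle it I would write, up to the factor $\sqrt{\binom{\alpha}{J}}$,
\begin{equation*}
  Y_{J,\ell}v_\eps-\varrho_\eps*\big(Y_{J,\ell}\widetilde v\big)=x^J\,\partial_{y_\ell}(\varrho_\eps*\widetilde v)-\varrho_\eps*\big(x^J\partial_{y_\ell}\widetilde v\big)=:C_\eps ,
\end{equation*}
where, using \eqref{eq:weak-Yj} and that $x^J$ does not depend on $y$, the distribution $x^J\partial_{y_\ell}\widetilde v=\partial_{y_\ell}(x^J\widetilde v)$ coincides with the $L^p$ function $Y_{J,\ell}\widetilde v/\sqrt{\binom{\alpha}{J}}$. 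A direct computation gives
\begin{equation*}
  C_\eps(x,y)=\int_{\R^N}(\partial_{y_\ell}\varrho_\eps)(x-x',y-y')\,\big(x^J-(x')^J\big)\,\widetilde v(x',y')\,dx'\,dy',
\end{equation*}
and after the substitution $(x',y')=(x,y)-\eps\zeta$ together with the Taylor expansion $x^J-(x-\eps\zeta_x)^J=\eps\,\zeta_x\cdot\nabla_x(x^J)+O(\eps^2)$ one obtains
\begin{equation*}
  C_\eps(x,y)=\int_{\R^N}(\partial_{y_\ell}\varrho)(\zeta)\,\big[\zeta_x\cdot\nabla_x(x^J)+O(\eps)\big]\,\widetilde v\big((x,y)-\eps\zeta\big)\,d\zeta ;
\end{equation*}
here the boundedness of the region (compact support of $\widetilde v$ and of the mollifiers) makes every step legitimate, $x^J$ being Lipschitz there. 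Passing to the limit, $\widetilde v(\cdot-\eps\zeta)\to\widetilde v$ in $L^p(\R^N)$ with a uniform integrable majorant, so $C_\eps\to\widetilde v\,\nabla_x(x^J)\cdot\int_{\R^N}\zeta_x\,(\partial_{y_\ell}\varrho)(\zeta)\,d\zeta$ in $L^p(\R^N)$; integrating by parts in $\zeta_{y_\ell}$ this last integral vanishes, since $\zeta_x$ does not depend on $\zeta_{y_\ell}$. Hence $C_\eps\to0$ in $L^p(\R^N)$, and since $Y_{J,\ell}\widetilde v\in L^p(\R^N)$ also $\varrho_\eps*(Y_{J,\ell}\widetilde v)\to Y_{J,\ell}\widetilde v$, which yields $Y_{J,\ell}v_\eps\to Y_{J,\ell}\widetilde v$ in $L^p(\R^N)$. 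Combining all the convergences we conclude $v_\eps\to\widetilde v$ in $W^{1,p}_\alpha(\R^N)$, and restricting to $\Omega$ completes the proof. An essentially equivalent argument avoiding Proposition~\ref{p:extension} works directly in $\Omega$, flattening $\partial\Omega$ near $\partial\Omega\cap\Sigma$ by the functions $g_j$ from condition $(D)$, translating inward and mollifying; the growth bound $|\nabla_x g_j|\le C_j|x|^\alpha$ in Definition~\ref{d:condition-D}\,$(iii)$ is exactly what keeps the deformed function in $W^{1,p}_\alpha$, as already seen in the proof of Proposition~\ref{p:extension}.
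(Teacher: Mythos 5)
Your proof is correct, and it fills in a proof that the paper actually omits (the authors only remark that the statement ``follows the classical argument by Friedrichs'' and give no details). Your argument is precisely that classical scheme: you reduce to $\R^N$ via the extension operator of Proposition~\ref{p:extension} (whose proof does not rely on density, so there is no circularity), mollify, and handle the only non-trivial point --- that mollification does not commute with the variable-coefficient fields $Y_{J,\ell}$ --- by the Friedrichs commutator computation, which you carry out correctly, including the cancellation $\int_{\R^N}\zeta_x\,(\partial_{y_\ell}\varrho)(\zeta)\,d\zeta=0$ that makes the commutator vanish in the limit.
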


Let us proceed with the definition of the trace operator.

\begin{proposition} \label{p:trace-alpha} Let $\Omega\subset \R^N$ be a bounded domain satisfying condition $(D)$ introduced in Definition \ref{d:condition-D}. Then there exists a continuous linear operar ${\rm Tr_\alpha}: W^{1,p}_\alpha (\Omega)\to L^p_\alpha(\partial\Omega)$ such that ${\rm Tr}\, v=v_{|\partial\Omega}$ for any $v\in C^\infty(\overline \Omega)$.
\end{proposition}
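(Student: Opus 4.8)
The plan is to construct $\mathrm{Tr}_\alpha$ first on the dense subspace $C^\infty(\overline\Omega)$ (dense by Proposition \ref{p:density}) by $v \mapsto v_{|\partial\Omega}$, to prove the trace inequality
\[
   \|v_{|\partial\Omega}\|_{L^p_\alpha(\partial\Omega)} \le C\,\|v\|_{W^{1,p}_\alpha(\Omega)}
   \qquad \text{for every } v \in C^\infty(\overline\Omega),
\]
with $C=C(\Omega,h,k,\alpha,p)$, and then to extend $\mathrm{Tr}_\alpha$ to all of $W^{1,p}_\alpha(\Omega)$ by density and continuity. The whole point is the trace inequality, which I would localise by a finite partition of unity $\{\chi_\ell\}$ subordinate to an open cover of $\overline\Omega$ consisting of: (a) open sets compactly contained in $\Omega$, which contribute nothing to the boundary; (b) the cuboids $\{V_j\}_{j=1}^s$ from Definition \ref{d:condition-D}, covering $\partial\Omega\cap\Sigma$; and (c) finitely many open sets covering the remaining part of $\partial\Omega$, which lies at positive distance from $\Sigma$. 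Since $\nabla_\alpha(\chi_\ell v)=\chi_\ell\nabla_\alpha v+v\,\nabla_\alpha\chi_\ell$ with $\nabla_\alpha\chi_\ell$ bounded, it suffices to bound $\|(\chi_\ell v)_{|\partial\Omega}\|_{L^p_\alpha(\partial\Omega)}$ by $\|v\|_{W^{1,p}_\alpha(\Omega)}$ for each $\ell$.

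On the sets of type (c), the weight $|x|^{p\alpha}$ is bounded above and below by positive constants, and the $W^{1,p}_\alpha$-norm is equivalent to the usual $W^{1,p}$-norm there; hence the desired estimate reduces to the classical trace theorem on a Lipschitz domain. The genuinely new part concerns a set $V_j$ of type (b). After applying the rotation $r_j$ of Definition \ref{d:condition-D} we may assume $\Omega\cap V_j=\{(x,y):(x,y')\in W_j,\ g_j(x,y')<y_k<b_{kj}\}$, and, shrinking the supports in the partition of unity, that $w:=\chi_j v$ vanishes for $y_k$ close to $b_{kj}$. For $(x,y')\in W_j$ the fundamental theorem of calculus in the $y_k$ variable and Young's inequality give
\[
   |x|^{p\alpha}\,|w(x,y',g_j(x,y'))|^p
   = -\,|x|^{p\alpha}\!\!\int_{g_j(x,y')}^{b_{kj}}\!\!\partial_{y_k}|w|^p\,dy_k
   \le C\!\!\int_{g_j(x,y')}^{b_{kj}}\!\!\Bigl[\bigl(|x|^\alpha|w|\bigr)^p+\bigl(|x|^\alpha|\partial_{y_k}w|\bigr)^p\Bigr]dy_k .
\]
Since $|x|$ is bounded on $\Omega$ we have $(|x|^\alpha|w|)^p\le C|w|^p$, and $|x|^\alpha|\partial_{y_k}w|\le|\nabla_\alpha w|$; integrating over $W_j$ therefore yields
\[
   \int_{W_j}|x|^{p\alpha}\,|w(x,y',g_j(x,y'))|^p\,dx\,dy'
   \le C\,\|w\|_{W^{1,p}_\alpha(\Omega\cap V_j)}^p \le C\,\|v\|_{W^{1,p}_\alpha(\Omega)}^p .
\]
Finally, the surface measure on the graph portion of $\partial\Omega$ is $\sqrt{1+|\nabla_x g_j|^2+|\nabla_{y'}g_j|^2}\,dx\,dy'$; by Definition \ref{d:condition-D}(iii) the term $|\nabla_x g_j|\le C_j|x|^\alpha$ is bounded and $g_j$ is Lipschitz, so this density is bounded between two positive constants. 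Hence $\int_{\partial\Omega\cap V_j}|x|^{p\alpha}|w|^p\,d\mathcal H^{N-1}$ is controlled by the left-hand side above, and summing over $\ell$ proves the trace inequality. (Note that $\partial\Omega\cap\Sigma$ carries the weight $|x|^{p\alpha}\equiv 0$, so it is irrelevant for $L^p_\alpha(\partial\Omega)$.)

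With the trace inequality in hand, the extension is routine: given $v\in W^{1,p}_\alpha(\Omega)$, pick $v_n\in C^\infty(\overline\Omega)$ with $v_n\to v$ in $W^{1,p}_\alpha(\Omega)$; then $\{v_n{}_{|\partial\Omega}\}$ is Cauchy in $L^p_\alpha(\partial\Omega)$, its limit is independent of the chosen sequence, and defining $\mathrm{Tr}_\alpha v$ to be this limit gives a bounded linear operator agreeing with restriction on $C^\infty(\overline\Omega)$. I expect the main obstacle to be exactly the control of the weighted boundary measure near $\Sigma$—this is where hypothesis (D)(iii) is essential—together with a careful choice of the partition of unity so that the boundary terms localise to the graph faces of the cuboids $V_j$ and the ``top'' contributions disappear.
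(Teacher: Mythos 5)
Your proposal is correct and follows essentially the same route as the paper: localize by a partition of unity, run the fundamental theorem of calculus in the $y_k$-direction together with Young's inequality so that the boundary weight $|x|^{p\alpha}$ is absorbed into $|\nabla_\alpha w|^p$ and $|w|^p$, and then extend by density using Proposition \ref{p:density}. The only (harmless) difference is that you perform the one-dimensional estimate directly on the graph $y_k=g_j(x,y')$ and then bound the surface density, whereas the paper first flattens the boundary via $(x,y',y_k)\mapsto(x,y',y_k-g_j(x,y'))$ (controlling the change of variables through \eqref{eq:prel-est}) and does the computation on the flat piece $W\times\{0\}$.
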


\begin{proof} Let us consider the linear operator which maps every function $v \in C^\infty(\overline \Omega)\subset  W^{1,p}_\alpha (\Omega)$ to $v_{|\partial\Omega}\in L^p_\alpha(\partial \Omega)$.
We want to prove that there exists a constant $C>0$ such that
\begin{equation} \label{eq:est-v-0}
     \|v_{|\partial \Omega}\|_{L^p_\alpha(\partial\Omega)}\le C \|v\|_{ W^{1,p}_\alpha(\Omega)} \qquad \text{for any } v\in C^\infty(\overline\Omega)
     \, .
\end{equation}
By using partition of unity and a local representation of $\partial \Omega$, as in Definition \ref{d:condition-D}, the problem can be reduced to prove the following estimate
\begin{equation} \label{eq:est-v}
   \|v_{|  W}\|_{L^p_\alpha(W)}\le C \|v\|_{ W^{1,p}_\alpha(W \times (0,+\infty))} \qquad \text{for any } v\in C^\infty_c(W\times [0,+\infty))
\end{equation}
with $W$ in the form
 $$
    W:=\{(x,y')\in \R^h\times \R^{k-1}:\alpha_{\iota}<x_\iota<\beta_{\iota}, \iota=1,\dots,h, \
         a_{i}<y_i<b_{i}, i=1,\dots,k-1   \} \, .
 $$
Indeed, one can proceed by exploiting the computations in \eqref{eq:prel-est} and the two conditions in Definition \ref{d:condition-D} (iii).
Let $R>0$ be such that the support of $v$ is contained in $W\times [0,R]$.

For any function $v$ as in \eqref{eq:est-v}, by the integration by parts formula we obtain
\begin{align*}
   & \int_{W} |x|^{p\alpha} \, |v(x,y',0)|^p \, dxdy' \\[7pt]
   & \ =\int_{W} |x|^{p\alpha} \, |v(x,y',y_k)|^p \, dxdy'-
   \int_{W \times (0,y_k)} p|x|^{p\alpha} \, |v(x,y',t)|^{p-2} v(x,y',t)\frac{\partial v}{\partial y_k}(x,y',t) \, dxdy'dt \\[7pt]
   & \ \le \int_{W} |x|^{p\alpha} \, |v(x,y',y_k)|^p \, dxdy'+(p-1)\int_{W\times (0,+\infty)}
   |x|^{p\alpha} \, |v|^p \, dxdy+\int_{W\times (0,+\infty)}
   |x|^{p\alpha} \, \left|\frac{\partial v}{\partial y_k}\right|^p \, dxdy \, .
\end{align*}
Integrating with respect to $y_k$ in the interval $(0,R)$ we get
\begin{align} \label{eq:W-0-infty}
   & R \int_{W} |x|^{p\alpha} \, |v(x,y',0)|^p \, dxdy' \\[7pt]
  \notag &   \le \int_{W \times (0,R)} |x|^{p\alpha} \, |v|^p \, dxdy+R(p-1)\int_{W\times (0,+\infty)}
   |x|^{p\alpha} \, |v|^p \, dxdy+R \int_{W\times (0,+\infty)}
   |x|^{p\alpha} \, \left|\frac{\partial v}{\partial y_k}\right|^p \, dxdy  \\[10pt]
  \notag  &  \le [1+R(p-1)] \left(\sup_W |x|^{p\alpha} \right) \ \int_{W\times (0,+\infty)} |v|^p \, dxdy
   +R \int_{W\times (0,+\infty)} |\nabla_\alpha v|^p \, dxdy \, .
\end{align}
This proves \eqref{eq:est-v}. Coming back to the original domain $\Omega$ and exploiting the assumptions contained in Definition \ref{d:condition-D}, one can easily complete the proof of \eqref{eq:est-v-0}.

Combining the estimate \eqref{eq:est-v-0} with the fact that $C^\infty(\overline\Omega)$ is dense in $ W^{1,p}_\alpha (\Omega)$, as shown in Proposition \ref{p:density}, we can extend by continuity and density the restriction operator to the whole $ W^{1,p}_\alpha (\Omega)$.

The operator defined in this way will be called trace operator and it will be denoted with the notation introduced in the statement of the proposition.
\end{proof}

As a corollary of Proposition \ref{p:trace-alpha} we have the following,  where we recall that $Q = h +(\alpha+1)k$ is the homogeneous dimension defined in  Section \ref{s:functional-setting}.

\begin{corollary} \label{c:trace} Let $\Omega\subset \R^N$ be a bounded domain satisfying condition $(D)$ introduced in Definition \ref{d:condition-D}.  Then the operator ${\rm Tr}_\alpha$ introduced in Proposition \ref{p:trace-alpha} is also a linear continuous operator from $ W^{1,p}_\alpha (\Omega)$ to $L^q_\alpha(\partial\Omega)$ for any $1\le q<\infty$ if $Q\le p$ and
for any $1\le q\le \frac{p(Q-1)}{Q-p}$ if $Q>p$.
\end{corollary}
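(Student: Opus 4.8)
The plan is to upgrade the $L^p_\alpha$-trace estimate of Proposition~\ref{p:trace-alpha} to the critical exponent by testing it against a suitable power of the function, exactly as in the classical Gagliardo--Nirenberg trace argument, but with the Euclidean dimension $N$ replaced by the homogeneous dimension $Q$. By Proposition~\ref{p:density} it suffices to prove
\[
  \|v\|_{L^q_\alpha(\partial\Omega)}\le C\,\|v\|_{W^{1,p}_\alpha(\Omega)}
\]
for every $v\in C^\infty(\overline\Omega)$ and then pass to the limit. Moreover it is enough to treat the endpoint exponent: when $p<Q$ we take $q=\frac{p(Q-1)}{Q-p}$, and when $p\ge Q$ we fix an arbitrary finite $q$; every smaller exponent $q'\le q$ then follows since $\partial\Omega$ has finite $\mathcal H^{N-1}$-measure and $L^s_\alpha(\partial\Omega)$ is precisely the $L^s$-space (with respect to $d\mathcal H^{N-1}$) of the function $|x|^\alpha v$, so that $L^q_\alpha(\partial\Omega)\hookrightarrow L^{q'}_\alpha(\partial\Omega)$.

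First I would reduce the weight $|x|^{q\alpha}$ of $L^q_\alpha(\partial\Omega)$ to the weight $|x|^{\alpha}$ of $L^1_\alpha(\partial\Omega)$: since $\Omega$ is bounded there is a constant $C_\Omega$ with $|x|^{(q-1)\alpha}\le C_\Omega$ on $\overline\Omega$, whence
\[
  \int_{\partial\Omega}|x|^{q\alpha}\,|v|^{q}\,d\mathcal H^{N-1}
  \le C_\Omega\int_{\partial\Omega}|x|^{\alpha}\,|v|^{q}\,d\mathcal H^{N-1}
  = C_\Omega\,\bigl\|\,|v|^{q}\,\bigr\|_{L^1_\alpha(\partial\Omega)}\,.
\]
Then I would apply Proposition~\ref{p:trace-alpha} with $p=1$ to the function $|v|^{q}$, which for $v\in C^\infty(\overline\Omega)$ and $q\ge 1$ lies in $C^1(\overline\Omega)\subset W^{1,1}_\alpha(\Omega)$ and whose trace is $(v_{|\partial\Omega})^{q}$ up to sign; this gives
\[
  \bigl\|\,|v|^{q}\,\bigr\|_{L^1_\alpha(\partial\Omega)}\le C\,\bigl\|\,|v|^{q}\,\bigr\|_{W^{1,1}_\alpha(\Omega)}
  = C\int_\Omega\Bigl(|v|^{q}+q\,|v|^{q-1}\,|\nabla_\alpha v|\Bigr)\,dxdy\,.
\]
Equivalently, one may simply repeat the local integration by parts leading to \eqref{eq:W-0-infty} with $p=1$ and $|v|^{q}$ in place of $v$.

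It remains to bound the right-hand side by $\|v\|_{W^{1,p}_\alpha(\Omega)}^{q}$. For the gradient term I would use Hölder's inequality with exponents $\tfrac{p}{p-1}$ and $p$,
\[
  \int_\Omega |v|^{q-1}\,|\nabla_\alpha v|\,dxdy\le \|v\|_{L^{(q-1)p/(p-1)}(\Omega)}^{q-1}\,\|\nabla_\alpha v\|_{L^p(\Omega)}\,,
\]
the decisive point being that $(q-1)\tfrac{p}{p-1}$ equals exactly the critical Sobolev exponent $p^{\ast}:=\tfrac{pQ}{Q-p}$ of $W^{1,p}_\alpha$ (this is precisely how $q=\tfrac{p(Q-1)}{Q-p}$ is singled out). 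Invoking the Sobolev embedding $W^{1,p}_\alpha(\Omega)\hookrightarrow L^{p^{\ast}}(\Omega)$ from Proposition~\ref{p:embedding}, and $L^{p^{\ast}}(\Omega)\hookrightarrow L^{q}(\Omega)$ on the bounded set $\Omega$ for the zeroth-order term, yields $\bigl\|\,|v|^{q}\,\bigr\|_{W^{1,1}_\alpha(\Omega)}\le C\|v\|_{W^{1,p}_\alpha(\Omega)}^{q}$, hence the claimed estimate. When $p\ge Q$ the same scheme works with $L^{p^{\ast}}$ replaced by $L^{r}(\Omega)$ for $r$ large but finite (any $r\ge (q-1)\tfrac{p}{p-1}$ will do), using that $W^{1,p}_\alpha(\Omega)$ embeds into $L^{r}(\Omega)$ for every $r<\infty$ by Proposition~\ref{p:embedding}. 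I do not expect a serious obstacle: the only genuinely non-formal ingredient is the interior Sobolev embedding with homogeneous dimension $Q$ supplied by Proposition~\ref{p:embedding}, and the only real care needed is the bookkeeping of the boundary weight $|x|^{q\alpha}$ versus $|x|^{\alpha}$, which is harmless precisely because $\Omega$ is bounded.
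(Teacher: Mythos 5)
Your proposal is correct and takes essentially the same route as the paper: the paper likewise reduces matters to the $L^1$-type boundary inequality for $|v|^q$ (by repeating the integration by parts behind \eqref{eq:W-0-infty} in the localized cuboid), applies H\"older (and Young) to the term $|v|^{q-1}|\nabla_\alpha v|$, and identifies the endpoint exponent from the requirement $(q-1)\tfrac{p}{p-1}\le p^*_\alpha$ combined with the Sobolev embedding of Proposition \ref{p:embedding}. The only cosmetic difference is that you invoke Proposition \ref{p:trace-alpha} with $p=1$ applied to $|v|^q$ rather than rewriting the one-dimensional computation, which you correctly note is equivalent.
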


\begin{proof} It is enough to prove the counterpart of \eqref{eq:est-v}, i.e.
\begin{equation} \label{eq:est-v-cor}
   \|v_{| W}\|_{L^q_\alpha(W)}\le C \|v\|_{ W^{1,p}_\alpha (W \times (0,+\infty))} \qquad \text{for any } v\in C^\infty_c(W\times [0,+\infty)) \, .
\end{equation}
For $1\le q<\infty$, by proceeding as in the proof of Proposition \ref{p:trace-alpha} and using first H\"older inequality and then Young inequality, we obtain the generalization of \eqref{eq:W-0-infty}, i.e.
\begin{equation} \label{eq:crit-trace}
    R \int_{W} |x|^{q\alpha} \, |v(x,y',0)|^q \, dxdy'
\end{equation}
\begin{equation*}
 \le \! \left( \sup_W |x|^{q\alpha} \right) \! \left( \|v\|_{L^q(W\times (0,+\infty))}^q
   \! + \! R(q-1) \|v\|_{L^{\frac{p(q-1)}{p-1}}(W\times (0,+\infty))}^q \right) \!
   + \! R \left(\int_{W\times (0,+\infty)} \! |\nabla_\alpha v|^p \, dxdy \right)^{\frac qp} \! \! .
\end{equation*}

In order to estimate the right hand side of \eqref{eq:crit-trace} in terms of $\|v\|_{ W^{1,p}_\alpha (W\times (0,+\infty))}^q$
the only restriction occurs when $Q>p$ and it reads $q\le p_\alpha^*$ and $\frac{p(q-1)}{p-1}\le p_\alpha^*$, as a consequence of Proposition \ref{p:embedding} below. The more restrictive condition is the second one and it can be rewritten as $q\le \frac{p(Q-1)}{Q-p}$. We have so completed the proof of \eqref{eq:est-v-cor}.
\end{proof}

We now recall an embedding for the weighted Sobolev spaces $ W^{1,p}_\alpha (\Omega)$.

\begin{proposition} \label{p:embedding} Let $\Omega\subset \R^N$ be a bounded domain satisfying condition $(D)$ introduced in Definition \ref{d:condition-D}.

\begin{itemize}
  \item[$(i)$] If $1\le p<Q$ then, letting $p^*_\alpha:=\frac{pQ}{Q-p}$, for any $1\le q\le p_\alpha^*$ we have that $ W^{1,p}_\alpha (\Omega)\subset L^q(\Omega)$ and there exists a constant $C(\Omega,\alpha,h,k,p,q)$ depending only on $\Omega$, $\alpha$, $h$, $k$, $p$ and $q$ such that
  \begin{equation*}
     \|u\|_{L^q(\Omega)} \le C(\Omega,\alpha,h,k,p,q) \|u\|_{ W^{1,p}_\alpha (\Omega)} \qquad
      \text{for any } u\in  W^{1,p}_\alpha (\Omega) \, .
   \end{equation*}
   Moreover, if $1\le q < p^*_\alpha$, the embedding $ W^{1,p}_\alpha (\Omega)\subset L^q(\Omega)$ is also compact.

  \item[$(ii)$] If $Q\le p<\infty$ then for any $1\le q<\infty$ we have that $ W^{1,p}_\alpha (\Omega)\subset L^q(\Omega)$ and there exists a constant $C(\Omega,\alpha,h,k,p,q)$ depending only on $\Omega$, $\alpha$, $h$, $k$, $p$ and $q$ such that
  \begin{equation*}
     \|u\|_{L^q(\Omega)} \le C(\Omega,\alpha,h,k,p,q) \|u\|_{ W^{1,p}_\alpha (\Omega)} \qquad
      \text{for any } u\in  W^{1,p}_\alpha (\Omega) \, .
   \end{equation*}
   Moreover, the embedding $ W^{1,p}_\alpha (\Omega)\subset L^q(\Omega)$ is also compact for any $1\le q<\infty$.
\end{itemize}

\end{proposition}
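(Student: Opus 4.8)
The plan is to deduce both parts from a single homogeneous Sobolev inequality on the whole space $\R^N$, transplanted to $\Omega$ through the extension operator of Proposition \ref{p:extension}.

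\textbf{Reduction to $\R^N$.}
Let $\mathcal E\colon W^{1,p}_\alpha(\Omega)\to W^{1,p}_\alpha(\R^N)$ be the extension operator; by Remark \ref{r:compact-support} I may assume that every $\mathcal E u$ is supported in one fixed ball. Since $\Omega$ is bounded, $L^{s}(\Omega)\hookrightarrow L^{q}(\Omega)$ whenever $q\le s$, so it suffices to prove
\begin{equation}\label{eq:grushin-sobolev}
\|v\|_{L^{p^*_\alpha}(\R^N)}\le C(\alpha,h,k,p)\,\|\nabla_\alpha v\|_{L^{p}(\R^N)},\qquad p^*_\alpha=\frac{pQ}{Q-p},\quad 1\le p<Q,
\end{equation}
for every $v\in W^{1,p}_\alpha(\R^N)$. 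Indeed, \eqref{eq:grushin-sobolev} combined with $\mathcal E$ gives part (i) in the full range $1\le q\le p^*_\alpha$ via $\|u\|_{L^q(\Omega)}\le|\Omega|^{1/q-1/p^*_\alpha}\|\mathcal E u\|_{L^{p^*_\alpha}(\R^N)}\le C\|u\|_{W^{1,p}_\alpha(\Omega)}$. Part (ii) then follows from part (i) with no extra geometry: $\Omega$ being bounded, $W^{1,p}_\alpha(\Omega)\hookrightarrow W^{1,\tilde p}_\alpha(\Omega)$ continuously for every $\tilde p\le p$ (H\"older), and $\tilde p^*_\alpha\to+\infty$ as $\tilde p\uparrow Q$; so, given any finite $q$, one picks $\tilde p<Q\le p$ with $\tilde p^*_\alpha\ge q$ and invokes part (i) at the exponent $\tilde p$.

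\textbf{The homogeneous inequality.}
The vector fields $X_j,Y_{J,\ell}$ of \eqref{eq:vector-fields} are smooth and satisfy the H\"ormander condition. Let $\rho$ be their Carnot--Carath\'eodory distance; it is globally equivalent to the gauge, $\rho(z,0)\asymp d_\alpha(z)$, and $(\R^N,\rho,dxdy)$ is a space of homogeneous type with the volume growth $|B_\rho(z_0,r)|\asymp r^{h+k}(|x_0|+r)^{\alpha k}$ for $z_0=(x_0,y_0)$. Two classical facts will be used: \emph{(a)} this measure is doubling with doubling exponent $Q$, since $|B_\rho(z_0,2r)|/|B_\rho(z_0,r)|$ is bounded by a constant times $2^{h+k+\alpha k}=2^{Q}$, the value $2^{Q}$ being attained near $\Sigma$ (Franchi--Lanconelli \cite{FrLa82}, Monti--Morbidelli \cite{MoMo02}); \emph{(b)} the $L^1$--Poincar\'e inequality $|B|^{-1}\int_{B}|v-v_B|\,dxdy\le C\,r(B)\,|\lambda B|^{-1}\int_{\lambda B}|\nabla_\alpha v|\,dxdy$ holds on $\rho$-balls $B$, with $v_B$ the average of $v$ (Jerison's theorem for H\"ormander systems). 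Granting \emph{(a)}--\emph{(b)}, the Sobolev inequality on Carnot--Carath\'eodory spaces (Garofalo--Nhieu \cite{GaNh98}, Danielli--Garofalo--Nhieu \cite{DaGaNh98}) gives \eqref{eq:grushin-sobolev}, the exponent $p^*_\alpha$ being governed by the \emph{homogeneous} dimension $Q$: one first obtains the case $p=1$ from \emph{(a)}--\emph{(b)} and then the range $1<p<Q$ by the truncation trick (apply the $p=1$ inequality to $|v|^{\gamma}$ for a suitable $\gamma>1$ and use H\"older). A second route I would consider bypasses the abstract machinery: as $\alpha,h,k\ge1$ forces $Q\ge3$, the operator $-\Delta_\alpha$ admits a positive fundamental solution obeying, near a pole $z_0$, the sharp bounds $\Gamma(z_0,z)\asymp \rho(z_0,z)^2/|B_\rho(z_0,\rho(z_0,z))|$ and $|\nabla_\alpha^z\Gamma(z_0,z)|\lesssim \rho(z_0,z)/|B_\rho(z_0,\rho(z_0,z))|$ (Garofalo \cite{Garofalo} for the pole at the origin, extended to general poles by the dilation \eqref{eq:delta-B} and doubling); the Green representation formula plus an integration by parts then bound $|v(z_0)|$ by a fractional integral of order $1$ of $|\nabla_\alpha v|$ over the homogeneous space, and Hardy--Littlewood--Sobolev on spaces of homogeneous type yields \eqref{eq:grushin-sobolev}.

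\textbf{Compactness and the main obstacle.}
For part (i) with $1\le q<p^*_\alpha$: if $\{u_n\}$ is bounded in $W^{1,p}_\alpha(\Omega)$, the extensions $\mathcal E u_n$ are bounded in $W^{1,p}_\alpha(\R^N)$ with support in a fixed ball, hence bounded in $L^{p^*_\alpha}$ by \eqref{eq:grushin-sobolev}; one concludes either from the general compactness of Sobolev embeddings on bounded subsets of a doubling Poincar\'e space, or by an elementary splitting: for $\eta>0$ the tube $\{|x|<\eta\}$ has small measure, so the uniform $L^{p^*_\alpha}$ bound makes the $L^q$-mass of $\{u_n\}$ there uniformly small, while away from $\Sigma$ one has $|\nabla_\alpha v|\asymp|\nabla v|$ and the Rellich--Kondrachov theorem on relatively compact open subsets of $\Omega\setminus\Sigma$ supplies a subsequence converging in $L^q$; a diagonal argument over $\eta\downarrow0$ concludes. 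For part (ii) one reduces to part (i): given $q<\infty$ pick $\tilde p<Q$ with $q<\tilde p^*_\alpha$ and use $W^{1,p}_\alpha(\Omega)\hookrightarrow W^{1,\tilde p}_\alpha(\Omega)\hookrightarrow\hookrightarrow L^q(\Omega)$. The hard part is \eqref{eq:grushin-sobolev}: its sharp exponent is controlled by the homogeneous dimension $Q$, not by $N$, and a naive Gagliardo--Nirenberg slicing controls the $y$-derivatives of $v$ only after dividing by $|x|^\alpha$, hence produces an inequality disfigured by a weight that is not integrable across $\Sigma$; recovering the correct, $\delta_\lambda$-invariant inequality genuinely requires the Carnot--Carath\'eodory geometry, with its doubling property and Poincar\'e inequality, or equivalently the precise size estimates for the fundamental solution of $-\Delta_\alpha$.
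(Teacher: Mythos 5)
Your proposal is correct, and its skeleton (extension to compactly supported functions on a fixed ball via Proposition \ref{p:extension} and Remark \ref{r:compact-support}, H\"older interpolation for subcritical $q$, and the reduction of part (ii) to part (i) by choosing $\tilde p<Q\le p$ with $\tilde p^*_\alpha\ge q$) coincides exactly with the paper's. Where you diverge is in sourcing the two key inputs. For the critical embedding the paper simply cites inequality (1.18) of Kogoj--Lanconelli \cite{KoLa12}, which gives $\|u\|_{L^{p^*_\alpha}}\le C\|\nabla_\alpha u\|_{L^p}$ directly for $W^{1,p}_{\alpha,0}$ of a ball; you instead re-derive it from the Carnot--Carath\'eodory package (volume growth $|B_\rho(z_0,r)|\asymp r^{h+k}(|x_0|+r)^{\alpha k}$, doubling with exponent $Q$, Jerison's $(1,1)$-Poincar\'e inequality, and the abstract Sobolev embedding on doubling Poincar\'e spaces), or alternatively from fundamental-solution estimates plus Hardy--Littlewood--Sobolev. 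This buys generality and makes transparent why $Q$ rather than $N$ governs the exponent, but it is not more elementary: the volume estimate, the equivalence $\rho\asymp d_\alpha$, and (for your second route) two-sided bounds on $\Gamma$ at poles \emph{off} $\Sigma$ are themselves deep results (the last is Nagel--Stein--Wainger/S\'anchez-Calle territory rather than Garofalo \cite{Garofalo}, which treats poles on the degenerate set), so in practice you are trading one citation for several. For compactness the paper again cites the compactness of $W^{1,p}_{\alpha,0}(\Omega)\subset L^q(\Omega)$, whereas your splitting argument --- uniform smallness of the $L^q$-mass on the tube $\{|x|<\eta\}$ from the critical bound plus H\"older (this is precisely where $q<p^*_\alpha$ enters), classical Rellich--Kondrachov away from $\Sigma$ where $|\nabla_\alpha v|\asymp|\nabla v|$, and a diagonal extraction --- is a genuine, self-contained alternative; the only technicality to polish is that the exterior region $\{|x|>\eta\}$ should be handled through a cutoff of the extended functions so that the classical compactness theorem applies on a fixed ball. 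Both routes are sound; the paper's is shorter because it leans on a reference tailored to exactly this operator.
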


\begin{proof} The proof of the proposition is essentially contained in Section 3 of Kogoj and Lanconelli \cite{KoLa12} where it is proved the continuous embedding $ W^{1,p}_{\alpha,0} (\Omega)\subset L^{p^*_\alpha}(\Omega$).
For proving part $(i)$ we first apply the extension operator constructed in Proposition \ref{p:extension}, see also Remark \ref{r:compact-support}. Let $R>0$ be as in Remark \ref{r:compact-support} so that the embedding $ W^{1,p}_\alpha (\Omega)\subset L^{p^*_\alpha}(\Omega)$ and the relative continuity estimate easily follow from the continuous embedding $ W^{1,p}_{\alpha,0} (B_R(0))\subset L^{p^*_\alpha}(B_R(0))$, see (1.18) in \cite{KoLa12}. The continuous embedding $ W^{1,p}_{\alpha,0} (B_R(0))\subset L^q(B_R(0))$ for $1\le q<p^*_\alpha$ then follows by the H\"older inequality.

The validity of the embedding $ W^{1,p}_\alpha (\Omega)\subset L^q(\Omega)$ for $1\le q\le p^*_\alpha$ and its continuity then follow immediately; at the same time, its compactness for $1\le q<p^*_\alpha$ follows from the compactness of the embedding
 $ W^{1,p}_{\alpha,0} (\Omega)\subset L^q(\Omega)$.

Let us proceed with the proof of $(ii)$. Since $p\ge Q$, for any $0<\eps\le Q-1$, by H\"older inequality we have that $ W^{1,p}_\alpha (\Omega)$ is continuously embedded in $ W^{1,Q-\varepsilon}_\alpha (\Omega)$ which, in turn, is continuously embedded in $L^q(\Omega)$ for any
$1\le q \le (Q-\varepsilon)^*_\alpha=\frac{Q(Q-\eps)}{\eps}$ and compactly embedded in $L^q(\Omega)$ for any
$1\le q < (Q-\varepsilon)^*_\alpha=\frac{Q(Q-\eps)}{\eps}$. For $\eps$ small we see that the critical exponent $(Q-\varepsilon)^*_\alpha$ becomes larger and larger, thus showing the validity of the compact embedding $ W^{1,p}_\alpha (\Omega)\subset L^q(\Omega)$ for any $1\le q<\infty$.

\end{proof}

\subsection{Local $ W^{2,p}_\alpha $-regularity results}

We now state a regularity result for weak solutions of the Poisson equation
\begin{equation}\label{eq:u-f}
  - \Delta_\alpha  \, u=f \, .
\end{equation}
Since all the statements of this subsection are all local regularity results, no restrictions are actually needed on the regularity of domain $\Omega$. Up to translation, it is not restrictive assuming that $\Omega$ contains the origin.

The next proposition provides regularity for a solution $u$ of \eqref{eq:u-f} in terms of both weighted Sobolev spaces and  classical (without weights) fractional Sobolev spaces.

\begin{proposition} \label{p:reg} For $1<p<\infty$, let $f\in L^p_{{\rm loc}}(\Omega)$ and let $u \in   W^{1,2}_{\alpha,\, {\rm loc}}(\Omega)\cap L^p_{{\rm loc}}(\Omega)$ be a weak solution of \eqref{eq:u-f} in the sense that
$$
    \int_\Omega \nabla_\alpha u \cdot \nabla_\alpha v \, dxdy=\int_\Omega fv \, dxdy  \qquad
    \text{for any } v\in  W^{1,2}_{\alpha,c} (\Omega) \cap L^{\frac{p}{p-1}}(\Omega) \, .
$$
Then the following assertions hold true:
\begin{itemize}
  \item[$(i)$] if $f\in  W^{m,p}_{\alpha,{\rm loc}}(\Omega)$, $m\ge 0$ integer, then $u\in  W^{m+2,p}_{\alpha,{\rm loc}}(\Omega)$. Moreover, for any open  sets $\omega_1, \omega_2$ such that $\overline{\omega_1} \subset \omega_2 \subset \overline{\omega_2}  \subset \Omega$, there exists a constant $C(\omega_1,\omega_2,\Omega,\alpha,h,k,m,p)$ depending only on $\omega_1$, $\omega_2$ $\Omega$, $\alpha$, $h$, $k$, $m$, $p$ such that
\begin{equation} \label{eq:reg-est}
   \|u\|_{W^{m+2,p}_\alpha(\omega_1)}\le C(\omega_1,\omega_2,\Omega,\alpha,h,k,m,p) \, \Big(\|u\|_{L^p(\omega_2)}+\|f\|_{W^{m,p}_\alpha(\omega_2)}\Big) \, ;
\end{equation}

\item[$(ii)$] if $f\in W^{\gamma,p}_{{\rm loc}}(\Omega)$, $\gamma \in [0,\infty)$, then
$u\in W^{\gamma+\frac{2}{\alpha+1},p}_{{\rm loc}}(\Omega)$. Moreover, for any open   sets $\omega_1, \omega_2$ such that $\overline{\omega_1} \subset \omega_2 \subset \overline{\omega_2}  \subset \Omega$ there exists a constant $C(\omega_1,\omega_2,\Omega,\alpha,h,k,\gamma,p)$ depending only on $\omega_1$, $\omega_2$, $\Omega$, $\alpha$, $h$, $k$, $\gamma$, $p$ such that
\begin{equation} \label{eq:reg-est-2}
   \|u\|_{W^{\gamma+\frac{2}{\alpha+1},p}(\omega_1)}\le C(\omega_1,\omega_2,\Omega,\alpha,h,k,\gamma,p) \, \Big(\|u\|_{L^p(\omega_2)}+\|f\|_{W^{\gamma,p}(\omega_2)}\Big) \, ;
\end{equation}
\end{itemize}

\end{proposition}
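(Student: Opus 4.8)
The plan is to build everything on the representation \eqref{eq:hormander} of $\Delta_\alpha$ as a H\"ormander sum of squares of the vector fields $X_j, Y_{J,\ell}$, which carries a $\delta_\lambda$-homogeneous (degree $2-Q$) fundamental solution $\Gamma$ and the attached subelliptic calculus. \emph{Base case of $(i)$, $m=0$.} I would first prove the interior bound $\|u\|_{W^{2,p}_\alpha(\omega_1)} \le C\bigl(\|u\|_{L^p(\omega_2)} + \|f\|_{L^p(\omega_2)}\bigr)$. Localizing with a cutoff $\zeta$ and representing $\zeta u$ through $\Gamma$ modulo a smoothing remainder, one reduces matters to the $L^p(\R^N)$-boundedness of the second-order operators $f \mapsto ZZ'(\Gamma * f)$ with $Z,Z' \in \{X_j, Y_{J,\ell}\}$; the kernels $ZZ'\Gamma$ are $\delta_\lambda$-homogeneous of degree $-Q$ and smooth off the origin, hence Calder\'on--Zygmund kernels with respect to the quasi-distance $d_\alpha$ and the (doubling) Lebesgue measure, so $L^p$-boundedness for $1<p<\infty$ follows from Calder\'on--Zygmund theory on spaces of homogeneous type once the $L^2$-bound is in hand (via the Plancherel/Fourier estimate, or the group structure after lifting). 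A routine cutoff-commutator argument then yields the stated interior estimate.

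\emph{Bootstrap in $(i)$.} I would induct on $m$: assuming $f \in W^{m,p}_{\alpha,\mathrm{loc}}$ and, by induction, $u \in W^{m+1,p}_{\alpha,\mathrm{loc}}$, differentiate the equation along a field $Z$ to get $-\Delta_\alpha(Zu) = Zf + [\Delta_\alpha, Z]u$. The subtle point is that $[\Delta_\alpha, X_j] = -2\alpha\, x_j|x|^{2\alpha-2}\Delta_y$, and similarly $[\Delta_\alpha, Y_{J,\ell}]$, are \emph{not} expressible purely in terms of the horizontal fields $X_j, Y_{J,\ell}$, so a naive iteration fails near $\Sigma$. This is resolved by the Rothschild--Stein lifting and approximation technique: $\Delta_\alpha$ lifts locally to a left-invariant sublaplacian $\widetilde\Delta$ on a free nilpotent group, where the commutator algebra closes on a homogeneous basis of left-invariant vector fields so that the horizontal (weighted) Sobolev regularity bootstraps freely; the approximation theorem transfers the resulting estimates back to $\Delta_\alpha$ with lower-order remainders absorbed by the subelliptic gain. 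Chaining the local estimates over a finite chain of intermediate sets gives \eqref{eq:reg-est}.

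\emph{Part $(ii)$.} Here one trades weighted regularity for Euclidean fractional smoothness. In the $x$-variables $X_j = \partial_{x_j}$ is Euclidean, so one gains two full derivatives; in the $y$-variables the gain is only $\tfrac{2}{\alpha+1}$. For $\gamma = 0$ I would decompose a neighbourhood of $\Sigma$ dyadically into shells $A_k = \{|x| \sim 2^{-k}\}$, on each of which $\Delta_\alpha$ is uniformly elliptic with ellipticity $\sim 2^{-2\alpha k}$; the anisotropic dilation $\delta_{2^k}$ rescales $A_k$ to the unit scale and renders $\Delta_\alpha$ non-degenerately elliptic there, so classical interior $L^p$ estimates apply, and undoing the dilation and summing the geometric series in $k$ produces $u \in W^{2/(\alpha+1),p}_{\mathrm{loc}}$ and \eqref{eq:reg-est-2} — the exponent $\tfrac{2}{\alpha+1}$ being exactly what makes the series converge, since $\delta_{2^k}$ scales $y$ by $2^{k(\alpha+1)}$ against $2^{k}$ in $x$. (The same gain can be read off the partial Fourier transform in $y$, which turns $-\Delta_\alpha$ into the anharmonic oscillator $-\Delta_x + |x|^{2\alpha}|\eta|^2$ with spectrum bounded below by $c|\eta|^{2/(\alpha+1)}$; but an $L^p$ version of that heuristic needs an anisotropic pseudodifferential calculus, so I would keep the real-variable scaling argument as the backbone.) Finally $\gamma > 0$ follows by horizontal differentiation as in $(i)$ together with interpolation between Euclidean Sobolev spaces.

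\emph{Main obstacle.} The two genuinely hard points are: making the bootstrap in $(i)$ rigorous \emph{across} the degenerate manifold $\Sigma$ — it is precisely the failure of the commutators to close among the $X_j, Y_{J,\ell}$ that forces the lifting/approximation machinery rather than an elementary argument; and securing in $(ii)$ the \emph{sharp} exponent $\tfrac{2}{\alpha+1}$ uniformly up to $\Sigma$ in the full range $1<p<\infty$, which demands that the constants in the shell-by-shell rescaling be tracked exactly so the summation converges at that order and not worse, together with an anisotropy-compatible interpolation to handle non-integer $\gamma$.
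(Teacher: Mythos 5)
Your proposal is correct in strategy, but it is worth knowing that the paper does not actually carry out any of this: its entire proof of Proposition \ref{p:reg} is a citation to Rothschild--Stein \cite{RoSt} (Theorem 16 for the subelliptic gain of two derivatives in the non-isotropic Sobolev scale, Theorem 11 and Propositions 17.3--17.4, Corollary 17.14 for the comparison with classical fractional Sobolev spaces, which is where the exponent $\frac{2}{\alpha+1}$ in part $(ii)$ comes from, $\alpha+1$ being the step of the H\"ormander system \eqref{eq:vector-fields}). What you have written is essentially a reconstruction of the interior of that machinery: the Calder\'on--Zygmund argument on the homogeneous space $(\R^N,d_\alpha,dx\,dy)$ for the base case, and the lifting/approximation technique for the bootstrap, are exactly the ingredients of the cited theorems, and your diagnosis that the commutators $[\Delta_\alpha,X_j]$ do not close over the horizontal fields --- so that the na\"ive induction fails and the lifting to a free nilpotent group is forced --- is precisely the reason the paper defers to \cite{RoSt} rather than arguing directly. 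The one place where you genuinely depart from the paper's route is part $(ii)$: the paper obtains $W^{\gamma+\frac{2}{\alpha+1},p}_{\rm loc}$ by composing the subelliptic estimate with the Rothschild--Stein embedding of non-isotropic Sobolev spaces into Euclidean fractional ones, whereas you propose a direct dyadic decomposition into shells $\{|x|\sim 2^{-k}\}$ with anisotropic rescaling and classical elliptic estimates. That route is legitimate and more self-contained, but note that it carries a technical burden the embedding route avoids: for non-integer order the Gagliardo seminorm is not subadditive over a partition into shells, so after summing the rescaled estimates you must still control the cross-shell contributions (e.g.\ via a Littlewood--Paley or finite-overlap argument) before concluding membership in $W^{2/(\alpha+1),p}_{\rm loc}$; and the case $\gamma>0$ requires an interpolation compatible with the anisotropic scaling, which you flag but do not supply. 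If you intend the argument to stand on its own rather than as a gloss on \cite{RoSt}, those two points are where the work remains.
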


\begin{proof}
  The proof of the proposition is a consequence of a classical regularity result for linear second order operators admitting a representation as the sum of squares of vector fields like in \eqref{eq:hormander}.
  For more details see Rothschild and  Stein \cite[Theorem 16]{RoSt} and its proof. For the proofs of \eqref{eq:reg-est}-\eqref{eq:reg-est-2} one can combine the following results contained in \cite{RoSt}: Theorem 11, Propositions 17.3-17.4, Corollary 17.14 and the proof of Theorem 16.
\end{proof}

Combining the assumptions on the potential $V$ contained in \eqref{eq:V} with the results of Propositions \ref{p:embedding}-\ref{p:reg} we obtain  the following regularity results for weak solutions of \eqref{eq:u}.

\begin{proposition} \label{p:L^q-estimate} Let $u\in   W^{1,2}_{\alpha,\, {\rm loc}}(\Omega)$ be a weak solution of \eqref{eq:u} with $V$ satisfying \eqref{eq:V}. Then $u\in L^q_{{\rm loc}}(\Omega)$ for any $1\le q<\infty$ and moreover for any open  sets $\omega_1, \omega_2$ such that $\overline{\omega_1} \subset \omega_2 \subset \overline{\omega_2}  \subset \Omega$, there exists a constant $C(\omega_1,\omega_2,\Omega,\alpha,h,k,\sigma,q)$ depending only on $\omega_1$, $\omega_2$ $\Omega$, $\alpha$, $h$, $k$, $\sigma$, $q$ such that
\begin{equation*}
   \|u\|_{L^q(\omega_1)} \le C(\omega_1,\omega_2,\Omega,\alpha,h,k,\sigma,q) \Big(\|u\|_{ W^{1,2}_\alpha}(\Omega)+\|V\|_{L^\sigma(\Omega)}\Big) \, .
\end{equation*}
\end{proposition}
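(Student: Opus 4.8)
The argument is a classical bootstrap on the integrability exponent of $u$, alternating the Sobolev inequalities of Proposition~\ref{p:embedding} with the interior $W^{2,p}_\alpha$-estimate \eqref{eq:reg-est} of Proposition~\ref{p:reg}(i) (taken with $m=0$), and the condition $\sigma>Q/2$ in \eqref{eq:V} is what guarantees that the exponent strictly improves at each step. Since $Q=h+(\alpha+1)k\ge 2$, with equality only in the Euclidean case $\alpha=0$, $h=k=1$, when $Q=2$ the conclusion is already contained in Proposition~\ref{p:embedding}(ii); so assume $Q>2$. By Proposition~\ref{p:embedding}(i), the hypothesis $u\in W^{1,2}_{\alpha,{\rm loc}}(\Omega)$ gives $u\in L^{q_0}_{\rm loc}(\Omega)$ with $q_0:=2^*_\alpha=\tfrac{2Q}{Q-2}$, i.e.\ $\tfrac1{q_0}=\tfrac12-\tfrac1Q$.

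The inductive step is as follows. Assume $u\in L^{q}_{\rm loc}(\Omega)$ for some $q\ge q_0$. By \eqref{eq:V} and H\"older's inequality, $f:=Vu\in L^{p}_{\rm loc}(\Omega)$ where $\tfrac1p=\tfrac1\sigma+\tfrac1q$; one checks $1<p<q$ (the lower bound uses $Q>2$ and $\sigma>Q/2$ at the first step, and $p$ only increases afterwards). Since the test function class appearing in Proposition~\ref{p:reg}, namely $W^{1,2}_{\alpha,c}(\Omega)\cap L^{p/(p-1)}(\Omega)$, is contained in $W^{1,2}_{\alpha,c}(\Omega)$, identity \eqref{eq:var} shows that $u$ is a weak solution of $-\Delta_\alpha u=f$ in the required sense, and $u\in W^{1,2}_{\alpha,{\rm loc}}(\Omega)\cap L^{p}_{\rm loc}(\Omega)$ because $p<q$. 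Hence Proposition~\ref{p:reg}(i) gives $u\in W^{2,p}_{\alpha,{\rm loc}}(\Omega)$ together with the quantitative bound \eqref{eq:reg-est}. From the definition of $W^{2,p}_\alpha$, all first Grushin derivatives $X_j u$ and $Y_{J,\ell}u$ lie in $W^{1,p}_{\alpha,{\rm loc}}(\Omega)$; since $|\nabla_\alpha u|^2=\sum_j (X_ju)^2+\sum_{\ell,J}(Y_{J,\ell}u)^2$, Proposition~\ref{p:embedding} gives $|\nabla_\alpha u|\in L^{p^*_\alpha}_{\rm loc}(\Omega)$ when $p<Q$, and as $u\in W^{1,p}_{\alpha,{\rm loc}}(\Omega)\subset L^{p^*_\alpha}_{\rm loc}(\Omega)$ we conclude $u\in W^{1,p^*_\alpha}_{\alpha,{\rm loc}}(\Omega)$. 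A second application of Proposition~\ref{p:embedding} then yields the dichotomy: if $p\ge Q/2$, then $u\in L^{q'}_{\rm loc}(\Omega)$ for every finite $q'$ and the bootstrap stops; if $p<Q/2$, then $u\in L^{q'}_{\rm loc}(\Omega)$ with $\tfrac1{q'}=\tfrac1p-\tfrac2Q=\tfrac1q-\bigl(\tfrac2Q-\tfrac1\sigma\bigr)$.

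Since $\sigma>Q/2$, the quantity $\delta:=\tfrac2Q-\tfrac1\sigma$ is strictly positive, so along the iteration $1/q$ decreases by the fixed amount $\delta$ at each step; after finitely many steps the forcing exponent $p$ enters the regime $p\ge Q/2$ and the iteration terminates, which proves $u\in L^q_{\rm loc}(\Omega)$ for every $1\le q<\infty$. For the quantitative estimate one fixes $q$, determines the finite number $m=m(q)$ of iterations needed, picks nested open sets $\omega_2=\Omega_0\supset\Omega_1\supset\cdots\supset\Omega_m\supseteq\omega_1$ with $\overline{\Omega_{i+1}}\subset\Omega_i$, and runs the same chain of steps on these sets, combining at each step the estimate \eqref{eq:reg-est}, the continuity of the embeddings of Proposition~\ref{p:embedding}, and $\|Vu\|_{L^p(\Omega_i)}\le\|V\|_{L^\sigma(\Omega)}\,\|u\|_{L^q(\Omega_i)}$, starting from $\|u\|_{L^{q_0}(\Omega_0)}\le C\|u\|_{W^{1,2}_\alpha(\Omega)}$. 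This yields a bound of the form $\|u\|_{L^q(\omega_1)}\le C\,(1+\|V\|_{L^\sigma(\Omega)})^{m}\,\|u\|_{W^{1,2}_\alpha(\Omega)}$, whence the estimate in the statement; the precise algebraic shape of the constant is irrelevant for the later uses of the proposition.

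The steps above are routine once the scheme is set up, and the only points needing attention---rather than genuine difficulty---are: keeping the forcing exponent $p$ strictly above $1$ so that the Grushin regularity theory (Proposition~\ref{p:reg}) applies (this is exactly where $Q>2$ and $\sigma>Q/2$ are used, at the first step, the later values of $p$ being larger); handling correctly the borderline transition when $p$ first reaches $Q/2$, after which one additional step gives membership in all $L^{q'}_{\rm loc}(\Omega)$; verifying the compatibility of the two weak-solution formulations; and the harmless shrinking of the domains. The one indispensable ingredient is the strict gain $\delta=\tfrac2Q-\tfrac1\sigma>0$ per iteration, which is available precisely because $\sigma>Q/2$.
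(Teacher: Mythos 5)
Your proposal is correct and follows essentially the same bootstrap as the paper: Hölder to get $Vu\in L^{p}_{\rm loc}$ with $\tfrac1p=\tfrac1q+\tfrac1\sigma$, the interior $W^{2,p}_\alpha$-estimate of Proposition~\ref{p:reg}(i), and a double application of Proposition~\ref{p:embedding} to reach $L^{q'}$ with $\tfrac1{q'}=\tfrac1p-\tfrac2Q$, iterated until $p\ge Q/2$. The only (harmless) cosmetic difference is that you certify termination via the fixed decrement $\delta=\tfrac2Q-\tfrac1\sigma$ of $1/q$, whereas the paper shows the forcing exponents $p_n$ grow geometrically.
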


\begin{proof} We may assume in the rest of the proof that $Q>2$ otherwise the proof of the proposition is an immediate consequence of Proposition \ref{p:embedding} (ii) with $p=Q=2$. Let us define the following sequences of exponents:
\begin{align*}
   & q_0=2^*_\alpha=\tfrac{2Q}{Q-2} \, , \quad p_n=\tfrac{q_n \sigma}{q_n+\sigma} \ \ \ \text{for $n\ge 0$} \, , \quad q_{n+1}=\tfrac{Qp_n}{Q-2p_n} \ \ \ \text{for $n\ge 0$}
\end{align*}
until $p_n$ satisfies the condition $p_n<\frac Q2$.

With this definition, combining iteratively Proposition \ref{p:embedding} and Proposition \ref{p:reg}, we may construct the following scheme: starting from $u\in L^{q_n}_{{\rm loc}}(\Omega)$, by H\"older inequality we first have $Vu\in L^{p_n}_{{\rm loc}}(\Omega)$, then by Proposition \ref{p:reg} (i) we have $u\in  W^{2,p_n}_{\alpha,{\rm loc}}(\Omega)$ and by an iteration of Proposition \ref{p:embedding} we finally obtain $u\in L^{q_{n+1}}_{{\rm loc}}(\Omega)$ and the procedure can restart again.

The iteration can be repeated until condition $p_n<\frac Q2$ is satisfied. If after a finite number of iterations we obtain $p_n\ge \frac Q2$ then we are done since $u\in  W^{2,p_n}_{\alpha,{\rm loc}}(\Omega)$ implies $u\in L^q_{{\rm loc}}(\Omega)$ for any $1\le q<\infty$.

After a simple check one can verify that the sequence of exponents $p_n$ is increasing since $\sigma>\frac Q2$ and since until $p_n<\frac Q2$ we also have
$$
    \tfrac{p_{n+1}}{p_n}=\tfrac{Q\sigma}{Q\sigma-(2\sigma-Q)p_n}>\tfrac{Q\sigma}{Q\sigma-(2\sigma-Q)p_0}>1 \, .
$$
This shows that after a finite number of steps we find $n\ge 0$ such that $p_n<\frac Q2$ but $p_{n+1}\ge \frac{Q}{2}$.

The conclusion is that $u\in  W^{2,p_{n+1}}_{\alpha,{\rm loc}}(\Omega)$ with $p_{n+1}\ge \frac{Q}{2}$ which, in turn, implies $u\in L^q_{{\rm loc}}(\Omega)$ for any $1\le q<\infty$. The corresponding estimate appearing in the statement of the proposition follows as well.
\end{proof}

 By Battaglia and Bonfiglioli \cite{BatBon} it is possible to obtain also an $L^\infty_{\rm loc}$ regularity results.

\begin{proposition} \label{p:L-infty} Let $u\in   W^{1,2}_{\alpha,\, {\rm loc}}(\Omega)$ be a weak solution of \eqref{eq:u} with $V$ satisfying \eqref{eq:V}. Then $u$ is locally H\"older continuous in $\Omega$ and in particular $u\in L^\infty_{{\rm loc}}(\Omega)$ and moreover for any open   sets $\omega_1, \omega_2$ such that $\overline{\omega_1} \subset \omega_2 \subset \overline{\omega_2}  \subset \Omega$, there exists a constant $C(\omega_1,\omega_2,\Omega,\alpha,h,k,\sigma)$ depending only on $\omega_1$, $\omega_2$, $\Omega$, $\alpha$, $h$, $k$, $\sigma$ such that
\begin{equation*}
   \|u\|_{L^\infty(\omega_1)} \le C(\omega_1,\omega_2,\Omega,\alpha,h,k,\sigma) \Big(\|u\|_{ W^{1,2}_\alpha(\omega_2)}+\|V\|_{L^\sigma(\omega_2)}\Big) \, .
\end{equation*}
\end{proposition}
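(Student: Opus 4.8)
The plan is to bootstrap the integrability already furnished by Proposition~\ref{p:L^q-estimate} and then feed the equation into the De~Giorgi--Nash--Moser theory for the Grushin operator due to Battaglia and Bonfiglioli. First I would fix $\omega_1 \subset\subset \omega_2 \subset\subset \Omega$ as in the statement and choose an intermediate open set $\omega_3$ with $\overline{\omega_1} \subset \omega_3 \subset \overline{\omega_3} \subset \omega_2$. By Proposition~\ref{p:L^q-estimate} (interposing, if needed, further nested open sets inside $\omega_2$) the solution $u$ lies in $L^q(\omega_3)$ for every finite $q$, with a quantitative bound in terms of $\|u\|_{W^{1,2}_\alpha(\omega_2)}$ and $\|V\|_{L^\sigma(\omega_2)}$.

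The key point is the choice of exponent. Since $\sigma > Q/2$ by \eqref{eq:V}, the number $p := \frac{q\sigma}{q+\sigma}$ increases to $\sigma$ as $q \to +\infty$, so we may fix $q$ large enough that $p > Q/2$. Hölder's inequality then yields $f := Vu \in L^p(\omega_3)$ with $\|f\|_{L^p(\omega_3)} \le \|V\|_{L^\sigma(\omega_3)} \|u\|_{L^q(\omega_3)}$, and $u$ is a weak solution of $-\Delta_\alpha u = f$ on $\omega_3$ in the sense of Proposition~\ref{p:reg}. At this stage I would invoke Battaglia and Bonfiglioli \cite[Theorem~4.1]{BatBon}: since $\Delta_\alpha$ is a sum of squares of the Hörmander vector fields $X_j$, $Y_{J,\ell}$ of \eqref{eq:hormander} and $p > Q/2$ is precisely the subcritical integrability threshold for the right-hand side, their result gives $u \in C^{0,\gamma}_{\rm loc}(\omega_3)$ for some $\gamma \in (0,1)$, hence $u \in L^\infty(\omega_1)$, together with an interior estimate
\[
\|u\|_{L^\infty(\omega_1)} \le C\bigl(\|u\|_{L^2(\omega_3)} + \|f\|_{L^p(\omega_3)}\bigr),
\]
where $C$ depends only on $\omega_1,\omega_3,\alpha,h,k$ and $p$ (and $p$ in turn only on $\sigma$ and $Q$). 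Combining this with the Hölder bound for $f$, with $\|u\|_{L^2(\omega_3)} \le \|u\|_{W^{1,2}_\alpha(\omega_2)}$, and with the $L^q$ estimate of Proposition~\ref{p:L^q-estimate} then gives the asserted inequality after collecting constants.

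Since the hard analytic work — the Moser iteration establishing local boundedness and Hölder continuity — is already contained in \cite{BatBon}, the proof here is essentially an assembly. The only genuine points to check are that the hypotheses of the cited theorem are met, which is exactly where $\sigma > Q/2$ enters: via Proposition~\ref{p:L^q-estimate} it forces $f = Vu$ to be integrable with an exponent above the critical value $Q/2$. The remaining issue is that the constant in the final estimate have the dependence claimed in the statement, which follows from the standard localization/covering argument passing from a single Grushin ball to arbitrary $\omega_1 \subset\subset \omega_2$. I do not anticipate a serious obstacle beyond this bookkeeping.
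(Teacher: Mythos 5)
Your proposal is correct and follows essentially the same route as the paper: both proofs reduce to verifying the hypotheses of Battaglia--Bonfiglioli \cite[Theorem 4.1]{BatBon}, using Proposition \ref{p:L^q-estimate} together with $V\in L^\sigma_{\rm loc}$, $\sigma>Q/2$, and H\"older's inequality to place $Vu$ in $L^p_{\rm loc}$ for some $p>Q/2$. Your explicit remark that $p=\tfrac{q\sigma}{q+\sigma}\uparrow\sigma>Q/2$ as $q\to\infty$ is exactly the computation implicit in the paper's one-line justification.
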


\begin{proof} See the statement of  \cite[Theorem 4.1]{BatBon}. In order to verify that the assumptions of  \cite[ Theorem 4.1]{BatBon} are really satisfied by our problem, see \cite[Section 6]{BatBon} and \eqref{eq:vector-fields}, \eqref{eq:hormander}, \eqref{eq:delta-lambda} in the present article.
Finally we have to show that $Vu \in L^p_{{\rm loc}}(\Omega)$ for some $p>Q/2$; this is an immediate consequence of Proposition \ref{p:L^q-estimate}, from which we have that $u\in L^q_{{\rm loc}}(\Omega)$ for any $1\le q <\infty$, and of \eqref{eq:V}, which states that $V\in L^\sigma(\Omega)$ for some $\sigma>\frac{Q}{2}$.
\end{proof}

 Now we have all the ingredients to prove the following regularity results, that will be necessary to prove the Pohozaev-type identity of Proposition \ref{p:Pohozaev}.

\begin{proposition} \label{p:reg-2} Let $u\in   W^{1,2}_{\alpha,\, {\rm loc}}(\Omega)$ be a weak solution of \eqref{eq:u} with $V$ satisfying \eqref{eq:V}. Then the following conclusions hold true:
\begin{itemize}
  \item[$(i)$] $u \in W^{1+\frac{2}{\alpha+1},\sigma}_{{\rm loc}}(\Omega) \cap  W^{2,2}_{\alpha,{\rm loc}} (\Omega)$ and moreover
  for any open   sets $\omega_1, \omega_2$ such that $\overline{\omega_1} \subset \omega_2 \subset \overline{\omega_2}  \subset \Omega$, there exists a constant $C(\omega_1,\omega_2,\Omega,\alpha,h,k,\sigma)$ depending only on $\omega_1$, $\omega_2$, $\Omega$, $\alpha$, $h$, $k$, $\sigma$ such that
  \begin{equation}\label{eq:2-f-s}
     \|u\|_{W^{1+\frac{2}{\alpha+1},\sigma}(\omega_1)} + \|u\|_{W^{2,2}_{\alpha}(\omega_1)}
      \le C(\omega_1,\omega_2,\Omega,\alpha,h,k,\sigma) \Big(\|u\|_{ W^{1,2}_\alpha(\omega_2)}+\|V\|_{W^{1,\sigma}(\omega_2)}\Big) \, ;
  \end{equation}

  \medskip

  \item[$(ii)$] $\nabla u \in L^2_{{\rm loc}}(\Omega; \R^N)$ and the traces on $\partial B_r^\alpha$ of first order derivatives of $u$ are in $L^2(\partial B_r^\alpha)$ for any $r>0$ such that $\overline{B_r^\alpha} \subset \Omega$.
      Moreover   for any open  sets $\omega_1, \omega_2$ such that $\overline{\omega_1} \subset \omega_2 \subset \overline{\omega_2}  \subset \Omega$, there exists a constant $C(\omega_1,\omega_2,\Omega,\alpha,h,k,\sigma)$ depending only on $\omega_1$, $\omega_2$, $\Omega$, $\alpha$, $h$, $k$, $\sigma$ and a constant $C(\omega_1,\omega_2,r,\alpha,h,k,\sigma)$ depending only on $\omega_1$, $\omega_2$, $\Omega$, $r$, $\alpha$, $h$, $k$, $\sigma$ such that
  \begin{align}\label{eq:2-f-s-bis}
     & \|\nabla u\|_{L^2(\omega_1; \R^N)}
      \le C(\omega_1,\omega_2,\Omega,\alpha,h,k,\sigma) \Big(\|u\|_{ W^{1,2}_\alpha (\omega_2)}+\|V\|_{W^{1,\sigma}(\omega_2)}\Big) \, , \\
  \label{eq:2-f-s-ter}   & \|\nabla u\|_{L^2(\partial B_r^\alpha; \R^N)}
      \le  C(\omega_1,\omega_2,r,\alpha,h,k,\sigma) \Big(\|u\|_{ W^{1,2}_\alpha (\omega_2)}+\|V\|_{W^{1,\sigma}(\omega_2)}\Big) \, .
  \end{align}

\end{itemize}

\end{proposition}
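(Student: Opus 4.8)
The plan is to obtain both statements by a bootstrap that plays the two parts of Proposition~\ref{p:reg} (the weighted $W^{2,p}_\alpha$-estimate and the subelliptic fractional Sobolev estimate) against the a priori information already available for $u$. First I would record the preliminary regularity: by Proposition~\ref{p:L^q-estimate}, $u\in L^q_{\rm loc}(\Omega)$ for every $q<\infty$, and by Proposition~\ref{p:L-infty}, $u\in L^\infty_{\rm loc}(\Omega)$ (locally H\"older continuous), with the corresponding local estimates. Since $Q=h+(1+\alpha)k\ge h+k=N\ge2$, the hypothesis $\sigma>Q/2$ forces $\sigma>N/2$, so the classical Sobolev embedding applied to $V\in W^{1,\sigma}_{\rm loc}(\Omega)$ gives $V\in L^2_{\rm loc}(\Omega)$ (when $\sigma<N$ one has $\tfrac{N\sigma}{N-\sigma}\ge2$ because $\sigma>N/2\ge\tfrac{2N}{N+2}$, and the case $\sigma\ge N$ is immediate). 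Hence $f:=Vu\in L^2_{\rm loc}(\Omega)\cap L^\sigma_{\rm loc}(\Omega)$, and Proposition~\ref{p:reg}$(i)$ with $m=0$, $p=2$ yields $u\in W^{2,2}_{\alpha,{\rm loc}}(\Omega)$ together with estimate \eqref{eq:reg-est}; this is the first half of part~$(i)$.

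For the classical fractional statement $u\in W^{1+\frac2{\alpha+1},\sigma}_{{\rm loc}}(\Omega)$ I would iterate Proposition~\ref{p:reg}$(ii)$. Starting from $f\in L^\sigma_{\rm loc}(\Omega)=W^{0,\sigma}_{\rm loc}(\Omega)$, Proposition~\ref{p:reg}$(ii)$ with $\gamma=0$, $p=\sigma$ gives $u\in W^{\frac2{\alpha+1},\sigma}_{\rm loc}(\Omega)$. One then iterates: if $u\in W^{s,\sigma}_{\rm loc}(\Omega)\cap L^\infty_{\rm loc}(\Omega)$, a product estimate in fractional Sobolev spaces, combined with the embeddings of Proposition~\ref{p:embedding} and the classical fractional Sobolev embeddings and using that $u$ is bounded and $V\in W^{1,\sigma}_{\rm loc}(\Omega)$ lies in a higher Lebesgue space, shows that $f=Vu\in W^{\tilde s,\sigma}_{\rm loc}(\Omega)$ for a suitable $\tilde s\le\min\{s,1\}$; a further application of Proposition~\ref{p:reg}$(ii)$ then improves the regularity of $u$ by the subelliptic gain $\tfrac2{\alpha+1}$. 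After finitely many steps one reaches $f=Vu\in W^{1,\sigma}_{\rm loc}(\Omega)$, at which point Proposition~\ref{p:reg}$(ii)$ with $\gamma=1$, $p=\sigma$ gives $u\in W^{1+\frac2{\alpha+1},\sigma}_{{\rm loc}}(\Omega)$; chaining the estimates \eqref{eq:reg-est}, \eqref{eq:reg-est-2} with those of Propositions~\ref{p:L^q-estimate}--\ref{p:L-infty} produces \eqref{eq:2-f-s}.

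Part~$(ii)$ then descends from part~$(i)$. Since $u\in W^{1+\frac2{\alpha+1},\sigma}_{{\rm loc}}(\Omega)$, its Euclidean gradient satisfies $\nabla u\in W^{\frac2{\alpha+1},\sigma}_{\rm loc}(\Omega;\R^N)$; because $\sigma>Q/2$ and $Q\ge N$, a short check of Sobolev exponents shows $W^{\frac2{\alpha+1},\sigma}_{\rm loc}\hookrightarrow L^2_{\rm loc}$ (or even $\hookrightarrow L^\infty_{\rm loc}$ when $\tfrac{2\sigma}{\alpha+1}\ge N$), giving $\nabla u\in L^2_{\rm loc}(\Omega;\R^N)$ and \eqref{eq:2-f-s-bis}. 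For the boundary statement one uses that each ball $B_r^\alpha$ with $\overline{B_r^\alpha}\subset\Omega$ satisfies condition $(D)$ of Definition~\ref{d:condition-D}, in particular is a Lipschitz domain; since $Q=h+(1+\alpha)k>\alpha+1$ one has $\tfrac2{\alpha+1}>\tfrac2Q>\tfrac1\sigma$, so $\tfrac{2\sigma}{\alpha+1}>1$ and the classical trace theorem for fractional Sobolev spaces applies to $\nabla u$, yielding ${\rm Tr}(\nabla u)\in W^{\frac2{\alpha+1}-\frac1\sigma,\sigma}(\partial B_r^\alpha)\hookrightarrow L^2(\partial B_r^\alpha)$ together with \eqref{eq:2-f-s-ter}.

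The delicate point is making the bootstrap of part~$(i)$ close: at each step the fractional Sobolev regularity that can be attributed to the product $Vu$ is capped by the mere $W^{1,\sigma}$-regularity of $V$, and this is tightest when $\alpha$ is large, so the subelliptic gain $\tfrac2{\alpha+1}$ is small; one must verify that $\sigma>Q/2$ together with $Q=h+(1+\alpha)k\ge N$ (and the resulting room in the fractional Sobolev embeddings, via the bound $4\alpha h\le(\alpha+1)QN$ type of inequalities) guarantees that the iterated orders do cross the threshold $W^{1,\sigma}_{\rm loc}$ in finitely many steps. The auxiliary fact that the gauge balls $B_r^\alpha$ satisfy condition $(D)$ is elementary but needs a remark, since near $\Sigma$ the boundary profile $y_k=g(x,y')$ of $\partial B_r^\alpha$ has $x$-gradient vanishing like $|x|^\alpha$, exactly matching Definition~\ref{d:condition-D}$(iii)$.
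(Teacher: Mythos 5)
Your overall strategy coincides with the paper's: both halves of part (i) are obtained by playing Proposition \ref{p:reg} against the a priori information from Propositions \ref{p:L^q-estimate} and \ref{p:L-infty}, the fractional regularity comes from a bootstrap based on a product rule in fractional Sobolev spaces (the paper uses Brezis--Mironescu, Lemma 6.1 of \cite{BrMi}), and part (ii) is derived from $u\in W^{1+\frac{2}{\alpha+1},\sigma}_{\rm loc}$ exactly as in the paper, via the fractional Sobolev embedding and the fractional trace theorem (Adams) on $\partial B_r^\alpha$, with the same exponent checks $\frac{2}{\alpha+1}>\frac{2}{Q}>\frac{1}{\sigma}$ and $\frac{\sigma(N-1)}{N-\frac{2\sigma}{\alpha+1}}\ge 2$. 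One genuine (and welcome) simplification: for $u\in W^{2,2}_{\alpha,\rm loc}$ you observe directly that $\sigma>Q/2\ge N/2$ forces $V\in L^2_{\rm loc}$ by the classical Sobolev embedding, hence $Vu\in L^2_{\rm loc}$ since $u\in L^\infty_{\rm loc}$, and Proposition \ref{p:reg}(i) with $p=2$ applies; the paper instead first proves $u\in W^{2,\sigma}_{\alpha,\rm loc}$ and then runs a case analysis on $Q\in\{2,3\}$ versus $Q\ge4$ to upgrade to $p=2$. Your route is shorter and correct.

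The one place where your argument is not actually a proof is the step you yourself flag as delicate: closing the bootstrap for the fractional regularity. Two issues. First, the per-step gain is not ``$+\frac{2}{\alpha+1}$'': the product rule caps the regularity of $Vu$ at $\theta s$ with an interpolation parameter $\theta\in(0,1)$ (since $V$ is only $W^{1,\sigma}$ and one must trade regularity of $u$ against integrability of $V$), so the recursion is $s_{n+1}=\theta s_n+\frac{2}{\alpha+1}$, whose iterates converge to the finite limit $\frac{2}{(\alpha+1)(1-\theta)}$ rather than diverging. Second, it is therefore not automatic that the iterates cross the threshold needed to conclude $Vu\in W^{1,\sigma}_{\rm loc}$; the paper's resolution is to observe that $\theta$ can be taken arbitrarily close to $1$ (by letting the auxiliary Lebesgue exponent $r$ of $V$ approach $\frac{N\sigma}{N-\sigma}$ when $N>\sigma$, or $+\infty$ when $N\le\sigma$, which is where $\sigma>\frac{Q}{2}\ge\frac{N}{2}$ enters), so that $\frac{\theta}{1-\theta}\cdot\frac{2}{\alpha+1}>1$ and the sequence $\theta s_n$ exceeds $1$ after finitely many steps; one then reruns the product estimate once more with $\theta=1/s$ to land exactly on $Vu\in W^{1,\sigma}_{\rm loc}$. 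Without this quantitative choice of $\theta$ the bootstrap can stall below the threshold, so this computation must be supplied to complete your proof.
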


\begin{proof} {\bf Proof of (i).} We begin by proving that $u \in W^{1+\frac{2}{\alpha+1},\sigma}_{{\rm loc}}(\Omega)$. Our purpose is to apply  Brezis and Mironescu \cite[Lemma 6.1]{BrMi} with $f=u$ and $g=V$.

If $N>\sigma$ we choose $p=\sigma$, $t=\frac{N\sigma}{N-\sigma}$, and $\sigma<r\le \frac{N\sigma}{N-\sigma}$ such that the number  $\theta$  defined by the identity $\frac{1}{r}+\frac{\theta}{t}=\frac{1}{p}$
satisfies $0<\theta<1$. We observe that choosing $r$ sufficiently close to $N<\frac{N\sigma}{N-\sigma}$, the number $0<\theta<1$ becomes closer and closer to $1$. Note that the inequality $N<\frac{N\sigma}{N-\sigma}$ is satisfied as a consequence of \eqref{eq:V},  being $Q>N$.

If $N\le \sigma$ we choose $p=t=\sigma$, $r>\sigma$ and $\theta=\frac{r-\sigma}{r}$. We observe that choosing $r>\sigma$ sufficiently large, the number $0<\theta<1$ becomes closer and closer to $1$.


We now put $s_1=\frac{2}{\alpha+1}$ and, up to shrink $\theta$ if necessary, we may assume that $\theta s_1\le 1$.

In this way, by \eqref{eq:V},  Proposition \ref{p:L^q-estimate}, Proposition \ref{p:reg} (ii) and Proposition \ref{p:L-infty}, $u$ and $V$ satisfy the assumptions of \cite[Lemma 6.1]{BrMi} with $f=u$ and $g=V$ in the following sense:
\begin{equation*} 
   u \in W^{s_1,t}_{{\rm loc}}(\Omega)\cap L^\infty_{{\rm loc}}(\Omega) \quad
   \text{and} \quad V\in W^{\theta s_1,\sigma}_{{\rm loc}}(\Omega) \cap L^r_{{\rm loc}}(\Omega) \, .
\end{equation*}
Actually, in order to apply the result in \cite{BrMi}, one needs to have functions defined on the whole $\R^N$ and belonging in the corresponding fractional spaces defined over $\R^N$; in our case, this may be done by multiplying $u$ and $V$ by a cut-off function and after that by extending them trivially to the whole $\R^N$. In this way, the extended functions that we still denote for simplicity by $u$ and $V$, belong to $W^{s_1,t}(\R^N)\cap L^\infty(\R^N)$ and $W^{\theta s_1,\sigma}(\R^N) \cap L^r(\R^N)$ respectively.
As a consequence of Lemma 6.1 in \cite{BrMi}, we obtain $Vu\in W^{\theta s_1,\sigma}_{{\rm loc}}(\Omega)$ and moreover for any couple of open sets $\omega_1, \omega_2$ such that  $\overline \omega_1 \subset \omega_2$ and $\overline \omega_2 \subset \Omega$, there exist two constants $C_1(\omega_1,\omega_2,\Omega,\alpha,h,k,\sigma,r)$ and $C_2(\omega_1,\omega_2,\Omega,\alpha,h,k,\sigma,r)$ depending only on $\omega_1,\omega_2,\Omega,\alpha,h,k,\sigma,r$ such that
\begin{align}  \label{eq:Vu-bis}
 &  \|Vu\|_{W^{\theta s_1,\sigma}_{{\rm loc}}( \omega_1)} \\
\notag & \qquad   \le C_1(\omega_1,\omega_2,\Omega,\alpha,h,k,\sigma,r)
     \Big(\|u\|_{L^\infty( \omega_2)} \, \|V\|_{W^{\theta s_1,\sigma}( \omega_2)}+\|V\|_{L^r( \omega_2)} \|u\|_{W^{s_1,t}( \omega_2)}^\theta \, \|u\|_{L^\infty( \omega_2)}^{1-\theta}\Big)  \\
\notag  & \qquad \le C_2  (\omega_1,\omega_2,\Omega,\alpha,h,k,\sigma,r)  \Big(\|u\|_{ W^{1,2}_\alpha(\omega_2)}+\|V\|_{L^\sigma( \omega_2)}\Big) \, .
\end{align}
By Proposition \ref{p:reg} (ii) we infer $u\in W^{\theta s_1 +s_1,\sigma}_{{\rm loc}}(\Omega)$. Now, we can proceed iteratively maintaining the same choice of $r$ and $\theta$ and by putting at each step $s_{n+1}=\theta s_n+s_1$ for any $n\ge 1$ until $\theta s_n \le 1$.

Suppose by contradiction that $\theta s_n \le 1$ for any $n\ge 1$. By direct computation we see that $s_n=s_1 \sum_{j=0}^{n-1} \theta^j=\frac{s_1(1-\theta^n)}{1-\theta}$  and that $\lim_{n\to +\infty} \theta s_n=\frac{\theta s_1}{1-\theta}$.

It is easy to verify that with an appropriate choice of $\theta\in (0,1)$ the limit $\frac{\theta s_1}{1-\theta}>1$ thus showing that $\theta s_n$ becomes eventually larger than $1$, a contradiction.

We may conclude that after a finite number of steps we find $n\ge 1$ such that $Vu \in W^{\theta s_n,\sigma}_{{\rm loc}}(\Omega)$, $\theta s_n\le 1$ and $u \in W^{s_{n+1},\sigma}_{{\rm loc}}(\Omega)$ with $s_{n+1}>\theta s_{n+1}>1$.

In particular we obtain that $u\in W^{s,\sigma}_{{\rm loc}}(\Omega)$ for some $s>1$. Now, we may choose $p$ and $t$ as above, $\theta=\frac 1s$ and $r$ such that the identity $\frac 1r +\frac \theta t=\frac 1p$ holds true. Such an $r$ always satisfies $r>\sigma$ and if $N>\sigma$ it also satisfies $r<\frac{N\sigma}{N-\sigma}$ as one can deduce from of the fact
that $\sigma >\frac{Q}{2}\ge \frac{N}{2}$. In this way we obtain
\begin{equation} \label{eq:prel-est-*}
   Vu \in W^{1,\sigma}_{{\rm loc}}(\Omega)
\end{equation}
and hence $u\in W^{1+\frac{2}{\alpha +1},\sigma}_{{\rm loc}}(\Omega)$  by Proposition \ref{p:reg} (ii). This completes the proof of the first part of (i).

Let us conclude the proof of (i) by showing that $u\in  W^{2,2}_{\alpha,{\rm loc}} (\Omega)$. Since $Vu\in L^{\sigma}_{{\rm loc}}(\Omega)$, by Proposition \ref{p:reg} (i) we deduce that $u \in  W^{2,\sigma}_{\alpha,{\rm loc}}(\Omega)$.

Let us consider now the three cases $Q\ge 4$, $Q=3$ and $Q=2$ separately.

\textit{Tha case $Q\ge 4$}. Since $u \in  W^{2,\sigma}_{\alpha,{\rm loc}}(\Omega)$ we immediately have that $\sigma>\frac Q2\ge 2$ thanks to \eqref{eq:V} and the conclusion readily follows.

\textit{The case $Q=3$}. Since $h,k\ge 1$ and $\alpha$ is a nonnegative integer we have the three following subcases: $h=1$, $k=2$ and $\alpha=0$ or $h=2$, $k=1$ and $\alpha=0$ or $h=k=1$ and $\alpha=1$. By \eqref{eq:prel-est-*} and Sobolev embedding we have that $Vu \in L^q_{{\rm loc}}(\Omega)$ for any $1\le q<\infty$ if $N\le \sigma$ or $Vu \in L^{\frac{N\sigma}{N-\sigma}}_{{\rm loc}}(\Omega)$ if $N>\sigma$.   When $N>\sigma$, in all the three subcases we have that $\frac{N\sigma}{N-\sigma}>2$ as one can check by direct computation after recalling that $\sigma>\frac{Q}{2}=\frac 32$.

In both situations $N>\sigma$ and $N\le \sigma$ we may conclude that $Vu \in L^q_{{\rm loc}}(\Omega)$ for some $q>2$ and the conclusion readily follows by Proposition \ref{p:reg} (i).

\textit{The case $Q=2$}. Since $h,k\ge 1$ and $\alpha$ is a nonnegative integer the only possibility is that $h=k=1$ and $\alpha=0$. Therefore the conclusion follows by proceeding as in the case $Q=3$. Note that in the case $N>\sigma$ the critical Sobolev exponent $\frac{N\sigma}{N-\sigma}=\frac{2\sigma}{2-\sigma}>2$.

The corresponding estimate \eqref{eq:2-f-s} follows by iteration of Sobolev embeddings, Proposition \ref{p:reg} and \eqref{eq:Vu-bis}.

 \medskip

{\bf Proof of (ii).} We start by proving that for any $r>0$ such that $\overline{B_r^\alpha} \subset \Omega$, the trace on $\partial B_r^\alpha$ of the first order derivatives of $u$ belong to $L^2(\partial B_r^\alpha)$. To see this, it is sufficient to recall that $u\in W^{1+\frac{2}{\alpha +1},\sigma}_{{\rm loc}}(\Omega)$ from part (i) of this proposition and hence its first order derivatives belong to $W^{\frac{2}{\alpha +1},\sigma}(B_r^\alpha)$. Applying Adams \cite[Theorem 7.58]{Adams} we deduce that the traces of functions in $W^{\frac{2}{\alpha +1},\sigma}(B_r^\alpha)$ belong to
$L^{\frac{\sigma(N-1)}{N-\frac{2\sigma}{\alpha+1}}}(\partial B_r^\alpha)$ if $N>\frac{2\sigma}{\alpha+1}$ and to $L^q(\partial B_r^\alpha)$ for any $1\le q<\infty$ if $N\le \frac{2\sigma}{\alpha+1}$. In any case we see that first order partial derivatives belong to $L^2(\partial B_r^\alpha)$. Indeed, since $\sigma>\frac Q2$, for the critical exponent in the case $N>\frac{2\sigma}{\alpha+1}$ we have
\begin{align} \label{eq:ge-than-2}
   & \tfrac{\sigma(N-1)}{N-\frac{2\sigma}{\alpha+1}}>\tfrac{Q(N-1)}{2\left(N-\frac{Q}{\alpha+1}\right)}\ge 2
\end{align}
where the last inequality is equivalent to the inequality
$$
  (h+k+\alpha k)(h+k-1)\ge \tfrac{4h\alpha}{\alpha+1} \, .
$$
The last one is readily verified for any $h,k\ge 1$ and $\alpha\ge 0$ integer.

It remains to show that $\nabla u \in L^2_{{\rm loc}}(\Omega;\R^N)$.  It is enough to show that $\nabla u \in L^2_{{\rm loc}}(\Omega_1;\R^N)$ for any smooth subset $\Omega_1$ such that  $\overline{\Omega_1} \subset \Omega$. We recall that $u \in W^{1+\frac{2}{\alpha +1},\sigma}(\Omega_1)$ so that its first order derivatives belong to $W^{\frac{2}{\alpha +1},\sigma}(\Omega_1)$. Now the conclusion follows immediately by Sobolev embedding both in the case $N\le \frac{2\sigma}{\alpha+1}$ and in the case $N>\frac{2\sigma}{\alpha+1}$: in the first case we have that first order derivatives are in $L^q(\Omega_1)$ for any $1\le q<\infty$ while in the second case we have that first order derivatives are in $L^q(\Omega_1)$ for any $1\le q\le \frac{\sigma N}{N-\frac{2\sigma}{\alpha+1}}$ as one can deduce by applying again    \cite[Theorem 7.58]{Adams}. Finally, by \eqref{eq:ge-than-2} we observe that $\frac{\sigma N}{N-\frac{2\sigma}{\alpha+1}}\ge 2$.

The corresponding estimates \eqref{eq:2-f-s-bis} and \eqref{eq:2-f-s-ter} follow by Sobolev embedding and \eqref{eq:2-f-s}.
 \end{proof}

We conclude this section by observing that, as a consequence of the regularity results for $ \Delta_\alpha $, the eigenfunctions of the spherical operator $-\mathcal L_\Theta$ are smooth on $\SA$:

\begin{proposition} \label{p:reg-eig} Let $\Psi$ be an eigenfunction of $-\mathcal L_\Theta$. Then $\Psi \in C^\infty(\SA)$.
\end{proposition}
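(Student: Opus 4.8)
The plan is to lift $\Psi$ to a $\delta_\lambda$-homogeneous function on $\R^N\setminus\{0\}$ that solves an equation with a smooth potential, invoke hypoellipticity, and then restrict back to the sphere. Let $\mu\ge 0$ be the eigenvalue of $-\mathcal L_\Theta$ associated with $\Psi$, so that \eqref{eq:b-alpha-eig} holds. I would define $W:\R^N\setminus\{0\}\to\R$ as the $\delta_\lambda$-homogeneous extension of $\Psi$ of degree zero,
$$
   W(x,y):=\Psi\left(\frac{x}{d_\alpha(x,y)},\frac{y}{[d_\alpha(x,y)]^{\alpha+1}}\right),
$$
so that in the polar coordinates $(\rho,\Theta)$ of Section \ref{s:spherical} one simply has $W(\rho,\Theta)=\Psi(\Theta)$ and $\partial W/\partial\rho\equiv 0$. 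Using the polar representation \eqref{eq:volume-integral} of the volume element, the representation \eqref{eq:gradu-gradv} of $\nabla_\alpha W\cdot\nabla_\alpha W$ (and its analogues when $h=1$ or $k=1$), the definition of $b_\alpha$ in \eqref{eq:f-b-sferica}, and the fact that $\Psi\in W^{1,2}_\alpha(\SA)$, one checks that $W\in W^{1,2}_{\alpha,{\rm loc}}(\R^N\setminus\{0\})$: on each shell $\{a<\rho<b\}$, $\int|\nabla_\alpha W|^2\,dxdy$ equals a multiple of $b_\alpha(\Psi,\Psi)\int_a^b\rho^{Q-3}\,d\rho<\infty$, and $\int W^2\,dxdy$ is controlled by $\int_{\SA}\Psi^2\,d\Ha$, both finite by the very definition \eqref{eq:norm-b-alpha} of the norm and Proposition \ref{p:equivalence}.

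The key step is to show that $W$ is a weak solution of
\begin{equation}\label{eq:W-equation}
   -\Delta_\alpha W = \mu(\alpha+1)^2\,\frac{\psi_\alpha}{d_\alpha^{2}}\,W \qquad\text{in } \R^N\setminus\{0\}\, .
\end{equation}
Given $v\in C^\infty_c(\R^N\setminus\{0\})$, I would write $\int_{\R^N}\nabla_\alpha W\cdot\nabla_\alpha v\,dxdy$ in polar coordinates via \eqref{eq:gradu-gradv} and \eqref{eq:volume-integral}. Since $\partial W/\partial\rho=0$ the radial term vanishes and, for each fixed $\rho$, the remaining angular integral is precisely $b_\alpha(\Psi,v(\rho,\cdot))$, with $v(\rho,\cdot)\in W^{1,2}_\alpha(\SA)$ because it is smooth on the sphere. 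Applying the eigenvalue identity \eqref{eq:b-alpha-eig} with test function $w=v(\rho,\cdot)$ turns this into $\mu\int_{\SA}\psi_\alpha\Psi\,v(\rho,\cdot)\,d\Ha$; integrating back over $\rho$ against the weight $\rho^{Q-1}$ reproduces exactly $\mu(\alpha+1)^2\int_{\R^N}\frac{\psi_\alpha}{d_\alpha^{2}}Wv\,dxdy$, which is the weak form of \eqref{eq:W-equation}. The delicate point — and the one I expect to require the most care — is that this computation must not excise the degenerate set $\Sigma$: it succeeds because \eqref{eq:b-alpha-eig} is valid for all test functions in $W^{1,2}_\alpha(\SA)$, not only those vanishing near $\SA\cap\Sigma$, and because the polar decomposition in Section \ref{s:spherical} was set up precisely so that $b_\alpha$ makes sense up to $\SA\cap\Sigma$. (The formal polar manipulations are transparent only on $\R^N\setminus\Sigma$; their validity across $\Sigma$ rests on the functional-analytic construction of $W^{1,2}_\alpha(\SA)$.)

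It then remains to invoke regularity. The coefficient $c:=\mu(\alpha+1)^2\psi_\alpha/d_\alpha^{2}$ lies in $C^\infty(\R^N\setminus\{0\})$: since $|x|^{2\alpha}=(|x|^2)^\alpha$ and $d_\alpha^{2(\alpha+1)}=|x|^{2(\alpha+1)}+(\alpha+1)^2|y|^2$ are smooth and $d_\alpha>0$ away from the origin, both $\psi_\alpha=|x|^{2\alpha}/d_\alpha^{2\alpha}$ and $d_\alpha^{-2}$ are smooth there. Hence \eqref{eq:W-equation} reads $(-\Delta_\alpha-c)W=0$, and since $-\Delta_\alpha=\sum_{j}X_j^2+\sum_{J,\ell}Y_{J,\ell}^2$ by \eqref{eq:hormander} with the vector fields $X_j,Y_{J,\ell}$ satisfying the Hörmander condition, the operator $-\Delta_\alpha-c$ is hypoelliptic on $\R^N\setminus\{0\}$ by Hörmander's theorem \cite{Hor1967} (hypoellipticity is unaffected by adding the smooth zeroth-order term $-c$). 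As $W\in W^{1,2}_{\alpha,{\rm loc}}(\R^N\setminus\{0\})\subset L^1_{{\rm loc}}(\R^N\setminus\{0\})$ is a distributional solution, we conclude $W\in C^\infty(\R^N\setminus\{0\})$. Finally $\SA=\{d_\alpha=1\}$ is a smooth compact hypersurface of $\R^N\setminus\{0\}$, because $d_\alpha\in C^\infty(\R^N\setminus\{0\})$ and $\nabla d_\alpha$ cannot vanish there (otherwise $d_\alpha=X_G d_\alpha=X_G\cdot\nabla d_\alpha$ would vanish too). Restricting $W$ to $\SA$ and recalling that $W\equiv\Psi$ on $\SA$ gives $\Psi\in C^\infty(\SA)$.
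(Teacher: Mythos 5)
Your proposal is correct, and its first two steps coincide with the paper's: you lift $\Psi$ to the degree-zero $\delta_\lambda$-homogeneous function $W(x,y)=\Psi\bigl(\tfrac{x}{d_\alpha(x,y)},\tfrac{y}{[d_\alpha(x,y)]^{\alpha+1}}\bigr)$ and show that it solves $-\Delta_\alpha W=\mu(\alpha+1)^2\,\psi_\alpha\,d_\alpha^{-2}\,W$ in $\R^{h+k}\setminus\{0\}$, which is exactly the equation the paper derives (there written with $|x|^{2\alpha}d_\alpha^{-2\alpha-2}$). You are in fact more careful than the paper on this point: the paper deduces the equation formally from the polar representation \eqref{GrPo}, while you verify the weak formulation by testing against $v\in C^\infty_c(\R^N\setminus\{0\})$, sliced in $\rho$, and invoking the variational identity \eqref{eq:b-alpha-eig} for each slice; this correctly addresses the only delicate issue, namely that nothing is excised near $\Sigma$. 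Where you genuinely diverge is the regularity step. The paper bootstraps via Proposition \ref{p:reg-2}, i.e.\ the combination of the subelliptic $W^{m,p}_\alpha$/fractional regularity of Proposition \ref{p:reg} with the Brezis--Mironescu product estimates, thereby reusing the machinery built for the merely $W^{1,\sigma}$ potential $V$ of the main theorem. You instead observe that here the zeroth-order coefficient $\mu(\alpha+1)^2\psi_\alpha d_\alpha^{-2}$ is $C^\infty$ away from the origin, so Hörmander's theorem applies directly to $-\Delta_\alpha-c$ (the sum-of-squares structure \eqref{eq:hormander} plus a smooth zeroth-order term) and yields $W\in C^\infty(\R^N\setminus\{0\})$ in one stroke. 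This is cleaner and exploits the smoothness of the coefficient, which is unavailable in the general setting of Theorem \ref{t:main}; the only small point worth making explicit is the passage from the variational identity $\int\nabla_\alpha W\cdot\nabla_\alpha\varphi\,dxdy=\int cW\varphi\,dxdy$ to the distributional identity $-\int W\Delta_\alpha\varphi\,dxdy=\int cW\varphi\,dxdy$, which follows from the definition \eqref{eq:weak-Yj} of the weak derivatives and the fact that each $Y_{J,\ell}$ is divergence-free, so that $X_j$ and $Y_{J,\ell}$ are formally skew-adjoint. Both routes are valid; yours is the more economical one for this particular statement.
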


\begin{proof} Let $\lambda$ be the eigenvalue of $-\mathcal L_\Theta$ corresponding to $\Psi$ and denote by $W$ the function defined in $\R^{h+k}\setminus \{0\}$ by $W(x,y)=\Psi\left(\frac{x}{d_\alpha(x,y)},\frac{y}{[d_\alpha(x,y)]^{\alpha+1}}\right)$. Then by \eqref{GrPo}, $W$ solves the equation
\begin{equation} \label{eq:W-equation}
  - \Delta_\alpha  W = \frac{\lambda (\alpha+1)^2 |x|^{2\alpha}}{(d_\alpha(x,y))^{2\alpha+2}} \, W \qquad \text{in } \R^{h+k}\setminus \{0\} \, .
\end{equation}
Applying the same kind of argument used in the proof of Proposition \ref{p:reg-2}, based on Proposition \ref{p:reg-2} and    \cite[Lemma 6.1]{BrMi}, we infer that $W \in C^\infty(\R^{h+k} \setminus \{0\})$ and in particular $\Psi \in C^\infty(\SA)$.
\end{proof}

\section{Appendix}
 In this appendix we collect some scaling properties of integrals on $B_\lambda^\alpha$ and $\partial B_\lambda^\alpha$ (see \eqref{eq:B-r-alpha} for the definition). Moreover we prove the Pohozaev-type identity of Proposition \ref{p:Pohozaev}.

Let $\delta_\lambda$ be the anisotropic dilation defined in \eqref{eq:delta-lambda}. As we have already observed one has that    $ \Delta_\alpha  (u \circ \delta_\lambda) = \lambda^2 ( \Delta_\alpha   u) \circ \delta_\lambda$.
This means that, up to a constant multiplier, the Grushin Laplacian commutes with the group of operators $\{T_t\}_{t\in \R}$ defined by $T_t u=u\circ \delta_{e^t}$  for every sufficiently smooth function $u$.
The group $\{T_t\}_{t\in \R}$ admits as an infinitesimal operator the operator $X_G$ defined in \eqref{eq:def-XG}.

Being $X_G$ the infinitesimal generator of the group of operators $\{T_t\}_{t\in \R}$, we have that
\begin{equation*}
  \begin{cases}
      \frac{\partial w}{\partial t}=X_G w \, , \qquad t \in \R \, , \\
      w(\cdot,\cdot,0)=u   \, ,
  \end{cases}
\end{equation*}
where $w(\cdot,\cdot,t):=T_t u$ where $u$ is a function of class $C^1$. In particular letting $u_\lambda:= u\circ \delta_\lambda$ and $t=\log \lambda$, we have that
\begin{equation} \label{eq:vect-field}
   \frac{\partial u_\lambda}{\partial \lambda}=\frac{dt}{d\lambda} \frac{\partial}{\partial t} (T_t u)=\frac 1 \lambda \, X_G (T_t u)= \frac{1}{\lambda} \, X_G u_\lambda \, .
\end{equation}

We can now state some basic properties which will be used in the monotonicity argument and the subsequent blow-up procedure.


\begin{lemma} \label{l:der-B} Let $\Omega\subseteq \R^N$ be a domain containing the origin and let $v\in L^1_{{\rm loc}}(\Omega)$. Then
\begin{equation*}
  \frac{d}{d\lambda} \int_{B_\lambda^\alpha} v(x,y) \, dxdy=\int_{\partial B_\lambda^\alpha} v \, d\Ha
\end{equation*}
for a.e. $\lambda>0$ such that $\overline{B_\lambda^\alpha}\subset \Omega$.
\end{lemma}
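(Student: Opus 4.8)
The plan is to express the integral over the Grushin ball $B_\lambda^\alpha$ using the co-area formula for the gauge function $d_\alpha$, and then differentiate with respect to $\lambda$. First I would recall that $B_\lambda^\alpha=\{d_\alpha<\lambda\}$, so by the co-area formula applied to the Lipschitz function $d_\alpha$ (which is smooth and has nonvanishing gradient on $\R^N\setminus\{0\}$),
\[
  \int_{B_\lambda^\alpha} v(x,y)\,dxdy=\int_0^\lambda\left(\int_{\partial B_s^\alpha}\frac{v}{|\nabla d_\alpha|}\,d\mathcal H^{N-1}\right)ds=\int_0^\lambda\left(\int_{\partial B_s^\alpha}v\,d\Ha\right)ds,
\]
where in the last equality I used the very definition of the measure $\Ha=\mathcal H_\alpha^{N-1}$, namely $d\Ha=\frac{1}{|\nabla d_\alpha|}\,d\mathcal H^{N-1}$. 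Since $v\in L^1_{\rm loc}(\Omega)$, the inner integral $s\mapsto\int_{\partial B_s^\alpha}v\,d\Ha$ is in $L^1_{\rm loc}(0,R)$ (with $R$ such that $\overline{B_R^\alpha}\subset\Omega$), because its integral against any bounded interval is finite by the displayed identity. Hence the function $\lambda\mapsto\int_{B_\lambda^\alpha}v\,dxdy$ is absolutely continuous on compact subintervals, and by the Lebesgue differentiation theorem its derivative equals $\int_{\partial B_\lambda^\alpha}v\,d\Ha$ for a.e.\ $\lambda$ with $\overline{B_\lambda^\alpha}\subset\Omega$, which is the claim.

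Alternatively, and perhaps more in the spirit of the other scaling lemmas in the Appendix, one can first pass to the reference ball $B_1^\alpha$ via the anisotropic dilation $\delta_\lambda$. Using the Jacobian of $\delta_\lambda$, which equals $\lambda^Q$ (from $\delta_\lambda(x,y)=(\lambda x,\lambda^{\alpha+1}y)$ and $Q=h+(\alpha+1)k$), one writes
\[
  \int_{B_\lambda^\alpha} v(x,y)\,dxdy=\lambda^Q\int_{B_1^\alpha}v(\delta_\lambda(x,y))\,dxdy,
\]
and then combines this with the analogous rescaling of surface integrals on $\partial B_\lambda^\alpha$ (the content of Lemma~\ref{l:rescale-partial}) together with the polar coordinate decomposition \eqref{eq:volume-integral}, i.e.\ $dxdy=\rho^{Q-1}\,d\rho\,d\Ha$. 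The latter identity immediately gives the co-area representation above by Fubini--Tonelli, so the two approaches are essentially the same; I would present whichever requires invoking the fewest previously established lemmas, most likely the direct one built on \eqref{eq:volume-integral}.

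The only genuinely delicate point is the behaviour near the origin, where $d_\alpha$ is not smooth and $|\nabla d_\alpha|$ may degenerate or blow up; this is why the statement is phrased with $v\in L^1_{\rm loc}(\Omega)$ and the conclusion holds for a.e.\ $\lambda$ rather than pointwise. However, this is harmless: for every $\lambda>0$ the set $\partial B_\lambda^\alpha$ is bounded away from $0$, the co-area formula \eqref{eq:volume-integral} is valid on $\R^N\setminus\{0\}$, and the single point $\{0\}$ carries no $dxdy$-measure, so it does not affect the integral over $B_\lambda^\alpha$. Thus the main work reduces to quoting the co-area / polar-coordinate formula \eqref{eq:volume-integral} already established in Section~\ref{s:spherical} and invoking absolute continuity of the indefinite integral of an $L^1_{\rm loc}$ function; no new estimates are needed.
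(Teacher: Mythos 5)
Your proof is correct and follows the same route as the paper, which disposes of this lemma in one line by invoking the coarea formula for the level sets of $d_\alpha$ (equivalently, the polar decomposition $dxdy=\rho^{Q-1}d\rho\,d\Ha$ of \eqref{eq:volume-integral}). Your additional remarks on absolute continuity, Lebesgue differentiation, and the harmlessness of the origin only make explicit what the paper leaves implicit.
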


\begin{proof} The proof is immediate consequence of the coarea formula being $\partial B_\lambda^\alpha$ the level sets of the function $d_\alpha$.
\end{proof}

We recall once again the definition of the homogenous dimension $Q=h+(\alpha+1)k$ introduced in Section \ref{s:functional-setting} before \eqref{eq:V}. In the next lemmas it is clarified its role when dealing with changes of variables for both volume and surface integrals.

\bigskip

\begin{lemma} \label{l:change-B} Let $\Omega\subseteq \R^N$ be a domain containing the origin and let $v\in L^1_{{\rm loc}}(\Omega)$. Then
\begin{equation*}
   \int_{B_\lambda^\alpha} v(x,y) \, dxdy=\lambda^Q \int_{B_1^\alpha} v(\delta_\lambda(x,y)) \, dxdy
\end{equation*}
for any $\lambda>0$ such that $\overline{B_\lambda^\alpha}\subset \Omega$.
\end{lemma}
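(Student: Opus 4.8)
The plan is to reduce this to the standard change-of-variables (area) formula for the diffeomorphism $\delta_\lambda$ on $\R^N$. The map $\delta_\lambda(x,y) = (\lambda x, \lambda^{\alpha+1} y)$ is a linear isomorphism of $\R^N = \R^h \times \R^k$ whose Jacobian determinant is constant, equal to $\lambda^h \cdot (\lambda^{\alpha+1})^k = \lambda^{h+(\alpha+1)k} = \lambda^Q$. Hence for any $v \in L^1_{\mathrm{loc}}(\Omega)$ and any measurable $E$ with $\overline{\delta_\lambda(E)} \subset \Omega$ one has $\int_{\delta_\lambda(E)} v(z)\,dz = \lambda^Q \int_E v(\delta_\lambda(z))\,dz$ by the classical change of variables.

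The only remaining point is to identify $\delta_\lambda(B_1^\alpha) = B_\lambda^\alpha$, which is exactly \eqref{eq:delta-B} (with $r=1$), itself a consequence of the homogeneity relation \eqref{eq:d-delta} for $d_\alpha$ under $\delta_\lambda$. So first I would recall \eqref{eq:delta-B}; then apply the area formula to $E = B_1^\alpha$ and the function $v$, using that $\overline{B_\lambda^\alpha} \subset \Omega$ guarantees the change of variables is legitimate (and that $v \circ \delta_\lambda \in L^1(B_1^\alpha)$ by the same formula applied to $|v|$). The computation of $\det \delta_\lambda = \lambda^Q$ is the place where the homogeneous dimension $Q$ enters, and it is a one-line determinant of a diagonal matrix.

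There is essentially no obstacle here: this is a routine scaling lemma, and the "hard part" is merely bookkeeping — making sure the hypothesis $\overline{B_\lambda^\alpha} \subset \Omega$ is used to place the image of $B_1^\alpha$ inside the domain of integrability of $v$. I would write it as follows.

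\proof By \eqref{eq:delta-B} with $r=1$ we have $\delta_\lambda(B_1^\alpha) = B_\lambda^\alpha$. The map $\delta_\lambda:\R^N \to \R^N$, $\delta_\lambda(x,y) = (\lambda x, \lambda^{\alpha+1}y)$, is a linear isomorphism of $\R^h \times \R^k$ whose associated matrix is diagonal, with $h$ diagonal entries equal to $\lambda$ and $k$ diagonal entries equal to $\lambda^{\alpha+1}$; hence
\begin{equation*}
   |\det D\delta_\lambda| = \lambda^h \cdot \lambda^{(\alpha+1)k} = \lambda^{h+(\alpha+1)k} = \lambda^Q \, .
\end{equation*}
Since $\overline{B_\lambda^\alpha} \subset \Omega$, the function $v$ is integrable on $B_\lambda^\alpha = \delta_\lambda(B_1^\alpha)$, and the classical change of variables formula applied to the diffeomorphism $\delta_\lambda$ gives
\begin{equation*}
   \int_{B_\lambda^\alpha} v(x,y) \, dxdy = \int_{\delta_\lambda(B_1^\alpha)} v(x,y) \, dxdy = \int_{B_1^\alpha} v(\delta_\lambda(x,y)) \, |\det D\delta_\lambda| \, dxdy = \lambda^Q \int_{B_1^\alpha} v(\delta_\lambda(x,y)) \, dxdy \, ,
\end{equation*}
which is the desired identity. \endproof
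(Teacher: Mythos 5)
Your proof is correct and follows essentially the same route as the paper, which simply invokes the definition of $\delta_\lambda$, identity \eqref{eq:delta-B}, and the classical change of variables; you have merely spelled out the Jacobian computation $|\det D\delta_\lambda|=\lambda^Q$ explicitly. No issues.
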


\begin{proof} The proof follows by the definition of $\delta_\lambda$, \eqref{eq:delta-B} and a change of variables.
\end{proof}

\begin{lemma} \label{l:rescale-partial} Let $v\in L^1(\partial B_\lambda^\alpha)$ for some $\lambda>0$. Then
\begin{equation*}
   \int_{\partial B_\lambda^\alpha} v\, d\Ha=\lambda^{Q-1} \int_{\SA} v(\delta_\lambda(\Theta)) \, d\Ha.
\end{equation*}
\end{lemma}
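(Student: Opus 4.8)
The plan is to derive the identity from the spherical decomposition of the Lebesgue measure \eqref{eq:volume-integral}, together with Lemmas \ref{l:der-B} and \ref{l:change-B}. The one preliminary observation needed is that the point of $\R^N$ with spherical coordinates $(s,\Theta)$, $s>0$, $\Theta\in\SA$, coincides with $\delta_s(\Theta)$; this is read off by comparing the change of variables \eqref{chvar} — respectively \eqref{chvar-2}, \eqref{chvar-4} when $h=1$ or $k=1$ — with the definition \eqref{eq:delta-lambda} of $\delta_s$, and one also uses the obvious group law $\delta_r\circ\delta_s=\delta_{rs}$.

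I would first prove: for any bounded domain $\Omega\ni0$ and any $w\in L^1_{\mathrm{loc}}(\Omega)$,
\begin{equation}\label{eq:aux-rescale}
\int_{\partial B_r^\alpha}w\,d\Ha=r^{Q-1}\int_{\SA}w(\delta_r(\Theta))\,d\Ha(\Theta)\qquad\text{for a.e.\ }r\ \text{with}\ \overline{B_r^\alpha}\subset\Omega.
\end{equation}
Setting $\phi(r):=\int_{B_r^\alpha}w\,dxdy$, Lemma \ref{l:change-B}, then \eqref{eq:volume-integral} applied inside $B_1^\alpha$ (using the observation above to write the point of coordinates $(s,\Theta)$ as $\delta_s(\Theta)$), and finally the substitution $u=rs$ give
\[
\phi(r)=r^Q\!\int_{B_1^\alpha}\! w(\delta_r(x,y))\,dxdy=r^Q\!\int_0^1\! s^{Q-1}\Big(\int_{\SA} w(\delta_{rs}(\Theta))\,d\Ha(\Theta)\Big)ds=\int_0^r u^{Q-1}G(u)\,du,
\]
where $G(u):=\int_{\SA}w(\delta_u(\Theta))\,d\Ha(\Theta)$ satisfies $u\mapsto u^{Q-1}G(u)\in L^1_{\mathrm{loc}}$. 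Differentiating, the fundamental theorem of calculus yields $\phi'(r)=r^{Q-1}G(r)$ for a.e.\ $r$, while Lemma \ref{l:der-B} yields $\phi'(r)=\int_{\partial B_r^\alpha}w\,d\Ha$ for a.e.\ $r$, which is \eqref{eq:aux-rescale}.

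Next I would upgrade \eqref{eq:aux-rescale} from a.e.\ $r$ to every $r$. For $w\in C(\R^N\setminus\{0\})$ (multiplied, if needed, by a cut-off equal to $1$ near the sphere under consideration and vanishing near the origin, so that \eqref{eq:aux-rescale} applies) both sides of \eqref{eq:aux-rescale} are continuous in $r$: the right-hand side by dominated convergence, and $r\mapsto\int_{\partial B_r^\alpha}w\,d\Ha$ by the classical change of variables along the diffeomorphism $\delta_r\colon\SA\to\partial B_r^\alpha$ — whose differential is the constant matrix $\mathrm{diag}(rI_h,r^{\alpha+1}I_k)$ — combined with $d\Ha=|\nabla d_\alpha|^{-1}\,d\HN$ and the fact that $\nabla d_\alpha$ does not vanish off the origin (a consequence of $X_G d_\alpha=d_\alpha$). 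Hence \eqref{eq:aux-rescale} holds for all admissible $r$ and all $w\in C(\R^N\setminus\{0\})$; equivalently, the restriction of $\Ha$ to $\partial B_r^\alpha$ equals $r^{Q-1}$ times the pushforward under $\delta_r$ of the restriction of $\Ha$ to $\SA$. Since equal Radon measures integrate every $L^1$ function in the same way, specializing to $r=\lambda$ gives the assertion of the lemma for an arbitrary $v\in L^1(\partial B_\lambda^\alpha)$.

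The routine ingredients are the two earlier lemmas and \eqref{eq:volume-integral}; the two points requiring a little care are (i) verifying that the spherical point $(s,\Theta)$ equals $\delta_s(\Theta)$ uniformly in the four configurations of $h,k$ — the variant coordinates \eqref{chvar-2}--\eqref{chvar-4} are tailored precisely so that this holds — and (ii) the a.e.-to-everywhere step, handled through the smooth dependence of $\{\partial B_r^\alpha\}_{r>0}$ on $r$ as above. At the conceptual level the statement is merely the compatibility of \eqref{eq:volume-integral} with the coarea formula for $d_\alpha$.
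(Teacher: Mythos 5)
Your argument is correct and follows the same route as the paper's (one-line) proof: combine Lemma \ref{l:der-B}, which identifies the boundary integral as the derivative of a volume integral, with the scaling of volume integrals from Lemma \ref{l:change-B}, using the spherical decomposition \eqref{eq:volume-integral}. The only addition is your a.e.-to-every-$\lambda$ upgrade via continuity in $r$ and identification of the two Radon measures on $\partial B_\lambda^\alpha$ — a point the paper leaves implicit but which is indeed needed to get the statement for an arbitrary fixed $\lambda$ and general $v\in L^1(\partial B_\lambda^\alpha)$.
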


\begin{proof} It can be proved by combining the result of Lemma \ref{l:der-B}, which characterizes a boundary integral as the derivative of a volume integral, with the scaling properties of the volume integral as stated in Lemma \ref{l:change-B}.
\end{proof}

We give now a rigorous version of an integration by parts formula on  $B_\lambda^\alpha$.
\begin{proposition} \label{p:int-parts}
Let $\Omega\subseteq \R^N$ be a domain containing the origin.
Let $u \in  W^{2,2}_{\alpha,{\rm loc}} (\Omega) \cap W^{1+\frac{2}{\alpha+1},\sigma}_{{ \rm loc}}(\Omega)$ for some $\sigma > \frac Q 2$ and let $v \in  W^{1,2}_{\alpha,{\rm loc}}(\Omega)$. Then for any $r>0$ such that $\overline{B_r^\alpha} \subset \Omega$ the following conclusions hold: $ \Delta_\alpha  u \in L^2(B_r^\alpha)$, the trace on $\partial B_r^\alpha$ of the first order derivatives of $u$ belong to $L^2(\partial B_r^\alpha)$, the trace of $v$ on $\partial B_r^\alpha$ belongs to $L^2_\alpha(\partial B_r^\alpha)$ and the following identity holds true
\begin{equation*}
    \int_{B_r^\alpha}  v  \Delta_\alpha  u \, dxdy=\int_{\partial B_r^\alpha} \frac{\psi_\alpha}{d_\alpha} \, v X_G u  \, d\Ha
    -\int_{B_r^\alpha} \nabla_\alpha u \cdot \nabla_\alpha v \, dxdy
\end{equation*}
where $\nu$ denotes the outer unit normal to $\partial B_r^\alpha$.
\end{proposition}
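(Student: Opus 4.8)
\emph{Proof strategy.} The three regularity assertions are essentially immediate from the hypotheses and the results of Section~\ref{s:regularity}. Writing $\Delta_\alpha$ as a sum of squares of the fields $X_j,Y_{J,\ell}$ as in \eqref{eq:hormander}--\eqref{eq:binom}, one has $\Delta_\alpha u=\sum_{j}X_j^2u+\sum_{\ell,\,|J|=\alpha}Y_{J,\ell}^2u$, a finite sum of quantities controlled by the $W^{2,2}_\alpha$-norm; since $\overline{B_r^\alpha}\subset\Omega$, this gives $\Delta_\alpha u\in L^2(B_r^\alpha)$. Next, $u\in W^{1+\frac{2}{\alpha+1},\sigma}_{{\rm loc}}(\Omega)$ forces $\nabla u\in W^{\frac{2}{\alpha+1},\sigma}(B_r^\alpha)$, and the classical trace theorem (Adams~\cite[Theorem 7.58]{Adams}) puts the trace of $\nabla u$ on $\partial B_r^\alpha$ in $L^q(\partial B_r^\alpha)$ with $q\ge 2$, by exactly the computation \eqref{eq:ge-than-2}. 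Finally $B_r^\alpha$ satisfies condition $(D)$, so Proposition~\ref{p:trace-alpha} yields ${\rm Tr}_\alpha v\in L^2_\alpha(\partial B_r^\alpha)$.

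For the identity the plan is the classical one: prove it first for smooth $u$ and $v$, then approximate. The smooth case is just the divergence theorem applied to the vector field $vA_\alpha\nabla u$ on the domain $B_r^\alpha$, which is smooth since $d_\alpha$ is smooth with $\nabla d_\alpha\ne 0$ off the origin: one has ${\rm div}(vA_\alpha\nabla u)=\nabla_\alpha u\cdot\nabla_\alpha v+v\,\Delta_\alpha u$, while a direct computation from \eqref{eq:distance} gives $\nabla_x d_\alpha=|x|^{2\alpha}x/d_\alpha^{2\alpha+1}$ and $\nabla_y d_\alpha=(\alpha+1)y/d_\alpha^{2\alpha+1}$, whence on $\partial B_r^\alpha$
\begin{equation*}
   (A_\alpha\nabla u)\cdot\frac{\nabla d_\alpha}{|\nabla d_\alpha|}\,d\mathcal H^{N-1}
   =\frac{|x|^{2\alpha}}{d_\alpha^{2\alpha+1}}\bigl(x\cdot\nabla_x u+(\alpha+1)y\cdot\nabla_y u\bigr)\frac{d\mathcal H^{N-1}}{|\nabla d_\alpha|}
   =\frac{\psi_\alpha}{d_\alpha}\,X_G u\,d\Ha ,
\end{equation*}
using $\psi_\alpha=|x|^{2\alpha}/d_\alpha^{2\alpha}$ and $d\Ha=|\nabla d_\alpha|^{-1}d\mathcal H^{N-1}$; rearranging the divergence theorem gives the stated formula.

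It remains to approximate $u$ by smooth functions $u_n$ (defined near $\overline{B_r^\alpha}$) with $\Delta_\alpha u_n\to\Delta_\alpha u$ and $\nabla_\alpha u_n\to\nabla_\alpha u$ in $L^2(B_r^\alpha)$ and with the traces of $\nabla u_n$ converging to those of $\nabla u$ in $L^2(\partial B_r^\alpha)$, and $v$ by $v_m\in C^\infty(\overline{B_r^\alpha})$ with $v_m\to v$ in $W^{1,2}_\alpha(B_r^\alpha)$ (Proposition~\ref{p:density}); one then passes to the limit in the smooth identity. Every integrand appearing there is a product of two $L^2$ functions --- on $\partial B_r^\alpha$ one writes $\frac{\psi_\alpha}{d_\alpha}vX_G u=r^{-2\alpha-1}(|x|^\alpha v)(|x|^\alpha X_G u)$, both factors being in $L^2(\partial B_r^\alpha)$ by the regularity just recorded --- so the products of the approximants converge in $L^1$, and $|B_r^\alpha\setminus B_r^\alpha|$ issues do not arise.

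The delicate point, and the main obstacle, is the approximation of the degenerate part of $\Delta_\alpha u$: a plain Euclidean mollification $u\mapsto u\ast\rho_\eps$ does \emph{not} satisfy $|x|^{2\alpha}\Delta_y(u\ast\rho_\eps)\to|x|^{2\alpha}\Delta_y u$ in $L^2$, since $\Delta_y u$ by itself is only a distribution near $\Sigma$. This is circumvented by a two-stage mollification. First mollify in the $y$-variables only, $u^{(1)}_\eps:=u\ast_y\psi_\eps$; since $|x|^{2\alpha}$ is constant in $y$, $|x|^{2\alpha}\Delta_y u^{(1)}_\eps=(|x|^{2\alpha}\Delta_y u)\ast_y\psi_\eps\to|x|^{2\alpha}\Delta_y u$ in $L^2_{{\rm loc}}$ (dominated convergence over the $x$-slices, the dominating function being $\||x|^{2\alpha}\Delta_y u(x,\cdot)\|_{L^2_y}^2\in L^1_x$), and likewise $\nabla_\alpha u^{(1)}_\eps\to\nabla_\alpha u$ in $L^2_{{\rm loc}}$ and $u^{(1)}_\eps\to u$ in $W^{1+\frac{2}{\alpha+1},\sigma}_{{\rm loc}}$. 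Then mollify $u^{(1)}_\eps$ in the $x$-variables: for fixed $\eps$ the quantities $\Delta_x u^{(1)}_\eps,\ \Delta_y u^{(1)}_\eps,\ \nabla u^{(1)}_\eps$ are genuine $L^2_{{\rm loc}}$ functions, so ordinary mollification in $x$ converges and this second step is harmless. A diagonal choice of the two mollification parameters produces the sought smooth sequence $u_n$, and the proof is completed by letting first $n\to\infty$ and then $m\to\infty$ in the smooth identity.
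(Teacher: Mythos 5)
Your proof is correct and follows essentially the same route as the paper: the regularity assertions via the trace results of Section \ref{s:regularity} and the argument of Proposition \ref{p:reg-2}, the divergence theorem for smooth functions together with the identity $A_\alpha\nabla d_\alpha=\frac{|x|^{2\alpha}}{d_\alpha^{2\alpha+1}}\,X_G$ to identify the boundary term as $\frac{\psi_\alpha}{d_\alpha}\,X_G u\,d\Ha$, and a passage to the limit by density. The only difference is in the density step: the paper invokes (a second-order version of) the approximation of Proposition \ref{p:density} on $\overline{B_r^\alpha}$, whereas you build the approximating sequence explicitly by an interior two-stage mollification (first in $y$, then in $x$), which is a legitimate and in fact more detailed treatment of the one delicate point --- that plain Euclidean mollification does not commute with $|x|^{2\alpha}\Delta_y$ --- that the paper leaves implicit.
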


\begin{proof} The proof can be obtained by density applying  Proposition \ref{p:density} on Grushin balls $B_r^\alpha$.
In order to show that first order derivatives of $u$ belong to $L^2(\partial B_r^\alpha)$ one can repeat the argument used in the proof of Proposition \ref{p:reg-2}. On the other hand, the trace of $v$ belongs to $L^2_\alpha(\partial B_r^\alpha)$ thanks to Proposition \ref{p:trace-alpha}. Then one can use the classical integration by parts formula on smooth functions combined with the identity
$ 
    A_\alpha(x,y) \nabla d_\alpha(x,y)=\frac{|x|^{2\alpha}}{(d_\alpha(x,y))^{2\alpha+1}} \, X_G(x,y)\, ,
    \ (x,y)\neq (0,0)
$
and pass to the limit.
\end{proof}

\subsection{Proof of Proposition \ref{p:Pohozaev}} We recall that by Sobolev embedding $u \in  W^{1,2}_\alpha (\Omega)\subset L^q(\Omega)$ for any $1\le q<\infty$ if $Q\le 2$ and for any $1\le q\le \frac{2Q}{Q-2}$ if $Q>2$. Hence, by \eqref{eq:V}, we infer that $Vu^2 \in L^1(\Omega)$ so that $\int_{B_r^\alpha} Vu^2 \, dxdy$ is well defined for any $r>0$ such that $\overline{B_r^\alpha} \subset \Omega$ and moreover
$\int_{\partial B_r^\alpha} Vu^2 \, d\Ha$ is defined almost everywhere and
\begin{equation} \label{eq:der-Vu^2}
  \frac{d}{dr}\int_{B_r^\alpha} Vu^2 \, dxdy=\int_{\partial B_r^\alpha} Vu^2 \, d\Ha
\end{equation}
almost everywhere, see Lemma \ref{l:der-B} for more details.
Applying again Sobolev embedding and \eqref{eq:V} we also deduce that $(\nabla V \cdot X_G) u^2 \in L^1_{{\rm loc}}(\Omega)$.

By \eqref{eq:der-Vu^2}, Lemma \ref{l:change-B} and simple computations we obtain
\begin{align} \label{eq:est-prel}
   & -\frac{Q}{r} \int_{B_r^\alpha} \frac 12 \, Vu^2 \, dxdy+\int_{\partial B_r^\alpha} \frac 12 \, Vu^2 \, d\Ha
     -\frac{1}{2r} \int_{B_r^\alpha} (\nabla V \cdot X_G) u^2 \, dxdy \\[10pt]
  \notag &  \qquad =r^Q  \int_{B_1^\alpha} \frac{\partial}{\partial r} \left(\frac 12 \, V_r u_r^2 \right) \, dxdy   -r^Q \int_{B_1^\alpha} \frac 12\,  \nabla V(rx,r^{\alpha+1} y) \cdot (x,(\alpha+1)r^\alpha y) u^2_r(x,y) \, dxdy \\[10pt]
   \notag & \qquad = r^Q \int_{B_1^\alpha} V_r u_r \frac{\partial u_r}{\partial r} \, dxdy
   =r^Q \int_{B_1^\alpha} (- \Delta_\alpha  u)_r \frac{\partial u_r}{\partial r} \, dxdy
\end{align}
where for any function $w=w(x,y)$ we use the notation $w_r(x,y)=w(\delta_r(x,y))$ in accordance with \eqref{eq:delta-lambda}. We observe that the right hand side of identity \eqref{eq:est-prel} is well defined since by Proposition \ref{p:reg-2} we have that
$V_r u_r = (Vu)_r= (- \Delta_\alpha  u)_r\in L^2(B_1^\alpha)$ being $u\in  W^{2,2}_{\alpha,{\rm loc}} (\Omega)$ and $\frac{\partial u_r}{\partial r}=\frac 1r \, \nabla u_r \cdot X_G \in L^2(B_1^\alpha)$.
Combining the above regularity of $u$ with an approximation argument like in Proposition \ref{p:density}, we may find a sequence $\{u^n\} \subset C^\infty(\overline{B_r^\alpha})$ such that
\begin{equation} \label{eq:conv-un}
   u^n \to u \quad \text{in }   W^{2,2}_\alpha (B_r^\alpha) \qquad \text{and} \qquad u^n \to u \quad \text{in } W^{1,2}(B_r^\alpha)
   \qquad \text{as } n\to +\infty \, .
\end{equation}
Applying Proposition \ref{p:int-parts} along the sequence $\{u^n\}$ and using \eqref{eq:psi-alpha}, \eqref{eq:vect-field},
Lemma \ref{l:der-B}, Lemma \ref{l:change-B} and Lemma \ref{l:rescale-partial}, we may write
\begin{align}  \label{eq:l-pa}
   & r^Q \int_{B_1^\alpha} (- \Delta_\alpha  u^n)_r \frac{\partial u^n_r}{\partial r} \, dxdy
   =r^{Q-2} \int_{B_1^\alpha} - \Delta_\alpha  u^n_r \, \frac{\partial u^n_r}{\partial r} \, dxdy
\end{align}
\begin{align*}
 \notag  & \qquad = -r^{Q-2} \int_{\partial B_1^\alpha} \psi_\alpha \, X_G u^n_r \, \frac{\partial u^n_r}{\partial r} d\Ha
     +r^{Q-2} \int_{B_1^\alpha} \nabla_\alpha u^n_r \cdot \nabla_\alpha \left(\frac{\partial u^n_r}{\partial r}\right) \, dxdy \\[10pt]
 \notag  & \qquad =-r^{Q-2} \int_{\partial B_1^\alpha} \psi_\alpha \, X_G u^n_r \, \frac 1r X_G u^n_r \, d\Ha
    +r^{Q-2} \int_{B_1^\alpha} \frac 12 \frac{\partial}{\partial r} \left(|\nabla_\alpha u^n_r|^2\right)\, dxdy \\[10pt]
  \notag & \qquad =-r^{Q-2} \int_{\partial B_1^\alpha} r^{-1} \psi_\alpha \, [(X_G u^n)^2]_r \, d\Ha
    +\frac{r^{Q-2}}2 \frac{\partial}{\partial r} \int_{B_1^\alpha}  |\nabla_\alpha u^n_r|^2 \, dxdy  \\[10pt]
 \notag  & \qquad =-r^{-2}  \int_{\partial B_r^\alpha} \psi_\alpha (X_G u^n)^2 \, d\Ha
   +\frac{r^{Q-2}}2 \frac{\partial}{\partial r} \left(r^{2-Q} \int_{B_r^\alpha}  |\nabla_\alpha u^n|^2 \, dxdy  \right) \\[10pt]
\notag   & \qquad =-r^{-2}  \int_{\partial B_r^\alpha} \psi_\alpha (X_G u^n)^2 \, d\Ha
   -\frac{Q-2}{2} \, r^{-1}  \int_{B_r^\alpha}  |\nabla_\alpha u^n|^2 \, dxdy
    +\frac 12 \int_{\partial B_r^\alpha}  |\nabla_\alpha u^n|^2 \, d\Ha\,.
\end{align*}
Passing to the limit as $n\to +\infty$ in the left and right hand sides of identity \eqref{eq:l-pa}, by \eqref{eq:conv-un} we obtain
\begin{align}  \label{eq:l-pa-bis}
   & r^Q \int_{B_1^\alpha} (- \Delta_\alpha  u)_r \frac{\partial u_r}{\partial r} \, dxdy  \\[10pt]
   \notag & \qquad = -\frac{Q-2}{2r} \,  \int_{B_r^\alpha}  |\nabla_\alpha u|^2 \, dxdy
     +\frac 12 \int_{\partial B_r^\alpha}  |\nabla_\alpha u|^2 \, d\Ha
     -\frac{1}{r^2} \int_{\partial B_r^\alpha} \psi_\alpha (X_G u)^2 \, d\Ha \, .
\end{align}
The proof of the proposition then follows combining \eqref{eq:est-prel} with \eqref{eq:l-pa-bis}.

\section*{Acknowledgments}

The authors are members of the ``Gruppo Nazionale per l’Analisi Matematica, la Probabilit\`a e le loro Applicazioni'' (GNAMPA) of the ``Istituto Nazionale di Alta Matematica'' (INdAM). L.A. is partially supported by the PRIN 2022 project 2022R537CS \emph{$NO^3$ - Nodal Optimization, NOnlinear elliptic equations, NOnlocal geometric problems, with a focus on regularity}, founded by the European Union - Next Generation EU.  A.F. and P.L.  acknowledge the support  from the project \emph{Perturbation problems and asymptotics for elliptic differential equations: variational and potential theoretic methods} funded by the MUR Progetti di Ricerca di Rilevante Interesse Nazionale (PRIN) Bando 2022 grant 2022SENJZ3.

\end{document}